\DeclareMathAlphabet{\mathpzc}{OT1}{pzc}{m}{it}
\begin{document}

\theoremstyle{plain}
\newtheorem{theorem}{Theorem}[section]
\newtheorem{lemma}[theorem]{Lemma}
\newtheorem{condition}[theorem]{Condition}
\newtheorem{proposition}[theorem]{Proposition}
\newtheorem{corollary}[theorem]{Corollary}
\newtheorem{Ass}[theorem]{Assumption}

\theoremstyle{definition}
\newtheorem{definition}[theorem]{Definition}
\newtheorem{remark}[theorem]{Remark}
\newtheorem{discussion}[theorem]{Discussion}
\newtheorem{intuition}[theorem]{Intuition}
\newtheorem{example}[theorem]{Example}
\newtheorem{SA}[theorem]{Standing Assumption}
\newtheorem*{observation}{Observations}
\newtheorem*{RL}{Comments on Related Literature}

\renewcommand{\chapterautorefname}{Chapter} 
\renewcommand{\sectionautorefname}{Section} 

\crefname{lemma}{lemma}{lemmas}
\Crefname{lemma}{Lemma}{Lemmata}
\crefname{corollary}{corollary}{corollaries}
\Crefname{corollary}{Corollary}{Corollaries}

\newcommand{\m}{\mathfrak{m}}
\newcommand{\mo}{\mathfrak{m}^\circ}
\newcommand{\mon}{\mathfrak{m}^{\circ, n}}
\newcommand{\M}{\tilde{\mathfrak{m}}}
\newcommand{\Mo}{\tilde{\mathfrak{m}}^\circ}
\newcommand{\Mon}{\tilde{\mathfrak{m}}^{\circ, n}}
\newcommand{\s}{\mathfrak{s}}
\renewcommand{\S}{\tilde{\mathfrak{s}}}
\newcommand{\n}{\widetilde{\mathfrak{m}}}
\renewcommand{\v}{\mathfrak{v}}
\renewcommand{\u}{\mathfrak{u}}
\newcommand{\q}{\mathfrak{q}}
\def\stackrelboth#1#2#3{\mathrel{\mathop{#2}\limits^{#1}_{#3}}}
\renewcommand{\l}{\mathscr{l}}
\newcommand{\E}{{\mathds{E}}}
\renewcommand{\P}{\mathds{P}}
\newcommand{\Po}{\mathds{P}^\circ}
\newcommand{\Pon}{\mathds{P}^{\circ, n}}
\newcommand{\Q}{\tilde{\mathds{P}}}
\newcommand{\Qo}{\tilde{\mathds{P}}^\circ}
\newcommand{\Qon}{\tilde{\mathds{P}}^{\circ, n}}
\newcommand{\W}{\mathds{W}}
\newcommand{\QQ}{\mathds{Q}}
\newcommand{\tQQ}{\tilde{\mathds{Q}}}
\newcommand{\on}{\operatorname}
\newcommand{\oU}{U}
\newcommand{\of}{[\hspace{-0.06cm}[}
\newcommand{\gs}{]\hspace{-0.06cm}]}
\newcommand{\ofr}{(\hspace{-0.09cm}(}
\newcommand{\gsr}{)\hspace{-0.09cm})}
\renewcommand{\emptyset}{\varnothing}

\renewcommand{\theequation}{\thesection.\arabic{equation}}
\numberwithin{equation}{section}

\newcommand{\1}{\mathds{1}}
\renewcommand{\epsilon}{\varepsilon}
\newcommand{\X}{\mathsf{X}}
\newcommand{\Y}{\mathsf{Y}}
\newcommand{\Z}{\mathsf{Z}}
\newcommand{\B}{\mathsf{B}}
\newcommand{\f}{\mathfrak{f}}
\newcommand{\g}{\mathfrak{g}}
\newcommand{\e}{\mathfrak{e}}
\renewcommand{\t}{T}
\newcommand{\A}{J_{\on{sep}}}

\title[Separating Times for Diffusions]{Separating Times for One-Dimensional General Diffusions}

\author[D. Criens]{David Criens}
\address{D. Criens - Albert-Ludwigs-University of Freiburg, Ernst-Zermelo-Str. 1, 79104 Freiburg, Germany.}
\email{david.criens@stochastik.uni-freiburg.de}

\author[M. Urusov]{Mikhail Urusov}
\address{M. Urusov - University of Duisburg-Essen, Thea-Leymann-Str. 9, 45127 Essen, Germany.}
\email{mikhail.urusov@uni-due.de}

\keywords{Diffusion; continuous Markov process; separating time; absolute continuity; singularity; speed measure; local time; scale function; no free lunch with vanishing risk; single asset market}

\makeatletter
\@namedef{subjclassname@2020}{\textup{2020} Mathematics Subject Classification}
\makeatother

\subjclass[2020]{60G30, 60G44, 60G48, 60H30, 60J55, 60J60, 91B70, 91G15}

\thanks{We are very grateful to Paul Jenkins for pointing out a gap in a previous version of this paper.
Moreover, we thank Zhesheng Liu and Mihail Zervos for sharpening our thinking on the difference between diffusion and semimartingale local times.
Last but not least, we thank the anonymous referee for numerous constructive comments and suggestions that helped us improve the manuscript.
DC acknowledges financial support from the DFG project No. SCHM 2160/15-1.}

\date{\today}

\begin{abstract}
The separating time for two probability measures on a filtered space is an extended stopping time which captures the phase transition between equivalence and singularity. More specifically, two probability measures are equivalent before their separating time and singular afterwards. In this paper, we investigate the separating time for two laws of general one-dimensional regular continuous strong Markov processes, so-called general diffusions, which are parameterized via scale functions and speed measures. Our main result is a representation of the corresponding separating time as
(loosely speaking) a hitting time
of a deterministic set which is characterized via speed and scale.
As hitting times are fairly easy to understand, our result gives access to explicit and easy-to-check sufficient and necessary conditions for two laws of general diffusions to be (locally) absolutely continuous and/or singular.
Most of the related literature treats the case of stochastic differential equations.
In our setting we encounter several novel features, which are due to general speed and scale on the one hand, and to the fact that we do not exclude (instantaneous or sticky) reflection on the other hand.
These new features are discussed in a variety of examples.
As an application of our main theorem, we investigate the no arbitrage concept no free lunch with vanishing risk (NFLVR) for a single asset financial market whose (discounted) asset is modeled as a general diffusion which is bounded from below (e.g., non-negative).
More precisely, we derive deterministic criteria for NFLVR and we identify the (unique) equivalent local martingale measure as the law of a certain general diffusion on natural scale.
\end{abstract}

\maketitle

\frenchspacing
\pagestyle{myheadings}

\section{Introduction}
The purpose of this paper is to give explicit and easy-to-check sufficient and necessary conditions for two laws of general one-dimensional regular continuous strong Markov processes
(so-called \emph{general diffusions}) to be (locally) absolutely continuous and/or singular. 
In contrast to the (strict) subclass of diffusions that can be characterized via stochastic differential equations (SDEs), and which are called \emph{It\^o diffusions} in this paper, general diffusions are, in general, not semimartingales. The law of a general diffusion can be characterized via two deterministic objects, namely its \emph{scale function} and its \emph{speed measure}. 

Our interest in general diffusions is motivated by applications to mathematical finance. It is well-known that It\^o diffusion models with non-vanishing volatility coefficients cannot reflect occupation time effects such as stickiness of the price curve around some price. However, such effects can, for instance, be observed when a company got a takeover offer. 
\begin{figure}
	\label{fig: hansen}
	\includegraphics[width=12cm, height=5.5cm]{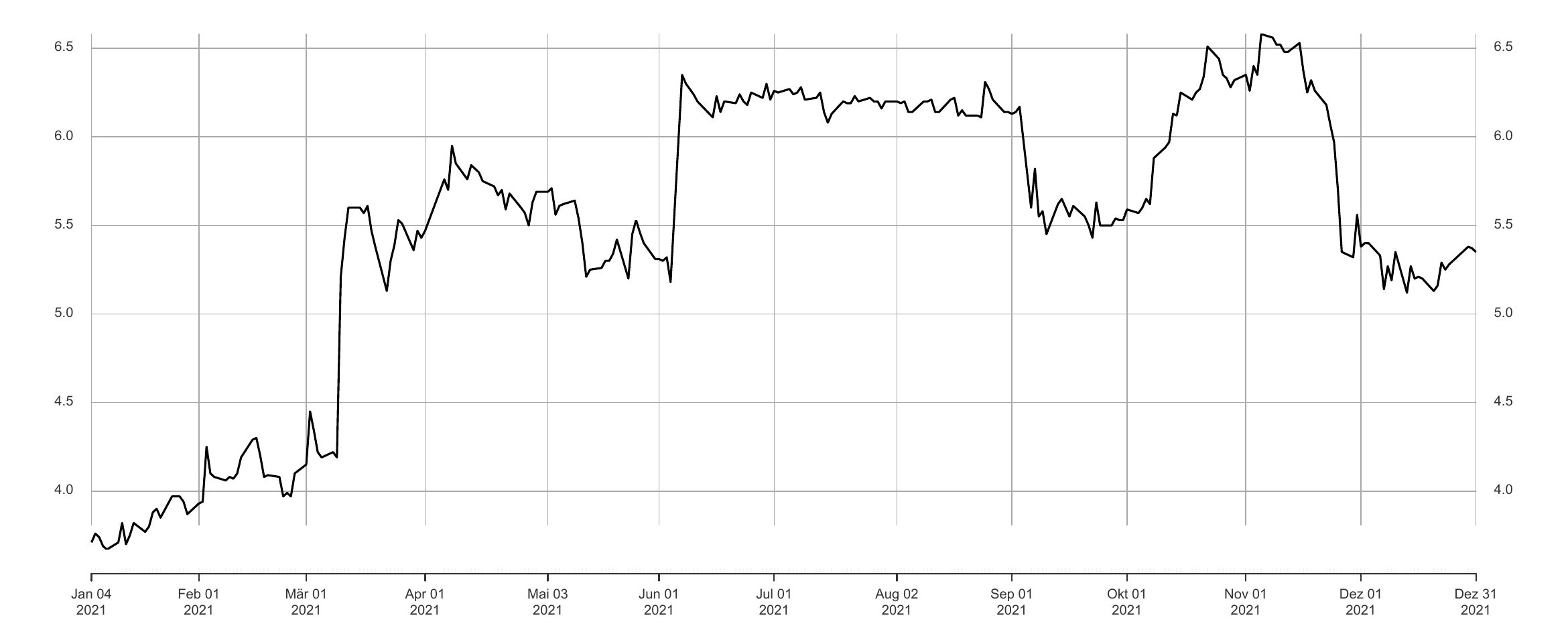} \caption{Share Price \emph{Hansen Technologies Limited (HSN.AX)}}
\end{figure}
To give an explicit example, we draw the reader's attention to Figure \ref{fig: hansen}, which shows the share price of Hansen Technologies Limited. On June~7, 2021, BGH Capital made an offer to acquire Hansen Technologies Limited at a price of AUD 6.5 in cash per share. The acquisition was cancelled by BGH Capital on September 6, 2021.\footnote{\tiny https://www.marketscreener.com/quote/stock/HANSEN-TECHNOLOGIES-LIMIT-6500487/news/BGH-Capital-Fund-I-managed-by-BGH-Capital-cancelled-the-acquisition-of-Hansen-Technologies-Limited-36355691/ (retrieved on January 31, 2022, 17:45 CET)} Figure~\ref{fig: hansen} shows a sticky behavior of the price curve while the takeover offer was active. 
Contrary to It\^o diffusions with non-vanishing volatility, general diffusions are able to capture such occupation time effects. 

Further, general diffusions can be used to model non-negative volatility processes which are allowed to vanish and reflect off the origin. For instance, there is empirical evidence (see, e.g., \cite[Section 6.3.1]{clark}) that for the classical Heston model the Feller condition is often violated, which means its volatility process can reach the origin and reflect from it.
In general, it might not always be possible to capture such properties with It\^o diffusions or semimartingales. 
The class of general diffusions is more flexible when it comes to modeling fine local time effects.

Let us now turn to our main results. Studying (local) absolute continuity and singularity of probability measures on a path space has a very long tradition, see \cite[pp. 631--634]{JS} and \cite[pp. 314--315]{LS} for bibliographic notes. A classical approach to tackle such questions is by studying certain candidate density processes. In this paper we use a somehow different idea based on the concept of \emph{separating times}.
For two given probability measures on a filtered space there exists (\cite{cherUru}) a so-called separating time which captures the phase transition between equivalence and singularity. More precisely, the two probabilities are equivalent before their separating time and singular afterwards. It is possible to describe all standard notions such as (local) absolute continuity, equivalence and singularity via the separating time. Broadly speaking, in case the separating time is known in a tractable form, one can answer any type of absolute continuity or singularity question. In particular, the separating time even encodes the information \emph{when} absolute continuity is lost.

Our main result is a representation of the separating time for two laws of general diffusions as ``something like''
the hitting time of a deterministic set which is described in terms of the scale functions and the speed measures.
As easy corollaries, we obtain necessary and sufficient conditions for two laws of general diffusions to be (locally) absolutely continuous and/or singular.
In contrast to such results for It\^o diffusions, in our setting we encounter several novel features.
Some of them are due to general scale functions and speed measures.
Other ones are due to the fact that we allow boundary points of the state space to be accessible and, in this case, allow both absorption and (instantaneous or sticky) reflection.
These new features are illustrated by a variety of examples.
As hitting times are fairly easy to understand, our result appears to be very useful
for applications.

To illustrate such an application, we consider a single asset financial market where the discounted asset price is modeled as a general diffusion bounded from below (e.g., non-negative).
We establish the existence of a unique candidate for an equivalent local martingale measure (ELMM), which is a certain general diffusion on natural scale, and,  in terms of scale and speed, we give \emph{precise deterministic criteria} for the candidate to be an ELMM. In financial terms, our result provides a precise deterministic description of the no arbitrage concept \emph{no free lunch with vanishing risk} (NFLVR) as introduced in \cite{DS}.

We comment now on related literature. 
The separating time for two solutions to SDEs under the Engelbert--Schmidt conditions was computed in \cite{cherUru} (see also \cite{MU12}). 
A characterization of local absolute continuity of two general diffusions with open state space was proved in \cite{orey}. Both of these results follow directly from our main theorem.

Local absolute continuity is closely related to the martingale property of a
certain
local martingale, which can be viewed as a candidate density process.
In this connection, we mention the paper \cite{Kotani2006}, which provides necessary and sufficient conditions for the martingale property of diffusions on natural scale.
These conditions are expressed solely in terms of the speed measure of the diffusion.
For the same general framework as considered in this paper, a characterization of the martingale property of certain non-negative continuous local martingales was recently proved in \cite{desmettre}. 
In Remark~\ref{rem: comment desm} we comment in more detail on the relation to this work.

Among other things, the existence and absence of NFLVR for one-dimensional It\^o diffusion markets has been studied in \cite{MU12b}. Our work extends those from \cite{MU12b} (for NFLVR) to a general diffusion framework. In particular, when compared to \cite{MU12b}, we make an interesting structural observation. Namely, in the It\^o diffusion framework from \cite{MU12b} the scale functions and the speed measures are assumed to be absolutely continuous w.r.t. the Lebesgue measure. We prove that the absolute continuity of the scale functions (but not of the speed measures) is a \emph{necessary} condition for NFLVR. At first glance, this appears to be quite surprising.

In general, (local) absolute continuity and singularity have also been studied for frameworks which are multidimensional, which include jumps and which are even non-Markovian. For instance, It\^o diffusions with and without jumps were studied in \cite{CFY,criens21}, (non-Markovian) continuous It\^o processes were studied in \cite{criens20,ruf15}, and more general semimartingales have been considered in \cite{criensglau}. Of course, this short list is quite incomplete and we refer to the references of these papers for more literature. 
We contribute to this list, as our paper seems to be the first to cover the full class of general diffusions. 

This article is structured as follows.
In Section~\ref{sec: sep time} we recall the concept of separating times.
An introduction to our canonical  diffusion framework is given in  Section~\ref{sec: diff basics}.
Our main result is presented in Section~\ref{sec: main} and examples are discussed in Section~\ref{sec: examples}.
The financial application is given in Section~\ref{sec: appl}.
Finally, the proof of our main result is presented in Section~\ref{sec: pf}.
We also added three appendices to make our paper as self-contained as possible.
In Appendix~\ref{app: A} we discuss the martingale problem for general diffusions, which we also think to be of independent interest.
Further technical facts about general diffusions, semimartingales and differentiation of measures are collected in Appendicies \ref{app: B} and~\ref{app: C}.

\section{Separating Times for General Diffusions}
This section is divided into four parts.
In the first two parts we recall the concept of separating times and we introduce our canonical diffusion setting.
In the third part we present our main results and finally we discuss some corollaries and examples.

\subsection{Separating Times: Encoding Absolute Continuity and Singularity}\label{sec: sep time}

We take a filtered space \((\Omega, \mathcal{F}, (\mathcal{F}_t)_{t\in[0,\infty)})\) with a right-continuous filtration.
Recall that, for a stopping time $\tau$, the $\sigma$-field $\mathcal F_\tau$ is defined by the formula
\[
\mathcal F_\tau \triangleq \big\{B\in\mathcal F \colon B\cap\{\tau\le t\}\in\mathcal F_t \text{ for all } t\in[0,\infty) \big\},
\]
in particular, $\mathcal F_\infty=\mathcal F$.
Let \(\delta\) be a point outside \([0, \infty]\) and endow \([0, \infty] \cup \{\delta\}\) with the ordering \(\delta > t\) for all~\(t \in [0, \infty]\). 

\begin{definition}
	A map \(S \colon \Omega \to [0, \infty] \cup \{\delta\}\) is called an \emph{extended stopping time} if \(\{S \leq t\} \in \mathcal{F}_{t}\) for all \(t \in [0, \infty]\).
\end{definition}

Let \(\P\) and \(\Q\) be two probability measures on \((\Omega, \mathcal{F})\).
The following result is a restatement of \cite[Theorem~2.1]{cherUru}.

\begin{theorem}\label{th:020323a1}
	There exists a \(\P, \Q\)-a.s. unique extended stopping time \(S\) such that for any stopping time \(\tau\) we have 
	\begin{align} \label{eq: sep time}
	\P \sim \Q \text{ on } \mathcal{F}_{\tau} \cap \{\tau < S\}, \qquad \P \perp \Q \text{ on } \mathcal{F}_\tau \cap \{\tau \geq S\}.
	\end{align}
	This extended stopping time is called the \emph{separating time for \(\P\) and \(\Q\)}.
\end{theorem}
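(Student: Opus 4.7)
The plan is to construct $S$ from candidate density processes and to read the two required properties off their pathwise behaviour. Let $R=\tfrac12(\P+\Q)$, so $\P,\Q\ll R$. Using right-continuity of $(\mathcal F_t)$, fix càdlàg versions of the bounded non-negative $R$-martingales
\[
Y^\P_t=\E_R\!\left[\tfrac{d\P}{dR}\,\Big|\,\mathcal F_t\right],\qquad Y^\Q_t=\E_R\!\left[\tfrac{d\Q}{dR}\,\Big|\,\mathcal F_t\right],
\]
extended to $t=\infty$ via their $R$-a.s.\ limits. Then $Y^\P+Y^\Q=2$ throughout, and uniform integrability (from the bound $Y^\P,Y^\Q\le 2$) yields the identity
\[
\P(A)=\E_R[\1_A Y^\P_\tau],\qquad \Q(A)=\E_R[\1_A Y^\Q_\tau],\qquad A\in\mathcal F_\tau,
\]
for every $[0,\infty]$-valued stopping time $\tau$. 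Now define
\[
S\triangleq \inf\{t\in[0,\infty]\colon Y^\P_t\wedge Y^\Q_t=0\},\qquad \inf\emptyset\triangleq \delta;
\]
right-continuity of the filtration makes $S$ an extended stopping time, and $R$-a.s.\ $Y^\P_S\wedge Y^\Q_S=0$ on $\{S<\delta\}$ by the standard hitting-time argument for càdlàg paths.

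For the equivalence half of \eqref{eq: sep time}, on $\{\tau<S\}$ the càdlàg paths of $Y^\P, Y^\Q$ remain strictly positive on $[0,\tau]$ $R$-a.s., so $Y^\P_\tau\wedge Y^\Q_\tau>0$ there. Hence for every $A\in\mathcal F_\tau$ with $A\subseteq\{\tau<S\}$,
\[
\P(A)=0 \iff \E_R[\1_A Y^\P_\tau]=0 \iff R(A)=0 \iff \E_R[\1_A Y^\Q_\tau]=0 \iff \Q(A)=0,
\]
i.e.\ $\P\sim\Q$ on $\mathcal F_\tau\cap\{\tau<S\}$.

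For the singularity half I would invoke the absorption property of non-negative càdlàg martingales: whenever one of $Y^\P, Y^\Q$ hits $0$ it stays there $R$-a.s., while the other is absorbed at $2$. Put $A\triangleq\{\tau\ge S,\;Y^\P_\tau=0\}\in\mathcal F_\tau$. Absorption and the density identity give $\P(A)=\E_R[\1_A Y^\P_\tau]=0$, whereas on $\{\tau\ge S\}\setminus A$ absorption forces $Y^\P_S>0$, hence $Y^\Q_S=0$, hence $Y^\Q_\tau=0$, so $\Q(\{\tau\ge S\}\setminus A)=\E_R[\1_{\{\tau\ge S\}\setminus A}Y^\Q_\tau]=0$. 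This exhibits the required partition and yields $\P\perp\Q$ on $\mathcal F_\tau\cap\{\tau\ge S\}$.

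Finally, uniqueness is a consistency argument: if $S'$ also satisfies \eqref{eq: sep time}, apply the property of $S$ and of $S'$ to $\tau=S\wedge n$ on the event $\{S\le n,\,S<S'\}\in\mathcal F_\tau$; one obtains both equivalence (via $S'$) and singularity (via $S$) on $\mathcal F_\tau\cap\{S\le n,\,S<S'\}$, forcing this event to be $\P,\Q$-null. Letting $n\to\infty$ and then treating the residual $\{S=\infty,\,S'=\delta\}$ via $\tau=\infty$ gives $\P(\{S<S'\})=\Q(\{S<S'\})=0$; symmetry closes the argument. The main obstacle I anticipate is precisely this bookkeeping around the "$\delta$" convention and the value $\tau=\infty$: one must include $t=\infty$ in both the hitting set defining $S$ and in the martingale extensions, and rely on uniform integrability so that the density identity survives at $\tau=\infty$. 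Without this, a loss of equivalence only in the tail would fail to be faithfully recorded by $S$, and neither existence nor uniqueness would go through cleanly.
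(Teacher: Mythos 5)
The paper does not actually prove this theorem --- it is quoted from \cite[Theorem~2.1]{cherUru} --- and your construction is essentially the one used there: pass to the dominating measure $R=\tfrac12(\P+\Q)$, form the two bounded density martingales, and let $S$ be the first time one of them vanishes, with the absorption of non-negative c\`adl\`ag supermartingales at zero doing all the work. The equivalence half, the singularity half (with the witness $A=\{\tau\ge S,\;Y^\P_\tau=0\}$) and the uniqueness argument via $\tau=S\wedge n$ plus the residual event at $\tau=\infty$ are all sound.

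Two points need attention. The substantive one concerns $t=\infty$, which you correctly single out as the delicate spot but then implement in a way that can fail: you define $Y^\P_\infty$ as the $R$-a.s.\ limit of $Y^\P_t$, which is a version of $d\P/dR$ only on $\mathcal F_{\infty-}\triangleq\bigvee_{t}\mathcal F_t$, whereas the theorem applied to $\tau\equiv\infty$ (for which $\mathcal F_\tau=\mathcal F$ by the paper's definition of $\mathcal F_\tau$) requires the density on all of $\mathcal F$. If $\mathcal F\supsetneq\bigvee_t\mathcal F_t$, the identity $\P(A)=\E_R[\1_A Y^\P_\infty]$ fails for $A\in\mathcal F\setminus\mathcal F_{\infty-}$ and your $S$ records $\delta$ where it should record $\infty$: take $\Omega=\{1,2,3\}$ with $\mathcal F=2^\Omega$, all $\mathcal F_t$ trivial, $\P=\tfrac12(\delta_1+\delta_2)$, $\Q=\tfrac12(\delta_2+\delta_3)$; then $Y^\P_t=Y^\Q_t\equiv1$ for all finite $t$, your limits equal $1$, so $S\equiv\delta$, yet $\P\not\sim\Q$ on $\mathcal F$ (the correct $S$ is $\infty$ on $\{1,3\}$ and $\delta$ on $\{2\}$). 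The fix is to set $Y^\P_\infty\triangleq d(\P|_{\mathcal F})/d(R|_{\mathcal F})$: the process on $[0,\infty]$ is then still a closed martingale, optional sampling at $[0,\infty]$-valued $\tau$ still holds, and absorption extends to $t=\infty$ because $Y^\P_t=0$ forces $\E_R[Y^\P_\infty\mid\mathcal F_t]=0$, hence $Y^\P_\infty=0$ a.s.\ on that event; everything else in your argument then goes through verbatim. (In the paper's canonical setting $\mathcal F=\sigma(\X_s,\,s\ge0)=\bigvee_t\mathcal F_t$, so there the issue is invisible.) The second point is routine: c\`adl\`ag modifications and the d\'ebut $\inf\{t\colon Y^\P_t\wedge Y^\Q_t=0\}$ are only guaranteed to be adapted, respectively a stopping time, under the usual conditions, so as written $S$ is an extended stopping time of the $R$-augmented filtration only; either use the hitting time that also tests left limits (which is a stopping time for the raw right-continuous filtration and a.s.\ equal to yours by absorption) or replace $S$ by an a.s.\ equal extended stopping time of $(\mathcal F_t)_{t\ge0}$ at the end.
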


We have already mentioned in the Introduction that separating times encode (local) absolute continuity and singularity properties. This claim is verified by the following proposition, which is a (sightly extended) restatement of \cite[Lemma~2.1]{cherUru}.

\begin{proposition}\label{prop: AC Sing}
	Let \(S\) be the separating time for \(\P\) and \(\Q\) and let $t\in[0,\infty]$. Then, the following hold:
	\begin{enumerate}[label=\textup{(\roman*)}]
		\item
		\(\P \ll \Q\) if and only if \(\P\)-a.s. \(S = \delta\).
		\item
		\(\P \sim \Q\) if and only if \(\P, \Q\)-a.s. \(S = \delta\).
		\item
		\(\P \ll_\textup{loc} \Q\) if and only if \(\P\)-a.s. \(S \geq \infty\).
		\item
		\(\P \sim_\textup{loc} \Q\) if and only if \(\P, \Q\)-a.s. \(S \geq \infty\).
		\item
		$\P\ll\Q$ on $\mathcal F_t$ if and only if \(\P\)-a.s. \(S>t\).
		\item
		$\P\sim\Q$ on $\mathcal F_t$ if and only if \(\P, \Q\)-a.s. \(S>t\).
		\item
		\(\P \perp \Q\) on \(\mathcal{F}_t\) if and only if \(\P, \Q\)-a.s. \(S \le t\) if and only if \(\P\)-a.s. \(S \le t\).
	\end{enumerate}
\end{proposition}

The following generalization of Proposition~\ref{prop: AC Sing} is sometimes also useful. Again, its proof is straightforward.

\begin{proposition}\label{prop:121224a1}
Let \(S\) be the separating time for \(\P\) and \(\Q\), let $\tau$ be a stopping time and $B\in\mathcal F_\tau$.
Then, the following hold:
\begin{enumerate}[label=\textup{(\roman*)}]
\item
$\P\ll\Q$ on $\mathcal F_\tau\cap B$ if and only if \(S>\tau\) \(\P\)-a.s. on $B$.
\item
$\P\sim\Q$ on $\mathcal F_\tau\cap B$ if and only if \(S>\tau\) \(\P, \Q\)-a.s. on $B$.
\item
The following are equivalent:
\begin{enumerate}[label=\textup{(\alph*)}]
\item
\(\P \perp \Q\) on \(\mathcal F_\tau\cap B\).
\item
\(S \le\tau\) \(\P, \Q\)-a.s. on $B$.
\item
\(S \le\tau\) \(\P\)-a.s. on $B$.
\end{enumerate}
\end{enumerate}
\end{proposition}

For several classes of probability measures the separating times are explicitly known. For instance, this is the case if \(\P\) and \(\Q\) are laws of one-dimensional It\^o diffusions under the Engelbert--Schmidt conditions (\cite{cherUru, MU12}) or Levy processes (\cite{cherUru}).
In Section \ref{sec: main} we prove an explicit characterization of the separating times for general one-dimensional regular continuous strong Markov processes. Our result generalizes those from \cite{cherUru, MU12} for It\^o diffusions in several directions. For instance, we allow sticky points in the interior and also (both instantaneous and slow) reflection at the boundaries.

\subsection{A Recap on the Canonical Diffusion Framework} \label{sec: diff basics}
The purpose of this section is to introduce our general setting and to fix some notation related to the theory of one-dimensional regular continuous strong Markov processes (general diffusions). 
Before we start this program, let us mention some of the most classical references. The definitive account of the theory is given in the seminal monograph \cite{itokean74} by It\^o and McKean. More introductory treatments can be found in \cite[Chapter~16]{breiman1968probability}, \cite[Chapter~2]{freedman}, \cite[Chapter~33]{kallenberg}, \cite[Section~VII.3]{RY} and \cite[Section~V.7]{RW2}. For an introduction with many useful examples, we also refer to
\cite[Chapter~15]{KarlinTaylor1981}. An overview on formulae is given in Chapter~II of the ``Handbook'' \cite{BorodinSalminen2002} by Borodin and Salminen.

Let \(J \subset [- \infty, \infty]\) be a bounded or unbounded, closed, open or half-open interval. We denote the closure of \(J\) in the extended reals \([- \infty, \infty]\) by \(\on{cl}(J)\), its interior by \(J^\circ\) and its boundary by \(\partial J\ (= \on{cl}(J) \setminus J^\circ)\).
Further, we define \(\Omega\triangleq C(\mathbb R_+;[-\infty,\infty])\), the space of continuous functions \(\mathbb{R}_+ \to [- \infty, \infty]\). The coordinate process on \(\Omega\) is denoted by \(\X\), i.e., \(\X_t (\omega) = \omega(t)\) for \(t \in \mathbb{R}_+\) and \(\omega \in \Omega\). 
We also set \(\mathcal{F} \triangleq \sigma (\X_s, s \geq 0)\) and \(\mathcal{F}_t \triangleq \bigcap_{s > t}\sigma (\X_r, r \leq s)\) for \(t \in \mathbb{R}_+\). Except in Section~\ref{sec: appl}, where we discuss an application to mathematical finance, \((\Omega, \mathcal{F}, (\mathcal{F}_t)_{t \geq 0})\) will be the underlying filtered space.

In accordance with \cite[Definition~V.45.1]{RW2},
we call a map \((J \ni x \mapsto \P_x)\), from \(J\) into the set of probability measures on \((\Omega, \mathcal{F})\), a \emph{general diffusion} (with state space $J$) if the following hold:
\begin{enumerate}
	\item[\textup{(i)}] $\P_x(\X_0 = x) = 1$ and $\P_x(C(\mathbb R_+;J))=1$ for all $x\in J$;
		\item[\textup{(ii)}] the map \(x \mapsto \P_x(B)\) is measurable for all \(B \in \mathcal{F}\);
				\item[\textup{(iii)}] the \emph{strong Markov property} holds, i.e., for any stopping time \(\tau\) and any \(x \in J\), the kernel \(\P_{\X_\tau}\) is the regular conditional \(\P_x\)-distribution of \((\X_{t + \tau})_{t \geq 0}\) given \(\mathcal{F}_{\tau}\) on \(\{\tau < \infty\}\), i.e., \(\P_x\)-a.s. on~\(\{\tau < \infty\}\)
				\[  
				\P_x ( \X_{\cdot + \tau} \in d\omega | \mathcal{F}_\tau) = \P_{\X_\tau} (d \omega).
				\]
\end{enumerate}
For abbreviation, we call ``general diffusions'' simply ``diffusions" in this paper.
A diffusion \((x \mapsto \P_x)\) is called \emph{regular} if, for all \(x \in J^\circ\) and~\(y \in J\),
\begin{equation}\label{eq:121224a0}
\P_x(T_y < \infty) > 0,
\end{equation}
where\footnote{Later we use the notation~\eqref{eq:020323a1} for arbitrary $y\in[-\infty,\infty]$ regardless of whether $y$ belongs to $J$.}
\begin{equation}\label{eq:020323a1}
T_y \triangleq \inf (s \geq 0 \colon \X_s = y)
\end{equation}
with the usual convention $\inf(\emptyset) \triangleq \infty$.
In this article we will only work with regular diffusions.

Next, we recall the important concepts of \emph{scale} and \emph{speed}. There exists a strictly increasing continuous function \(\s \colon J \to \mathbb{R}\), which is unique up to increasing affine transformations, such that, for any interval \(I = (a, b)\) with \([a, b] \subset J\), we have
\begin{equation}\label{eq:260223a1}
\P_x(T_b < T_a)
= 1-\P_x(T_a < T_b)
=
\frac{\s(x) - \s(a)}{\s(b) - \s(a)}, \quad x \in I, 
\end{equation}
see \cite[Theorem 16.27]{breiman1968probability}.
Any such function
\(\s\) is called a \emph{scale function}.
In case \(\s (x) = x\), we say that the diffusion is on \emph{natural scale}.
For an interval \(I = (a, b)\) with \([a, b] \subset J\), we set
\begin{align} \label{eq: def G}
G_I (x, y) \triangleq \begin{cases}
\displaystyle\frac{2\hspace{0.03cm} (\s(x\wedge y) - \s(a))(\s(b) - \s(x\vee y))}{\s(b) - \s(a)},& x, y \in I, \\
0,&\text{otherwise},
\end{cases}
\end{align}
which is often called {\em Green's function}.
There exists a unique locally finite measure \(\m\) on \((J^\circ, \mathcal{B}(J^\circ))\) such that, for any interval \(I = (a, b)\) with~\([a, b] \subset J\), 
\begin{equation}\label{eq:090322a1}
\E_x \big[ T_a \wedge T_b\big] = \int G_I(x, y) \m(dy), \quad x \in I,
\end{equation}
see \cite[Theorem~VII.3.6]{RY}
(notice that $\m$ is determined uniquely given the version of $\s$;
if we replace $\s$ with $k\s+l$, $k>0$, $l\in\mathbb R$, then $\m$ is replaced with $\m/k$).
The measure \(\m\) is called the \emph{speed measure} of \((x \mapsto \P_x)\).\footnote{We remark that our speed measure is one half of the speed measure
in \cite{BorodinSalminen2002,freedman,RY}.
Our normalization is consistent
with \cite{breiman1968probability,KarlinTaylor1981,RW2}.
For instance, our speed measure of a Brownian motion is the Lebesgue measure (when we take the scale function $\s(x)=x$).}
Scale and speed determine the potential operator of the diffusion killed when exiting an interval.
We take a reference point \(c \in J^\circ\) and define, for \(x \in J^\circ\),
\begin{equation*} 
\begin{split}
\u (x) &\triangleq \begin{cases} \displaystyle \int_{(c, x]} \m((c, z]) \s(dz),& \text{for } x \geq c,\\\vspace{-
	0.4cm}\\
\displaystyle \int_{(x, c]} \m((z, c]) \s (dz),&\text{for } x \leq c,\end{cases}  
\\
\v (x) &\triangleq \begin{cases} \displaystyle \int_{(c, x]} \s((c,y]) \m(dy),&\text{for } x \geq c,\\ \vspace{-0.4cm}
\\
\displaystyle\int_{(x, c]} \s((y,c]) \m(dy),&\text{for } x \leq c,\end{cases}
\end{split}
\end{equation*}
where we identify $\s$ with the locally finite measure on $(J^\circ,\mathcal B(J^\circ))$ defined via $\s((x,y])\triangleq\s(y)-\s(x)$, for $x<y$ in $J^\circ$.
For \(b\in \partial J\), we write \(\u (b) \triangleq \lim_{J^\circ\ni x \to b} \u(x)\) and \(\v(b) \triangleq \lim_{J^\circ\ni x \to b} \v(x)\).
A boundary point \(b\in \partial J\) is called 
\begin{align*}
\emph{regular}&\quad\text{ if } \u(b) < \infty \text{ and } \v(b) < \infty,\\  
\emph{exit}&\quad\text{ if } \u(b) < \infty \text{ and } \v(b) = \infty,\\  
\emph{entrance}&\quad\text{ if } \u(b) = \infty \text{ and } \v(b) < \infty,\\  
\emph{natural}&\quad\text{ if } \u(b) = \infty \text{ and } \v(b) = \infty.
\end{align*}
These definitions are independent of the choice of the reference point \(c \in J^\circ\).
Regular and exit boundaries are called \emph{accessible}, and entrance and natural boundaries are called \emph{inaccessible}. As already indicated by their names, inaccessible boundaries are not in the state space \(J\), while accessible ones are, see
\cite[Proposition 16.43]{breiman1968probability}.\footnote{Concerning the boundary classification, our nomenclature can be shown to be the same as in \cite{breiman1968probability,KarlinTaylor1981}.
On the contrary, it is worth noting that \cite{BorodinSalminen2002} uses a slightly different terminology.
}

For $b\in\partial J$, we set $\s(b)\triangleq\lim_{J^\circ\ni x\to b}\s(x)$ $(\in[-\infty,\infty])$,
and, for a reference point $c\in J^\circ$, we also use the notation
\begin{equation}\label{eq:271022a0}
\ofr b, c \gsr  \triangleq \begin{cases} (b, c), & b < c,\\ (c, b), & c < b.\end{cases}
\end{equation}
Straightforward calculations show that, for $b\in\partial J$,
\begin{align}
\u(b)<\infty
\;\;\Longrightarrow&\;\;
|\s(b)|<\infty,
\label{eq:101022a1}\\
\v(b)<\infty
\;\;\Longrightarrow&\;\;
\m(\ofr b, c \gsr)<\infty,
\label{eq:101022a2}\\
b\text{ is regular}
\;\;\Longleftrightarrow&\;\;
|\s(b)|<\infty
\text{ and }
\m(\ofr b, c\gsr)<\infty,
\label{eq:101022a3}
\end{align}
where $c\in J^\circ$ is arbitrary
(as $\m$ is a locally finite measure on $(J^\circ,\mathcal B(J^\circ))$,
the finiteness of $\m(\ofr b, c \gsr)$ does not depend on $c\in J^\circ$).
For the sake of comparing the above statements with each other,
we emphasize that the converse implications in \eqref{eq:101022a1} and~\eqref{eq:101022a2} are false, whereas \eqref{eq:101022a3} is indeed an equivalence.
Sometimes the following characterization for $b\in\partial J$ to be accessible
(i.e., for $\u(b)<\infty$)
is convenient:
\begin{equation}\label{eq:160223a1}
b\text{ is accessible}
\;\;\Longleftrightarrow\;\;
|\s(b)|<\infty
\text{ and }
\int_{(\hspace{-0.06cm}( b,c)\hspace{-0.06cm})}
|\s(z)-\s(b)|\,\m(dz)<\infty
\end{equation}
for all (equivalently, for some) $c\in J^\circ$.
The characterization~\eqref{eq:160223a1} follows from straightforward calculations.

The behavior of the diffusion at exit, entrance and natural boundaries is fully specified by \(\s\) and \(\m\).
Regular boundaries are different in this regard.
To see this, consider Brownian motion with state space \([0, \infty)\) and absorption or reflection in the origin (\cite[Section~16.3]{breiman1968probability}). In both cases the speed measure coincides with the Lebesgue measure on \((0, \infty)\) (when we take the scale function $\s(x)=x$) and the origin is regular.
Hence, knowing the speed measure on \(J^\circ = (0, \infty)\) does not suffice to decide whether the origin is absorbing or reflecting. To fix this issue, it is convenient to extend the speed measure \(\m\) to a measure on \((J, \mathcal{B}(J))\).
In the following we explain how this can be done for the case \(J = [0, \infty)\) and \(\s(0) = 0\).
Define \(\s^* \colon \mathbb{R} \to \mathbb{R}\) by setting \(\s^* (x) \triangleq \s (x)\) and \(\s^*(- x) \triangleq - \s(x)\) for \(x \in \mathbb{R}_+\).
For \(I = [0, a)\) with \(a > 0\), define \(G^*_I\) as \(G_{(-a, a)}\) from \eqref{eq: def G} with \(\s\) replaced by~\(\s^*\), and set
\[
G^\circ_I (x, y) \triangleq G^*_I (x, y) + G^*_I (x, -y), \quad x, y \in J = \mathbb{R}_+.
\]
By \cite[Proposition VII.3.10]{RY}, it is possible to define \(\m(\{0\})\in[0,\infty]\) such that, for any interval \(I = [0, a)\) with \(a > 0\) and any Borel function \(f \colon \mathbb{R}_+ = J \to \mathbb{R}_+\) with \(f(0) > 0\), 
\begin{align} \label{eq: extended speed measure}
\E_x \Big[ \int_0^{\t_a} f(\X_s) ds \Big] = \int_I G^\circ_I (x, y) f(y) \m(dy), \quad x \in I.
\end{align}
Let us convince ourselves that \(\m(\{0\})\) distinguishes absorption and reflection. Taking \(f \equiv \1_{\{0\}}\) in \eqref{eq: extended speed measure} yields 
\[
\E_0 \Big[ \int_0^{\t_a} \1_{\{\X_s = 0\}} ds \Big] = 2\s(a) \m(\{0\}), \quad a > 0.
\]
This formula motivates the following definitions:
a regular boundary point \(b\) is called \emph{absorbing} if \(\m(\{b\}) = \infty\), \emph{slowly reflecting} if \(0 < \m(\{b\}) < \infty\), and \emph{instantaneously reflecting} if \(\m (\{b\}) = 0\).
In what follows, we call a boundary point simply ``reflecting'' if it is ``either instantaneously or slowly reflecting''.

We stress at this point that exit boundaries are also absorbing in the sense that such a boundary point cannot be left by the diffusion. However, in contrast to the regular case, the behavior of an exit boundary is fully characterized by \(\m\) on \((J^\circ, \mathcal{B}(J^\circ))\). To guarantee that \eqref{eq: extended speed measure} holds we use the convention that \(\m (\{b\}) \equiv \infty\) in case \(b\) is an exit boundary.

The scale function and the extended speed measure determine the law of the diffusion uniquely, see \cite[Corollary 16.73]{breiman1968probability}. Therefore, we call the pair \((\m, \s)\) the \emph{characteristics of the diffusion}. To avoid confusions, we stress that here \(\m\) is the \emph{extended} speed measure, i.e., it is defined on \((J, \mathcal{B}(J))\).

\begin{remark}\label{rem:170322a1}
We will also need a kind of converse to the discussion above:
given a function and a measure
\emph{with properties to be specified precisely in this remark}
they are the scale function and the speed measure of some diffusion.
To this end, we first recall from above that the restriction $\m|_{J^\circ}$ of the speed measure $\m$ to $J^\circ$ is necessarily a locally finite measure on $(J^\circ,\mathcal B(J^\circ))$. Moreover, by virtue of~\eqref{eq:090322a1}, the speed measure also satisfies
\begin{equation}\label{eq:090322a2}
\m([a,b])>0\quad\text{for all }a<b\text{ in }J^\circ.
\end{equation}
Conversely, given an \emph{open} interval $I\subset\mathbb R$, a continuous and strictly increasing function $\s^\circ\colon I\to\mathbb R$ and a locally finite measure $\m^\circ$ on $(I,\mathcal B(I))$ satisfying~\eqref{eq:090322a2}
with $I$ in place of $J^\circ$,
there exists a diffusion with state space $J$, scale function $\s$ and speed measure $\m$ such that $J^\circ=I$, $\s|_{J^\circ}=\s^\circ$ and $\m|_{J^\circ}=\m^\circ$ (see \cite[Theorem~33.9]{kallenberg}).\footnote{The law of such a diffusion can happen to be non-unique, as we can have different boundary behavior (instantaneous or slow reflection, absorption) at regular boundary points.
More precisely, as we have seen above, $\s^\circ$ and $\m^\circ$ alone determine which boundary points of $J^\circ$ are regular, exit or inaccessible.
Accessible boundary points need to be in $J$, so $J$ is uniquely determined by $J^\circ$, $\s^\circ$ and $\m^\circ$.
For an exit boundary point $b\in\partial J$, the only possible boundary behavior is absorption, and we need to set $\m(\{b\})=\infty$ according to the convention above.
For a regular boundary point $b\in\partial J$, however, we can choose any $\m(\{b\})\in[0,\infty]$, i.e., we can decide about the boundary behavior at a regular boundary in addition to the information carried in $\s^\circ$ and $\m^\circ$.}
\end{remark}

\subsection{Main Results} \label{sec: main}
We take two state spaces \(J\) and \(\tilde{J}\) such that \(J^\circ = \tilde{J}^\circ\).
Let \((J \ni x \mapsto \P_x)\) and \((\tilde{J} \ni x \mapsto \Q_x)\) be two regular diffusions with characteristics \((\m, \s)\) and \((\M, \S)\), respectively.
Our goal is to compute the separating time for \(\P_{x_0}\) and \(\Q_{x_0}\) and an arbitrary initial value \(x_0 \in J \cap \tilde{J}\). 

To give some guidance on the structure of this section, in the first part, only in terms of the diffusion characteristics \((\m, \s)\) and \((\M, \S)\), we introduce the concept of {\em separating points} for \(\P_{x_0}\) and \(\Q_{x_0}\) (these are certain points from $\on{cl}(J)$).
Our main Theorem~\ref{theo: main1} then shows that the separating time of \(\P_{x_0}\) and \(\Q_{x_0}\) is ``something like''
the hitting time of the set \(\A\) of separating points, where we need to carefully distinguish between the values \(\infty\) and \(\delta\).
A precise mathematical formulation is given below.

\begin{definition}[\emph{Non-separating} or \emph{good} interior point] \label{def: non-sep int}
	We say that a point \(x \in J^\circ (= \tilde{J}^\circ)\) is
\emph{non-separating} (or \emph{good})
	if there exists an open neighborhood \(U (x) \subset J^\circ\) of \(x\) such that 
		\begin{enumerate}[label=\textup{(\roman*)}]
		\item
		the differential quotient
\begin{equation}\label{eq:101224a1}
\Big(\frac{d^+ \s}{d \S} \Big) (z) \triangleq \lim_{h \searrow 0} \frac{\s (z + h) - \s (z)}{\S (z + h) - \S (z)}
\end{equation}
		exists for all \(z \in U (x)\) and is strictly positive and finite, i.e.,
		\(d^+\s/d\S\) is a function \(U(x) \to (0, \infty)\);
		
		\item
		there exists a Borel function \(\beta \colon U (x) \to \mathbb{R}\) such that 
\begin{equation}\label{eq:101224a2}
\int_{U(x)} \big(\beta (z) \big)^2\, \S (dz) < \infty 
\end{equation}
		and, for all \(y \in U(x)\),			
\begin{equation}\label{eq:050122a1}
\Big(\frac{d^+ \s}{d \S} \Big) (y) - \Big(\frac{d^+ \s}{d \S} \Big) (x) = \int_x^y \beta (z)\, \s (dz);
\end{equation}

				\item
				the differential quotient
\begin{equation}\label{eq:101224a3}
\Big(\frac{d \m}{d\M}\Big)(z) \triangleq \lim_{h \searrow 0} \frac{\m ( (z - h, z + h))}{\M ((z - h, z + h))}
\end{equation}
				exists for all \(z \in U(x)\) and
\begin{equation}\label{eq:111022a1}
\frac{d \m}{d \M} \frac{d^+ \s}{d \S} = 1 
\end{equation}
on \(U (x)\).
\end{enumerate}
\end{definition}

We stress that the integration in~\eqref{eq:050122a1} is indeed meant w.r.t.~$\s$. One could alternatively write the right-hand side of~\eqref{eq:050122a1} as $\int_x^y \bar\beta (z) \S (dz)$, where $\bar\beta=\beta\,d^+\s/d\S$, but the function $\beta$ (rather than $\bar\beta$) is explicitly used many times below, which is the reason for the chosen form of~\eqref{eq:050122a1}.

\begin{remark}\label{rem:260922a1}
Replacing the \emph{right} differential quotient \(d^+\s/d\S\) with the \emph{left} one \(d^-\s/d\S\) everywhere in Definition \ref{def: non-sep int} results in an equivalent definition.
Indeed, if \(x\in J^\circ\) is non-separating according to Definition \ref{def: non-sep int} (i.e., with \(d^+\s/d\S\)), then, due to~\eqref{eq:050122a1}, \(d^+\s/d\S\) is continuous on \(U(x)\) and hence, thanks to \cite[p.~204]{saks}
applied to the function $\s\circ\S^{-1}$ on $\S(U(x))$,
\(d^-\s (z)/d\S\) exists for all \(z\in U(x)\) and coincides with \(d^+\s/d\S\). A similar argumentation also applies to the converse.
\end{remark}

	\begin{intuition} \label{int: interior}
		Let us outline the main ideas that explain that (i)--(iii) from Definition~\ref{def: non-sep int} must hold in case \(\P_{x_0}\) and \(\Q_{x_0}\) are equivalent up to the exit time of a neighborhood of \(x_0\), emphasizing that the following discussion is by no means rigorous nor complete. We refer to Section~\ref{sec: pf} for all the fizzy details. 
		We restrict our attention to the case \(\S = \on{Id}\). In fact this simplification is without loss of generality, as the general case then can be obtained by Lemma~\ref{lem: diff homo}. Take a point \(x_0 \in J^\circ\), an open neighborhood \(V (x_0) \subset J^\circ\) of \(x_0\) and set \(\tau \triangleq \inf (t \geq 0 \colon \X_t \not \in V (x_0))\). We assume that \(\P_{x_0} \sim \Q_{x_0}\) on \(\mathcal{F}_{\tau}\).
		
		Let us try to understand (i) from Definition~\ref{def: non-sep int}. Since the process \(\s (\X_{\cdot \wedge \tau})\) is a \(\P_x\)-martingale (see Lemma~\ref{lem: scale fct}), \(\P_{x_0} \sim \Q_{x_0}\) on \(\mathcal{F}_\tau\) yields that \(\s (\X_{\cdot \wedge \tau})\) is a \(\Q_x\)-semimartingale. But, under \(\Q_{x_0}\), \(\X_{\cdot \wedge \tau}\) is a time-change of Brownian motion (because \(\S = \on{Id}\)) and, as the semimartingale property is invariant under changes of time (see Lemma~\ref{lem: change of time}), also \(\s (W_{\cdot \wedge \tau'})\) is a semimartingale, where \(W\) is a Brownian motion starting at \(x_0\) and \(\tau' \triangleq \inf (t \geq 0 \colon W_t \not \in V (x_0))\). Thanks to the seminal work \cite{CinJPrSha}, for a Borel function \(f \colon \mathbb{R} \to \mathbb{R}\), we know that the process \(f (W)\) is a continuous semimartingale if and only if \(f\) is the difference of two convex functions (a so-called dc function). In Theorem~\ref{lem: diff convex}, we prove a local version of this deep result, which allows us to conclude that \(\s\) is a dc function in a neighborhood of \(x_0\). For now, we assume that \(\s\) is a dc function on \(V (x_0)\) (possibly making $V(x_0)$ a bit smaller),
which, by \cite[Proposition~5.1]{CinJPrSha}, means that it is continuous with a right-continuous right-hand derivative whose second derivative measure has locally finite variation.
In particular, part~(i) follows (except the strict positivity, which we explain below).
		
		To understand (ii), let
		\(\{L^x_t (\X) \colon (t, x) \in \mathbb{R}_+ \times J^\circ\}\)
		be a jointly continuous modification of the
		semimartingale
		local time processes of \(\X\) under \(\Q_{x_0}\)
		(it exists for diffusions on natural scale, see Lemma~\ref{lem: occ formula diff}
		and Remark~\ref{rem:241124a1}~(a)).
		Similarly, let
		\(\{L^x_t (\s (\X)) \colon (t, x) \in \mathbb{R}_+ \times \s (J^\circ)\}\)
		be a jointly continuous modification of the semimartingale
		local time processes of \(\s(\X)\) under \(\P_{x_0}\)
		(which is a diffusion on natural scale, see Lemma~\ref{lem: diff homo}).
		Then, by Lemma~\ref{lem: occ smg}, we must have \(\P_{x_0}, \Q_{x_0}\)-a.s.
		\begin{align} \label{eq: understand (i), (iii)}
		L^{\s(x)}_t (\s (\X)) = \Big(\frac{d^+ \s}{dx} \Big) (x) L^x_t (\X),\quad
		x\in V(x_0), \ t \leq \tau.
		\end{align} 
		As both local times are continuous in the space variable, \(y \mapsto d^+ \s (y) /dx\) needs to be continuous on \(V (x_0)\). Furthermore, inherited from the respective property of the Brownian local time, one can prove that \(\P_{x_0}, \Q_{x_0}\)-a.s.  \(L^{\s(x)}_\tau (\s (\X)), L^x_{\tau} (\X) > 0\) on \(V (x_0)\) (see Lemma~\ref{lem: pos LT}). Hence, \(y \mapsto d^+ \s (y) /dx\) is a strictly positive continuous function on \(V (x_0)\). 
In fact, the above regularity of \(y \mapsto d^+ \s (y) /dx\) can be improved to absolute continuity. To understand this, using the It\^o--Tanaka formula (see Lemma~\ref{lem: occ smg}), one gets that \(\Q_{x_0}\)-a.s., for all \(t < \tau\),
		\begin{align} \label{eq: intu 1; ito}
		d \s (\X_t) = \Big( \frac{d^+ \s}{dx} \Big) (\X_t) \,  d \X_t + \tfrac{1}{2}\, d \, \int L^{x}_t (\X) \s'' (dx).
		\end{align}
		From now on, suppose that \(\X\) is a Brownian motion till \(\tau\) under \(\Q_{x_0}\), which can be made rigorous by a change of time. Then, Girsanov's theorem (see Lemma~\ref{lem: Girs}) yields that \(\Q_{x_0}\)-a.s.
		\[
		d \, \int L^x_t (\X) \,\s'' (dx) \ll dt \quad \text{on } [0, \tau].
		\] 
		But, as \(\Q_{x_0}\)-a.s., for all \(t  \in [0, \tau]\), 
		\(
		t = \int L_t^x (\X) \, dx,
		\) 
		by the occupation time formula (see Lemma~\ref{lem: occ smg}),
	one might find it convincing that
		\(\Q_{x_0}\)-a.s. 
		\[
		L^x_t (\X) \, \s'' (dx) \ll L^x_t (\X) \, dx, \quad t \in [0, \tau],
		\] 
		which yields \(\s''(dx) \ll dx\), as \(\Q_{x_0}\)-a.s. \(L^x_\tau (\X) > 0\) for all \(x \in V (x_0)\) (see Lemma~\ref{lem: pos LT}). 
		As a consequence, up to increasing affine transformations, we have 
		\[
		\s (x) = \int^x \exp \Big( \int^y \beta (z) dz \Big) \, dy, \quad x \in V (x_0),
		\] 
		which means that on \(V (x_0)\) the scale function has the form of a scale function of an It\^o diffusion (which is given through an SDE).
In particular, we get~	\eqref{eq:050122a1} with (a priori) some integrable $\beta$.
		The final part of~(ii) deals with the square-integrability of \(\beta\), i.e.,
\(\int_{V (x_0)} \beta (z)^2 dz < \infty\),
where we recall that \(\S = \on{Id}\). 
		To understand this condition, notice that \eqref{eq: intu 1; ito} reformulates to 
		\begin{align*}
		d \s (\X_t) &= \s' (\X_t)  d \X_t + \tfrac{1}{2} \, d \, \int L^{x}_t (\X) \s' (x) \beta (x) dx
		\\&= \s' (\X_t) d\X_t + \tfrac{1}{2}\, \s' (\X_t) \beta (\X_t) d\langle \X, \X\rangle_t,
		\end{align*} 
		where we use the occupation time formula (see Lemma~\ref{lem: occ smg}) and \(\s'\) denotes the derivative of \(\s\).
		Now, Girsanov's theorem (see Lemma~\ref{lem: Girs}) yields that \(\Q_{x_0}\)-a.s.
\(\int_0^{\tau} \beta (\X_s)^2\,d\langle\X,\X\rangle_s < \infty.\)
		Because \(\Q_{x_0}\)-a.s.
		\[
		\int_0^{\tau} \beta (\X_s)^2\,d\langle\X,\X\rangle_s = \int_{V (x_0)} \beta (x)^2 L^x_\tau (\X) dx, \quad L^x_\tau (\X) > 0, \ x \in V (x_0), 
		\] 
		we conclude that
\(\int_{V (x_0)} \beta (z)^2 dz < \infty\) (possibly making $V(X_0)$ a bit smaller), as needed.
		
		Finally, we comment on part (iii). 
		Thanks to the occupation time formula for diffusions (see Lemma~\ref{lem: occ formula diff}),
		for $z\in J^\circ$,
		we have \(\P_{x_0}\)-a.s. 
		\[
		\frac{\int_0^\tau \1_{\{ z - h\, < \, \X_s \, <\, z + h\}} ds}{\m ( (z - h, z + h))} \to L^{\s (z)}_\tau (\s (\X)), \quad h \searrow 0, 
		\] 
		and \(\Q_{x_0}\)-a.s. 
		\[
		\frac{\int_0^\tau \1_{\{ z - h \, <\, \X_s\, <\, z + h\}} ds}{\M ( (z - h, z + h))} \to L^z_\tau (\X), \quad h \searrow 0.
		\] 
		Hence, as \(\P_{x_0}\)-a.s. \(L^{\s (x)}_\tau (\s (\X)) > 0\) on $V(x_0)$, using also \(\P_{x_0} \sim \Q_{x_0}\) on \(\mathcal{F}_\tau\) and \eqref{eq: understand (i), (iii)}, we get that \(\P_{x_0}\)-a.s. 
		\[
		d \m / d \M (x) = L^x_\tau (\X) / L^{\s (x)}_\tau (\s (\X)) = 1 / ( d^+ \s (x) / dx),
		\quad x\in V(x_0),
		\]  
		which yields~(iii).
	\end{intuition}

The next result provides a measure-theoretical view on Definition~\ref{def: non-sep int} and a measure-theoretical meaning of the quantities appearing there.
We recall that
$\s$ and $\S$ are identified with locally finite measures on $(J^\circ,\mathcal B(J^\circ))$ defined via $\s((x,y])\triangleq\s(y)-\s(x)$ and $\S((x,y])\triangleq\S(y)-\S(x)$, where $x<y$ are in $J^\circ$.

\begin{lemma}[Equivalent definition of a non-separating interior point]\label{lem:130222a1}
\quad 
\begin{enumerate}
\item[\textup{(i)}]
A point $x\in J^\circ (=\tilde J^\circ)$ is non-separating if and only if there exists an open neighborhood $U(x)\subset J^\circ$ of $x$ such that
\begin{enumerate}[label=\textup{(\alph*)}]
\item
$\s\ll\S$ on $\mathcal B(U(x))$ and
$\m\ll\M$ on $\mathcal B(U(x))$;

\item
there exists a version of the Radon--Nikodym derivative $\partial\s/\partial\S$ and a Borel function \(\beta \colon U (x) \to \mathbb{R}\) such that
\begin{gather*}
\Big(\frac{\partial\s}{\partial\S} \Big)(y)
>0
\quad\text{for all }y\in U(x),
\\[1mm]
\Big(\frac{\partial\s}{\partial\S}\Big)(y)-\Big(\frac{\partial\s}{\partial\S}\Big)(x)
=\int_x^y \beta(z)\,\s(dz)
\quad\text{for all }y\in U(x),
\\[1mm]
\int_{U(x)}\big(\beta(z)\big)^2\,\S(dz)
<\infty;
\end{gather*}

\item
for the version of $\partial\s/\partial\S$ described in~(b) we have
$$
\frac{\partial\m}{\partial\M}
\frac{\partial\s}{\partial\S}
=1\;\;\M\text{-a.e. on }U(x),
$$
where $\partial\m/\partial\M$ denotes (any version of) the Radon--Nikodym derivative of~$\m$ w.r.t.~$\M$.
\end{enumerate}
\item[\textup{(ii)}]
Furthermore, let $x\in J^\circ (=\tilde J^\circ)$ be a non-separating point.
Then, the differential quotient $d^+\s/d\S$ from Definition~\ref{def: non-sep int} equals the version $\partial\s/\partial\S$ of the Radon--Nikodym derivative described in~(b)
and the differential quotient $d\m/d\M$ from Definition~\ref{def: non-sep int} is a version of the Radon--Nikodym derivative $\partial\m/\partial\M$.
Finally, the function $\beta$ from part (ii)~of Definition~\ref{def: non-sep int} satisfies 
\begin{align} \label{eq: beta formula}
	\beta (z) = \lim_{h \searrow 0} \frac{d^+ \s (z + h)/d \S - d^+ \s (z - h) / d \S}{\s (z + h) -\s (z - h)}
\end{align} 
for \(\s\), \(\S\)-a.a. \(z \in U (x)\). 
\end{enumerate}
\end{lemma}

\begin{remark}\label{rem:130222a1}
As an immediate consequence of Lemma~\ref{lem:130222a1}, we make the following note.
In a neighborhood of a non-separating point we must have $\s\sim\S$ and $\m\sim\M$ with continuous (!) Radon--Nikodym derivatives.
\end{remark}

\begin{proof}[Proof of Lemma~\ref{lem:130222a1}]
The facts that (a)--(c) of Lemma~\ref{lem:130222a1} imply (i)--(iii) of Definition~\ref{def: non-sep int}
and that the differential quotients from Definition~\ref{def: non-sep int} are versions of the Radon-Nikodym derivatives as described in Lemma~\ref{lem:130222a1}~(ii)
are straightforward
due to the continuity of the version $\partial\s/\partial\S$ of the Radon--Nikodym derivative described in~(b) and the continuity of the version $\partial\m/\partial\M\triangleq1/(\partial\s/\partial\S)$ of the Radon--Nikodym derivative of $\m$ w.r.t. $\M$.
	We now prove that the formula \eqref{eq: beta formula} holds for \(\s\)-a.a. \(z \in U (x)\). Let \(\nu\) be the signed measure defined by 
	\[
	\nu ( (a, b] ) \triangleq d^+ \s / d \S (b) - d^+ \s / d \S (a), \quad a < b, \, a, b \in U (x). 
	\]
	Then, we get from \eqref{eq:050122a1} and a monotone class argument that 
	\[
	\nu (B) = \int_B \beta (y) \s (dy), \quad B \in \mathcal{B} (U (x)). 
	\] 
	As a consequence, \(\nu \ll \s\) on \(\mathcal{B} (U (x))\) with Radon--Nikodym density \(\partial \nu / \partial \s = \beta\). Finally, Theorem~\ref{th:260922a1} yields that \eqref{eq: beta formula} holds for \(\s\)-a.a. \(z \in U (x)\).
Finally, as $\s\sim\S$ on $\mathcal B(U(x))$, \eqref{eq: beta formula}~also holds $\S$-a.e. on $U(x)$.

It remains to prove that (i)--(iii) of Definition~\ref{def: non-sep int} imply (a)--(c) of Lemma~\ref{lem:130222a1}.
Let $x\in J^\circ$ and suppose that there exists an open neighborhood $U(x)\subset J^\circ$ of $x$ such that (i)--(iii) of Definition~\ref{def: non-sep int} hold true.
Remark~\ref{rem:260922a1} implies that
$$
\Big(\frac{d^+ \s}{d \S} \Big) (z) = \lim_{h \searrow 0} \frac{\s (z + h) - \s (z-h)}{\S (z + h) - \S (z-h)}
$$
for all $z\in U(x)$.
Using the notation from Section~\ref{subsec:DiffMeas}, this means that $d^+\s/d\S=D^{\on{sym}}_{\S}(\s)$ on $U(x)$.
By Theorem~\ref{th:260922a1}, $d^+\s/d\S$ is a version of the generalized Radon--Nikodym derivative $\partial\s/\partial\S$
(the notion is defined in Section~\ref{subsec:GenDen}).
As $d^+\s/d\S$ is $(0,\infty)$-valued, Lemma~\ref{lem:260922a1} implies that $\s\sim\S$ on $\mathcal B(U(x))$
(alternatively, this follows from Corollaries \ref{cor:260922a1} and~\ref{cor:260922a2});
in particular, the generalized Radon--Nikodym derivative $\partial\s/\partial\S$ is the standard one.
In a similar way, (iii)~of Definition~\ref{def: non-sep int} and Theorem~\ref{th:260922a1} yield that $\m\sim\M$ on $\mathcal B(U(x))$ and that $d\m/d\M$ is a version of the Radon--Nikodym derivative $\partial\m/\partial\M$.
The remaining claims are now obvious.
\end{proof}

\begin{definition}[\emph{Half-good} boundary point]\label{def:130222a1}
Take \(b \in \partial J (= \partial\tilde{J})\). We say that \(b\) is \emph{half-good} if there exists a non-empty open interval \(B \subsetneq J^\circ (= \tilde{J}^\circ)\) with \(b\) as endpoint such that
\begin{enumerate}[label=\textup{(\roman*)}]
\item
all points in \(B\) are good in the sense of Definition~\ref{def: non-sep int};

\item
\(\S(b) \triangleq \lim_{J^\circ \ni x \to b} \S(x)\in\mathbb R\) and
\begin{equation}\label{eq:130222a1}
\int_B \big|\S (z) - \S (b) \big| \big(\beta (z) \big)^2  \, \S (dz) < \infty,
\end{equation}
where \(\beta\) is as in Definition~\ref{def: non-sep int}, which can be defined unambiguously by the formula~\eqref{eq: beta formula}.
\end{enumerate}
\end{definition}

\begin{intuition}
	Part~(ii) from Definition~\ref{def:130222a1} encodes that, for every \(x \in B\), \(\P_x, \Q_x\)-a.s. 
	\[
	\lim_{t \nearrow T_b} \frac{d \P_{x}}{d \Q_{x}} \Big|_{\mathcal{F}_t} > 0
	\] 
on the trajectories such that $\X_t\to b$, $t\nearrow T_b$, and $\X_\cdot\in B$ on $[0,T_b)$.
	It is known that this property is related to the non-explosion of a non-negative additive functional 
	(cf., e.g., \cite{criens21,criensglau,desmettre,MU12}). By similar ideas as outlined in Intuition~\ref{int: interior}, again considering the case \(\S = \on{Id}\), this question can be reduced to the problem whether 
	\[
	\int_0^{T_b (W)} \beta (W_s)^2 \,ds,\quad W = \text{Brownian motion},  
	\]
	is almost surely finite. Such questions were for example investigated in \cite{MU12c}, leading to the integral condition from (ii) in Definition~\ref{def:130222a1}. Of course, this provides only a rough idea where \eqref{eq:130222a1} comes from, while a rigorous proof needs a careful investigation.
\end{intuition}

\begin{lemma}\label{lem:130222a2}
Assume that \(b \in \partial J (= \partial\tilde{J})\) is half-good in the sense of Definition~\ref{def:130222a1}. Then, one of the following alternatives holds:
\begin{enumerate}[label=\textup{(\alph*)}]
\item
$b\in J$ and $b\in\tilde J$,
i.e., $b$ is an accessible boundary point both for \((x \mapsto \P_x)\) and for \((x \mapsto \Q_x)\);

\item
$b\notin J$ and $b\notin\tilde J$,
i.e., $b$ is an inaccessible boundary point both for \((x \mapsto \P_x)\) and for \((x \mapsto \Q_x)\).
\end{enumerate}
\end{lemma}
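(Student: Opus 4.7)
The plan is to exploit the characterization~\eqref{eq:160223a1}, which reads: $b$ is accessible for $(x\mapsto\P_x)$ precisely when $|\s(b)|<\infty$ together with $\int_{B'}|\s(z)-\s(b)|\,\m(dz)<\infty$ for some one-sided neighborhood $B'$ of $b$ inside $B$, and symmetrically for $(x\mapsto\Q_x)$ with $(\S,\M)$. Since $|\S(b)|<\infty$ is part of the definition of half-goodness, the task reduces to showing, under half-goodness, that \emph{(i)} $|\s(b)|<\infty$ automatically, and \emph{(ii)} the two accessibility integrals are simultaneously finite or infinite. The dichotomy in the lemma then follows because accessibility of $b$ for a diffusion means $b\in J$ (respectively $b\in\tilde J$).

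The structural input is Lemma~\ref{lem:130222a1} together with Remark~\ref{rem:130222a1}: on $B$, $\s\sim\S$ and $\m\sim\M$ with continuous, strictly positive Radon--Nikodym derivatives $\rho\triangleq d\s/d\S$ and $1/\rho=d\m/d\M$. Moreover, \eqref{eq:050122a1} reads $\rho(y)-\rho(x)=\int_x^y\beta(z)\,\s(dz)$ on~$B$, and writing $\s(dz)=\rho(z)\,\S(dz)$ and pushing forward by~$\S$ turns this into an ODE for $\tilde\rho(u)\triangleq\rho(\S^{-1}(u))$ in the $u$-variable. Concretely, taking WLOG $B=(b,c)$ with $b<c$ and translating so that $\S(b)=0$, the function $\tilde\rho$ lives on $(0,\delta_*)$ with $\delta_*=\S(c)$, is continuous and positive, and satisfies the linear ODE $\tilde\rho'=\tilde\beta\tilde\rho$ with $\tilde\beta(u)\triangleq\beta(\S^{-1}(u))$. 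A logarithmic change of variable then yields the exponential representation $\tilde\rho(v)=\tilde\rho(u_0)\exp(\int_{u_0}^v\tilde\beta\,du)$, and the half-good condition~\eqref{eq:130222a1} translates to $\int_0^{\delta_*}u\,\tilde\beta^2(u)\,du<\infty$.

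The hard part will be the asymptotic
\[
\frac{A(u)}{u\,\tilde\rho(u)}\;\xrightarrow[u\downarrow 0]{}\;1,\qquad A(u)\triangleq\int_0^u\tilde\rho(w)\,dw,
\]
which pushes the local agreement of $(\s,\m)$ and $(\S,\M)$ all the way down to~$b$. My plan is to substitute $w=u\,e^{-r}$ to rewrite
\[
\frac{A(u)}{u\,\tilde\rho(u)}=\int_0^\infty\frac{\tilde\rho(u\,e^{-r})}{\tilde\rho(u)}\,e^{-r}\,dr=\int_0^\infty\exp\!\Bigl(-\!\int_{u\,e^{-r}}^{u}\tilde\beta(v)\,dv\Bigr)e^{-r}\,dr,
\]
and to control the inner integral by Cauchy--Schwarz with the pairing $\tilde\beta(v)=(v^{1/2}\tilde\beta(v))\cdot v^{-1/2}$:
\[
\Bigl|\int_{u\,e^{-r}}^{u}\tilde\beta(v)\,dv\Bigr|^2\le\Bigl(\int_{u\,e^{-r}}^{u}v\,\tilde\beta^2(v)\,dv\Bigr)\cdot\int_{u\,e^{-r}}^{u}\frac{dv}{v}\le\eta(u)\cdot r,
\]
where $\eta(u)\triangleq\int_0^u v\,\tilde\beta^2(v)\,dv\to 0$ as $u\downarrow 0$ by the half-good condition. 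This delivers both pointwise convergence $\tilde\rho(u\,e^{-r})/\tilde\rho(u)\to 1$ for each fixed~$r$ and the envelope $\exp(\sqrt{\eta(u)\,r}-r)\le e^{-r/2}$ for small~$u$ and large~$r$, so dominated convergence yields the claimed limit~$1$.

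Given $A(u)\sim u\,\tilde\rho(u)$, the two remaining points follow quickly. Continuity of $\tilde\rho$ on $(0,\delta_*]$ makes $\tilde\rho(u)<\infty$ for every $u>0$; combined with the asymptotic, this forces $A(u)<\infty$ for all small $u>0$, which back-translates to $|\s(b)|<\infty$. Using $\m(dz)=(1/\rho(z))\,\M(dz)$ and the identity $\s(z)-\s(b)=A(\S(z))$ (in the shifted coordinates), the accessibility integral becomes
\[
\int_{B'}|\s(z)-\s(b)|\,\m(dz)=\int_{B'}\frac{A(\S(z))}{\rho(z)}\,\M(dz),
\]
which by $A(u)/\tilde\rho(u)\sim u$ is comparable on a sufficiently small one-sided neighborhood $B'$ of~$b$ to $\int_{B'}\S(z)\,\M(dz)=\int_{B'}|\S(z)-\S(b)|\,\M(dz)$. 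Hence both accessibility integrals are simultaneously finite or infinite, and~\eqref{eq:160223a1} delivers the desired equivalence $b\in J\Leftrightarrow b\in\tilde J$.
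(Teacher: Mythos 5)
Your proof is correct, but it takes a genuinely different route from the paper's. The paper argues probabilistically: on the interval $B$ from the half-good definition it builds two auxiliary It\^o diffusions (a Brownian motion absorbed at $\partial B$ and a diffusion with scale $\s$ and speed $dx/\s'$), invokes the known separating-time theorem for It\^o diffusions (Lemma~\ref{lem: sep ito diff}) to conclude that their laws are equivalent -- hence have the same state space $\on{cl}(B)$ -- and then transfers this equivalence to the stopped original diffusions via the time-change chain rule (Theorem~\ref{theo: chain rule}); since an equivalent change of measure preserves the state space, $b\in J\Leftrightarrow b\in\tilde J$. You instead work entirely deterministically from the accessibility criterion~\eqref{eq:160223a1}: passing to the $\S$-coordinate, solving the ODE $\tilde\rho'=\tilde\beta\tilde\rho$ for the density $\tilde\rho=d\s/d\S\circ\S^{-1}$, and proving the Tauberian-type asymptotic $\int_0^u\tilde\rho(w)\,dw\sim u\,\tilde\rho(u)$ via the Cauchy--Schwarz pairing $\tilde\beta=(v^{1/2}\tilde\beta)\cdot v^{-1/2}$ and dominated convergence, with~\eqref{eq:130222a1} entering precisely as $\int_0^{\delta_*}u\,\tilde\beta^2(u)\,du<\infty$. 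I checked the key steps -- the exponential representation of $\tilde\rho$, the bound $|\int_{ue^{-r}}^u\tilde\beta\,dv|\le\sqrt{\eta(u)r}$ with $\eta(u)\to0$, the integrable envelope, and the resulting comparability of the two accessibility integrals $\int(\s(z)-\s(b))\,\m(dz)$ and $\int(\S(z)-\S(b))\,\M(dz)$ near $b$ -- and they all go through (note in particular that the identity $A(u)/(u\tilde\rho(u))=\int_0^\infty\tilde\rho(ue^{-r})\tilde\rho(u)^{-1}e^{-r}\,dr$ holds in $[0,\infty]$ by Tonelli, so the uniform bound on the right-hand side yields finiteness of $A$ without circularity). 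What the two approaches buy: yours is self-contained and elementary, avoids importing the Cherny--Urusov/Mijatovi\'c--Urusov machinery, and delivers the quantitative local comparison $\s(z)-\s(b)\sim(\S(z)-\S(b))\,\rho(z)$, which is of independent interest; the paper's is shorter given that Lemma~\ref{lem: sep ito diff} and Theorem~\ref{theo: chain rule} are developed anyway for the main theorem, and it proves the stronger statement $\P_{x_0}\circ\X_{\cdot\wedge T}^{-1}\sim\Q_{x_0}\circ\X_{\cdot\wedge T}^{-1}$, which is reused later.
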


This lemma is proved in Section~\ref{sec: pf of lem:130222a2}.
At this point, Lemma~\ref{lem:130222a2} justifies the equivalence
between $b\in\tilde J$ and $b\in J$
in the following definition. For \(b \in \partial J ( = \partial \tilde{J})\) and an arbitrary reference point \(c \in J^\circ (= \tilde{J}^\circ)\), we set 
\begin{equation}\label{eq:271022a1}
\of b, c \gsr \triangleq \begin{cases} [b, c), & b < c,\\ (c, b], & c < b.\end{cases}
\end{equation}

\begin{definition}[\emph{Non-separating} or \emph{good} boundary point]\label{def: non-sep bound}
Take \(b \in \partial J (= \partial\tilde{J})\).
We say that \(b\) is \emph{non-separating} (or \emph{good}) if
\begin{enumerate}[label=\textup{(\roman*)}]
\item
$b$ is half-good in the sense of Definition~\ref{def:130222a1};

\item
if \(b \in \tilde{J}\) (equivalently, $b\in J$), then
either it holds that $\m(\{b\})=\M(\{b\})=\infty$ or it holds that $\m(\{b\}) < \infty$, $\M(\{b\})<\infty$;

\item
if \(b \in \tilde{J}\) (equivalently, $b\in J$) and $\m(\{b\}) < \infty$, $\M(\{b\})<\infty$,
then also the following conditions hold:
\begin{enumerate}
\item[\textup{(a)}] the differential quotient
\begin{align}
\Big(\frac{d^+\s}{d\S}\Big)(b)
&\triangleq
\lim_{J^\circ\ni c\to b}\frac{\s (b) - \s (c)}{\S(b) - \S(c)}, 
\label{eq:101022c1}
\end{align}
exists as a strictly positive and finite number;

\item[\textup{(b)}]
with $B$ as in Definition~\ref{def:130222a1}, there exists a Borel function \(\beta \colon B\to \mathbb{R}\) such that
\begin{equation}\label{eq:271022a2}
\int_{B} \big( \beta (z) \big)^2 \, \S (dz) < \infty,
\end{equation}
and, for all \(y \in B\),
\begin{equation}\label{eq:271022a3}
\Big( \frac{d^+ \s}{d \S} \Big) (b \vee y) - \Big( \frac{d^+ \s}{d \S} \Big) (b \wedge y) = \int_{b\wedge y}^{ b\vee y} \beta (z) \,  \s(dz);
\end{equation}

\item[\textup{(c)}]
the differential quotient \begin{align}
\Big(\frac{d\m}{d\M}\Big)(b)
&\triangleq
\lim_{J^\circ\ni c\to b}\frac{\m(\of b, c \gsr )}{\M(\of b, c \gsr )}
\label{eq:101022c2}
\end{align}
exists as a strictly positive and finite number and 
\begin{align}
\Big(\frac{d\m}{d\M}\Big)(b)
\Big(\frac{d^+\s}{d\S}\Big)(b)
=1.
\label{eq:101022c3}
\end{align}
\end{enumerate}
\end{enumerate}
\end{definition}

We notice that, although the superscript ``$+$'' in the notation $d^+\s/d\S$ does not really make sense when \(b\) is a right boundary point
	in~\eqref{eq:101022c1}, we keep it for conformity with Definition~\ref{def: non-sep int}
(compare \eqref{eq:111022a1} with~\eqref{eq:101022c3}).

\begin{discussion}
What is encoded in Definition~\ref{def: non-sep bound} is worth a discussion.
Let $b\in\partial J$ be a non-separating boundary point.
Then, the following statements hold true,
where the phrase ``for both diffusions'' always means
``both for $(x\mapsto\P_x)$ and for $(x\mapsto\Q_x)$''.

(i) As already mentioned, $b$ is either accessible for both diffusions or inaccessible for both (Lemma~\ref{lem:130222a2}).

(ii) Let $b$ be accessible for both diffusions.
Then, by~\eqref{eq:101022a1}, $|\s(b)|<\infty$ and $|\S(b)|<\infty$.
Furthermore, $b$ is either absorbing for both diffusions ($\m(\{b\})=\M(\{b\})=\infty$) or reflecting for both ones ($\m(\{b\}) < \infty$ and $\M(\{b\})<\infty$).

(iiia) Let $b$ be reflecting for both diffusions.
Then, by \eqref{eq:101022a2} or~\eqref{eq:101022a3},
$\m(\of b, c \gsr )<\infty$ and $\M(\of b, c \gsr )<\infty$ for all $c\in J^\circ$
(recall the convention in the second paragraph before Remark~\ref{rem:170322a1}
that $\m(\{b\})=\infty$ whenever $b$ is an exit boundary,
hence $\m(\{b\})<\infty$ necessarily means that $b$ is a regular boundary).
Together with $|\s(b)|<\infty$ and $|\S(b)|<\infty$ discussed above,
this means that the quotients
$(\s (b) - \s (c))/ (\S (b) - \S(c))$
and
$\m(\of b, c \gsr )/\M(\of b, c \gsr )$
on the right-hand sides
of \eqref{eq:101022c1} and~\eqref{eq:101022c2}
are well-defined.

(iiib) Let $b$ be reflecting for both diffusions.
As, for a finite measure, the $L^2$-space is included in the $L^1$-space, we have $\beta\in L^1(B,\S(dz))$ due to $|\S(b)|<\infty$ together with~\eqref{eq:271022a2}.
Hence, the right-hand side in \eqref{eq:271022a3} is well-defined.
Moreover, due to \eqref{eq:111022a1}, \eqref{eq:271022a3} and~\eqref{eq:101022c3},
$d^+\s/d\S$ and $d\m/d\M$ are continuous on~$B\cup\{b\}$.

(iiic) Let $b$ be reflecting for both diffusions.
Then, it is either instantaneously reflecting for both diffusions ($\m(\{b\})=\M(\{b\})=0$) or slowly reflecting for both ($\m(\{b\}),\M(\{b\})\in(0,\infty)$).
Indeed, if $b$ is instantaneously reflecting for one of the diffusions and slowly reflecting for the other, then the limit in~\eqref{eq:101022c2} is $0$ or $\infty$,
which contradicts to the fact that $b$ is a non-separating boundary point.

(iv) Let $b$ be slowly reflecting for both diffusions.
Then, the limit in~\eqref{eq:101022c2} is nothing else but the quotient
$\m(\{b\})/\M(\{b\})$.
\end{discussion}

\begin{intuition}
	In addition to ``half-goodness'' from Definition~\ref{def:130222a1}, Definition~\ref{def: non-sep bound} imposes additional constraints if $b\in\partial J$ is reflecting at least for one of the diffusions (and, for $b$ to be non-separating, necessarily, for both diffusions).
	The structure of these additional parts appear to be very similar to the definition of an interior non-separating point (see Definition~\ref{def: non-sep int}). This is no coincidence, because reflecting boundary points are related to interior points of a symmetrized diffusion with a larger state space. To see the idea, for a standard Brownian motion \(W\) with \(W_0 = 0\), the formula \(|W|\) defines a Brownian motion with reflection at the origin. So to say, standard Brownian motion is the symmetrization of a Brownian motion with reflection at the origin.
	We refer to Lemmata~\ref{lem: refl} and~\ref{lem:200223a3} for the general picture of the symmetrization procedure. Finally, the additional constraints in Definition~\ref{def: non-sep bound} are nothing else but the requirement that $b$ is a non-separating interior point for the symmetrized diffusion. Let us emphasize that it is by no means obvious that symmetrization is useful for the computation of the separating time (to get an idea of the problem, notice that, in the above example with $|W|$ and $W$, the filtration of $|W|$ is strictly smaller than that of~$W$).
This requires a careful investigation.
\end{intuition}

\begin{definition}[\emph{Separating} or \emph{bad} point]\label{def:101224a1}
We say that a point $x\in\on{cl}(J)(=\on{cl}(\tilde J))$ is \emph{separating} (or \emph{bad})
if it is not non-separating.
We denote the set of all separating points in $\on{cl}(J)$ by $\A$.
Notice that \(\A\) is closed in \(\on{cl}(J)\).
\end{definition}

We set
$$
l \triangleq \inf J
\qquad\text{and}\qquad
r \triangleq \sup J.
$$
Let $\Delta$ be a point outside $[-\infty,\infty]$.
We denote by \(\alpha\) the separating point which is closest to \(x_0\) from the left in the following sense:
\[
\alpha \triangleq \begin{cases} 
\sup \big([l, x_0] \cap \A\big), & [l, x_0] \cap \A \not = \emptyset,\\
\Delta, & [l, x_0] \cap \A = \emptyset.
\end{cases}
\]
Next, we define an extended stopping time $U$,
which is ``a variant of the hitting time'' of the point $\alpha$. 
\begin{enumerate}
\item[-]
If $\alpha=\Delta$, then we set 
\(U\triangleq\delta.\)

\item[-]
If $\alpha=x_0$, then we set \(U\triangleq0.\)

\item[-]
Otherwise (i.e., in case $\on{cl}(J)\ni\alpha<x_0$) we set
\begin{equation}\label{eq:160223a2}
U \triangleq\begin{cases}
T_\alpha,&\liminf_{t\nearrow T_\alpha}\X_t=\alpha,\\
\delta,&\liminf_{t\nearrow T_\alpha}\X_t>\alpha.
\end{cases}
\end{equation}
\end{enumerate}
The structure of $U$ in~\eqref{eq:160223a2} is worth a discussion.
On the event $\{T_\alpha<\infty\}$, the right-hand side of~\eqref{eq:160223a2} equals $T_\alpha$.
However, on $\{T_\alpha=\infty\}$, the right-hand side of~\eqref{eq:160223a2} is either $\infty$ or $\delta$ depending on whether the point $\alpha$ is hit asymptotically or not.

In a symmetric way, we define \(V\) to be ``a variant of the hitting time'' of the separating point which is closest to \(x_0\) from the right. To avoid confusions, we give a precise definition. We set
\[
\gamma \triangleq \begin{cases} 
\inf \big([x_0, r] \cap \A\big), & [x_0, r] \cap \A \not = \emptyset,\\
\Delta, & [x_0, r] \cap \A = \emptyset
\end{cases}
\]
and proceed as above:
\begin{enumerate}
\item[-]
If $\gamma=\Delta$, we set \(V\triangleq\delta.\)

\item[-]
If $\gamma=x_0$, we set \(V\triangleq0.\)

\item[-]
Otherwise (i.e., in case $\on{cl}(J)\ni\gamma>x_0$), we set
\begin{equation}\label{eq:111224a1}
V \triangleq\begin{cases}
T_\gamma,&\limsup_{t\nearrow T_\gamma}\X_t=\gamma,\\
\delta,&\limsup_{t\nearrow T_\gamma}\X_t<\gamma.
\end{cases}
\end{equation}
\end{enumerate}
Of course, the structure of $V$ in~\eqref{eq:111224a1} can be discussed in the same way as the structure of $U$ in~\eqref{eq:160223a2} is discussed above.

We also introduce the deterministic time
\[
R \triangleq \begin{cases}
\infty, & \alpha = \gamma = \Delta,
\text{ \(l\) and \(r\) are reflecting for one (equivalently, for both) of the diffusions},
\\
\delta, & \text{otherwise}.
\end{cases}
\]
Finally, we are in a position to present our main result. Intuitively speaking, it shows that the separating time of two non-identical diffusions is given by ``something like'' the first time the coordinate process hits a separating point (with a careful distinction between $\infty$ and $\delta$ on the trajectories that do not hit separating points in finite time).

\begin{theorem} \label{theo: main1}
	Let \(x_0 \in J \cap \tilde{J}\) and let \(S\) be the separating time for \(\P_{x_0}\) and \(\Q_{x_0}\). 
	\begin{enumerate}[label=\textup{(\roman*)}]
		\item
		If \(\P_{x_0} = \Q_{x_0}\), then \(\P_{x_0}, \Q_{x_0}\)-a.s. \(S = \delta\).
		\item
		If \(\P_{x_0} \not = \Q_{x_0}\), then \(\P_{x_0}, \Q_{x_0}\)-a.s. \(S = U \wedge V \wedge R\).
	\end{enumerate}
\end{theorem}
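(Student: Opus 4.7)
Part~(i) is immediate from Theorem~\ref{th:020323a1}: if $\P_{x_0}=\Q_{x_0}$, the two measures are equivalent on all of $\mathcal F$, so the separating time equals $\delta$ a.s. For part~(ii), set $S'\triangleq U\wedge V\wedge R$; by the uniqueness part of Theorem~\ref{th:020323a1} it suffices to verify $\P_{x_0}\sim\Q_{x_0}$ on $\mathcal F_\tau\cap\{\tau<S'\}$ and $\P_{x_0}\perp\Q_{x_0}$ on $\mathcal F_\tau\cap\{\tau\geq S'\}$ for every stopping time~$\tau$.

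\emph{Equivalence before $S'$.} I would localize strictly inside $(\alpha,\gamma)$: for any $\alpha',\gamma'$ with $\alpha<\alpha'<x_0<\gamma'<\gamma$ and any $n\in\mathbb N$, the process stopped at $\sigma\triangleq T_{\alpha'}\wedge T_{\gamma'}\wedge n$ stays in a compact interval on which \emph{every} point is good, and Lemma~\ref{lem:130222a1} supplies continuous, strictly positive, mutually reciprocal versions of $d\s/d\S$ and $d\m/d\M$. Using the martingale-problem characterization from Appendix~\ref{app: A}, one realizes the stopped laws as pushforwards under $\s$ (respectively $\S$) of a reflected Brownian motion time-changed against $\m$ (respectively $\M$). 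The two measures on path space are then linked by a Girsanov-type stochastic exponential
\[
Z=\mathcal E\!\Bigl(\int_0^{\cdot\wedge\sigma}\beta(\X_{s})\,dM_{s}^\P\Bigr),
\]
with $M^\P$ the continuous $\P$-martingale part of $\s(\X)$ and $\beta$ as in Definition~\ref{def: non-sep int}; the local $L^2(\S)$-control on $\beta$ translates, via the occupation-times formula for the speed measure, into a finite quadratic variation and makes $Z$ a genuine uniformly integrable martingale up to~$\sigma$. Letting $\alpha'\searrow\alpha$, $\gamma'\nearrow\gamma$, $n\to\infty$ and invoking the strong Markov property at $\sigma$ yields equivalence on $\mathcal F_\tau\cap\{\tau<U\wedge V\wedge R\}$; the role of $R$ is precisely to prevent this passage to the limit from being pushed past $t=\infty$ when both endpoints are reflecting and the process is recurrent.

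\emph{Singularity from $S'$.} Thanks to the strong Markov property, it suffices to exhibit, on each of the events $\{S'=U<\delta\}$, $\{S'=V<\delta\}$ and $\{S'=R=\infty\}$, an $\mathcal F_{S'}$-measurable set that is $\P_{x_0}$-full and $\Q_{x_0}$-null. I would carry this out by a case analysis on which clause of Definition~\ref{def: non-sep int} (for interior separating points) or Definition~\ref{def: non-sep bound} (for boundary separating points) is violated at $\alpha$ or $\gamma$: failure of the density relation is detected by the $\P_{x_0}$-a.s.\ behaviour of the quotients $[\s(z+h)-\s(z)]/[\S(z+h)-\S(z)]$ or $\m((z-h,z+h))/\M((z-h,z+h))$ along the path approaching the separating point; failure of the $L^2$-condition on $\beta$ or of the compatibility $(d\m/d\M)(d^+\s/d\S)=1$ causes the exponent of $Z$ to accumulate infinite quadratic variation, so by the Kakutani dichotomy for nonnegative continuous local martingales one of $Z,1/Z$ collapses to zero; the boundary-atom conditions are handled analogously, using that slow versus instantaneous reflection is detectable through $\int_0^t\1_{\{\X_s=b\}}\,ds$. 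In the recurrent case $R=\infty$, recurrence forces $\int_0^\infty\beta^{2}(\X_s)\,d\langle M^\P\rangle_s=\infty$ unless $\beta\equiv 0$ (which would put us back in part~(i)), giving singularity on~$\mathcal F_\infty$.

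\emph{Main obstacle.} The crux of the argument is making the Girsanov-type construction work in the \emph{general} (non-It\^o) diffusion setting, where $\s(\X)$ is only a time-changed Brownian motion rather than an It\^o semimartingale. One must rely on the martingale-problem framework of Appendix~\ref{app: A} to describe the local martingale part of $\s(\X)$ and its sharp bracket in terms of $\m$ via an occupation-times identity, and then verify that $\int\beta\,dM^\P$ really has the expected finite quadratic variation up to~$\sigma$. The treatment of reflecting and sticky boundaries, whose atoms in $\m$ must match those in $\M$ according to the boundary half of Definition~\ref{def: non-sep bound}, requires an extra bookkeeping argument at every excursion from an accessible boundary, and is the source of the genuinely new features distinguishing our setting from the It\^o-diffusion case of~\cite{cherUru,MU12}.
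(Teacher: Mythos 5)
Your overall architecture (equivalence strictly before $U\wedge V\wedge R$, singularity from it onwards, then conclude via the defining property of $S$) matches the paper's, but three essential steps are asserted rather than supplied, and one of them would fail as stated. First, for the equivalence half you claim that the local $L^2(\S)$-bound on $\beta$ gives the stochastic exponential $Z$ ``finite quadratic variation and makes $Z$ a genuine uniformly integrable martingale up to $\sigma$.'' Finite quadratic variation of the exponent does not imply uniform integrability of $\mathcal E(\cdot)$, and proving that the candidate density really links the two stopped laws is precisely the hard content of the Cherny--Urusov theorem for It\^o diffusions. The paper does not reprove this: it uses a time-change (chain rule for diffusions, Theorem~\ref{theo: chain rule}) to push both stopped general diffusions onto a pair of It\^o diffusions and then quotes Lemma~\ref{lem: sep ito diff}. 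Your route would need either that reduction or a full martingale-property proof for $Z$; neither is present. Second, your localization ``strictly inside $(\alpha,\gamma)$'' breaks down exactly when a non-separating boundary is accessible and reflecting: there is no $\alpha'$ strictly between $\Delta$ and a reflecting endpoint, and the process keeps returning to the boundary. This is why the paper needs the symmetrization and periodization constructions (Lemmata \ref{lem: refl} and~\ref{lem:200223a3}) in Cases 3, 5 and 6 of the $\le$-direction; your sketch only acknowledges the role of $R$ for recurrence, not how equivalence up to finite times is obtained past a reflecting boundary.

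Third, and most seriously for the singularity half: before one can speak of $\s''$, of $\beta$ with $d\s'=\s'\beta\,dz$, or of ``the exponent of $Z$ accumulating infinite quadratic variation,'' one must know that $\s$ is locally a difference of two convex functions. This is not an assumption --- $\s$ is merely continuous and strictly increasing --- and it is exactly what has to be \emph{deduced} from the hypothesis $\P_{x_0}\sim\Q_{x_0}$ on $\mathcal F_\xi$ via the semimartingale property of $\s(\X_{\cdot\wedge\xi})$ under the Brownian law. Since $\X$ is only a Brownian motion stopped at a positive stopping time, the classical \cite{CinJPrSha} result does not apply, and the paper has to prove a new non-Markovian version (Theorem~\ref{lem: diff convex}). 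Your case analysis skips this regularization step entirely, and the subsequent ``Kakutani dichotomy'' argument also presupposes that $Z$ has been identified as the density of (the absolutely continuous part of) $\P_{x_0}$ relative to $\Q_{x_0}$ --- in the paper this identification is itself a substantial martingale-problem computation (the proof of \eqref{eq: K eq}) followed by Jessen's theorem, not an off-the-shelf dichotomy. As it stands the proposal is a plausible roadmap but not a proof.
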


\begin{remark}\label{rem:191124a0}
(a)
We notice that the (trivial) case $\P_{x_0}=\Q_{x_0}$ needs to be treated separately in Theorem~\ref{theo: main1}.
Technically, the reason is that a boundary point $b$ can be separating also in the case $\P_{x_0}=\Q_{x_0}$.
For example, in case both $\P_{x_0}$ and $\Q_{x_0}$ are the standard Wiener measure, trivially \(\P_{x_0}, \Q_{x_0}\)-a.s. \(S = \delta\), but \(\P_{x_0}, \Q_{x_0}\)-a.s. \(U \wedge V \wedge R = \infty\), because \(\pm\infty\) are separating points and \(\P_{x_0}, \Q_{x_0}\)-a.s. \(\limsup_{t \to \infty} \X_t = - \liminf_{t \to \infty} \X_t = \infty\) by standard properties of Brownian motion.

\smallskip
(b)
It is possible to formulate part~(ii) from Theorem~\ref{theo: main1} without the deterministic time \(R\): 
\begin{enumerate}
	\item[\textup{(ii.1)}] If all points of $\on{cl}(J)$ are non-separating and the boundary points $l$ and $r$ are reflecting for one (equivalently, for both) of the diffusions, then $\P_{x_0},\Q_{x_0}$-a.s. $S=\infty$.
	\item[\textup{(ii.2)}]
	Otherwise, $\P_{x_0},\Q_{x_0}$-a.s. $S=U\wedge V$.
\end{enumerate}
The situation from (ii.1) needs to be treated separately, because diffusions with two reflecting boundary points are necessarily recurrent, which entails that they cannot be equivalent on the infinite time horizon. This is the only case where $\P_{x_0}\ne\Q_{x_0}$ but the separating time still cannot be captured via the set $\A$ of separating points alone. In Example~\ref{ex: BM with reflec bd} and Discussion~\ref{disc:180223a1} we discuss this issue in more detail.
\end{remark}

\begin{discussion}\label{disc:190223a1}
In order to apply Theorem~\ref{theo: main1} in specific situations, we need to compute the set $\A$ of separating points in \(\on{cl}(J) (=\on{cl}(\tilde J))\).
Given the above definitions this might be a very computational task.
Fortunately, there are many interesting interdependencies between the notions, which allow to reduce the computations in many specific situations considerably.
In the following, we collect several helpful observations.

(i) If a boundary point $b \in \partial J (= \partial\tilde{J})$ is accessible for one of the diffusions but inaccessible for the other one, then $b\in \A$ (that is, in such a case we do not need to verify~\eqref{eq:130222a1}).
This follows directly from Lemma~\ref{lem:130222a2}.

(ii) The roles of the diffusions $(x\mapsto\P_x)$ and $(x\mapsto\Q_x)$ in each of the three building blocks in the definition of a non-separating point, namely, Definitions \ref{def: non-sep int}, \ref{def:130222a1} and~\ref{def: non-sep bound}, are symmetric.
That is, in any of these definitions we can interchange the roles between $(\m,\s)$ and $(\M,\S)$.
To avoid confusions, we sketch this in more detail.
\begin{enumerate}
\item[-]
Instead of working with $d^+\s/d\S$ and $d\m/d\M$ one may use $d^+\S/d\s$ and $d\M/d\m$ to define a good (non-separating) interior point.
Instead of $\beta$ we then get another function $\tilde\beta$.
The relation between $\beta$ and $\tilde\beta$ is
\begin{equation}\label{eq:130222a2}
\tilde\beta=-\beta\hspace{0.05cm}\frac{d^+\S}{d\s}.
\end{equation}

\item[-]
To define a half-good and a good (non-separating) boundary point exchange the roles of $(\m,\s)$ and $(\M,\S)$ everywhere and do not forget to replace $\beta$ with $\tilde\beta$ in \eqref{eq:130222a1}, \eqref{eq:271022a2} and~\eqref{eq:271022a3}.

\item[-]
It is possible to exchange the roles in {\em some} but {\em not in all} of the three definitions.
For instance, one might inspect the goodness of the interior points exactly as in Definition~\ref{def: non-sep int} (which also provides the function $\beta$ on the way)
and inspect the goodness of the boundary points using the definitions for the interchanged diffusions (in which case use $\tilde\beta$ from~\eqref{eq:130222a2}).
\end{enumerate}
The possibility to interchange the roles of $(\m,\s)$ and $(\M,\S)$ follows from Theorem~\ref{theo: main1} together with the symmetry in the notion of the separating time: the ($\P_{x_0},\Q_{x_0}$-a.s. unique) separating time for $\P_{x_0}$ and $\Q_{x_0}$ is the same as the separating time for $\Q_{x_0}$ and~$\P_{x_0}$.
The formula~\eqref{eq:130222a2} is a straightforward calculation.

(iii) A boundary point $b$ with either $|\S(b)|=\infty$ or $|\s(b)|=\infty$ is automatically separating, i.e., $b\in \A$.
In the case $|\S(b)|=\infty$ this is seen directly from the definitions.
In the case $|\s(b)|=\infty$ this follows from the previous point in this discussion, i.e., the symmetric roles of $(x\mapsto\P_x)$ and $(x\mapsto\Q_x)$.
\end{discussion}

By virtue of Proposition~\ref{prop: AC Sing}, Theorem~\ref{theo: main1} yields a variety of corollaries concerning absolute continuity and singularity of \(\P_{x_0}\) and \(\Q_{x_0}\).

\begin{corollary} \label{coro: local absolute continuity}
	Let \(\P_{x_0} \not = \Q_{x_0}\) and \(x_0 \in J\cap\tilde J\).
	Then, the following are equivalent:
	\begin{enumerate}
		\item[\textup{(i)}] \(\P_{x_0} \ll \Q_{x_0}\) on \(\mathcal{F}_t\) for some \(t > 0\).
		\item[\textup{(ii)}] \(\P_{x_0} \ll \Q_{x_0}\) on \(\mathcal{F}_t\) for all \(t > 0\), i.e., \(\P_{x_0} \ll_{\textup{loc}} \Q_{x_0}\).
		\item[\textup{(iii)}] All points in \(J\) are non-separating
		(in other words, all points in \(J^\circ  (= \tilde{J}^\circ)\) are non-separating and boundary points that are accessible for \((x \mapsto \P_x)\) are non-separating).
	\end{enumerate}
\end{corollary}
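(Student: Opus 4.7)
The overall plan is to combine Theorem~\ref{theo: main1}(ii)---which, under the hypothesis $\P_{x_0}\neq\Q_{x_0}$, identifies $S=U\wedge V\wedge R$ $\P_{x_0}$-a.s.---with parts~(iv) and~(vi) of Proposition~\ref{prop: AC Sing}. Since (ii)$\Rightarrow$(i) is immediate from the definition of local absolute continuity, only (iii)$\Rightarrow$(ii) and (i)$\Rightarrow$(iii) require argument.

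For (iii)$\Rightarrow$(ii), assume $A\cap J=\emptyset$. I claim each of $U$, $V$, $R$ is $\geq\infty$ $\P_{x_0}$-a.s. Since $J$ is an interval containing $x_0$, the segment $[l,x_0]$ lies in $J\cup\{l\}$, and therefore $[l,x_0]\cap A\subseteq\{l\}\setminus J$. Hence either this intersection is empty---in which case $\alpha=\Delta$ and $U=\delta$---or $\alpha=l$ is an inaccessible boundary point, so that $T_l=\infty$ $\P_{x_0}$-a.s.\ and therefore $U\in\{\infty,\delta\}$. In both cases $U\geq\infty$ $\P_{x_0}$-a.s., and symmetrically $V\geq\infty$ $\P_{x_0}$-a.s. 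Since $R\in\{\infty,\delta\}$ by definition, $S\geq\infty$ $\P_{x_0}$-a.s., and (ii) follows from Proposition~\ref{prop: AC Sing}(iv).

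For (i)$\Rightarrow$(iii), I argue by contraposition. Suppose there exists $y\in A\cap J$; by the left--right symmetry of the setting assume $y\leq x_0$. Then $\alpha=\sup([l,x_0]\cap A)\in[y,x_0]\subseteq J$ (as $J$ is an interval), so $\alpha$ is accessible for $(x\mapsto\P_x)$. If $\alpha=x_0$, then $U=0$ forces $S=0$ $\P_{x_0}$-a.s., and Proposition~\ref{prop: AC Sing}(vi) rules out $\P_{x_0}\ll\Q_{x_0}$ on $\mathcal F_t$ for every $t>0$. If $\alpha<x_0$, I invoke the classical fact that, for a regular one-dimensional diffusion, any point $\alpha\neq x_0$ accessible from $x_0$ satisfies $\P_{x_0}(T_\alpha\leq t)>0$ for every $t>0$. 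On $\{T_\alpha\leq t\}$, path continuity yields $\liminf_{s\nearrow T_\alpha}\X_s=\X_{T_\alpha}=\alpha$, so $U=T_\alpha\leq t$; consequently $\P_{x_0}(S\leq t)\geq\P_{x_0}(T_\alpha\leq t)>0$, contradicting (i) via Proposition~\ref{prop: AC Sing}(vi).

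The main obstacle is precisely the classical positivity statement $\P_{x_0}(T_\alpha\leq t)>0$ used in the last step. It can be justified by passing to natural scale, representing the resulting continuous local martingale as a time-changed Brownian motion, and exploiting the strictly positive density of the Brownian hitting time; alternatively, one appeals directly to the standard theory of one-dimensional diffusions (e.g.\ \cite{RY}), where this is a well-known regularity property of the hitting-time distribution.
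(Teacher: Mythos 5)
Your proof is correct and uses exactly the ingredients the paper's own (one-line) proof cites for this corollary, namely Theorem~\ref{theo: main1}, Proposition~\ref{prop: AC Sing}, and the fact that diffusions hit points arbitrarily fast (Lemma~\ref{lem: diff hit points fast}); your case analysis of $U$, $V$, $R$ simply fills in the details the paper leaves to the reader. The only step worth making explicit is the verification that $\P_{x_0}(T_\alpha<\infty)>0$ before invoking the hitting-time lemma: for $x_0\in J^\circ$ this is the definition of regularity, while for $x_0$ a reflecting boundary point it follows from Lemma~\ref{lem: finiteness hitting times} (the absorbing boundary cases being either excluded by $\P_{x_0}\neq\Q_{x_0}$ or subsumed in your $\alpha=x_0$ case).
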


\begin{remark}\label{rem:191124a1}
In case $x_0$ is \emph{not} an absorbing boundary point at least for one of the diffusions, the assumption $\P_{x_0}\ne\Q_{x_0}$ can be removed from Corollary~\ref{coro: local absolute continuity}.
To see this, consider the case where \(x_0\) is not absorbing and $\P_{x_0}=\Q_{x_0}$. Then, all conditions (i), (ii) and~(iii) in Corollary~\ref{coro: local absolute continuity} are satisfied.
While this is clear for (i) and~(ii), for (iii)  this follows from Lemma~\ref{lem:121224a1}.
As a consequence, the assumption $\P_{x_0}\ne\Q_{x_0}$ in Corollary~\ref{coro: local absolute continuity} can be dropped whenever $x_0\in J^\circ$.
\end{remark}

\begin{corollary}\label{cor:110222a1}
	Let \(\P_{x_0} \not = \Q_{x_0}\) and \(x_0 \in J\cap\tilde J\). 
	Then, \(\P_{x_0} \ll \Q_{x_0}\) if and only if
\begin{enumerate}
\item[\textup{(a)}]	
all points in \(J^\circ\) are non-separating,

\item[\textup{(b)}]
each boundary point \(b\) of \(J\) satisfies one of the following:
	\begin{enumerate}
		\item[\textup{(b.i)}]
		\(b\) is non-separating,
		\item[\textup{(b.ii)}]
		\(|\s(b)| = \infty\) and the other boundary point \(b^*\) is non-separating,
	\end{enumerate}

\item[\textup{(c)}]
and, in case both boundary points are non-separating, at least one of the boundary points is not reflecting (for one, equivalently for both, of the diffusions).
\end{enumerate}
\end{corollary}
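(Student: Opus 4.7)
My approach combines Proposition~\ref{prop: AC Sing}(ii) with Theorem~\ref{theo: main1}(ii): under the standing assumption $\P_{x_0}\neq\Q_{x_0}$, we have $\P_{x_0}\ll\Q_{x_0}$ if and only if $\P_{x_0}$-a.s.\ $U\wedge V\wedge R=\delta$, equivalently $U=V=R=\delta$ $\P_{x_0}$-a.s.\ (since the three quantities take values in $[0,\infty]\cup\{\delta\}$ and $\delta$ dominates $[0,\infty]$). The remainder of the proof is the translation of these three requirements into the stated structural conditions (a), (b), (c).

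The analysis of $R$ is immediate from its definition: $R=\delta$ fails only when $\alpha=\gamma=\Delta$ and both boundary points are reflecting. If (b.ii) holds at one of the boundaries, then $|\s(b)|=\infty$ forces $b$ to be inaccessible via the contrapositive of~\eqref{eq:101022a1}, hence not reflecting, and the corresponding separating point makes $\alpha\neq\Delta$ (or $\gamma\neq\Delta$); so $R=\delta$ automatically. If both boundaries are non-separating (so $\alpha=\gamma=\Delta$), then $R=\delta$ is equivalent to condition~(c).

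For $U$ (and, by symmetry, $V$) I proceed by cases on $\alpha$. Conditions (a) and (b) together force $\alpha\in\{\Delta,l\}$: no interior point is separating by (a), and $x_0\in\partial J\cap A$ is ruled out because (b.ii) would require $x_0$ inaccessible (contradicting $x_0\in J$) and (b.i) gives $x_0\notin A$ directly. If $\alpha=\Delta$, then $U=\delta$ by construction, matching case (b.i) for $l$. If $\alpha=l$, then $l$ must be inaccessible, for an accessible $l\in J$ would yield $\P_{x_0}(T_l<\infty)>0$ by regularity and thus $U<\infty$ on a set of positive $\P_{x_0}$-measure. Hence $T_l=\infty$ $\P_{x_0}$-a.s.\ and $U=\delta$ $\P_{x_0}$-a.s.\ reduces to $\liminf_{t\to\infty}\X_t>l$ $\P_{x_0}$-a.s. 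The scale hitting formula~\eqref{eq:260223a1} gives $\P_{x_0}(T_a<T_b)=(\s(b)-\s(x_0))/(\s(b)-\s(a))$ for $l<a<x_0<b<r$; sending $b\nearrow r$ and then $a\searrow l$, the limit tends to $0$ exactly when $|\s(l)|=\infty$ and $|\s(r)|<\infty$. The first is the scale part of (b.ii); the second follows from $r$ being non-separating via Discussion~\ref{disc:190223a1}(iii). That $r$ must be non-separating when $l$ is separating is seen by contradiction: if both $l,r$ were separating, the two-sided version of the same computation would either force recurrence (when $|\s(l)|=|\s(r)|=\infty$) or force positive-probability exit to one of the endpoints (when at least one scale limit is finite), both incompatible with $U=V=\delta$ $\P_{x_0}$-a.s.

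The converse direction reads the equivalences backwards. The only new verification needed is (a): if there were an interior separating point $y\in J^\circ\cap[l,x_0)$, then $\alpha\in J^\circ\cap[y,x_0]$, and regularity gives $\P_{x_0}(T_\alpha<\infty)>0$, so $U<\infty$ on a set of positive measure, contradicting $U=\delta$ $\P_{x_0}$-a.s.; the case $y\in J^\circ\cap(x_0,r]$ is symmetric via $V$. The main technical step throughout is the use of~\eqref{eq:260223a1} to translate the probabilistic requirement $\liminf_{t\to\infty}\X_t>l$ into the algebraic scale conditions $|\s(l)|=\infty,|\s(r)|<\infty$; everything else is bookkeeping with the definitions of $U$, $V$, $R$ and the set $A$.
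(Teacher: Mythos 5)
Your overall strategy is the paper's own: the published proof of this corollary is a one-line reduction to Theorem~\ref{theo: main1}, Proposition~\ref{prop: AC Sing} and Lemmata \ref{lem: diff hit points fast} and~\ref{lem:200223a2}, and you are spelling out the translation of ``$\P_{x_0}$-a.s.\ $U=V=R=\delta$'' into deterministic conditions. Most of the bookkeeping (forcing $\alpha\in\{\Delta,l\}$ under (a) and (b), ruling out interior separating points via regularity and Lemma~\ref{lem: diff hit points fast}, the analysis of $R$) is sound.

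There is, however, a genuine gap in the central step: the claim that, when $\alpha=l$ is inaccessible, ``$U=\delta$ $\P_{x_0}$-a.s.'' is equivalent to the vanishing of the iterated limit of $\P_{x_0}(T_a<T_b)$, hence to $|\s(l)|=\infty$ and $|\s(r)|<\infty$. The first-passage formula \eqref{eq:260223a1} controls the path only up to its first approach to $r$ and says nothing about what happens afterwards when $r$ is accessible. If $r$ is reflecting --- which is perfectly compatible with $r$ being non-separating and with $|\s(l)|=\infty$ --- then by Lemma~\ref{lem:200223a1} both diffusions are recurrent, so $\P_{x_0}$-a.s.\ $\liminf_{t\to\infty}\X_t=l$ and $U=T_l=\infty\neq\delta$, even though your iterated limit equals $0$. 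This is exactly the configuration treated in Discussion~\ref{disc:180223a1}, where $\P_{x_0}$-a.s.\ $U\le\infty$ is deduced from Lemma~\ref{lem: finiteness hitting times}\,(iii); the correct translation of ``$U=\delta$ a.s.'' requires Lemma~\ref{lem:200223a2} \emph{combined with} the classification of $r$ (inaccessible or absorbing versus reflecting), not the hitting probabilities alone. The omission is consequential rather than cosmetic: carrying out the case $\alpha=l$ correctly shows that in configuration (b.ii) the non-separating boundary $b^*$ must in addition not be reflecting (so condition (c) has to be understood as constraining the non-separating boundary point also when the other boundary is separating), since otherwise one lands in the recurrent situation with $S=\infty$, where $\P_{x_0}\ll_{\textup{loc}}\Q_{x_0}$ but $\P_{x_0}\perp\Q_{x_0}$ on $\mathcal F$. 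Your argument, which only distinguishes reflecting boundaries inside the ``both boundaries non-separating'' case, cannot detect this and therefore does not establish the stated equivalence.
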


It is worth noting that, for a boundary point $b$, (b.i) and~(b.ii) above exclude each other because $|\s(b)|=\infty$ implies that $b$ is separating
(recall Discussion~\ref{disc:190223a1}~(iii)).

\begin{remark}\label{rem:191124a2}
Contrary to Corollary~\ref{coro: local absolute continuity} and Remark~\ref{rem:191124a1},
in Corollary~\ref{cor:110222a1} the assumption $\P_{x_0}\ne\Q_{x_0}$ cannot be dropped even in the case $x_0\in J^\circ$.
The reason is discussed in Remark~\ref{rem:191124a0}~(a).
For instance, if $\P_{x_0}=\Q_{x_0}$ is the standard Wiener measure, then both $\infty$ and $-\infty$ are separating boundary points, and hence (b) in Corollary~\ref{cor:110222a1} is not satisfied.
\end{remark}

\begin{proof}[Proof of Corollaries \ref{coro: local absolute continuity} and \ref{cor:110222a1}]
The (trivial) case that $x_0$ is an absorbing boundary for both diffusions is excluded by the assumption \(\P_{x_0} \not = \Q_{x_0}\).
Now, the claims of both corollaries follow from
Theorem~\ref{theo: main1},
Proposition~\ref{prop: AC Sing}
and Lemmata \ref{lem: diff hit points fast}
and~\ref{lem:200223a2}.
\end{proof}

\begin{corollary}
	Let \(x_0 \in J\cap\tilde J\). We have
	\(\P_{x_0} \perp \Q_{x_0}\) on \(\mathcal{F}_0\) if and only if \(x_0 \in \A\).
\end{corollary}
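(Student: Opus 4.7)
The plan is to combine Theorem~\ref{theo: main1} with Proposition~\ref{prop: AC Sing}(viii), which reduces the claim to showing that $S=0$ $\P_{x_0},\Q_{x_0}$-a.s.\ is equivalent to $x_0\in A$.

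First, I would establish that $A$ is closed in $\on{cl}(J)$. By Definition~\ref{def: non-sep int}, every non-separating interior point admits an open neighborhood of non-separating interior points, so the set of non-separating interior points is open in $J^\circ$. Moreover, if $b\in\partial J$ is a non-separating boundary point, then the interval $B\subset J^\circ$ abutting $b$ from Definition~\ref{def:130222a1} consists entirely of non-separating interior points, so $\{b\}\cup B$ is an open neighborhood of $b$ in $\on{cl}(J)$ contained in the complement of $A$.

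Using the closedness of $A$, if $[l,x_0]\cap A\neq\emptyset$ then $\alpha=\sup([l,x_0]\cap A)$ is attained in $A$, so $\alpha=x_0$ iff $x_0\in A$; analogously $\gamma=x_0$ iff $x_0\in A$. From the constructions of $U$ and $V$, together with path continuity and $\X_0=x_0$ (which force $T_\alpha>0$ whenever $\on{cl}(J)\ni\alpha<x_0$, and likewise $T_\gamma>0$ whenever $\on{cl}(J)\ni\gamma>x_0$), one reads off that $U=0$ iff $\alpha=x_0$, and $V=0$ iff $\gamma=x_0$. Consequently,
\[
x_0\in A\quad\Longleftrightarrow\quad U\wedge V=0.
\]

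Finally, I would invoke Theorem~\ref{theo: main1}. If $\P_{x_0}\neq\Q_{x_0}$, then $S=U\wedge V\wedge R$ with $R\in\{\infty,\delta\}$, so $R>0$ and $S=0$ reduces to $U\wedge V=0$, hence to $x_0\in A$. If instead $\P_{x_0}=\Q_{x_0}$, Theorem~\ref{theo: main1}(i) gives $S=\delta>0$, so the left-hand side of the desired equivalence fails; the matching statement $x_0\notin A$ follows by exploiting the strong Markov property and regularity to propagate $\P_{x_0}=\Q_{x_0}$ to $\P_y=\Q_y$ at every $y$ reachable from $x_0$, which, by the uniqueness of scale and speed, forces $\S=k\s+\ell$ and $\M=\m/k$ for some $k>0$, $\ell\in\mathbb R$; with $\beta\equiv0$, every point of $J\cap\tilde J$ then verifies the conditions of Definitions~\ref{def: non-sep int} and~\ref{def: non-sep bound}, so $x_0\notin A$. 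The main subtlety is this last step: ruling out $x_0\in A$ in the degenerate equal-laws case via the propagation argument.
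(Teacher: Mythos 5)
Your main argument is correct and is exactly the route the paper intends: $A$ is closed in $\on{cl}(J)$ for the reasons you give, hence $\alpha=x_0$ (equivalently, $\gamma=x_0$) precisely when $x_0\in A$; since $\P_{x_0},\Q_{x_0}$-a.s.\ $T_\alpha>0$ whenever $\on{cl}(J)\ni\alpha<x_0$ (and likewise for $\gamma$) while $R\in\{\infty,\delta\}$ is never $0$, one reads off that $\P_{x_0},\Q_{x_0}$-a.s.\ $U\wedge V\wedge R=0$ if and only if $x_0\in A$, and Theorem~\ref{theo: main1}(ii) together with Proposition~\ref{prop: AC Sing}(viii) finishes the proof whenever $\P_{x_0}\ne\Q_{x_0}$.

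The gap is in your treatment of the case $\P_{x_0}=\Q_{x_0}$, and it cannot be closed as stated, because there the asserted equivalence genuinely fails without an additional hypothesis. Your propagation argument requires that some point $y\ne x_0$ be reachable from $x_0$ with positive probability; this is exactly what breaks down when $x_0$ is a boundary point that is absorbing for \emph{both} diffusions. In that configuration $\P_{x_0}=\Q_{x_0}$ is the point mass on the constant path no matter what the characteristics look like on $J^\circ$, and one can easily arrange $x_0\in A$: take $J=\tilde J=[0,\infty)$, $\s=\S=\on{Id}$, $\m$ equal to the Lebesgue measure and $\M$ equal to twice the Lebesgue measure on $(0,\infty)$, and $\m(\{0\})=\M(\{0\})=\infty$. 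Then $(d\m/d\M)(d^+\s/d\S)=1/2\ne1$ everywhere on $(0,\infty)$, so no interior point is good, $0$ is not half-good in the sense of Definition~\ref{def:130222a1}, hence $0\in A$; yet $\P_0=\Q_0$ are certainly not mutually singular on $\mathcal F_0$. The corollary must therefore be read with the same standing assumption $\P_{x_0}\ne\Q_{x_0}$ as its neighbours, Corollaries \ref{coro: local absolute continuity} and~\ref{cor:110222a1}, whose proof explicitly notes that this assumption excludes the doubly absorbing case. Under that assumption your first argument is already a complete proof and the propagation step is not needed at all; where the propagation step does apply (i.e., when $x_0$ is not absorbing for both diffusions, so that interior points are reachable), it is correct, but it is precisely in the remaining case that it, and the statement itself, fail.
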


\begin{corollary}
	Take \(x_0 \in J\cap\tilde J\) and suppose that either \(\s = \S\) on \(J^\circ\) or \(\m = \M\) on \(\mathcal{B}(J^\circ)\). The following are equivalent:
	\begin{enumerate}
		\item[\textup{(i)}] \(\P_{x_0} \ll \Q_{x_0}\) on \(\mathcal{F}_t\) for some \(t > 0\).
		\item[\textup{(ii)}] \(\P_{x_0} = \Q_{x_0}\).
		\end{enumerate}
\end{corollary}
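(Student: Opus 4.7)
The implication (ii)$\Rightarrow$(i) is immediate, so the plan is to prove (i)$\Rightarrow$(ii). The overall idea is to use Corollary~\ref{coro: local absolute continuity} to upgrade (i) into the statement that every point of $J$ is non-separating, and then to combine this with the coincidence of either $\s$ and $\S$ or $\m$ and $\M$ on $J^\circ$ to force the two (extended) characteristics to agree. Once both characteristics coincide (up to the affine ambiguity of the scale function), the uniqueness of the diffusion law from its characteristics (\cite[Corollary~16.73]{breiman1968probability}) yields $\P_{x_0}=\Q_{x_0}$.

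First I would treat the case $\s=\S$ on $J^\circ$. At every good interior point $x$, the differential quotient $(d^+\s/d\S)(y)$ equals $1$ for all $y$ in some neighborhood $U(x)$, so the identity~\eqref{eq:111022a1} forces $(d\m/d\M)(y)=1$ on $U(x)$. By Lemma~\ref{lem:130222a1}(ii) (together with Remark~\ref{rem:130222a1}), $\m\sim\M$ on $\mathcal B(U(x))$ with continuous Radon--Nikodym derivative identically equal to $1$, whence $\m=\M$ on $\mathcal B(U(x))$. Covering the connected open set $J^\circ$ by such neighborhoods yields $\m=\M$ on $\mathcal B(J^\circ)$. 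For a boundary point $b\in\partial J$, Lemma~\ref{lem:130222a2} (applied to the non-separating hypothesis) shows that $b$ shares the same accessibility status for both diffusions, and the classification following Definition~\ref{def: non-sep bound} shows that $b$ also has the same reflection regime. Combining $\s=\S$ with~\eqref{eq:101022c1} gives $(d^+\s/d\S)(b)=1$, and then~\eqref{eq:101022c3} forces $(d\m/d\M)(b)=1$; via the quotient interpretation of this limit (see the discussion after Definition~\ref{def: non-sep bound}) this yields $\m(\{b\})=\M(\{b\})$. Hence the extended characteristics $(\m,\s)$ and $(\M,\S)$ coincide.

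The case $\m=\M$ on $\mathcal B(J^\circ)$ is symmetric. The identity~\eqref{eq:111022a1} now gives $(d^+\s/d\S)(y)=1$ on a neighborhood of each good interior point, and continuity of the density (Remark~\ref{rem:130222a1}) together with connectedness of $J^\circ$ upgrades this to $\s-\S\equiv c$ on $J^\circ$ for some constant $c\in\mathbb R$. Since the diffusion law is invariant under an affine modification of the scale function (and, as noted in the discussion around~\eqref{eq:090322a1}, a pure shift of $\s$ does not alter~$\m$), I may replace $\s$ by $\s-c$ without changing $\P_{x_0}$, which reduces matters to the previous case.

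I expect the main obstacle to lie in the careful boundary bookkeeping: one must rule out that, at the same boundary point, one diffusion is instantaneously reflecting while the other is slowly reflecting (or absorbing), and one must transfer the equality $(d^+\s/d\S)(b)=1$ from the interior to the boundary in the reflecting case. Both of these points are already encoded in Lemma~\ref{lem:130222a2} and in the discussion immediately following Definition~\ref{def: non-sep bound}, so the argument closes cleanly once those results are carefully invoked.
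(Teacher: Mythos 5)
Your proposal is correct and follows essentially the same route as the paper's proof: invoke Corollary~\ref{coro: local absolute continuity} (after disposing of the trivial case $\P_{x_0}=\Q_{x_0}$, which is needed to apply that corollary) to see that all points of $J$ are non-separating, use the relation $\frac{d\m}{d\M}\frac{d^+\s}{d\S}=1$ to force both characteristics to agree on $J^\circ$ (up to the harmless affine normalization of the scale function), match the boundary behavior via Definition~\ref{def: non-sep bound}, and conclude by uniqueness of the law from $(\m,\s)$. The only differences are presentational: you are slightly more explicit about the affine shift in the $\m=\M$ case and about the boundary bookkeeping, both of which the paper handles more tersely.
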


\begin{proof}
	Of course, (ii) \(\Rightarrow\) (i) is trivial. Assume that (i) holds and, for contradiction, further assume that \(\P_{x_0} \ne \Q_{x_0}\). By Corollary~\ref{coro: local absolute continuity}, all points in \(J\) are non-separating. In particular, all points in \(J^\circ (= \tilde{J}^\circ\)) are non-separating. Thus, if \(\s = \S\) on \(J^\circ\) then \(d\m/d \M= 1\) on \(J^\circ\), and if \(\m = \M\) on \(\mathcal{B}(J^\circ)\) then \(d^+ \s/d \S= 1\) on \(J^\circ\). Consequently, irrespective of our hypothesis, both \(\s = \S\) on \(J^\circ\) and \(\m = \M\) on \(\mathcal{B}(J^\circ)\) hold.
	As accessibility is characterized by scale and speed on the interior of the state space, we have \(J = \tilde{J}\) and, by continuity, \(\s = \S\) on \(J\). Finally, let \(b \in J \setminus J^\circ\) be accessible for \((x \mapsto \P_x)\). As noted before, \(b\) is non-separating and we get \(\m (\{b\}) = \M(\{b\})\) from Definition~\ref{def: non-sep bound}. Thus, \(\m = \M\) on \(\mathcal{B}(J)\) and we conclude~\(\P_{x_0} = \Q_{x_0}\) from the fact that scale and speed characterize a diffusion uniquely. This is a contradiction and the proof is complete.
\end{proof}

\begin{remark} \label{rem: comment desm}
	The main result from \cite{desmettre} provides necessary and sufficient conditions for the martingale property of certain non-negative local martingales. Equivalently, these are necessary and sufficient conditions for \(\Q_{x_0} \ll_\textup{loc} \P_{x_0}\) in case
	\[
	d\S = d \s /\varphi^2,\qquad d \M= \varphi^2 d \m, 
	\]
	where \(\varphi\) is a positive function in the domain of the extended generator of the diffusion \((x \mapsto \P_x)\). Corollary~\ref{coro: local absolute continuity} gives a complete characterization of local absolute continuity for two general diffusions without predetermined structural assumptions. As a consequence, our result shows that the above structure of \((\S, \M)\) is necessary, where, in general, \(\varphi\) does not need to be in the domain of the extended generator, although $\varphi^2$ always needs to be continuous (recall Remark~\ref{rem:130222a1}). This observation confirms a conjecture of Chris Rogers (personal communication) about the structure of \(\S\) and \(\M\). For diffusions with open state space, the conjecture also follows from the main result in \cite{orey}.
	
	We stress that our approach is quite different from the one in \cite{desmettre}, as we do not work with a candidate density process. 
	In fact, because in our general framework there is no obvious candidate for a density process, a different approach seems to be necessary.
\end{remark}

In the next section we discuss an application of our main result to mathematical finance.

\section{Deterministic Conditions for NFLVR in General Diffusion Models} \label{sec: appl}
In this section we discuss an application of our main result to mathematical finance. Namely, we derive deterministic conditions for the existence and absence of arbitrage in the sense of the notion \emph{no free lunch with vanishing risk (NFLVR)} as introduced in \cite{DS}.
We do this in the most general single asset regular diffusion model whose price process is bounded from below.

Let us start with an introduction to our financial framework. Take \(l \in \mathbb{R}\) and let \(J\) be either \([l, \infty)\) or \((l, \infty)\).
Moreover, let \(\s \colon J \to \mathbb{R}\) be a scale function, let \(\m\) be a speed measure on \((J, \mathcal{B}(J))\) and let \((J \ni x \mapsto \P_x)\) be a regular diffusion with characteristics \((\s, \m)\).

In the following we fix an initial value \(x_0 \in J^\circ\) and a time horizon \(T \in (0, \infty]\). In case \(T = \infty\) we understand the interval \([0, T]\) as \(\mathbb{R}_+\)
and read expressions like ``$s\le T\,$'' as ``$s<\infty$''.
Define  \(\mathcal{G}_t \triangleq \mathcal{F}_t\) for \(t < T\) and \(\mathcal{G}_T \triangleq \sigma (\X_s, s \leq T)\).
In the following, we consider \((\Omega, \mathcal{G}_T, (\mathcal{G}_t)_{t \leq T}, \P_{x_0})\) as our underlying filtered probability space. Moreover, we use the convention that all processes are time indexed over~\([0, T]\). The process \(\X\) will represent an asset price process.
The interest rate is assumed to be zero.

We now recall the notion NFLVR. To introduce necessary terminology, suppose for a moment that \(\X\) is a semimartingale.
A one-dimensional predictable process \(H = (H_t)_{t \leq T}\) is called a \emph{trading strategy} if the stochastic integral \((\int_0^t H_s d \X_s)_{t \leq T}\) is well-defined. Further, a trading strategy \(H\) is called \emph{admissible} if there exists a constant \(c \geq 0\) such that a.s. \(\int_0^t H_s d \X_s \geq - c\) for all \(t \leq T\).
The convex cone of all contingent claims attainable from zero initial capital is given by 
\begin{align*}
K \triangleq \Big\{ \int_0^T H_sd \X_s \colon H &\text{ admissible,} \text{ and if \(T = \infty\), then \(\lim_{t \to \infty} \int_0^t H_s d \X_s\) exists a.s.} \Big\}.
\end{align*}
Let \(C\) be the set of essentially bounded random variables that are dominated by claims in \(K\), i.e.,
\[
C \triangleq \big\{ g \in L^\infty \colon \exists f \in K \text{ such that } g \leq f\ \text{a.s.} \big\}.
\]

\begin{definition}
We say that NFLVR holds in our market if \(\X\) is a semimartingale and
\(
\overline{C} \cap L^\infty_+ = \{0\}, 
\)
where \(\overline{C}\) denotes the closure of \(C\) in \(L^\infty\) w.r.t. the norm topology and \(L^\infty_+\) denotes the cone of non-negative random variables in \(L^\infty\). 
\end{definition}

According to the celebrated fundamental theorem of asset pricing (\cite{DS}), NFLVR is equivalent to the existence of an equivalent local martingale measure (ELMM), i.e., a probability measure \(\mathds{Q}\) on the filtered space \((\Omega, \mathcal{G}_T, (\mathcal{G}_t)_{t \leq T})\) such that \(\P_{x_0} \sim \mathds{Q}\) on \(\mathcal{G}_T\) and \(\X\) is a local \(\mathds{Q}\)-martingale. 
We emphasize that, thanks to Girsanov's theorem (Lemma~\ref{lem: Girs}), the existence of an ELMM immediately implies that \(\X\) is a \(\P_{x_0}\)-semimartingale.
In the following, we describe NFLVR in a deterministic manner for our single asset diffusion market. 

\begin{condition} \label{cond : beta1}
	There exists a function \(\beta \colon J^\circ \to \mathbb{R}\) such that
	\begin{align}
	\qquad\beta^2 \in L^1_\textup{loc}(J^\circ), \label{eq: nflvr1}
	\end{align}
	and, up to increasing affine transformations,
	\begin{align} \label{eq: nflvrS}
	\s (x) = \int^x \exp \Big( \int^y \beta (z) dz \Big) dy, \quad x \in J^\circ.
	\end{align}
	Moreover, if \(J = [l, \infty)\), then \(\m (\{l\}) = \infty\), i.e., the boundary point $l$ is absorbing for the diffusion $(x\mapsto\P_x)$ whenever it is accessible.
\end{condition}

\begin{condition} \label{cond: beta2}
Condition \ref{cond : beta1} holds and, for \(\beta\) as in Condition \ref{cond : beta1},
\begin{equation}\label{eq: nflvr2}
\int_{l +} (x - l) \beta (x)^2 dx < \infty.
\end{equation}
\end{condition}

\begin{condition} \label{cond: beta3}
	Condition \ref{cond : beta1} holds and
\begin{gather}
\text{either}\quad
\Big(\s(l) = - \infty\Big)
\quad\text{or}\quad
\Big(\s(l) > - \infty
\text{ and }
\int_{l+} |\s (x) - \s (l)| \m (dx) = \infty\Big),
\label{eq: nflvr3}\\[2mm]
\int_{l +} (x - l) \s' (x) \m (dx) = \infty,
\label{eq: nflvr4}
\end{gather}
where \(\s'\) denotes the derivative of \(\s\), see \eqref{eq: nflvrS}.
\end{condition}

\begin{theorem} \label{theo: NFLVR finite tiome horizon}
	If \(T < \infty\), then NFLVR holds if and only if at least
	one of Conditions \ref{cond: beta2} and~\ref{cond: beta3} holds.
Moreover, if NFLVR holds, the unique ELMM is given by \(\Q_{x_0}\), where \((x \mapsto \Q_x)\) is the diffusion with
the interior of the state space $J^\circ$,
characteristics $(\on{Id},\s'\,d\m)$ on $J^\circ$
and the boundary point $l$ being absorbing whenever it is accessible.
\end{theorem}

It is worth noting that
Conditions \ref{cond: beta2} and~\ref{cond: beta3}
do not exclude each other.

\begin{remark}\label{rem:070125a1}
Assume that NFLVR holds for some $T<\infty$.
Then either the boundary point $l$ is inaccessible for both diffusions
$(x\mapsto\P_x)$ and $(x\mapsto\Q_x)$
or $l$ is accessible (and absorbing) for both diffusions.
This follows from
Theorem~\ref{theo: NFLVR finite tiome horizon} and
Lemma~\ref{lem:130222a2} together with the facts that
\begin{itemize}
\item
Condition~\ref{cond: beta2} means that $l$ is half-good for $\P_{x_0}$ and~$\Q_{x_0}$
(compare, in particular, \eqref{eq:130222a1} with~\eqref{eq: nflvr2});

\item
\eqref{eq: nflvr3} (resp.,~\eqref{eq: nflvr4}) means that $l$ is inaccessible under~$\P_{x_0}$
(resp., under~$\Q_{x_0}$).
\end{itemize}
Furthermore, $\infty$ is inaccessible both under $\P_{x_0}$ and under~$\Q_{x_0}$.
The former is contained in our setting ($\infty\notin J$),
while the latter follows from $\S(\infty)=\infty$ (with $\S=\on{Id}$).
\end{remark}

\begin{proof}[Proof of Theorem~\ref{theo: NFLVR finite tiome horizon}]
We start with the necessity of the conditions. Suppose that NFLVR holds, which means, by the fundamental theorem of asset pricing (\cite[Corollary 1.2]{DS}), that an ELMM \(\mathds{Q}\) exists. 
As non-negative local martingales (which are supermartingales) cannot resurrect from zero (\cite[Proposition II.3.4]{RY}), in case \(J = [l, \infty)\),
using $\mathds{Q}\sim\P_{x_0}$ we get that
the left boundary \(l\) has to be an absorbing state for \((x \mapsto \P_x)\), i.e., \(\m (\{l\}) = \infty\).

Now we investigate properties of the scale function \(\s\). 
	Take an arbitrary point \(y_0 \in J^\circ\) and define \(\Y_t \triangleq \X_{ (t + T_{y_0}) \wedge T}\) and \(\mathcal{A}_t \triangleq \mathcal{G}_{(t + T_{y_0}) \wedge T}\) for \(t \in [0, T]\). 
	Recall from Lemma \ref{lem: diff hit points fast} that \(\mathds{P}_{x_0} (T_{y_0} < T) > 0\). In particular, as \(\mathds{P}_{x_0} \sim \mathds{Q}\) on \(\mathcal{G}_T\), we have \(\mathds{Q} (T_{y_0} < T) > 0\), which means that we can define a probability measure \(\mathds{K}\) on \((\Omega, \mathcal{G}_T)\) by the formula
	\[
	\mathds{K} ( d \omega ) \triangleq \mathds{Q} (d \omega | T_{y_0} < T) = \frac{ \mathds{Q} (d \omega \cap \{T_{y_0} < T\})}{\mathds{Q} (T_{y_0} < T)}.
	\]
	By the definition of an ELMM and Lemma~\ref{lem: change of time}, \(\Y\) is a local \(\mathds{Q}\)-\((\mathcal{A}_t)_{t \leq T}\)-martingale. 
	Moreover, as \(\{T_{y_0} < T\} \in \mathcal{A}_0\), \(\Y\) is a local \(\mathds{K}\)-\((\mathcal{A}_t)_{t \leq T}\)-martingale. 
	We define 
\[
L (t) \triangleq \inf (s \in [0, T] \colon \langle \Y, \Y \rangle_s > t) \wedge T, \quad t \in \mathbb{R}_+,
\]
where \(\langle \Y, \Y\rangle\) denotes the \(\mathds{K}\)-\((\mathcal{A}_t)_{t \leq T}\)-quadratic variation process of \(\Y\).
As, in case \(J = [l, \infty)\), the left boundary point \(l\) is absorbing for \((x \mapsto \mathds{P}_x)\), we get from Lemmata~\ref{lem: scale fct} and~\ref{lem: change of time} that \(\s(\Y)\) is a local \(\P_{x_0}\)-\((\mathcal{A}_t)_{t \leq T}\)-martingale. 
Hence, by Girsanov's theorem (Lemma~\ref{lem: Girs}), as \(\mathds{K} \ll \mathds{Q} \sim \mathds{P}_{x_0}\) on \(\mathcal{A}_T = \mathcal{G}_T\), the process \(\s (\Y)\) is a \(\mathds{K}\)-\((\mathcal{A}_t)_{t \leq T}\)-semimartingale. 
Using once again Lemma \ref{lem: change of time}, this implies that
\(
\s (\Y_{L})
\)
is a \(\mathds{K}\)-\((\mathcal{A}_{L (t)})_{t \geq 0}\)-semimartingale.
By the Doeblin, Dambis, Dubins--Schwarz theorem (\cite[Theorem V.1.7]{RY}), the time-changed process \(\Y_L\) is a standard \(\mathds{K}\)-\((\mathcal{A}_{L(t)})_{t \geq 0}\)-Brownian motion stopped at \(\langle \Y, \Y\rangle_T\), which is an \((\mathcal{A}_{L (t)})_{t \geq 0}\)-stopping time by \cite[Lemma~10.5]{Jacod}. 
It follows from the fact (\cite[Proposition~IV.1.13]{RY}) that continuous local martingales and their quadratic variation processes have the same intervals of constancy, and Lemma \ref{lem: exit immediately}, that \(\mathds{K}\)-a.s. \(\langle \Y, \Y\rangle_T > 0\) and \(L(0) = 0\).
Consequently, 
\[
\mathds{K} (\Y_{L (0)} = y_0) = \frac{\mathds{Q} (\X_{T_{y_0} \wedge T} = y_0, T_{y_0} < T)}{\mathds{Q} (T_{y_0} < T)} = 1,
\]
and we deduce from Theorem~\ref{lem: diff convex} that, in an open neighborhood of \(y_0\), the scale function \(\s\) is the difference of two convex functions. Recall that being the difference of two convex functions on an open (or closed) convex subset of a finite-dimensional Euclidean space is a local property (see \cite[(I) on p.~707]{hart}).
Hence, as \(y_0\) was arbitrary, we conclude that the scale function \(\s\) is the difference of two convex functions on \(J^\circ\).
In particular, $\s$ has a right-continuous right-hand derivative (and a left-continuous left-hand one) and they are locally bounded on $J^\circ$.
With a little abuse of notation, we denote the right-hand derivative of $\s$ by~\(\s'\). This notation is motivated by the fact that $\s$ is differentiable, as we prove below, cf.~\eqref{eq: nflvrS}.

Next, let us understand the structure of \(\mathds{Q}\) in a more precise manner.
We discuss the case $J=[l,\infty)$ where \(l\) is accessible (and absorbing) under \(\P_{x_0}\).
The inaccessible case \(J = (l, \infty)\) follows the same way.
Notice that $\s'd\m$ is a valid speed measure (recall Remark~\ref{rem:170322a1}).
Indeed, its local finiteness on $J^\circ$ follows from the local boundedness of $\s'$ on $J^\circ$, so we only need to verify~\eqref{eq:090322a2}.
As $\s$ is strictly increasing, $\s'$ cannot have intervals of zeros.
Therefore, for any $a<b$ in $J^\circ$, there exists an $x\in[a,b)$ with $\s'(x)>0$, hence $\s'>0$ on some $[x,x+\varepsilon)\subset[a,b)$ due to the right continuity of $\s'$, and this implies~\eqref{eq:090322a2}.
Now take \(f \in C_b([l, \infty); \mathbb{R})\) such that the following holds true:
the restriction $f|_{(l,\infty)}$ is a difference of two convex functions $(l,\infty)\to\mathbb R$ and,
denoting the right-hand derivative of $f$ on $(l,\infty)$ by $f'_+$ (which necessarily has locally finite variation on $(l,\infty)$),
there exists a function \(g \in C_b([l, \infty); \mathbb{R})\) with $g(l)=0$ such that
$df'_+=2g\s'd\m$ on $(l,\infty)$ in the sense explained in~\eqref{eq:180322a1}.
Let \(\{L^x_t (\X) \colon (t,x) \in \mathbb{R}_+ \times \mathbb{R}\}\) be the
(continuous in $t$ and right-continuous in $x$)
semimartingale local time of the coordinate process \(\X\) under \(\P_{x_0}\).
Under \(\P_{x_0}\), we obtain, for all \(t < T_l\),
\begin{align*}
f (\X_t) &= f(x_0)+\int_0^t \Big(\frac{d^- f}{dx}\Big) (\X_s) d \X_s + \frac{1}{2} \int_{J^\circ} L^x_t (\X) 2g (x) \s' (x) \m (dx)
\\&=  f(x_0)+\int_0^t \Big(\frac{d^- f}{dx}\Big) (\X_s) d \X_s + \int_{J^\circ} L^{\s(x)}_t (\s (\X)) g (x) \m (dx)
\\&= f(x_0)+\int_0^t \Big(\frac{d^- f}{dx}\Big) (\X_s) d \X_s + \int_{\s(J^\circ)} L^{x}_t (\s (\X)) g (\s^{-1} (x)) \m \circ \s^{-1} (dx)
\\&= f(x_0)+\int_0^t \Big(\frac{d^- f}{dx}\Big) (\X_s) d \X_s + \int_0^t g (\s^{-1} (\s(\X_s))) ds, 
\\&= f(x_0)+\int_0^t \Big(\frac{d^- f}{dx}\Big) (\X_s) d \X_s + \int_0^t g (\X_s) ds, 
\end{align*} 
where we use Lemma \ref{lem: occ smg} in the first and second line and Lemmata \ref{lem: diff homo} and \ref{lem: occ formula diff}
(more precisely, formula~\eqref{eq:241124a2})
in the fourth line. 
Now, the coordinate process \(\X\) is a local \(\mathds{Q}\)-martingale and hence, by the above formula, the process
\begin{align}\label{eq: test process ELMM}
f (\X) - f (x_0) - \int_0^{\cdot} g (\X_s) ds
\end{align}
is a local \(\mathds{Q}\)-martingale on \([0, T_l)\). Using \cite[Proposition~5.9]{Jacod} and the fact that the process in \eqref{eq: test process ELMM} is bounded on any finite time interval, we obtain that it is a global \(\mathds{Q}\)-martingale. 
Consequently, we deduce \(\mathds{Q} = \Q_{x_0}\) (on \(\mathcal{G}_T\)) from Lemmata \ref{lem: generator} and~\ref{lem: loc uni}.

In summary, if NFLVR holds, then \(\P_{x_0} \sim \Q_{x_0}\) on \(\mathcal{G}_T\) and the unique ELMM is given by \(\Q_{x_0}\).
By virtue of Corollary \ref{coro: local absolute continuity}
and Remark~\ref{rem:191124a1},
\(\P_{x_0} \sim \Q_{x_0}\) on \(\mathcal{G}_T\) if and only if all interior points are non-separating (which implies Condition \ref{cond : beta1}) and in addition either \(l\) is non-separating (which means that Condition \ref{cond: beta2} holds) or \(\P_{x_0}, \Q_{x_0}\) do not reach $l$ (which means that Condition \ref{cond: beta3} holds). This proves the necessity of the conditions. 

It remains to discuss the sufficiency. As explained above, if either Condition \ref{cond: beta2} or Condition~\ref{cond: beta3} holds, then \(\P_{x_0} \sim \Q_{x_0}\) on \(\mathcal{G}_T\). As \(\X\) is a local \(\Q_{x_0}\)-martingale (Lemma \ref{lem: scale fct}), we can conclude that \(\Q_{x_0}\) is an ELMM and hence, NFLVR holds by the fundamental theorem of asset pricing.
The proof is complete.
\end{proof}

\begin{remark}
	Under the NFLVR condition, in case the asset represented by \(\X\) gets bankrupt in the sense \(\X\) reaches the boundary point \(l\), it remains there. Of course, this observation holds beyond our diffusion framework, as it is a consequence of the fact that non-negative local martingales are non-negative supermartingales, which cannot resurrect from zero. 
\end{remark}

\begin{remark}
In the It\^o diffusion setting (see \cite{MU12b}) conditions \eqref{eq: nflvr1}--\eqref{eq: nflvr4} have the following financial interpretations: condition~\eqref{eq: nflvr1} means that the market price of risk is $\P_{x_0},\Q_{x_0}$-a.s. square integrable on \([0, t]\) for every \(t < T_l\). If, in addition to~\eqref{eq: nflvr1}, either~\eqref{eq: nflvr2} or both \eqref{eq: nflvr3} and~\eqref{eq: nflvr4} hold, then the market price of risk is even $\P_{x_0},\Q_{x_0}$-a.s. locally square integrable on $\mathbb R_+$.
\end{remark}

We also give a result for the infinite time horizon setup.

\begin{theorem} \label{theo: NFLVR infinite tiome horizon}
	If \(T = \infty\), then NFLVR holds if and only if 
		Condition~\ref{cond: beta2} and \(\s (\infty) = \infty\) hold.
	Moreover, in case NFLVR holds, the unique ELMM is given by \(\Q_{x_0}\),  where \((x \mapsto \Q_x)\) is the diffusion with
the interior of the state space $J^\circ$,
characteristics $(\on{Id},\s'\,d\m)$ on $J^\circ$
and the boundary point $l$ being absorbing whenever it is accessible.
\end{theorem}

For completeness, we notice that, as NFLVR with $T=\infty$ implies NFLVR with any $T<\infty$,
the messages of Remark~\ref{rem:070125a1} apply also under NFLVR with $T=\infty$.

\begin{proof}[Proof of Theorem~\ref{theo: NFLVR infinite tiome horizon}]
Using the arguments explained in the proof of Theorem \ref{theo: NFLVR finite tiome horizon}, it suffices to understand when \(\P_{x_0} \sim \Q_{x_0}\) on \(\mathcal{F}\) (\(= \mathcal{G}_T\) as \(T = \infty\)). By Corollary~\ref{cor:110222a1},
in the case when $\P_{x_0}\ne\Q_{x_0}$ (cf. Remark~\ref{rem:191124a2}),
we have \(\P_{x_0} \sim \Q_{x_0}\) on \(\mathcal{F}\)
if and only if all points in \(J^\circ\) are non-separating (which means that Condition~\ref{cond : beta1} holds),
\(l\) is non-separating (which upgrades Condition~\ref{cond : beta1} to Condition~\ref{cond: beta2}) and
$\s(\infty)=\infty$.
In the other case $\P_{x_0}=\Q_{x_0}$,
clearly, \(\P_{x_0} \sim \Q_{x_0}\) on \(\mathcal{F}\),
as well as
both Condition~\ref{cond: beta2} and $\s(\infty)=\infty$ hold.
This concludes the proof.
\end{proof}

\begin{remark}
(a)
Theorem \ref{theo: NFLVR infinite tiome horizon} shows that if NFLVR holds for \(T = \infty\), then bankruptcy is certain on the long run, i.e.,
$\P_{x_0}$-a.s. \(\lim_{t \to \infty} \X_t = l\).
Indeed, Condition~\ref{cond: beta2} means that $l$ is a non-separating boundary point for the diffusions $(x\mapsto\P_x)$ and $(x\mapsto\Q_x)$,
hence, by Discussion~\ref{disc:190223a1}, we have $\s(l)>-\infty$.
Together with \(\s (\infty) = \infty\) this yields the statement via Lemma~\ref{lem:200223a2}.

(b)
As pointed out by the referee, it is interesting to notice that,
under NFLVR in the case \(T = \infty\),
the price process cannot be a uniformly integrable martingale under the ELMM \(\Q_{x_0}\).
Let us mention two ways to see this. First, by~(a) and the equivalence $\P_{x_0}\sim\Q_{x_0}$, we have \(\Q_{x_0}\)-a.s. \(\lim_{t\to\infty}\X_t = l\).
Now, if $\X$ were a uniformly integrable martingale under $\Q_{x_0}$,
then we would obtain the contradiction
\(x_0 = \E^{\Q_{x_0}} [ \X_t ] \to l\) as \(t \to \infty\).
Alternatively, the fact can be deduced from \cite[Theorem~1.1]{HR19} because,
as explained in \cite[Example~3.1]{HR19},
Condition~(A) from \cite{HR19} is violated in diffusion settings with state space \([l, \infty)\) or \((l, \infty)\).
\end{remark}

\begin{remark}
The state spaces \([l,\infty)\) or \((l, \infty)\) seem to be economically interesting choices.
Nevertheless, using identical arguments as in the proofs of Theorems \ref{theo: NFLVR finite tiome horizon} and \ref{theo: NFLVR infinite tiome horizon}, Theorem \ref{theo: main1} also yields deterministic characterizations for NFLVR in case \(J\) is any other interval in \(\mathbb{R}\).
The precise statements are left to the reader.
We emphasize that for financial applications we need to consider $J\subset\mathbb R$
(as opposed to $J\subset[-\infty,\infty]$ in the general setting from Section~\ref{sec: diff basics}), as this is necessary to define NFLVR
(if $\infty\in J$ or $-\infty\in J$, then the coordinate process cannot be a semimartingale).
\end{remark}

\begin{remark}
	Theorems \ref{theo: NFLVR finite tiome horizon} and~\ref{theo: NFLVR infinite tiome horizon} include their counterparts \cite[Theorems 3.1, 3.5]{MU12b} for the It\^o diffusion setting. 
	In the framework from \cite{MU12b} the scale function \(\s\) is assumed to be continuously differentiable with absolutely continuous derivative. Our results show that this assumption is necessary, which seems to be surprising at first glance. Although for NFLVR to hold the scale function has to have the same structure as in \cite{MU12b}, our results apply for arbitrary speed measures, while in \cite{MU12b} these have to be absolutely continuous w.r.t. the Lebesgue measure. In particular, in contrast to the results in \cite{MU12b}, ours cover diffusions with sticky points, which are interesting models in the presence of takeover offers, as explained the Introduction.
\end{remark}

	Let us end this section with two (classical) examples, which show how Theorems \ref{theo: NFLVR finite tiome horizon} and~\ref{theo: NFLVR infinite tiome horizon} can be applied. 
	\begin{example}
		\begin{enumerate}
			\item[(a)]
			The famous Black--Scholes model with drift \(\mu \in \mathbb{R}\) and volatility \(\sigma \not = 0\) can be rephrased in our diffusion language by taking \(J = (0, \infty)\) and 
			\begin{align*}
			\s (x) = \begin{cases} - \frac{x^{- 2 \nu}}{2 \nu}, & \nu \not = 0, \\ \log (x), & \nu = 0,\end{cases}\qquad \m (dx) = \frac{x^{2 \nu - 1}}{\sigma^2} dx, 
		\end{align*}
		where \(\nu = \mu / \sigma^2 -  1 / 2\). In this case Condition~\ref{cond : beta1} holds with 
		\(
		\beta (x) =- 2 \mu /(\sigma^2 x).
		\)
		Moreover, it is straightforward to check that Condition~\ref{cond: beta3} is satisfied, while Condition~\ref{cond: beta2} is violated whenever $\mu\ne0$ (but satisfied in the case $\mu=0$).
As a consequence, the Theorems~\ref{theo: NFLVR finite tiome horizon} and \ref{theo: NFLVR infinite tiome horizon} show that NFLVR holds for \(T < \infty\) but fails for \(T = \infty\)
in the case $\mu\ne0$ (and holds for $T=\infty$ in the trivial case $\mu=0$).
Of course, this recovers the very classical results from the literature.

			\item[(b)]
			A classical counterexample in arbitrage theory is the three-dimensional Bessel process
		(see, e.g., \cite{DS1995a}).
			In our language, this corresponds to the case \(J = (0, \infty)\), \(\s(x) = - 1 / x\) and \(\m (dx) = x^2 dx\). Condition~\ref{cond : beta1} holds with \(\beta (x) = - 2 / x\), but, as the reader easily checks, both Condition~\ref{cond: beta2} and Condition~\ref{cond: beta3} are violated. As a consequence, Theorem~\ref{theo: NFLVR finite tiome horizon} yields that NFLVR fails for any finite time horizon. Again, we recover the known results from the literature.
		\end{enumerate}
	\end{example}

\section{Proof of the Main Theorem} \label{sec: pf}
As homeomorphic space transformations do not affect the question of equivalence and singularity, we can assume that one of the diffusions of interest is on natural scale. More precisely, we assume that \((x \mapsto \Q_x)\) is on natural scale, i.e., \(\S = \on{Id}\). The general result then follows from Lemma \ref{lem: diff homo}.

\subsection{Some Preparations}
In this subsection we collect some auxiliary results which are needed in the proof of Theorem~\ref{theo: main1}. 
One of our main tools is a time-change argument which we learned from \cite{orey}, where it was used to prove local equivalence for diffusions with open state space.
Excluding, for a moment, the possibility of (instantaneously or slowly) reflecting boundaries, the idea is roughly speaking as follows:
via a change of time we reduce certain questions related to equivalence of two diffusions to the same question for Brownian motions with and without (generalized)
drift and possibly absorbing boundaries.
This reduction brings us into the position to apply one of the main results from \cite{cherUru}, i.e., that Theorem~\ref{theo: main1} holds in It\^o diffusion settings with possibly absorbing boundaries.
To treat (instantaneously and slowly) reflecting boundaries, we combine the time change argument with a symmetrization trick, which, roughly speaking, states that reflecting boundaries can be considered as interior points of a symmetrized diffusion. As equivalence of laws of diffusions is typically not preserved by symmetrization, this part of the proof requires some additional technical considerations.

\subsubsection{Proof of Lemma~\ref{lem:130222a2}} \label{sec: pf of lem:130222a2}
	Suppose that \(b\in \partial J\) is half-good.
	As \(\S = \on{Id}\), this implies that \(b\in \mathbb{R}\).
	By definition, there exists a non-empty open interval
	\(B \subsetneq J^\circ\)
	with \(b\) as endpoint such that all points in \(B\)
	and the other endpoint of $B$
	are non-separating (good). 
	By virtue of (i) and (ii) in Definition~\ref{def: non-sep int} and recalling our standing assumption \(\S = \on{Id}\), it follows that the scale function \(\s\) is differentiable on \(B\) (see \cite[p.~204]{saks}) and that its derivative \(\s'\) satisfies the equation
	\[
	d \s' (x) = \beta (x) \s' (x) dx.
	\]
	This means that, up to increasing affine transformations,
	\begin{equation}\label{eq:280922a1}
	\s (x) = \int^x \exp \Big( \int^y \beta (z) dz \Big) dy, \quad x \in B.
	\end{equation}
	Take \(x_0 \in B\), let \(\Qo_{x_0}\)
be the law of a Brownian motion which is absorbed in the boundaries of \(B\) and let \(\Po_{x_0}\)
be a  the law of a diffusion started at \(x_0\) absorbed in accessible boundaries of \(B\) with scale function \(\s\)
	and speed measure 
	\(
	\mo (dx) = dx/\s'(x)\) on \(\mathcal{B} (B). 
	\)
	We deduce from Lemma \ref{lem: sep ito diff} that \(\Po_{x_0}\sim\Qo_{x_0}\).
	In particular, this means that \(\Po_{x_0}\) and \(\Qo_{x_0}\) have the same state space \(\on{cl}(B)\).
	Next, we transfer this equivalence to the (stopped) diffusions \(\P_{x_0} \circ \X_{\cdot \wedge T}^{-1}\) and \(\Q_{x_0} \circ \X_{\cdot \wedge T}^{-1}\), where \(T \triangleq \inf (t \geq 0 \colon \X_t \not \in B).\)
	Define
	\[
	\tilde{\l}^* (t, x) \triangleq 
	\begin{cases}
	\limsup_{h \searrow 0}
	\frac{\int_0^t \1 \{x - h < \X_s < x + h\} ds}{2h}
	&\text{if }x\in B,
	\\[2mm]
	0 & \text{if } x \in \partial B,
	\end{cases}
	\]
	and set 
	\[
	\g (t) \triangleq \begin{cases} \int_B \tilde{\l}^* (t, x) \M (dx), & t < T,\\ \infty, & t \geq T. \end{cases}
	\]
	Further, let \(\g^{-1}\) be the right-inverse of \(\g\), i.e.,
	\(
	\g^{-1} (t) \triangleq \inf (s \geq 0 \colon \g(s+) > t), t \in \mathbb{R}_+.
	\)
	Then, according to the chain rule for diffusions (Theorem~\ref{theo: chain rule}), we get that 
	\begin{align} \label{eq: time change 1st}
	\Qo_{x_0} \circ \X_{\g^{-1}(\cdot)}^{-1} = \Q_{x_0} \circ \X^{-1}_{\cdot \wedge T}.
	\end{align}
	Set
	\[
	\l^* (t, x) \triangleq
	\begin{cases}
	\limsup_{h \searrow 0}
	\frac{\int_0^t \1 \{x - h < \X_s < x + h\} ds}{\m^\circ ((x - h, x + h))}
	&\text{if }x\in B,
	\\[2mm]
	0
	&\text{if }x \in \partial B.
	\end{cases}
	\]
	Now, for \(t < T\), using part~(iii) of Definition~\ref{def: non-sep int},
together with Lemma~\ref{lem:130222a1},
	we obtain
	\begin{equation} \label{eq: comp Lem 2.10 pf}
	\begin{split}
	\g (t) &= \int_B \tilde{\l}^* (t, x) \M(dx) 
	= \int_B \l^* (t, x) \Big(\frac{d \m^\circ}{dx}\Big) (x) \M(dx) 
	= \int_B \frac{ \l^* (t, x) \M(dx)}{\s'(x)}
	\\&= \int_B \l^* (t, x) \Big(\frac{d \m}{d \M}\Big) (x) \M (dx) 
	= \int_B \l^* (t, x) \m(dx).
	\end{split}
	\end{equation}
	Hence, again by the chain rule, we have
	\begin{align} \label{eq: time change 2nd}
	\Po_{x_0} \circ \X_{\g^{-1}(\cdot)}^{-1} = \P_{x_0} \circ \X^{-1}_{\cdot \wedge T}.
	\end{align}
	Take \(G \in \mathcal{F}\) such that \(\P_{x_0} (\X_{\cdot \wedge T} \in G) = 0\). Then, by \eqref{eq: time change 2nd}, also \(\Po_{x_0}(\X_{\g^{-1}(\cdot)} \in G) = 0\), which yields that \(\Qo_{x_0} (\X_{\g^{-1}(\cdot)} \in G) = 0\), by the equivalence of \(\Po_{x_0}\) and \(\Qo_{x_0}\) on \(\mathcal{F}\), and finally, using \eqref{eq: time change 1st}, we get \(\Q_{x_0} (\X_{\cdot \wedge T} \in G) = 0\).
	Conversely, if \(G \in \mathcal{F}\) is such that \(\Q_{x_0} (\X_{\cdot \wedge T} \in G) = 0\), then we get \(\P_{x_0} (\X_{\cdot \wedge T} \in G) = 0\) by a similar reasoning.
	Thus, we conclude that \(\P_{x_0} \circ \X_{\cdot \wedge T}^{-1} \sim \Q_{x_0} \circ \X_{\cdot \wedge T}^{-1}\). As an equivalent change of measure does not change the state space, the claim of Lemma~\ref{lem:130222a2} follows.\qed

\subsubsection{Criteria for Equivalence} \label{sec: cri equivalence}
In the following, we investigate equivalence up to a hitting time.
Recall that \(\A\) denotes the set of separating points.

\begin{lemma} \label{lem: loc equivalence}
	Suppose that \(x_0 \in J^\circ\) and that \(a, c \in J^\circ\) are such that \(a < x_0 < c\) and \([a, c] \subset J^\circ \setminus \A\). Then, \(\P_{x_0} \sim \Q_{x_0}\) on \(\mathcal{F}_{T_a \wedge T_c}\).
\end{lemma}

\begin{proof}
		Take \(a' < a\) and \(c' > c\) such that \([a', c'] \subset J^\circ \setminus \A\). We stress that \(a'\) and \(c'\) exist as \(J^\circ \setminus \A\) is open. To simplify our notation, we set \(T' \triangleq T_{a'} \wedge T_{c'}\).
	Let \(([a', c'] \ni x \mapsto \Qo_x)\) be a Brownian motion which is absorbed in both \(a'\) and~\(c'\).
	Recall that all points in \([a', c']\) are non-separating by hypothesis, which inter alia implies that \(d \m /d \M\) exists as a positive continuous function on \([a', c']\).
	We define a measure \(\m^\circ\) on \(\mathcal{B}([a', c'])\) by
	\[
	\frac{\m^\circ (dx)}{dx} \triangleq \Big( \frac{d \m}{d \M}\Big) (x) \text{ on } (a', c'), \qquad \m^\circ (\{a'\}) \equiv \m^\circ (\{c'\}) \triangleq \infty.
	\]
Notice that for every 
interval \(I \subset (a', c')\) with strictly positive length
we have \(\m^\circ (I) \in (0, \infty)\), because \(d \m /d \M\) is a positive continuous function on \([a', c']\). In other words, \(\m^\circ\) is a valid speed measure (recall Remark~\ref{rem:170322a1}).
Let \((x \mapsto \Po_x)\) be a diffusion with characteristics \((\s, \mo)\). 
Notice that both $a'$ and $c'$ are accessible for \((x \mapsto \Po_x)\) (and then, due to the infinite masses in the speed measure, absorbing), as $|\s(a')|,|\s(c')|<\infty$ and $\mo((a',c'))<\infty$ (recall~\eqref{eq:101022a3}). In particular, the state space of  \((x \mapsto \Po_x)\) is given by $[a',c']$. Further,
as in the proof of Lemma~\ref{lem:130222a2}, $\s$ has the representation~\eqref{eq:280922a1} on $(a',c')$, while, due to~\eqref{eq:111022a1}, $\mo(dx)=dx/\s'(x)$ on $(a',c')$.
Therefore, Lemma~\ref{lem: sep ito diff} applies and yields~\(\Po_{x_0} \sim \Qo_{x_0}\).

We now deduce the claim of the lemma from this equivalence. We use a refined version of the time-change argument from the proof of Lemma \ref{lem:130222a2}.
		Define
		\[
		\tilde{\l}^* (t, x) \triangleq	\begin{cases}
		\limsup_{h \searrow 0}
		\frac{\int_0^t \1 \{x - h < \X_s < x + h\} ds}{2h}
		&\text{if }x\in (a', c'),
		\\[2mm]
		0 & \text{if } x \in \{a', c'\},
		\end{cases}
		\]
		and set 
		\[
		\g (t) \triangleq \begin{cases} \int_{a'}^{c'} \tilde{\l}^* (t, x) \M (dx), & t < T',\\ \infty, & t \geq T'. \end{cases}
		\]
		Further, \(\g^{-1} (t) = \inf (s \geq 0 \colon \g (s+) > t)\) denotes the right-inverse of \(\g\).
		According to the chain rule for diffusions (Theorem~\ref{theo: chain rule}), we get that 
		\(
		\Qo_{x_0} \circ \X_{\g^{-1}(\cdot)}^{-1} = \Q_{x_0} \circ \X^{-1}_{\cdot \wedge T'}.
		\)
		Set
		\[
		\l^* (t, x) \triangleq 	\begin{cases}
		\limsup_{h \searrow 0}
		\frac{\int_0^t \1 \{x - h < \X_s < x + h\} ds}{\m^\circ ( (x - h, x + h))}
		&\text{if }x\in (a', c'),
		\\[2mm]
		0 & \text{if } x \in \{a', c'\}.
		\end{cases}
		\]
		Now, for \(t < T'\), as in~\eqref{eq: comp Lem 2.10 pf}, we obtain
		\begin{align*}
		\g (t) &= \int_{a'}^{c'} \l^* (t, x) \m(dx).
		\end{align*}
		Hence, again by the chain rule, we have
		\(
		\Po_{x_0} \circ \X_{\g^{-1}(\cdot)}^{-1} = \P_{x_0} \circ \X^{-1}_{\cdot \wedge T'}.
		\)		
		Finally, take \(G \in \mathcal{F}_{T_a \wedge T_c}\) such that \(\P_{x_0} (G) = 0\). 
Using Galmarino's test
in the form \cite[10~c), p.~87]{itokean74} (or adjusting \cite[Exercise~I.4.21]{RY} for the right-continuous filtration $(\mathcal F_t)_{t \geq 0}$),
we obtain
		\[
		0 = \P_{x_0} (G) = \P_{x_0} (\X_{\cdot \wedge T'} \in G) = \Po_{x_0} ( \X_{\g^{-1}(\cdot)} \in G).
		\]
		Since \(\Po_{x_0} \sim \Qo_{x_0}\), we obtain
		\[
		0 = \Qo_{x_0}(\X_{\g^{-1}(\cdot)} \in G) = \Q_{x_0}(\X_{\cdot \wedge T'} \in G) = \Q_{x_0} (G),
		\]
		which proves \(\Q_{x_0} \ll \P_{x_0}\) on \(\mathcal{F}_{T_a \wedge T_c}\).
The reverse absolute continuity follows by a similar reasoning.
This completes the proof.
\end{proof}

A minor variation of argument for Lemma \ref{lem: loc equivalence} also shows the next lemma. We leave the details to the reader.
Recall that \(l = \inf J (= \inf \tilde{J})\) and \(r = \sup J (= \sup \tilde{J})\).

\begin{lemma} \label{lem: local equivalence hitting time from the right}
	Suppose that the left boundary \(l\) is non-separating and that it is either inaccessible or absorbing for one (equivalently, for both) of the diffusions.
	Further, assume that \(c \in (x_0, r)\) is such that all points in \([l, c]\) are non-separating. Then, \(\P_{x_0} \sim \Q_{x_0}\) on \(\mathcal{F}_{T_c}\).
\end{lemma}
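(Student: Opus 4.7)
The plan is to mimic the proof of Lemma~\ref{lem: loc equivalence} \emph{verbatim}, with the lower endpoint $a'$ replaced by the boundary point $l$ itself.

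First I would choose $c' \in (c, r)$ such that every point of $[l, c']$ remains non-separating. This is possible because $A$ is closed in $\on{cl}(J)$ and $c$ is non-separating by hypothesis, so a neighborhood of $c$ in $\on{cl}(J)$ consists of non-separating points only; glued to the given non-separability of $[l,c]$, this supplies the desired $c'$. Set $T' \triangleq T_{c'}$.

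Next I would introduce two reference It\^o diffusions on the state space $J \cap (-\infty, c']$. The first, $(\Qo_x)$, is Brownian motion on this space, absorbed at $c'$, and with the same behavior at $l$ as $(x\mapsto\Q_x)$: inaccessible when $l$ is inaccessible for $\Q$, and absorbing (encoded by $\mo(\{l\}) = \infty$) when $l$ is accessible and absorbing for $\Q$. The second, $(\Po_x)$, is the diffusion with scale $\s$ and speed $\m^\circ(dx) \triangleq dx/\s'(x)$ on $(l,c')$, $\m^\circ(\{c'\}) = \infty$, and the same $l$-behavior as $(x\mapsto\P_x)$. A short computation using \eqref{eq:160223a1} and the non-separability of $l$ shows that these prescriptions at $l$ are consistent with the accessibility forced by $(\s,\m^\circ)$ and $(\on{Id},\mo)$.

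With these in place, the It\^o diffusion input (Lemma~\ref{lem: sep ito diff}, used exactly as in the proof of Lemma~\ref{lem:130222a2}) yields $\Po_{x_0} \sim \Qo_{x_0}$: all interior points are non-separating by construction, $l$ is non-separating (also in the It\^o sense, see Example~\ref{ex: ito diffusion}), and $c'$ is absorbing for both references. I would then transfer this equivalence to the stopped laws $\P_{x_0} \circ \X^{-1}_{\cdot\wedge T'}$ and $\Q_{x_0} \circ \X^{-1}_{\cdot\wedge T'}$ via the same time-change / chain-rule argument as in Lemma~\ref{lem: loc equivalence}: define the local-time-type quantities $\tilde\ell^*, \ell^*$ relative to $\M$ and $\m^\circ$, form $\g(t) = \int \tilde\ell^*(t,x)\,\M(dx)$, use part~(iii) of Definition~\ref{def: non-sep int} (via Lemma~\ref{lem:130222a1}) to rewrite the same clock as $\g(t) = \int \ell^*(t,x)\,\m(dx)$, and apply Theorem~\ref{theo: chain rule}. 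Galmarino's test then upgrades the resulting absolute continuities to $\mathcal F_{T'}$, and since $x_0 < c < c'$ and paths are continuous, $T_c \le T_{c'} = T'$, so $\mathcal F_{T_c}\subset\mathcal F_{T'}$ and the claimed equivalence on $\mathcal F_{T_c}$ follows.

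The main obstacle is ensuring the reference diffusions $\Po$ and $\Qo$ admit a consistent boundary behavior at $l$ so that the It\^o diffusion input applies cleanly. By hypothesis, $l$ is either inaccessible or absorbing for \emph{both} original diffusions---never (instantaneously or slowly) reflecting---so the speed-measure prescriptions at $l$ for $\Po$ and $\Qo$ match in type, and no further compatibility conditions need to be checked; the equivalence then propagates through the time change without additional work.
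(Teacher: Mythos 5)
Your overall strategy --- choose $c'>c$ with $[l,c']$ still non-separating, build It\^o reference diffusions on an interval having $l$ as left endpoint, invoke Lemma~\ref{lem: sep ito diff}, and transfer via the time change of Theorem~\ref{theo: chain rule} --- is exactly the intended ``minor variation'' of Lemma~\ref{lem: loc equivalence}, and the final transfer step (Galmarino's test plus $T_c\le T_{c'}$) is fine. The problem is your construction of the reference diffusions at $l$. Since $l$ is non-separating and $\S=\on{Id}$ in this section, $l=\S(l)\in\mathbb R$, so $l$ is finite; for a Brownian motion (scale $\on{Id}$, speed Lebesgue near $l$) a finite boundary point is always \emph{regular}, hence accessible --- the computation via \eqref{eq:160223a1} gives $\int_{l+}(z-l)\,dz<\infty$ unconditionally. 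So in the case where $l$ is inaccessible for $(x\mapsto\Q_x)$, which the lemma genuinely allows (half-goodness of $l$ controls $\int_{l+}(z-l)\beta(z)^2\,dz$, not $\int_{l+}(z-l)\,\M(dz)$), the object ``Brownian motion on $(l,c']$ with $l$ inaccessible'' does not exist, and your claimed consistency check in fact yields the opposite conclusion. The same objection applies to your prescription for $\Po$.

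The correct move, which is what the paper does in the proofs of Lemmata \ref{lem:130222a2} and \ref{lem: loc equivalence} and in Case~4 of the main proof, is \emph{not} to match the boundary behaviour of the references to that of the originals: take $\Qo$ to be Brownian motion on $[l,c']$ absorbed at both endpoints and $\Po$ the diffusion with characteristics $(\s,dx/\s')$ absorbed in its accessible boundaries; the equivalence $\Po_{x_0}\sim\Qo_{x_0}$ then follows from Lemma~\ref{lem: sep ito diff} because all points of $[l,c']$ --- including $l$, by \eqref{eq:130222a1} --- are non-separating for this It\^o pair with absorbing boundaries, and it forces $l$ to be accessible for $\Po$ as well. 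The discrepancy with the original diffusions is handled by the time change itself: when $l$ is inaccessible for $(x\mapsto\Q_x)$, the clock $\g(t)=\int\tilde{\l}^*(t,x)\,\M(dx)$ diverges before the Brownian path reaches $l$, so $\X_{\g^{-1}(\cdot)}$ never gets there; and the hypothesis that $l$ is inaccessible or absorbing (never reflecting) for the originals is precisely what keeps conditions (c)--(d) of Theorem~\ref{theo: chain rule} vacuous. With this correction the rest of your argument goes through.
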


Clearly, there is also a version of Lemma \ref{lem: local equivalence hitting time from the right} for the right boundary point.

\subsubsection{Singularity under time transformations} \label{sec: singularity time change}
In the following, we give a result, which shows that singularity propagates through certain changes of time.
\begin{lemma} \label{lem: time change singularity}
Let \(\Po_{x_0}\) and \(\Qo_{x_0}\) be as in the first step of the proof of Lemma~\ref{lem: loc equivalence} with
some $a',c'\in J^\circ$, $a'<x_0<c'$.
If \(\Po_{x_0} \perp \Qo_{x_0}\) on \(\mathcal{F}_0\), then \(\P_{x_0} \perp \Q_{x_0}\) on \(\mathcal{F}_0\).
\end{lemma}
\begin{proof}
	For \(t \in \mathbb{R}_+\), define
	\begin{align} \label{eq: diff local time}
		\tilde{\l} (t, x) \triangleq 	\begin{cases}
			\limsup_{h \searrow 0}
			\frac{\int_0^t \1 \{x - h < \X_s < x + h\} ds}{\M ( (x - h, x + h))}
			&\text{if }x\in (a', c'),
			\\[2mm]
			0 & \text{if } x \in \{a', c'\},
		\end{cases}
	\end{align}
	\begin{align*}
		\f (t) & \triangleq \begin{cases} \int_{a'}^{c'} \tilde{\l} (t, x) dx, & t < T' = T_{a'} \wedge T_{c'},\\ \infty, & t \geq T = T_{a'} \wedge T_{c'}, \end{cases}
	\end{align*}
	and 
	\(\f^{-1} (t) \triangleq \inf (s \geq 0 \colon \f(s+) > t).\)
	The chain rule for diffusions as given by Theorem~\ref{theo: chain rule} yields that 
	\begin{align} \label{eq: meas after chain rule singularity}
		\Po_{x_0} = \P_{x_0} \circ \X_{\f^{-1}(\cdot)}^{-1},\qquad \Qo_{x_0} = \Q_{x_0} \circ \X_{\f^{-1}(\cdot)}^{-1}.
	\end{align}
	Take \(G \in \mathcal{F}_0\). By definition, for every \(t > 0\), we have \(G \in \sigma (\X_s, s \leq t)\) and hence,
	\[
	\{\X_{\f^{-1}(\cdot)} \in G\} \in \sigma (\X_{\f^{-1}(s)}, s \leq t) \subset \mathcal{F}_{\f^{-1} (t)}.
	\]
	Consequently, we get
	\[
	\{\X_{\f^{-1}(\cdot)} \in G\} \in \bigcap_{t > 0} \mathcal{F}_{\f^{-1} (t)} = \mathcal{F}_{\inf_{t > 0} \f^{-1} (t)} = \mathcal{F}_{\f^{-1}(0)},
	\]
	see  \cite[Lemma 9.3]{kallenberg} for the first equality.
	Thus, \(\{\X_{\f^{-1}(\cdot)} \in G, \f^{-1}(0) = 0\} \in \mathcal{F}_0\). Now, assume that \(\Po_{x_0} \perp \Qo_{x_0}\) on \(\mathcal{F}_0\). Then, there exists a set \(G \in \mathcal{F}_0\) such that \(\Po_{x_0} (G) = 1 - \Qo_{x_0}(G) = 0\).
	Hence, using~\eqref{eq: meas after chain rule singularity}, we get
	\[
	\P_{x_0} (\X_{\f^{-1}(\cdot)} \in G, \f^{-1}(0) = 0) \leq \P_{x_0} (\X_{\f^{-1}(\cdot)} \in G) = \Po_{x_0}(G) = 0,
	\]
	and 
	\[
	\Q_{x_0} (\X_{\f^{-1}(\cdot)} \not \in G \text{ or } \f^{-1}(0) \not = 0) = \Q_{x_0}(\X_{\f^{-1}(\cdot)} \not \in G) = 1 - \Qo_{x_0}(G) = 0.
	\]
	Here, we also used that \(\Q_{x_0} (\f^{-1} (0) = 0) = 1\) by Lemma \ref{lem: pos LT}. 
	We conclude that \(\P_{x_0} \perp \Q_{x_0}\) and the proof is complete.
\end{proof}

\subsubsection{Criteria for Singularity}
In Section~\ref{sec: cri equivalence} we studied criteria for equivalence up to hitting times and in Section~\ref{sec: singularity time change} we provided some preliminary observations in the direction of singularity. In the following, we take a more complete look at singularity.
The following lemma is a preparatory result.

\begin{lemma} \label{lem: in between stopping time}
	Let \(S\) be the separating time for \(\P_{x_0}\) and \(\Q_{x_0}\) with \(x_0 \in J \cap \tilde{J}\). If \(\P_{x_0}(S > 0) > 0\), then there exists a stopping time \(\xi\) such that \(\P_{x_0}, \Q_{x_0}\)-a.s. \(0 < \xi < S \wedge \infty\).
\end{lemma}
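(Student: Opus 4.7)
The plan is to first use a Blumenthal-type $0$-$1$ law to upgrade the hypothesis to $\P_{x_0}(S>0)=\Q_{x_0}(S>0)=1$, then take $\xi$ to be the (truncated) first exit time from a small neighbourhood of $x_0$ chosen to avoid the separating set $A$, with Lemma~\ref{lem: loc equivalence} guaranteeing that $\xi<S$.

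The event $\{S>0\}=\{S\leq 0\}^c$ lies in $\mathcal F_0$ since $S$ is an extended stopping time. Because $(x\mapsto\P_x)$ is strong Markov with $\P_{x_0}(\X_0=x_0)=1$, Blumenthal's $0$-$1$ law makes $\mathcal F_0$ trivial under $\P_{x_0}$, so the hypothesis gives $\P_{x_0}(S>0)=1$. Applying Theorem~\ref{th:020323a1} with $\tau=0$ yields $\P_{x_0}\sim\Q_{x_0}$ on $\mathcal F_0\cap\{0<S\}=\mathcal F_0$, so $\Q_{x_0}(S>0)>0$, and Blumenthal under $\Q_{x_0}$ pushes this to~$1$. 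Next, I argue that $x_0\notin A$: if $x_0$ were separating, a direct inspection of Definitions~\ref{def: non-sep int} and~\ref{def: non-sep bound} at the infinitesimal scale accessible through $\mathcal F_0$ would produce $\P_{x_0}\perp\Q_{x_0}$ on~$\mathcal F_0$, contradicting Proposition~\ref{prop: AC Sing}(viii).

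For $x_0\in J^\circ$, the closedness of $A$ in $\on{cl}(J)$ yields $a,c\in J^\circ$ with $a<x_0<c$ and $[a,c]\subset J^\circ\setminus A$. Set
\[
\xi\triangleq(T_a\wedge T_c)\wedge 1.
\]
Continuity of the coordinate process and $x_0\in(a,c)$ imply $T_a>0$ and $T_c>0$ $\P_{x_0},\Q_{x_0}$-a.s., so $\xi>0$ a.s. Lemma~\ref{lem: loc equivalence} gives $\P_{x_0}\sim\Q_{x_0}$ on $\mathcal F_{T_a\wedge T_c}$, and combining this with Theorem~\ref{th:020323a1} at $\tau=T_a\wedge T_c$ forces the set $\{T_a\wedge T_c\geq S\}\in\mathcal F_{T_a\wedge T_c}$ to be $\P_{x_0},\Q_{x_0}$-null (it carries both equivalence and singularity simultaneously, which is possible only for null sets). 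Thus $T_a\wedge T_c<S$ $\P_{x_0},\Q_{x_0}$-a.s., and since $\xi\leq T_a\wedge T_c$ and $\xi\leq 1<\infty$, we conclude $0<\xi<S\wedge\infty$ $\P_{x_0},\Q_{x_0}$-a.s.

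When $x_0\in\partial J$ a short case analysis handles the remaining possibilities. If $x_0$ is absorbing for both diffusions, then $\P_{x_0}=\Q_{x_0}$ is the Dirac mass on the constant path, $S=\delta$, and $\xi\equiv 1$ works. If $x_0$ is reflecting for both diffusions, the one-sided $\xi\triangleq T_c\wedge 1$ with $c\in J^\circ$ chosen so that $[x_0,c]$ avoids $A$ does the job; the equivalence $\P_{x_0}\sim\Q_{x_0}$ on $\mathcal F_{T_c}$ needed here follows by extending Lemma~\ref{lem: local equivalence hitting time from the right} via the symmetrization trick outlined at the start of Section~\ref{sec: pf}, which recasts a reflecting boundary as an interior point of a symmetrized diffusion. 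The main obstacle is the implication ``$x_0\in A\Rightarrow\P_{x_0}\perp\Q_{x_0}$ on $\mathcal F_0$'' invoked in the second paragraph: this is the ``time $0$'' instance of Theorem~\ref{theo: main1} and must be verified directly from the local definitions of (non-)separating points, since Theorem~\ref{theo: main1} itself is not yet available at this stage of the paper.
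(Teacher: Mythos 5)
Your opening step (Blumenthal's zero--one law to upgrade $\P_{x_0}(S>0)>0$ to $\P_{x_0}(S>0)=\Q_{x_0}(S>0)=1$, hence $\P_{x_0}\sim\Q_{x_0}$ on $\mathcal F_0$) matches the paper, and the construction $\xi=(T_a\wedge T_c)\wedge 1$ together with Lemma~\ref{lem: loc equivalence} would indeed work \emph{if} you knew that $x_0\notin A$. The problem is that your route to $x_0\notin A$ is circular. The implication ``$x_0\in A\Rightarrow\P_{x_0}\perp\Q_{x_0}$ on $\mathcal F_0$'' is exactly the content of Lemmata \ref{lem: sing starting in A} and \ref{lem: singular boundary separating}, and both of those proofs \emph{begin} by invoking Lemma~\ref{lem: in between stopping time}: the strictly positive stopping time $\xi$ with $\xi<S$ is what furnishes the equivalence $\P_{x_0}\sim\Q_{x_0}$ on $\mathcal F_\xi$ over a non-degenerate time interval, which is then fed into the local-time and Girsanov arguments that contradict separation. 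You acknowledge this obstacle in your last paragraph but do not resolve it; ``a direct inspection of Definitions~\ref{def: non-sep int} and~\ref{def: non-sep bound} at the infinitesimal scale accessible through $\mathcal F_0$'' is not a proof — it is a placeholder for several pages of the hardest analysis in the paper, and that analysis itself needs the lemma you are trying to prove.

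The paper's proof sidesteps the set $A$ entirely and is purely an exercise in the general theory of processes: $S'\triangleq S\wedge\infty$ is a stopping time; by Meyer's theorem on predictability (Lemma~\ref{lem: Meyer theo}) it coincides $\P_{x_0}$-a.s.\ with a predictable time, so it admits an announcing sequence $S'_n\nearrow S'$ with $S'_n<S'$; since $\P_{x_0}$-a.s.\ $S'>0$, some $S'_N$ is positive with positive probability, hence $\P_{x_0}$-a.s.\ positive by Blumenthal, giving $\xi_1$ with $\P_{x_0}$-a.s.\ $0<\xi_1<S'$; repeat under $\Q_{x_0}$ to get $\xi_2$, and set $\xi=\xi_1\wedge\xi_2$, using $\P_{x_0}\sim\Q_{x_0}$ on $\mathcal F_0$ to see that $\{\xi_i>0\}$ has full measure under both laws. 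You should replace your second paragraph and the case analysis by this argument (or supply an independent, Lemma-\ref{lem: in between stopping time}-free proof of the singularity statement at time $0$, which would amount to restructuring a large part of Section~\ref{sec: pf}).
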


\begin{proof}
	First of all, since \(\{S > 0\} \in \mathcal{F}_0\), Blumenthal's zero-one law (Lemma~\ref{lem: Blumenthal}) yields that the event \(\{S > 0\}\) has
\(\P_{x_0}\)-probability zero or one and \(\Q_{x_0}\)-probability zero or one.
Consequently, by Propositon~\ref{prop: AC Sing}, \(\P_{x_0}(S > 0) = \Q_{x_0}(S > 0) = 1\). Let \(S' \triangleq S \wedge \infty\). Clearly, as \(S\) is an extended stopping time, \(S'\) is a stopping time. By Lemma \ref{lem: Meyer theo}, i.e., Meyer's theorem on predictability, any stopping time coincides \(\P_{x_0}\)-a.s. with a predictable time. 
Let \(S'_1, S'_2, \dots\) be an announcing sequence for \(S'\) under \(\P_{x_0}\), i.e., \(S'_1, S'_2, \dots\) is an increasing sequence of stopping times such that \(\P_{x_0}\)-a.s. \(S'_n \to S'\) and \(S_n' < S'\)
for all \(n \in \mathbb{N}\). There exists an \(N \in \mathbb{N}\) such that \(\P_{x_0}(S'_N > 0) > 0\), because \(\P_{x_0}\)-a.s. \(S'_n \to S' > 0\). Using again Blumenthal's zero-one law, we get that \(\P_{x_0} (S'_N > 0) = 1\). Now, \(\xi_1 \triangleq S'_N\) satisfies \(\P_{x_0}\)-a.s. \(0 < \xi_1 < S'\). In the same manner, we obtain the existence of a stopping time \(\xi_2\) such that \(\Q_{x_0}\)-a.s. \(0 < \xi_2 < S'\). Finally, set \(\xi \triangleq \xi_1 \wedge \xi_2\). We clearly have \(\P_{x_0}, \Q_{x_0}\)-a.s. \(\xi < S'\). It remains to check that also \(\P_{x_0}, \Q_{x_0}\)-a.s. \(\xi > 0\). Since \(\P_{x_0}, \Q_{x_0}\)-a.s. \(S > 0\), we get from Proposition~\ref{prop: AC Sing} that \(\P_{x_0}\sim \Q_{x_0}\) on \(\mathcal{F}_0\). Thus, since \(\{\xi_1 > 0\} \in \mathcal{F}_0\), \(\P_{x_0}(\xi_1 > 0) = 1\) implies \(\Q_{x_0}(\xi_1 > 0) = 1\), and similarly, \(\P_{x_0}(\xi_2 > 0) = 1\) follows from \(\Q_{x_0}(\xi_2 > 0) = 1\). In summary, we have \(\P_{x_0}, \Q_{x_0}\)-a.s. \(\xi> 0\) and the proof is complete.
\end{proof}

\begin{lemma} \label{lem: sing starting in A}
	Suppose that \(x_0 \in J^\circ\) and consider the following three conditions:
	\begin{enumerate}
		\item[\textup{(a)}] for every open neighborhood \(U (x_0) \subset J^\circ\) of \(x_0\) either there exists a point \(z \in \oU(x_0)\) such that
		the differential quotient
		\(d \m (z)/ d\M\) does not exist, or \(d \m / d \M\) exists on \(U (x_0)\) but not as a strictly positive continuous function;
		\item[\textup{(b)}] for every open neighborhood \(U (x_0) \subset J^\circ\) of \(x_0\) either there exists a point \(z \in \oU(x_0)\) such that
		the differential quotient
		\(d^+ \s (z)/ dx\) does not exist, or $d^+ \s/ dx$ ${(\equiv \s')}$ exists on \(U(x_0)\) but not as a strictly positive absolutely continuous function such that \(d \s' (z) = \s' (z) \beta(z) dz\) with \(\beta  \in L^2 (U(x_0))\);
		\item[\textup{(c)}] there exists an
		open neighborhood \(U (x_0) \subset J^\circ\) of \(x_0\) such that the differential quotients \(d \m/ d\M\) and \(d^+ \s/ dx\) exist on \(U(x_0)\) and for all sub-neighborhoods \(V(x_0) \subset U (x_0)\) with \(x_0 \in V (x_0)\) there exists a point \(z \in V(x_0)\) such that \(d \m (z)/ d \M \cdot d^+ \s (z) / d x \not = 1\).
	\end{enumerate}
	Then, the following equivalent statements hold:
	\begin{enumerate}
		\item[\textup{(i)}] 
			If at least one of the conditions (a)--(c) above holds, then \(\P_{x_0} \perp \Q_{x_0}\) on \(\mathcal{F}_0\).
		\item[\textup{(ii)}] 
		If \(\P_{x_0} \sim \Q_{x_0}\) on \(\mathcal{F}_0\), then all three conditions (a)--(c) above are violated.
	\end{enumerate}
\end{lemma}

\begin{proof}
	First, notice that (i) and (ii) are equivalent, because, as a consequence of Blumenthal's zero-one law (Lemma~\ref{lem: Blumenthal}), either \(\P_{x_0} \perp \Q_{x_0}\) on \(\mathcal{F}_0\) or \(\P_{x_0} \sim \Q_{x_0}\) on \(\mathcal{F}_0\). In the following, we will prove (ii),
i.e.,
we assume that \(\P_{x_0} \sim \Q_{x_0}\) on \(\mathcal{F}_0\) and prove that (a)--(c) are violated.
	
	In view of Proposition~\ref{prop: AC Sing}, this means that \(\P_{x_0}, \Q_{x_0}\)-a.s. \(S > 0\), where \(S\) denotes the separating time for \(\P_{x_0}\) and \(\Q_{x_0}\). 
	Thanks to Lemma \ref{lem: in between stopping time}, there exists a stopping time \(\xi\) such that \(\P_{x_0}, \Q_{x_0}\)-a.s. \(0 < \xi < S \wedge \infty\) and, by the definition of separating time, \(\P_{x_0} \sim \Q_{x_0}\) on~\(\mathcal{F}_\xi\).
	
	Let \(\{L_t^x(\X) \colon (t, x) \in \mathbb{R}_+ \times\mathbb R\}\)
	be the on $\mathbb R_+\times J^\circ$ jointly continuous
	\(\Q_{x_0}\)-modification of the
	semimartingale
	local time of \(\X\) and let \(\{L_t^x (\s(\X)) \colon (t, x) \in \mathbb{R}_+ \times\mathbb R\}\) be the on $\mathbb R_+\times\s(J^\circ)$ jointly continuous
	\(\P_{x_0}\)-modification of the
	semimartingale
	local time of \(\s(\X)\), see Lemmata \ref{lem: diff homo} and \ref{lem: occ formula diff}
	and Remark~\ref{rem:241124a1}~(a).
	We define \(G \triangleq G_1 \cap G_2 \cap G_3\), where
	\begin{align*}
	G_1 &\triangleq \Big\{0 < \xi < \infty,\ L_{\xi}^{x_0} (\X), L_{\xi}^{\s(x_0)} (\s(\X)) > 0\Big\},
	\\
	G_2 &\triangleq \Big\{0 < \xi < \infty,\ \int_0^\xi f (\X_{s}) ds = \int f(x) L_\xi^x (\X) \M (dx) 
	\ \forall \, f \in \mathcal{B}^+_0\Big\},\\
	G_3 &\triangleq \Big\{0 < \xi < \infty,\ \int_0^\xi f (\X_{s}) ds = \int f(x) L_\xi^{\s(x)} (\s(\X)) \m (dx)  
	\ \forall \, f \in \mathcal{B}^+_0\Big\}
	\end{align*}
and \(\mathcal{B}^+_0 \triangleq \{ f \colon \on{cl} (J) \to \mathbb{R}_+, \, \text{Borel}, \, f |_{\partial J} = 0 \}\).
By Lemmata \ref{lem: occ formula diff} and~\ref{lem: pos LT}
and Remark~\ref{rem:241124a1}~(b),
the equivalence \(\P_{x_0}\sim\Q_{x_0}\) on \(\mathcal{F}_\xi\) and the fact that \(\P_{x_0}, \Q_{x_0}\)-a.s. \(0<\xi<\infty\), we have \(\P_{x_0} (G) = \Q_{x_0} (G) = 1\), which in particular implies that \(G \not = \emptyset\). We fix some \(\omega \in G\). As the functions
	\[x \mapsto L^x_{\xi (\omega)} (\X) (\omega) \quad \text{and} \quad x \mapsto L^{\s(x)}_{\xi (\omega)} (\s(\X)) (\omega)\] 
	are continuous on $J^\circ$,
	using the definition of \(G_1\), we can find an open neighborhood \(V(x_0)\) of \(x_0\) such that 
	\[
	L^x_{\xi (\omega)} (\X) (\omega), L^{\s(x)}_{\xi (\omega)} (\s(\X)) (\omega) > 0 \ \ \forall \, x \in V(x_0).
	\]
	Using the definition of \(G_2\) and \(G_3\), for every \(z \in V(x_0)\), we get, as \(h \searrow 0\),
	\begin{align*}
	\frac{\int_0^{\xi (\omega)} \1 \{z - h < \X_{s} (\omega) < z + h\} ds}{\M ((z - h, z + h))} &= \frac{\int_{z - h}^{z + h} L_{\xi (\omega)}^x (\X) (\omega) \M (dx)}{\M((z - h, z + h))} \to L_{\xi (\omega)}^z (\X) (\omega) > 0, 
	\end{align*}
	and 
	\begin{align*}
	\frac{\int_0^{\xi (\omega)} \1 \{z - h < \X_{s} (\omega) < z + h\} ds}{\m ((z - h, z + h))} 
	&= \frac{\int_{z - h}^{z + h} L_{\xi (\omega)}^{\s(x)} (\s(\X)) (\omega) \m(dx)}{\m((z - h, z + h))} 
	\to L_{\xi(\omega)}^{\s(z)} (\s(\X)) (\omega) > 0,
	\end{align*}
	which shows that 
	\begin{align} \label{eq: speed measure density local time}
	\Big(\frac{d \m}{d \M}\Big) (z) = \frac{L^z_{\xi (\omega)} (\X) (\omega) }{L^{\s(z)}_{\xi(\omega)} (\s (\X)) (\omega)} > 0.
	\end{align}
	Thanks to the continuity of the function 
	\[
	V(x_0) \ni z \mapsto \frac{L^z_{\xi (\omega)} (\X) (\omega) }{L^{\s(z)}_{\xi(\omega)} (\s (\X)) (\omega)},
	\]
	we conclude that \(d \m /d \M\) exists as a strictly positive continuous function on \(V(x_0)\), i.e., (a)~is violated.

Next, we show that (b) is violated, too.
Let \(\Po_{x_0}\) and \(\Qo_{x_0}\) be as in the first step of the proof of Lemma~\ref{lem: loc equivalence} with \((a', c') = V(x_0)\).
By virtue of Lemma~\ref{lem: time change singularity} and Blumenthal's zero-one law,
\(\P_{x_0} \sim \Q_{x_0}\) on \(\mathcal{F}_0\) implies that \(\Po_{x_0} \sim \Qo_{x_0}\) on \(\mathcal{F}_0\).
Therefore, to simplify our notation, we can and will w.l.o.g.
assume that \(\Q_{x_0}\) is the law of a Brownian motion stopped at the boundaries of an open neighborhood \(V(x_0)\) of \(x_0\) and that \(\P_{x_0}\) is the law of a diffusion with scale function \(\s\) and speed measure
$$
\frac{\mo(dx)}{dx} \triangleq \Big(\frac{d \m}{d \M} \Big)(x)
$$
also stopped at the boundaries of \(V(x_0)\). 
These assumptions are in force for the remainder of this proof. 
By Lemma~\ref{lem: scale fct}, \(Y\ \triangleq \s (\X_{\cdot \wedge \xi})\) is a \(\P_{x_0}\)-semimartingale (in fact, \(Y\) is even a local \(\P_{x_0}\)-martingale). Consequently, as \(\P_{x_0} \sim \Q_{x_0}\) on \(\mathcal{F}_\xi\), Girsanov's theorem (Lemma~\ref{lem: Girs}) yields that \(Y\) is also a \(\Q_{x_0}\)-semimartingale.
	As \(\Q_{x_0}\) is the law of a Brownian motion with absorbing boundaries, thanks to Theorem~\ref{lem: diff convex}, possibly making \(V(x_0)\) a bit smaller,  we get that \(\s\) is the difference of two convex functions on \(V(x_0)\). In particular, \(d^+ \s /dx \equiv \s'\) exists as a right-continuous function of (locally) finite variation (\cite[Proposition~5.1]{CinJPrSha}).
	Let $a<c$ denote the boundary points of $V(x_0)$.
	Take \(a < a' < x_0 < c' < c\) and set \(T \triangleq T_{a'} \wedge T_{c'}\).
	Since \(\P_{x_0} \sim \Q_{x_0}\) on \(\mathcal{F}_\xi\) and because \(\Q_{x_0}\) is the law of a Brownian motion stopped at the boundaries of \(V(x_0)\), Girsanov's theorem (Lemma~\ref{lem: Girs}) shows that under \(\P_{x_0}\) the stopped process \(\X_{\cdot \wedge \xi \wedge T}\) is a Brownian motion stopped at \(\xi \wedge T\) with additional drift \(\int_0^{\cdot \wedge \xi \wedge T} \tilde{\beta}_s ds\) for a predictable process \(\tilde{\beta}\) such that \(\P_{x_0}, \Q_{x_0}\)-a.s.\
	\[\int_0^{t\wedge\xi\wedge T} \tilde{\beta}^2_s ds < \infty\] for all $t\in\mathbb R_+$.
	In other words, under \(\P_{x_0}\), we have, informally,
	\[
	\X_{\cdot \wedge \xi \wedge T} = x_0 + \int_0^{\cdot \wedge \xi \wedge T} \tilde{\beta}_s ds + W_{\cdot \wedge \xi \wedge T},
	\]
	where \(W\) is a standard Brownian motion.
	Let \(\{L^x_t (\X) \colon (t, x) \in \mathbb{R}_+ \times \mathbb{R}\}\) be a jointly continuous modification\footnote{Notice that \(\X\) is a continuous \(\Q_{x_0}\)-martingale, as it is a stopped Brownian motion under \(\Q_{x_0}\), and recall \cite[Theorem~VI.1.7]{RY}.} of the
	semimartingale
	local time of \(\X\) under \(\Q_{x_0}\).
	Then, as \(\s\) is the difference of two convex functions on \([a', c'] \subset (a, c)\), the generalized It\^o formula (Lemma~\ref{lem: occ smg}
		and\footnote{Recall that here $\s'$ denotes the \emph{right-hand} derivative, so we need the left-continuous (in the space variable) local time process in the generalized It\^o formula.
But in the present context the local time process is jointly continuous.}
Remark~\ref{rem:021022a1}) yields that \(\Q_{x_0}\)-a.s. 
	\[
	\s (\X_{\cdot \wedge \xi \wedge T}) = \s(x_0) + \int_0^{\cdot \wedge \xi \wedge T} \s' (\X_s) d \X_s + \frac{1}{2} \int L^x_{\cdot \wedge \xi \wedge T} (\X) \s'' (dx),
	\]
	where \(\s''(dx)\) denotes the
	(signed)
	second derivative measure of \(\s\) defined
	on $(a',c']$
	by \(\s'' ((x, y]) = \s'(y) - \s'(x)\) for all \(a' \leq x \leq y \leq c'\).
	Of course, by the equivalence of \(\P_{x_0}\) and \(\Q_{x_0}\) on \(\mathcal{F}_\xi\), this equality also holds under \(\P_{x_0}\).
	Thus, under \(\P_{x_0}\), we get
	\begin{align*}
	\s (\X_{\cdot \wedge \xi \wedge T}) &- \frac{1}{2} \int L^x_{\cdot \wedge \xi \wedge T} (\X) \s'' (dx) - \int_0^{\cdot \wedge \xi \wedge T} \s' (\X_s) \tilde{\beta}_s ds 
	\\&= \s (x_0) + \int_0^{\cdot \wedge \xi \wedge T} \s' (\X_s) d \Big(\X_s - \int_0^s \tilde{\beta}_u du \Big) 
	= \text{local \(\P_{x_0}\)-martingale}.
	\end{align*}
	By Lemma \ref{lem: scale fct}, the stopped process \(\s (\X_{\cdot \wedge \xi \wedge T})\) is a local \(\P_{x_0}\)-martingale, too. Hence, using the fact that continuous local martingales of (locally) finite variation are constant, we obtain that \(\P_{x_0}\)-a.s., and hence also \(\Q_{x_0}\)-a.s.,
	\begin{align}\label{eq: drift condition}
	\int L^x_{\cdot \wedge \xi \wedge T} (\X) \s'' (dx) = - \int_0^{\cdot \wedge \xi \wedge T} 2 \s'(\X_s) \tilde{\beta}_s ds.
	\end{align}
	Computing the variation of both sides (see \cite[pp. 915--916]{MU15} for a detailed computation of the variation of the first integral) yields that 
	\(\Q_{x_0}\)-a.s.
	\[
	\int L^x_{\cdot \wedge \xi \wedge T} (\X) |\s''| (dx) = \int_0^{\cdot \wedge \xi \wedge T} \big|2 \s' (\X_s) \tilde{\beta}_s\big| ds,
	\]
	where $|\s''|$ denotes the variation measure of the measure $\s''$.
	Let \(\mathcal{N}\) be the collection of all Lebesgue null sets in \(\mathcal{B}((a', c'))\). The occupation time formula (Lemma~\ref{lem: occ smg}) yields that \(\Q_{x_0}\)-a.s., for all \(G \in \mathcal{N}\), \(\int_0^{\cdot \wedge \xi \wedge T} \1_G (\X_s) ds = 0\)
	and hence,
	\[
	\int_0^{\cdot \wedge \xi \wedge T} \1_G (\X_s) d \Big(\int_0^s \big| 2 \s' (\X_r) \tilde{\beta}_r \big| dr \Big) = \int_0^{\cdot \wedge \xi \wedge T} \1_G (\X_s) \big| 2 \s'(\X_s) \tilde{\beta}_s\big| ds = 0.
	\]
	Moreover, we compute that \(\Q_{x_0}\)-a.s. for all \(G \in \mathcal{N}\)
	\begin{align*}
	\int_0^{\cdot \wedge \xi \wedge T} \1_G (\X_s) d \Big( \int L^x_s (\X) |\s''| (dx) \Big) 
	&= 
	\int \Big( \int_0^{\cdot \wedge \xi \wedge T} \1_G (\X_s) d_s L^x_s (\X) \Big) |\s''| (dx) 
	\\&= \int \Big( \int_0^{\cdot \wedge \xi \wedge T} \1_G (\X_s) \1_{\{\X_s = x\}} d_s L^x_s (\X) \Big) |\s''| (dx) 
	\\&= \int_G L_{\cdot \wedge \xi \wedge T}^x (\X) |\s''|(dx),
	\end{align*}
	where we use Lemma \ref{lem: occ smg} for the last two lines.
	Together, we get \(\Q_{x_0}\)-a.s. \(\int_G L^x_{\cdot \wedge \xi \wedge T} (\X) |\s''| (dx) = 0\) for all \(G \in \mathcal{N}.\)
	By the above observations, Lemma~\ref{lem: pos LT} and the fact that \(\Q_{x_0}\)-a.s. \(\xi \in (0, \infty)\), for \(\Q_{x_0}\)-a.a. \(\omega \in \Omega\), we have
	\begin{align} \label{eq: omega cond}
	\begin{cases} &0 < \xi (\omega) < \infty, \\ &L^{x_0}_{\xi (\omega) \wedge T (\omega)} (\X) (\omega) > 0, \\ &\int_G L^{x}_{\xi (\omega) \wedge T (\omega)} (\X) (\omega) |\s''| (dx) = 0\ \ \forall\ G \in \mathcal{N}. \end{cases}
	\end{align}
	Take an \(\omega \in \Omega\) such that \eqref{eq: omega cond} holds.
	As \(x \mapsto L^x_{\xi (\omega) \wedge T (\omega)} (\X) (\omega)\) is continuous, there exists an open neighborhood \(V^\circ(x_0) \subset (a', c')\) of \(x_0\) such that 
	\(L^{x}_{\xi (\omega) \wedge T (\omega)} (\X) (\omega) > 0\) for all \(x \in V^\circ(x_0).\)
	Together with the third part of~\eqref{eq: omega cond}, we conclude that \(\s''(dx) \ll dx\) on \(\mathcal{B}(V^\circ(x_0))\).
	Thus, $\s'$ is an absolutely continuous function on $V^\circ(x_0)$.
		Let \(\{L^x_t (\s(\X)) \colon (t, x) \in \mathbb{R}_+ \times \mathbb{R}\}\) be a jointly continuous modification\footnote{Notice that \(\s(\X)\) is a continuous local
			\(\P_{x_0}\)-martingale, as it is stopped at the boundaries of $V(x_0)$ (Lemma~\ref{lem: scale fct}), and recall \cite[Theorem VI.1.7]{RY}.} of the
		semimartingale
		local time of \(\s(\X)\) under \(\P_{x_0}\).
		Then, $\P_{x_0}$-a.s.\ for all $(t,z)\in\mathbb R_+\times J$ we have
		\begin{equation}\label{eq:071022a1}
		L_{t\wedge\xi\wedge T}^{\s(z)}(\s(\X))
		=
		L_t^{\s(z)}(\s(\X_{\cdot\wedge\xi\wedge T})).
		\end{equation}
		As the local time process is preserved under an absolutely continuous measure change
		(which is a consequence of \cite[Corollary VI.1.9]{RY})
		and $\P_{x_0}\sim\Q_{x_0}$ on $\mathcal F_\xi$,
		it follows that \eqref{eq:071022a1} holds also $\Q_{x_0}$-a.s.\ for all $(t,z)\in\mathbb R_+\times J$.
		Further, $\Q_{x_0}$-a.s.\ for all $(t,z)\in\mathbb R_+\times (a',c')$ we have
		\begin{equation}\label{eq:071022a2}
		L_t^{\s(z)}(\s(\X_{\cdot\wedge\xi\wedge T}))
		=
		\s'(z) L_t^z(\X_{\cdot\wedge\xi\wedge T})
		=
		\s'(z) L_{t\wedge\xi\wedge T}^z(\X),
		\end{equation}
		where in the first equality we use part~(iii) of Lemma~\ref{lem: occ smg}
		(recall that $\s$ is the difference of two convex functions on $V(x_0)=(a,c)$, that $a<a'<x_0<c'<c$ and that $T=T_{a'}\wedge T_{c'}$).
		In particular, \eqref{eq:071022a1} and~\eqref{eq:071022a2} imply that $\Q_{x_0}$-a.s.\ for all $z\in(a',c')$
		\begin{equation}\label{eq:071022a3}
		L_{\xi\wedge T}^{\s(z)}(\s(\X))
		=
		\s'(z) L_{\xi\wedge T}^z(\X).
		\end{equation}
By Lemma~\ref{lem: pos LT}, the equivalence $\P_{x_0}\sim\Q_{x_0}$ on $\mathcal F_\xi$
and the continuity of the local times in the space variable,
		we can take an $\omega\in\Omega$ such that \eqref{eq:071022a3} holds for all $z\in(a',c')$
		and the functions
		$$
		z\mapsto L_{\xi(\omega)\wedge T(\omega)}^z(\X)(\omega)
		\quad\text{and}\quad
		z\mapsto L_{\xi(\omega)\wedge T(\omega)}^{\s(z)}(\s(\X))(\omega)
		$$
		are strictly positive in a neighborhood of $x_0$.
		It follows that $\s'>0$ in a sufficiently small open neighborhood $V^*(x_0)\subset(a',c')$ of $x_0$.
		We now define the open neighborhood $U(x_0)\triangleq V^\circ(x_0)\cap V^*(x_0)$ of $x_0$ and observe that $\s'$ is a strictly positive absolutely continuous function on $U(x_0)$.
		Let $\zeta\colon U(x_0)\to\mathbb R$ be a Borel function such that
		\(\s'' (dx) = \zeta (x) dx\) on \(\mathcal{B}(U(x_0))\)
		(in other words, $\zeta$ equals the second derivative $\s''$ almost everywhere on \(\mathcal{B}(U(x_0))\) w.r.t.\ the Lebesgue measure).
	Set
	\begin{align}\label{eq: def H}
	H \triangleq \inf (t \geq 0 \colon \X_t \not \in U(x_0)).
	\end{align}
	Recalling~\eqref{eq: drift condition} and using the occupation time formula (Lemma~\ref{lem: occ smg}), we get \(\Q_{x_0}\)-a.s.
	\begin{align*}
	\int_0^{\cdot \wedge \xi \wedge H} \zeta (\X_s) ds &= \int L_{\cdot \wedge \xi \wedge H}^x(\X) \zeta (x) dx 
	= \int L_{\cdot \wedge \xi \wedge H}^x(\X) \s'' (dx) 
	= - \int_0^{\cdot \wedge \xi \wedge H} 2\s' (\X_s) \tilde{\beta}_s ds, 
	\end{align*}
	which implies that \(\Q_{x_0}\)-a.s.\ \(\tilde{\beta}_s = -  \zeta (\X_s) / (2 \s' (\X_s)) \equiv - \beta (\X_s)/2\) for \(\mu_L\)-a.a.\ \(s \leq \xi \wedge H\)
	with $\beta=\zeta/\s'$ (cf.\ the formulation of part~(b)).
	Thanks to the
	(mentioned above)
	square integrability of \(\tilde{\beta}\), which follows from Girsanov's theorem, we obtain
	\[
	\Q_{x_0} \Big(\int_0^{t \wedge \xi \wedge H} \big(\beta (\X_s)\big)^2 ds < \infty\Big) = 1,\quad t\in\mathbb R_+.
	\]
	Now, Lemma~\ref{lem:090922a1} implies that 
	the function $\beta$
	is square integrable in a neighborhood of~\(x_0\). All in all, we proved that (b) does not hold.

Finally, for the last part, we notice that formulas \eqref{eq: speed measure density local time} and~\eqref{eq:071022a3} entail that~(c) is violated
(more precisely, use~\eqref{eq: speed measure density local time} with $\xi$ replaced by $\xi\wedge T$).
The proof is complete.
\end{proof}

For the following lemma we use the notation $\of b,c\gs$, which is defined similar to $\ofr b,c\gsr$ and $\of b,c\gsr$ from \eqref{eq:271022a0} and~\eqref{eq:271022a1}.

\begin{lemma} \label{lem: singular boundary separating}
	Suppose that \(b \in \partial J\) is an accessible boundary for both \((x \mapsto \P_{x})\) and \((x \mapsto \Q_{x})\), and that there exists a non-empty open interval
\(B\subsetneq J^\circ\)
with \(b\) as endpoint such that all points in \(B\) are non-separating. Furthermore, assume \(\m (\{b\}), \M(\{b\}) < \infty\). 

Consider the following three conditions:
		\begin{enumerate}
		\item[\textup{(a)}] for every \(c \in B\) the differential quotient
		\(d \m / d\M\) does not exist as a continuous function from \(\of b, c\gs\) into \((0, \infty)\);
		\item[\textup{(b)}] for every \(c \in B\) the differential quotient $d^+ \s/ dx$ ${(\equiv \s')}$ does not exist as an absolutely continuous function from \(\of b, c \gs\) into \((0, \infty)\) such that \(d \s' (z) = \s' (z) \beta(z) dz\) with \(\beta  \in L^2 (\of b, c \gs)\);
		\item[\textup{(c)}] the differential quotients \(d \m (b)/ d\M\) and \(d^+ \s (b)/ dx\) exist but
		\[
				\Big(\frac{d \m}{d \M} \Big) (b) \Big( \frac{d^+ \s}{dx}\Big) (b) \not = 1.
				\]
	\end{enumerate}
		Then, the following equivalent statements hold:
		\begin{enumerate}
			\item[\textup{(i)}] If at least one of the conditions (a)--(c) above holds, then \(\P_b \perp \Q_b\) on \(\mathcal{F}_0\).
			\item[\textup{(ii)}] If \(\P_b \sim \Q_b\) on \(\mathcal{F}_0\), then all three conditions (a)--(c) above are violated.
		\end{enumerate}
\end{lemma}

Related to the assumption of Lemma~\ref{lem: singular boundary separating}, we remark the following:
as $\S=\on{Id}$, the accessibility of $b$ implies that $b\in\mathbb R$
(see~\eqref{eq:101022a1}).

\begin{proof}
As in the proof of Lemma~\ref{lem: sing starting in A}, (i) and~(ii) are equivalent by Blumenthal's zero-one law (Lemma~\ref{lem: Blumenthal}). In the following, we prove~(ii), i.e., we assume that \(\P_b \sim \Q_b\) on \(\mathcal{F}_0\). Equivalently, this means that \(\P_b, \Q_b\)-a.s. \(S > 0\), where \(S\) is the separating time of \(\P_b\) and \(\Q_b\).

Thanks to Lemma \ref{lem: in between stopping time}, there exists a stopping time \(\xi\) such that \(\P_b, \Q_b\)-a.s. \(0 < \xi < S \wedge \infty\).
By the definition of separating time, $\P_b\sim\Q_b$ on $\mathcal F_\xi$.
Take $c\in B$.
Let \(L(\X)=\{L_t^x(\X) \colon (t, x) \in \mathbb{R}_+ \times\mathbb R\}\)
(resp., \(L(\s(\X))=\{L_t^x (\s(\X)) \colon (t, x) \in \mathbb{R}_+ \times \mathbb R\}\))
be the semimartingale local time of $\X$ under $\Q_b$
(resp., of $\s(\X)$ under $\P_b$),
see Lemma~\ref{lem: diff homo} and Lemma~\ref{lem: occ formula diff}~(i).
Notice that, if $b$ is the left (resp., right) boundary point of $B$, then, $\Q_b$-a.s.,
$L(\X)$ (resp., $L^-(\X)\triangleq\{L_t^{x-}(\X):(t,x)\in\mathbb R_+\times\mathbb R\}$)
is jointly continuous for $(t,x)\in\mathbb R_+\times\of b,c\gs$,
see Lemma~\ref{lem: occ formula diff}~(ii) and Remark~\ref{rem:241124a1}~(a).
Similarly, $\P_b$-a.s., $L(\s(\X))$ or
$L^-(\s(\X))\triangleq\{L_t^{x-}(\s(\X)):(t,x)\in\mathbb R_+\times\mathbb R\}$
is jointly continuous on $\mathbb R_+\times\of b,c\gs$ depending on whether $b<c$ or $b>c$.
For notational convenience only, below we assume w.l.o.g.\ that \(b = 0\), \(c \in (0, \infty)\) and \(\s (0) = 0\).\footnote{This is the case $b<c$. In the opposite case $b>c$ we need to work with $L^-(\X)$ in place of $L(\X)$ and $L^-(\s(\X))$ in place of $L(\s(\X))$.}
	We define \(G \triangleq G_1 \cap G_2 \cap G_3\) by
	\begin{align*}
	G_1 &\triangleq \Big\{0 < \xi < \infty,\ L_{\xi}^{0} (\X), L_{\xi}^{0} (\s(\X)) > 0\Big\},
	\\
	G_2 &\triangleq \Big\{0 < \xi < \infty,\ \int_0^\xi f (\X_{s}) ds = \int f(x) L_\xi^x (\X) \M (dx) 
	\ \forall \, f \in \mathcal{B}^+_0
	\Big\},\\
	G_3 &\triangleq \Big\{0 < \xi < \infty,\ \int_0^\xi f (\X_{s}) ds = \int f(x) L_\xi^{\s(x)} (\s(\X)) \m (dx) 
	\ \forall \, f \in \mathcal{B}^+_0
	\Big\},
	\end{align*}	
	with \(\mathcal{B}^+_0 = \{f \colon \mathbb R_+ \to \mathbb{R}_+, \text{Borel}, f |_{[c,\infty)} = 0\}\).
	By Lemmata \ref{lem: occ formula diff} and~\ref{lem: pos LT} and Remark~\ref{rem:241124a1}~(b), the equivalence \(\P_{0}\sim\Q_{0}\) on \(\mathcal{F}_\xi\) and the fact that \(\P_{0}, \Q_{0}\)-a.s. \(\xi \in (0, \infty)\), we have \(\P_{0} (G) = \Q_{0} (G) = 1\). Take \(\omega \in G\). By the space continuity of the local time processes and the definition of the set \(G_1\), there exists a number \(c^\circ = c^\circ (\omega) \in  (0, c)\) such that \(L^z_{\xi (\omega)} (\X) (\omega), L^z_{\xi (\omega)} (\s (\X)) (\omega) > 0\) for all \(z \in [0, c^\circ]\). 
	Using the definition of \(G_2\) and \(G_3\), for every \(z \in [0, c^\circ]\), we get
	\begin{align*}
	\lim_{h \searrow 0} \frac{\int_0^{\xi (\omega)} \1 \{ z \leq \X_{s} (\omega) < z + h\} ds}{\M ([z, z + h))} &= L_{\xi (\omega)}^z (\X) (\omega) > 0, 
	\end{align*}
	and 
	\begin{align*}
	\lim_{h \searrow 0} \frac{\int_0^{\xi (\omega)} \1 \{ z \leq \X_{s} (\omega) < z + h\} ds}{\m ([z, z + h))}  =  L_{\xi(\omega)}^{\s(z)} (\s(\X)) (\omega) > 0,
	\end{align*}
	which shows that 
	\begin{align} \label{eq: speed measure density local time 2}
	\Big(\frac{d \m}{d \M}\Big) (z) = \frac{L^z_{\xi (\omega)} (\X) (\omega) }{L^{\s(z)}_{\xi(\omega)} (\s (\X)) (\omega)} > 0.
	\end{align}
	Thanks to the continuity of the function 
	\[
	[0, c^\circ] \ni z \mapsto \frac{L^z_{\xi (\omega)} (\X) (\omega) }{L^{\s(z)}_{\xi(\omega)} (\s (\X)) (\omega)},
	\]
	we conclude that \(d \m /d \M\) exists as a continuous function from \([0, c^\circ]\) into \((0, \infty)\). This means that (a) has to be violated.
	
Next, we show that (b) is violated, too. 
	Let \(([0, c] \ni x \mapsto \Qo_x)\) be a diffusion on natural scale with speed measure 
\[
\M^\circ (dx) \triangleq dx \text{ on } (0, c), \quad \M^\circ (\{0\}) \triangleq 0, \quad \M^\circ (\{c\}) \triangleq \infty,
\]
i.e.,  \(([0, c] \ni x \mapsto \Qo_x)\) is a Brownian motion which is instantaneously reflected from \(0\) and absorbed in \(c\).
Moreover, let \(([0, c] \ni x \mapsto \Po_x)\) be a diffusion with characteristics \((\s, \mo)\), where 
\[
\frac{\m^\circ (dx)}{dx} \triangleq \Big( \frac{d \m}{d \M}\Big) (x) \text{ on } (0, c), \quad \m^\circ (\{0\}) \triangleq 0, \quad \m^\circ (\{c\}) \triangleq \infty.
\]

\begin{lemma} \label{lem: time change singularity2}
If \(\Po_{0} \perp \Qo_{0}\) on \(\mathcal{F}_0\), then \(\P_{0} \perp \Q_{0}\) on \(\mathcal{F}_0\).
Equivalently, if \(\P_{0} \sim \Q_{0}\) on \(\mathcal{F}_0\), then \(\Po_{0} \sim \Qo_{0}\) on \(\mathcal{F}_0\).
\end{lemma}

\begin{proof}
The equivalence between the two claims follows from Blumenthal's zero-one law (Lemma~\ref{lem: Blumenthal}). We prove the first claim.
	For \(t \in \mathbb{R}_+\), define
	\begin{align*} 
	\tilde{\l} (t, x) \triangleq 
	\limsup_{h \searrow 0}
	\frac{\int_0^t \1 \{x - h < \X_s < x + h\} ds}{\M ( (x - h, x + h))}, \quad x \in (0, c),
	\end{align*}
	\begin{align*}
	\f (t) & \triangleq \begin{cases} \int_{ (0, c) } \tilde{\l} (t, x) dx, & t < T_c,\\ \infty, & t \geq T_c, \end{cases}
	\end{align*}
	and 
\(
	\f^{-1} (t) \triangleq \inf (s \geq 0 \colon \f(s+) > t).
\)
	The chain rule for diffusions as given by Theorem~\ref{theo: chain rule} yields that 
	\begin{align*} 
	\Po_{0} = \P_{0} \circ \X_{\f^{-1}(\cdot)}^{-1},\qquad \Qo_{0} = \Q_{0} \circ \X_{\f^{-1}(\cdot)}^{-1}.
	\end{align*}
From this point on, we can argue verbatim as in the proof of Lemma \ref{lem: time change singularity} to obtain the
first
claim of the lemma. We omit the details. 
\end{proof}
Thanks to Lemma~\ref{lem: time change singularity2}, we can and will assume
(in the part of the proof, where we show that (b) is violated)
that \(\Q_{0} = \Qo_0\) and that \(\P_{0} = \Po_0\).
These assumptions are in force for the remainder of this proof. 
Define 
\[
\s^\leftrightarrow (x) \triangleq \on{sgn} (x) \s (|x|), \quad x \in [- c, c], 
\]
and 
\[
\m^\leftrightarrow (A) \triangleq \begin{cases} \m (A), & A \in \mathcal{B} ( (0, c) ), \\
\m (- A), & A \in \mathcal{B}( (- c, 0)), \\
0, & A = \{0\}, \\
+ \infty, & A \subset \{-c , c\},\;A\ne\emptyset.
\end{cases}
\]
Let \(([- c, c] \ni x \mapsto \P^\leftrightarrow_x)\) be the diffusion with characteristics \((\s^\leftrightarrow, \m^\leftrightarrow)\). Furthermore, let \(([- c, c] \ni x \mapsto \Q^\leftrightarrow_x)\) be a Brownian motion with absorbing boundaries \(-c\) and \(c\). 
By a version of Lemma~\ref{lem: refl} (cf. \cite[Section~6]{ankirchner}), we have
\begin{align*} 
\P_{0} = \P^\leftrightarrow_{0} \circ Y^{-1}, \quad \Q_0 = \Q^\leftrightarrow_0 \circ Y^{-1}, \quad Y \triangleq |\X|.
\end{align*}
Notice that \(\P_0 \sim \Q_0\) on \(\mathcal{F}_\xi\) implies that \(\P^\leftrightarrow_0 \sim \Q^\leftrightarrow_0\) on \(Y^{-1}(\mathcal{F}_\xi) = \mathcal{F}^Y_{\rho}\) with \(\rho = \xi \circ Y\), see Lemma~\ref{lem: sigma field stopped id} for the identity of the \(\sigma\)-fields and the fact that \(\rho\) is an \((\mathcal{F}^Y_t)_{t \geq 0}\)-stopping time.
Let \((\mathbb{R} \ni x \mapsto \W_x)\) be the Wiener measure, take \(0 < c^* < c\) and set 
\[
T \triangleq T_{-c} \wedge T_c = T_c \circ Y, \qquad T^* \triangleq T_{-c^*} \wedge T_{c^*} = T_{c^*} \circ Y.
\]  
Then, \(\mathds{W}_0 \circ \X_{\cdot \wedge T}^{-1} = \Q^\leftrightarrow_0\) and, 
for every \(A \in \mathcal{F}^Y_{\rho \wedge T^*}\), Galmarino's test
in the form \cite[10~c), p.~87]{itokean74} (alternatively, one can adjust \cite[Exercise~I.4.21]{RY} to the right-continuous filtration $(\mathcal F_t)_{t \geq 0}$)
yields that
\[
\W_0 (A) = \W_0 (\X_{\cdot \wedge T} \in A) = \Q^\leftrightarrow_0 (A).
\]
Hence, \(\P_0^\leftrightarrow \sim \W_0\) on \(\mathcal{F}^Y_{\rho \wedge T^*}\). 
We deduce from Lemmata~\ref{lem: scale fct} and \ref{lem: occ smg} that the process \(\s (Y) = |\s^\leftrightarrow (\X)|\) is a \(\P^\leftrightarrow_0\)-\((\mathcal{F}_t)_{t \geq 0}\)-semimartingale (since it is the absolute value of a semimartingale). Thus, by Stricker's theorem (see, e.g., \cite[Theorem~9.19]{Jacod}), the process \(\s (Y)\) is also a \(\P^\leftrightarrow_0\)-\((\mathcal{F}^{Y}_t)_{t \geq 0}\)-semimartingale and, by Girsanov's theorem (Lemma~\ref{lem: Girs}), as  \(\P_0^\leftrightarrow \sim \W_0\) on \(\mathcal{F}^Y_{\rho \wedge T^*}\), the stopped process \(\s (Y_{\cdot \wedge \rho \wedge T^*})\) is a \(\W_{0}\)-\((\mathcal{F}^Y_t)_{t \geq 0}\)-semimartingale.

\begin{lemma} \label{lem: smg bigger filtration}
\(\s (Y_{\cdot \wedge \rho \wedge T^*}) = \s (|\X_{\cdot \wedge \rho \wedge T^*}|)\) is also a \(\W_{0}\)-\((\mathcal{F}_t)_{t \geq 0}\)-semimartingale.
\end{lemma}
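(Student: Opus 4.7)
The plan is to establish the $(H)$-hypothesis of $(\mathcal{F}^Y_t)_{t\geq 0}$ in $(\mathcal{F}_t)_{t\geq 0}$ under $\W_0$, from which the lemma follows at once: the $(\mathcal{F}^Y_t,\W_0)$-Doob--Meyer decomposition of $\s(Y_{\cdot\wedge\rho\wedge T^*})$ transfers verbatim to the larger filtration, since the local martingale part remains a local martingale (by immersion) and the finite-variation part remains of finite variation.

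The core step is to show that, under $\W_0$, for every $t\geq 0$ and every bounded $\mathcal{F}^Y_\infty$-measurable $Z$,
\begin{equation}\label{eq:immers180223}
\E^{\W_0}[Z\,|\,\mathcal{F}_t] = \E^{\W_0}[Z\,|\,\mathcal{F}^Y_t].
\end{equation}
The essential observation is that, under $\W_0$, the conditional distribution of $Y_t$ given $\mathcal{F}_s$ depends on $\mathcal{F}_s$ only through $Y_s=|\X_s|$: by independence and symmetry of the centered Gaussian increment $\X_t-\X_s$, one has, for $s\leq t$ and any bounded Borel $f$,
\[
\E^{\W_0}\bigl[f(|\X_t|)\,\big|\,\mathcal{F}_s\bigr]=\int f(|\X_s+z|)\,\varphi_{t-s}(z)\,dz=\int f(|Y_s+z|)\,\varphi_{t-s}(z)\,dz,
\]
where $\varphi_{t-s}$ is the centered Gaussian density of variance $t-s$, and the second equality exploits $\varphi_{t-s}(-z)=\varphi_{t-s}(z)$ (splitting into the cases $\X_s\geq 0$ and $\X_s<0$ and making a change of variable $z\mapsto-z$ in the latter). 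The right-hand side is a Borel function of $Y_s$, hence $\mathcal{F}^Y_s$-measurable. Extending this identity first to finite-dimensional cylinder events $\{(|\X_{t_1}|,\dots,|\X_{t_n}|)\in B\}$ with $s\leq t_1<\dots<t_n$ by iteration (using the tower property), and then to arbitrary bounded $Z\in\mathcal{F}^Y_\infty$ by a monotone class argument, yields~\eqref{eq:immers180223}.

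Given~\eqref{eq:immers180223}, every bounded $(\mathcal{F}^Y_t,\W_0)$-martingale is an $(\mathcal{F}_t,\W_0)$-martingale, and the same extends to local martingales by localization. Applying this to the local martingale part of the (continuous) $(\mathcal{F}^Y_t,\W_0)$-semimartingale decomposition $\s(Y_{\cdot\wedge\rho\wedge T^*})=M+A$ (with $A$ of locally finite variation and $(\mathcal{F}^Y_t)$-adapted, hence trivially of locally finite variation in the larger filtration) concludes the proof. The only delicate point is the verification of~\eqref{eq:immers180223}; the Markov-plus-symmetry computation is routine in itself, but some attention is needed regarding the right-continuity (and non-augmentation) of the filtrations used throughout the paper, which is why the cylinder-set reduction plus monotone class argument appears to be the cleanest route.
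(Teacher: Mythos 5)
Your proof is correct and takes essentially the same route as the paper: establish the immersion of $(\mathcal{F}^Y_t)_{t\geq0}$ in $(\mathcal{F}_t)_{t\geq0}$ under $\W_0$ via the Markov property of $|\X|$ with respect to the Brownian filtration, then transfer the semimartingale decomposition to the larger filtration. The paper simply cites \cite[Exercise 6.23]{LeGall} for the Markov property and \cite[Proposition 9.28]{Jacod} for the transfer where you argue by hand; just make sure the monotone class step starts from cylinder functions involving times on both sides of $t$, with the past factor (being both $\mathcal{F}_t$- and $\mathcal{F}^Y_t$-measurable) pulled out of both conditional expectations.
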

\begin{proof}
	Essentially, the claim follows from \cite[(3.24.i)]{CinJPrSha} and \cite[Exericse 6.23]{LeGall}. We provide the details.
	Thanks to \cite[Exericse 6.23]{LeGall}, \(Y\) is an \((\mathcal{F}_t)_{t \geq 0}\)-Markov process under \(\W_0\). Hence, we get for all \(s_1 < s_2 < \dots < s_n \leq t < t_1 < t_2 < \dots < t_m\) and every bounded Borel functions \(f \colon \mathbb{R}^n \to \mathbb{R}\) and \(g \colon \mathbb{R}^m \to \mathbb{R}\), that~\(\W_0\)-a.s.
	\begin{align*}
	\E^{\W_0} \big[ f (Y_{s_1}&, \dots, Y_{s_n}) g (Y_{t_1}, \dots, Y_{t_m}) | \mathcal{F}_t \big] 
	= f (Y_{s_1}, \dots, Y_{s_n}) \E^{\W_0} \big[  g (Y_{t_1}, \dots, Y_{t_m}) | Y_t \big].
	\end{align*}
	By the tower property, this yields that \(\W_0\)-a.s.
	\begin{align*}
	\E^{\W_0} \big[ f (Y_{s_1}, \dots, Y_{s_n}) g (Y_{t_1}, \dots, Y_{t_m}) | \mathcal{F}^Y_t \big]
	&= \E^{\W_0} \big[ \E^{\W_0} \big[ f (Y_{s_1}, \dots, Y_{s_n}) g (Y_{t_1}, \dots, Y_{t_m}) | \mathcal{F}_t \big] | \mathcal{F}^Y_t\big]
	\\&= f (Y_{s_1}, \dots, Y_{s_n}) \E^{\W_0} \big[  g (Y_{t_1}, \dots, Y_{t_m}) | Y_t \big]
	\\&= \E^{\W_0} \big[ f (Y_{s_1}, \dots, Y_{s_n}) g (Y_{t_1}, \dots, Y_{t_m}) | \mathcal{F}_t \big].
	\end{align*}
	Therefore, by a monotone class argument, for every \(\W_0\)-integrable \(\mathcal{F}^Y_\infty\)-measurable random variable \(Z\) and any \(t \in \mathbb{R}_+\), we have \(\W_0\)-a.s.
\[
\E^{\W_0} \big[ Z | \mathcal{F}_t \big] = \E^{\W_0} \big[ Z | \mathcal{F}^Y_t \big].
\]
This implies that any \(\W_0\)-\((\mathcal{F}^Y_t)_{t \geq 0}\)-martingale is also a \(\W_0\)-\((\mathcal{F}_t)_{t \geq 0}\)-martingale.
Finally, by \cite[Proposition 9.28]{Jacod}, this implies that the same implication also holds for semimartingales and hence, the claim follows. 
\end{proof}

Since \(\P_0^\leftrightarrow \sim \W_0\) on \(\mathcal{F}^Y_{\rho \wedge T^*}\) and \(\P_0^\leftrightarrow (\rho \wedge T^* > 0) = \P_0 (\xi \wedge T_{-c^*} \wedge T_{c^*} > 0) = 1\), we have \(\W_0 (\rho \wedge T^* > 0) = 1\).
Now, we deduce from Lemma \ref{lem: smg bigger filtration} and Theorem~\ref{lem: diff convex} that \(\s (| \cdot |)\) is the difference of two convex functions on a closed interval \([-c', c']\) with \(0 < c' < c^* < c\). 
Set 
\(
T' \triangleq T_{-c'} \wedge T_{c'},
\)
and, for \(t \in \mathbb{R}_+\), let \(\mathcal{G}^{\W_0}_t\) be the \(\W_0\)-completion of \(\mathcal{F}^Y_{t \wedge \rho \wedge T'}\) (with subsets of \(\W_0\)-null sets from $\mathcal F$)
and
\(
\mathcal{G}_t \triangleq \mathcal{G}^{\W_0}_t \cap \mathcal{F}^Y_{\rho \wedge T'}.
\)
Since \(\P_0^\leftrightarrow \sim \W_0\) on \(\mathcal{F}^Y_{\rho \wedge T^*}\), we also have
\(\mathcal{G}_t = \mathcal{G}^{\P^\leftrightarrow_0}_t \cap \mathcal{F}^Y_{\rho \wedge T'}\) and \(\P_0^\leftrightarrow \sim \W_0\) on \(\mathcal{G}_{\rho \wedge T'}\). Notice that the integral process
\[
\int_0^{\cdot \wedge \rho \wedge T'} \on{sgn} (\X_s) d \X_s = |\X_{\cdot \wedge \rho \wedge T'}| - L^0_{\cdot \wedge \rho \wedge T'} (\X)
\]
is \((\mathcal{G}_t)_{t \geq 0}\)-adapted by virtue of \cite[Corollary VI.1.9]{RY}. 
Hence, we conclude, by the tower property, that it is a \(\W_0\)-\((\mathcal{G}_t)_{t \geq 0}\)-martingale (notice that \(\on{sgn} (\X)\) is bounded, which yields the true martingale property).
By Girsanov's theorem, there exists a \((\mathcal{G}_t)_{t \geq 0}\)-predictable process \(\tilde{\beta}\) such that \(\P^\leftrightarrow_0\)-a.s.
\begin{align} \label{eq: 15.10 2}
\int_0^{\cdot \wedge \rho \wedge T'} \on{sgn} (\X_s) d \X_s  = \text{local \(\P^\leftrightarrow_0\)-\((\mathcal{G}_t)_{t \geq 0}\)-martingale} + \int_0^{\cdot \wedge \rho \wedge T'} \tilde{\beta}_s ds.
\end{align}
As above, notice that the integral process
\begin{align} \label{eq: 15.10 4}
 \int_0^{\cdot \wedge \rho \wedge T'} \on{sgn} (\s^\leftrightarrow (\X_s)) d \s^\leftrightarrow (\X_s) = \s (|\X_{\cdot \wedge \rho \wedge T'}|) - L^0_{\cdot \wedge \rho \wedge T'} (\s^\leftrightarrow (\X))
\end{align}
 is \((\mathcal{G}_t)_{t \geq 0}\)-adapted by \cite[Corollary VI.1.9]{RY}.
We conclude, by the tower property, that it is a \(\P^\leftrightarrow_0\)-\((\mathcal{G}_t)_{t \geq 0}\)-martingale. 
Next, by the generalized It\^o formula (Lemma~\ref{lem: occ smg}), we get that \(\P^\leftrightarrow_0\)-a.s.
\begin{equation} \label{eq: 15.10-1}
\begin{split}
 \int_0^{\cdot \wedge \rho \wedge T'} \on{sgn} (\s^\leftrightarrow (\X_s)) d \s^\leftrightarrow (\X_s) 
 =  &\int_0^{\cdot \wedge \rho \wedge T'} \on{sgn} (\s^\leftrightarrow (\X_s)) [\s^\leftrightarrow]' (\X_s) d \X_s \\&\quad + \frac{1}{2}  \int_0^{\cdot \wedge \rho \wedge T'} \on{sgn} (\s^\leftrightarrow (\X_s)) d \Big[  \int L^x_{s} (\X) [\s^\leftrightarrow ]'' (dx) \Big].
\end{split}
\end{equation}
Notice that \(\on{sgn}(\s^\leftrightarrow (\X)) = \on{sgn}(\X)\). Hence, by virtue of \eqref{eq: 15.10 2}, we obtain that \(\P^\leftrightarrow_0\)-a.s.
\begin{equation} \label{eq: 15.10 3}
\begin{split}
\int_0^{\cdot \wedge \rho \wedge T'} \on{sgn} (\s^\leftrightarrow (\X_s)) [\s^\leftrightarrow]' (\X_s) d \X_s 
= \text{local \(\P^\leftrightarrow_0\)-\((\mathcal{G}_t)_{t \geq 0}\)-martingale} + \int_0^{\cdot \wedge \rho \wedge T'} [\s^\leftrightarrow]' (\X_s) \tilde{\beta}_s ds.
\end{split}
\end{equation}
Therefore, as the process in \eqref{eq: 15.10 4} is a (local) \(\P^\leftrightarrow_0\)-\((\mathcal{G}_t)_{t \geq 0}\)-martingale, \eqref{eq: 15.10-1} and \eqref{eq: 15.10 3}, together with the fact that continuous local martingales of (locally) finite variation are constant, imply that \(\P^\leftrightarrow_0\)-a.s.
\begin{align*}
\int_0^{\cdot \wedge \rho \wedge T'} \on{sgn} (\s^\leftrightarrow (\X_s)) d \Big[  \int L^x_{s} (\X) [\s^\leftrightarrow ]'' (dx) \Big] = - \int_0^{\cdot \wedge \rho \wedge T'} 2 [\s^\leftrightarrow]' (\X_s) \tilde{\beta}_s ds.
\end{align*}
Integrating \(\on{sgn} (\s^\leftrightarrow (\X)) = \on{sgn} (\X)\) against both sides yields that \(\P^\leftrightarrow_0\)-a.s.
\begin{align*} 
\int L^x_{\cdot \wedge \rho \wedge T'} (\X)  [\s^\leftrightarrow]'' (dx) = - \int_0^{\cdot \wedge \rho \wedge T'} 2 \on{sgn} (\X_s)  [\s^\leftrightarrow]' (\X_s) \tilde{\beta}_s ds.
\end{align*}
From this point on we can argue almost verbatim as in the proof of Lemma \ref{lem: sing starting in A} to obtain the existence of a Borel map \(\bar{\beta} \colon [- c', c'] \to \mathbb{R}\), where \(0 < c'< c\) might have been replaced by a smaller value, such that \([\s^\leftrightarrow]'' (dx) = \bar{\beta} (x) [\s^\leftrightarrow]' (x) dx\) and \(\bar{\beta} \in L^2([- c', c'])\). We omit the details. In summary, condition~(b) is violated.

Finally, we prove that (c) is violated.
We are no longer assuming that \(\Q_{0} = \Qo_0\) and that \(\P_{0} = \Po_0\).
To achieve the aim, it suffices only to observe that
the formula~\eqref{eq: speed measure density local time 2} at the point $z=0$ ($=b$)
and part~(iii) of Lemma~\ref{lem: occ smg} (see also~\eqref{eq:071022a3}) provide a contradiction to part~(c) (more precisely, we need to use~\eqref{eq: speed measure density local time 2} with $\xi$ replaced by $\xi\wedge T'$ because $\s$ is the difference of two convex functions only on $[0,c']$,
whereas such a structure of $\s$ is needed to apply part~(iii) of Lemma~\ref{lem: occ smg}).
The proof is complete.
\end{proof}

\subsection{Proof of Theorem \ref{theo: main1}}
First of all, part~(i) is trivial. 
Let us note that if \(x_0\) is an absorbing boundary point for both diffusions \((x \mapsto \P_x)\) and \((x \mapsto \Q_x)\), then \(\P_{x_0} = \Q_{x_0}\), which is covered by~(i).
We now prove part~(ii).
That is, below we always assume that $\P_{x_0}\ne\Q_{x_0}$.

Next, we discuss the case where \(x_0\) is a boundary point which is absorbing for one of the diffusions, but not for the other.
In this case, \(x_0\) is separating (recall Definition~\ref{def: non-sep bound}).
Consequently,
\(U \wedge V\wedge R= 0\).
We now show that \(\P_{x_0}\perp \Q_{x_0}\) on \(\mathcal{F}_0\), which then yields that \(\P_{x_0}, \Q_{x_0}\)-a.s.
\(S = 0 = U \wedge V\wedge R\)
and hence the claim. Set \(C \triangleq \inf (t > 0 \colon \X_t \not = x_0)\). In case \(x_0\) is an absorbing boundary point for \((x \mapsto \P_{x})\) and a reflecting boundary point for \((x \mapsto \Q_{x})\), we have \(\P_{x_0} (C > 0) = 1\) and \(\Q_{x_0} (C > 0) = 0\). Thus, as \(\{C > 0\} \in \mathcal{F}_0\), it follows that \(\P_{x_0} \perp \Q_{x_0}\) on \(\mathcal{F}_0\). The case where \(x_0\) is absorbing for \((x \mapsto \Q_x)\) and reflecting for \((x \mapsto \P_x)\) follows by symmetry.

We now discuss all remaining cases, where we can assume that, in case \(x_0\) is a boundary point, it is reflecting for both diffusions. 
The proof is split into two parts. First, we show that \(\P_{x_0}, \Q_{x_0}\)-a.s.
\(U\wedge V\wedge R \leq S\)
and, second, we prove that \(\P_{x_0}, \Q_{x_0}\)-a.s.
\(S\leq U \wedge V \wedge R\).
Recall that \(\A\) denotes the set of separating points.
\\

\noindent
\emph{Proof of \(U \wedge V\wedge R \leq S\):}
If \(x_0 \in \A\), then
\(U \wedge V\wedge R = 0\)
and the claim is trivial. Thus, we can and will assume that \(x_0 \not \in \A\). In the following, we will consider the case \(x_0 \in J^\circ\). The case where \(x_0\) is a boundary point (which is reflecting for both diffusions) can be handled the same way. 
Define 
\[
\alpha' \triangleq \begin{cases} l, & \alpha = \Delta,\\ \alpha, &\alpha \not = \Delta,\end{cases} \qquad \gamma' \triangleq \begin{cases} r, & \gamma = \Delta,\\ \gamma, &\gamma \not = \Delta,
\end{cases}
\]
and take two sequences \((a_n)_{n \in \mathbb{N}}\) and \((c_n)_{n \in \mathbb{N}}\) such that \(a_1 < x_0 < c_1,\) \(a_{n + 1} < a_n,\) \(c_{n + 1} > c_n,\) \(\lim_{n \to \infty} a_n = \alpha'\) and \(\lim_{n \to \infty} c_n = \gamma'\).
By Lemma~\ref{lem: loc equivalence} and Proposition~\ref{prop:121224a1},
we have \(\P_{x_0}, \Q_{x_0}\)-a.s.
\(T_{a_n} \wedge T_{c_n} < S\)
for all \(n \in \mathbb{N}\).

\smallskip 
\noindent
\emph{Case 1 (\(\alpha \not = \Delta\) and \(\gamma \not = \Delta\)):}
In this case, we have \(R = \delta\) and, by virtue of Lemma~\ref{lem:200223a2}, \(\P_{x_0}, \Q_{x_0}\)-a.s.
\(T_{a_n} \wedge T_{c_n} \nearrow U \wedge V\).
Hence, \(\P_{x_0}, \Q_{x_0}\)-a.s.
\(U \wedge V\wedge R \leq S\)
follows. 

\smallskip 
\noindent
\emph{Case 2 (\(\alpha = \Delta, \gamma \not = \Delta\) and \(l\) is inaccessible
or absorbing for one, equivalently for both, of the diffusions}):
In this case, $U=R=\delta$.
By Lemma~\ref{lem: local equivalence hitting time from the right} and Proposition~\ref{prop:121224a1}, it holds that \(\P_{x_0} \sim \Q_{x_0}\) on \(\mathcal{F}_{T_{c_n}}\) for all \(n \in \mathbb{N}\). 
Hence, \(\P_{x_0}, \Q_{x_0}\)-a.s.
\(T_{c_n} < S.\)
Notice that
$\{T_{c_n}=\infty\}\nearrow\{V=\delta\}$.
Therefore, \(\P_{x_0}, \Q_{x_0}\)-a.s.\ we have $S=\delta$ on $\{V=\delta\}$.
Together with the fact that \(\P_{x_0}, \Q_{x_0}\)-a.s.\ $T_{c_n}\nearrow V\wedge\infty$ we conclude that \(\P_{x_0}, \Q_{x_0}\)-a.s.\ \(U \wedge V\wedge R =V\leq S\).

\smallskip 
\noindent
\emph{Case 3 (\(\alpha = \Delta, \gamma \not = \Delta\) and \(l\) is reflecting
for one, equivalently for both, of the diffusions):}
Here, we again have $U=R=\delta$.
To simplify our notation, we assume that \(J = \mathbb{R}_+\), in particular, \(l = 0\).
Let  \((\mathbb{R} \ni x \mapsto \P^\leftrightarrow_x)\) and \((\mathbb{R} \ni x \mapsto \Q^\leftrightarrow_x)\)
be diffusions constructed from
$(\mathbb R_+\ni x\mapsto\P_x)$ and $(\mathbb R_+\ni x\mapsto\Q_x)$
as in Lemma~\ref{lem: refl}.
In particular, we have
\[
\P_{x_0} = \P^\leftrightarrow_{x_0} \circ |\X|^{-1}
\qquad\text{and}\qquad
\Q_{x_0} = \Q^\leftrightarrow_{x_0} \circ |\X|^{-1}.
\]
For a moment fix \(n \in \mathbb{N}\).
Since all points in \([- c_n, c_n]\) are non-separating for the symmetrized setting (see Lemma~\ref{lem: refl} for the structure of scale and speed of the symmetrized diffusions), Lemma~\ref{lem: loc equivalence} yields that \(\P^\leftrightarrow_{x_0} \sim \Q^\leftrightarrow_{x_0}\) on \(\mathcal{F}_{T_{-c_n} \wedge T_{c_n}}\) and, consequently, \(\P_{x_0} \sim \Q_{x_0}\) on~\(\mathcal{F}_{T_{c_n}}\).
Therefore, \(\P_{x_0}, \Q_{x_0}\)-a.s.\ $T_{c_n}<S$.
By Lemma~\ref{lem: finiteness hitting times}, \(\P_{x_0}, \Q_{x_0}\)-a.s.\ $T_{c_n}\nearrow V$.
Thus, \(\P_{x_0}, \Q_{x_0}\)-a.s.\ \(U \wedge V\wedge R =V\leq S\).

\smallskip 
\noindent
\emph{Case 4 (\(\alpha = \Delta, \gamma = \Delta\) and both \(l\) and \(r\) are inaccessible or absorbing):}
First of all, notice that in this case we have \(x_0 \in J^\circ\) and that \(l\) and \(r\) are finite (because they are half-good and \(\S= \on{Id}\)). 
Let \(([l, r] \ni x \mapsto \Qo_x)\) be a Brownian motion which is absorbed in \(l\) and \(r\).
Further, let \(([l, r] \ni x \mapsto \Po_x)\) be a diffusion with characteristics \((\s, \mo)\), where the measure \(\m^\circ\) on \(\mathcal{B}([l, r])\) is given by
\[
\frac{\m^\circ (dx)}{dx} \triangleq \Big( \frac{d \m}{d \M}\Big) (x) \text{ on } (l, r), \qquad \m^\circ (\{l\}) \equiv \m^\circ (\{r\}) \triangleq \infty.
\]
That $\m^\circ$ is a valid speed measure and the state space of the latter diffusion is indeed $[l,r]$ (i.e., both endpoints are accessible) follows as in the proof of Lemma~\ref{lem: loc equivalence}.
Moreover, again as in the proof of Lemma~\ref{lem: loc equivalence}, we argue that Lemma~\ref{lem: sep ito diff} applies to the diffusions \(([l, r] \ni x \mapsto \Po_x)\) and \(([l, r] \ni x \mapsto \Qo_x)\) and consequently, \(\Po_{x_0} \sim \Qo_{x_0}\).

We now use the time-change argument from Lemma~\ref{lem: loc equivalence} to conclude that \(\P_{x_0} \sim \Q_{x_0}\).
Define
\[
\tilde{\l}^* (t, x) \triangleq	\begin{cases}
\limsup_{h \searrow 0}
\frac{\int_0^t \1 \{x - h < \X_s < x + h\} ds}{2h}
&\text{if }x\in (l, r),
\\[2mm]
0 & \text{if } x \in \{l, r\},
\end{cases}
\]
and set 
\[
\g (t) \triangleq \begin{cases} \int_{a'}^{c'} \tilde{\l}^* (t, x) \M (dx), & t < T_l \wedge T_r,\\ \infty, & t \geq T_l \wedge T_r. \end{cases}
\]
Let \(\g^{-1}\) be the right-inverse of \(\g\).
According to the chain rule for diffusions (Theorem~\ref{theo: chain rule}), we get that 
\[
\Po_{x_0} \circ \X_{\g^{-1}(\cdot)}^{-1} = \P_{x_0}
\qquad\text{and}\qquad
\Qo_{x_0} \circ \X_{\g^{-1}(\cdot)}^{-1} = \Q_{x_0}
\]
(see the proof of Lemma~\ref{lem: loc equivalence} for more details). In particular, at this stage we used that \(l\) and \(r\) are either inaccessible or absorbing for both diffusions \((x \mapsto \P_x)\) and \((x \mapsto \Q_x)\).
Thanks to these equalities, \(\Po_{x_0} \sim \Qo_{x_0}\) implies \(\P_{x_0} \sim \Q_{x_0}\) and, consequently, \(\P_{x_0}, \Q_{x_0}\)-a.s. \(S = \delta\).

\smallskip 
\noindent
\emph{Case 5 (\(\alpha = \Delta, \gamma = \Delta\), \(l\) is reflecting and \(r\) is inaccessible or absorbing):}
This case is reduced to the previous one via Lemma~\ref{lem: refl} (cf.\ Cases 2 and~3).

\smallskip 
\noindent
\emph{Case 6 (\(\alpha = \Delta, \gamma = \Delta\) and both \(l\) and \(r\) are reflecting):}
Here $U=V=\delta$ and $R=\infty$. We need to prove that $\P_{x_0},\Q_{x_0}$-a.s.\ $S\ge\infty$.
First, we notice that, in this case, $l$ and $r$ are finite (because they are half-good and $\S=\on{Id}$).
To simplify our notation, we assume that $J=[0,1]$, in particular, $l=0$ and $r=1$.
Let $(\mathbb R\ni x\mapsto\QQ_x)$
(resp., $(\mathbb R\ni x\mapsto\tQQ_x)$)
be the diffusion constructed from
$([0,1]\ni x\mapsto\P_x)$
(resp., $([0,1]\ni x\mapsto\Q_x)$)
as in Lemma~\ref{lem:200223a3}.
Let $\overline S$ be the separating time for $\QQ_{x_0}$ and $\tQQ_{x_0}$.
All points in $\mathbb R$ are non-separating for $(x\mapsto\QQ_x)$ and $(x\mapsto\tQQ_x)$.
As the boundaries $\pm\infty$ are inaccessible for both these diffusions,
Case~1 above yields
$\QQ_{x_0},\tQQ_{x_0}$-a.s.\ $\overline S\ge\infty$,
hence, by Proposition~\ref{prop: AC Sing},
$\QQ_{x_0}\sim_\textup{loc}\tQQ_{x_0}$.
As $\P_{x_0}=\QQ_{x_0}\circ\f(\X)^{-1}$ and
$\Q_{x_0}=\tQQ_{x_0}\circ\f(\X)^{-1}$
for a function $\f$ that is described in Lemma~\ref{lem:200223a3}, it follows that
$\P_{x_0}\sim_\textup{loc}\Q_{x_0}$.
Applying Proposition~\ref{prop: AC Sing} once again,
we obtain $\P_{x_0},\Q_{x_0}$-a.s.\ $S\ge\infty$, as required.

\smallskip 
\noindent
Up to symmetry we considered all possible cases.\qed
\\

\noindent
\emph{Proof of \(S\le U \wedge V \wedge R\):} We will distinguish several cases. 

\smallskip 
\noindent
\emph{Case 1 (\(x_0 \in \A \cap J^\circ\)):} Lemma \ref{lem: sing starting in A} yields \(\P_{x_0} \perp \Q_{x_0}\) on \(\mathcal{F}_0\) and hence, \(\P_{x_0}, \Q_{x_0}\)-a.s. \(S = 0 = U \wedge V \wedge R\). 

\smallskip 
\noindent
\emph{Case 2 (\(x_0 \in \A \cap \partial J\)):}
Recall that we suppose that, in case $x_0$ is a boundary point, it is reflecting for both diffusions \((x \mapsto \P_{x})\) and \((x \mapsto\Q_{x})\). 

First, suppose that there is a point \(z \in J^\circ\) such that all points in \(\ofr x_0,z \gsr\)
(recall this notation from~\eqref{eq:271022a0})
are non-separating. Then, Lemma~\ref{lem: singular boundary separating} yields \(\P_{x_0} \perp \Q_{x_0}\) on \(\mathcal{F}_0\) and hence, \(\P_{x_0}, \Q_{x_0}\)-a.s. \(S = 0 = U \wedge V \wedge R\). 

Second, suppose that we can find a monotone sequence \(z_1, z_2, \ldots \in J^\circ\) of separating points such that \(z_n \to x_0\).
As \(x_0\) is a reflecting boundary point, Lemma~\ref{lem: finiteness hitting times} yields that \(\P_{x_0}, \Q_{x_0}\)-a.s.\ \(T_{z_n} < \infty\) for all \(n\in\mathbb N\). Now, the strong Markov property and Lemma~\ref{lem: sing starting in A} yield that \(\P_{x_0} \perp \Q_{x_0}\) on \(\mathcal{F}_{T_{z_n}}\) and, consequently, \(\P_{x_0}, \Q_{x_0}\)-a.s.\ \(S \leq T_{z_n}\), for all \(n\in\mathbb N\).
Define $T_{x_0+}=\lim_{n\to\infty}T_{z_n}$.
By Lemma \ref{lem: exit immediately}, \(\P_{x_0}, \Q_{x_0}\)-a.s.\ $T_{x_0+}=0$.
Thus, \(\P_{x_0}, \Q_{x_0}\)-a.s.\ \(S =0 = U \wedge V \wedge R\).

\smallskip  
\noindent
 \emph{Case 3 (\(l < \alpha < x_0 < \gamma < r\)):} 
In this case, we have \(\P_{x_0}, \Q_{x_0}\)-a.s.\ \(U \wedge V \wedge R = T_\alpha \wedge T_\gamma < \infty\)
by~\eqref{eq:260223a1}.
Using the strong Markov property and the result from Case~1,
we obtain that \(\P_{x_0} \perp \Q_{x_0}\) on \(\mathcal{F}_{T_\alpha} \cap \{T_\alpha < T_\gamma\}\).
Proposition~\ref{prop:121224a1} yields
\(\P_{x_0}, \Q_{x_0}\)-a.s.\ \(S \leq T_\alpha\) on \(\{T_\alpha < T_\gamma\}\). Similarly, we get that \(\P_{x_0}, \Q_{x_0}\)-a.s.\ \(S \leq T_\gamma\) on \(\{T_\alpha > T_\gamma\}\). In summary, \(\P_{x_0}, \Q_{x_0}\)-a.s.\ \(S \leq T_\alpha \wedge T_\gamma = U \wedge V \wedge R\).

\smallskip 
\noindent
{\em Case 4 (\(l < \alpha < x_0, \gamma = \Delta\)):}
It suffices to prove that \(\P_{x_0}, \Q_{x_0}\)-a.s.\ \(S \leq T_\alpha\) on \(\{T_\alpha < \infty\}\). This follows from the strong Markov property,
Lemma~\ref{lem: sing starting in A} and Proposition~\ref{prop:121224a1}.

\smallskip 
\noindent
\emph{Case 5 (\(l < \alpha < x_0 < \gamma = r\)):}
Recall from Lemma~\ref{lem:200223a2} that, restricted to the set \(\{T_\alpha = T_r = \infty\}\), a.a. paths of a diffusion with state space \(J\) travel to \(r\) in the sense that \(\X_t \to r\) as \(t \to \infty\). Consequently, as \(R = \delta\), we have \(\P_{x_0}, \Q_{x_0}\)-a.s. \(U \wedge V \wedge R = T_\alpha \wedge T_r\). It suffices to prove that \(\P_{x_0}, \Q_{x_0}\)-a.s. \(S \leq T_\alpha\) on \(\{T_\alpha < T_r\}\) and \(\P_{x_0}, \Q_{x_0}\)-a.s. \(S \leq T_r\) on \(\{T_\alpha \geq T_r\}\).

The first part, i.e., \(\P_{x_0}, \Q_{x_0}\)-a.s. \(S \leq T_\alpha\) on \(\{T_\alpha < T_r\}\), follows from the strong Markov property,
Lemma~\ref{lem: sing starting in A} and Proposition~\ref{prop:121224a1}.

In the following, we prove that \(\P_{x_0}, \Q_{x_0}\)-a.s. \(S \leq T_r\) on \(\{T_\alpha \geq T_r\}\). 
We distinguish several cases. First, suppose that \(r\) has different boundary classifications for the diffusions \((x \mapsto \P_x)\) and \((x \mapsto \Q_x)\).
In case \(r\) is accessible
for one of the diffusions and inaccessible for the other, the set \(\{T_r < \infty\} \in \mathcal{F}_{T_r}\) shows that \(\P_{x_0} \perp \Q_{x_0}\) on \(\mathcal{F}_{T_r} \cap \{T_\alpha \geq T_r\}\)
(apply Lemma~\ref{lem:020323a2}).
In case \(r\) is reflecting for one of the diffusions and absorbing for the other, the set \(\{\tau = 0, T_r < \infty\} \in \mathcal{F}_{T_r}\), with \(\tau \triangleq \inf (t \geq 0 \colon \X_{t + T_r} \not = r)\), shows that \(\P_{x_0} \perp \Q_{x_0}\) on \(\mathcal{F}_{T_r} \cap \{T_\alpha \geq T_r\}\)
(here we use that, in case of reflection, $\{\tau>0\}$ happens with probability~$0$ by Lemma \ref{lem: exit immediately}).
By Proposition~\ref{prop:121224a1},
in both cases, \(\P_{x_0}, \Q_{x_0}\)-a.s. \(S \leq T_r\) on \(\{T_\alpha \geq T_r\}\).

Next, suppose that \(r\) is reflecting for both diffusions. Then, we can again use the strong Markov property and
Lemma~\ref{lem: singular boundary separating}
to conclude that \(\P_{x_0}, \Q_{x_0}\)-a.s. \(S \leq T_r\) on \(\{T_\alpha \geq T_r\}\). 

Now, assume that \(r\) is either absorbing for both diffusions or inaccessible for both diffusions. In particular, that means that \(r\) cannot be half-good. First, suppose that \(r = \infty\). Then, by Lemma~\ref{lem:200223a2}
and the fact that $\S=\on{Id}$,
\(\Q_{x_0} (T_\alpha \geq T_r) = 0\) and hence, \(\P_{x_0} \perp \Q_{x_0}\) on \(\mathcal{F}_{T_r} \cap \{T_\alpha \geq T_r\}\).
This yields that \(\P_{x_0}, \Q_{x_0}\)-a.s. \(S \leq T_r\) on \(\{T_\alpha \geq T_r\}\).
	From now on, suppose that \(r < \infty\).
	We fix a number \(c \in (\alpha, x_0)\). Recall that all points \([c, r)\) are non-separating and let \(\beta \colon [c, r) \to \mathbb{R}\) be the function as in Definition~\ref{def: non-sep int} (see also \eqref{eq: beta formula}).
	Notice that \(\beta \in L^2_\textup{loc} ([c, r))\), as all points in \([c, r)\) are non-separating
and $\S=\on{Id}$.
Furthermore, again using that $\S=\on{Id}$, \(\X\) is a \(\Q_{x_0}\)-semimartingale (see Lemma~\ref{lem: occ formula diff}) and the stopped process
\(\X_{\cdot \wedge T_c \wedge T_r} = \X_{\cdot \wedge T_c}\) is a continuous local \(\Q_{x_0}\)-martingale.
	The semimartingale occupation time formula (see Lemma \ref{lem: occ smg}) yields that, \(\Q_{x_0}\)-a.s. for all \(t \in \mathbb{R}_+\),
	\[
	\int_0^{t} \, [ \beta (\X_s) ]^2 d \langle \X, \X\rangle_s = \int_{- \infty}^{\infty} [ \beta (x) ]^2 L_{t}^x (\X) dx.
	\]
	As \(\beta \in L^2_\textup{loc} ([c, r))\), this identity shows that, \(\Q_{x_0}\)-a.s. for all \(t < T_r\), 
	\begin{equation}\label{eq:280323a1}
	\int_0^{t \wedge T_c} [ \beta (\X_s) ]^2 d \langle \X, \X\rangle_s < \infty. 
	\end{equation}
	Hence, we can define the process 
	\[
	Z_t \triangleq \begin{cases} \exp \big( - \frac{1}{2} \int_0^{t \wedge T_c} \beta (\X_s) d \X_s - \frac{1}{8} \int_0^{t \wedge T_c} [ \beta (\X_s) ]^2 d \langle \X, \X\rangle_s \big),& t < T_r,
	\\
	\exp \big( - \frac{1}{2} \int_0^{T_c} \beta (\X_s) d \X_s - \frac{1}{8} \int_0^{T_c} [ \beta (\X_s) ]^2 d \langle \X, \X\rangle_s \big), & t \geq T_r, T_c < T_r,\\
	0, & t \geq T_r, T_c \geq T_r. \end{cases}
	\]
	Recall that \(r\) is finite but not half-good. Thus, it holds that
	\(
	\int^r (r - x) [ \beta (x) ]^2 dx = \infty,
	\)
	and Lemma~\ref{lem: pepetual integral diffusion} shows that \(\Q_{x_0}\)-a.s. 
	\[
	\int_0^{T_r} [ \beta( \X_s) ]^2 d \langle \X, \X \rangle_s = \infty \text{ on } \Big\{ \lim_{t \nearrow T_l \wedge T_r} \X_t = r \Big\}.
	\]
	Notice that \(\Q_{x_0}\)-a.s. \(\{T_c \geq T_r\} \subset \{ \lim_{t \nearrow T_l \wedge T_r} \X_t = r\}\)
	by Lemma~\ref{lem:200223a2}, recalling \(\S(r) = r < \infty\).
	Thus, \(\Q_{x_0}\)-a.s. \(Z_{T_r - } = 0\) on \(\{T_c \geq T_r\}\), which proves that \(Z\) is a continuous process. Further, thanks to \cite[Lemma~12.43]{Jacod}, \(Z\) is even a continuous local \(\Q_{x_0}\)-martingale. 
Take a sequence
\(x_0 < r_1 < r_2 < \cdots\)
such that \(r_n \to r\). For \(m, n \in \mathbb{N}\), define
	\[
	\sigma_m^n \triangleq \inf \Big( t \geq 0 \colon \int_0^{t \wedge T_c \wedge T_{r_n}} [ \beta (\X_s) ]^2 d \langle \X, \X \rangle_s \geq m \Big). 
	\]
	The process 
	\[
	t \mapsto \int_0^{t \wedge T_c \wedge T_{r_n}} [ \beta (\X_s) ]^2 d \langle \X, \X \rangle_s \in [0, \infty]
	\]
	is left-continuous (by the monotone convergence theorem) but, in general, it might jump to infinity. Therefore, \(\sigma_m^n\) is an \((\mathcal{F}_t)_{t \geq 0}\)-stopping time but it might fail to be \((\mathcal{F}_t)_{t \geq 0}\)-predictable (cf. \cite[p.~193]{JS} for a more detailed discussion). 
We now pass to a suitable predictable version. 
Notice that \(T_{c} \wedge T_{r_n} = (T_c \wedge T_{r_n}) (\X_{\cdot \wedge T_c \wedge T_{r_n}})\) by Galmarino's test (cf. \cite[Lemma~III.2.43]{JS}). Hence, we obtain 
	\begin{align*}
		\sigma^n_m (\X_{\cdot \wedge T_c \wedge T_{r_n}}) &= \inf \Big( t \geq 0 \colon \int_0^{t \wedge (T_c \wedge T_{r_n}) (\X_{\cdot \wedge T_c \wedge T_{r_n}})} \big[ \beta (\X_{s \wedge T_c \wedge T_{r_n}}) \big]^2 \, d \langle \X_{\cdot \wedge T_c \wedge T_{r_n}}, \X_{\cdot \wedge T_c \wedge T_r} \rangle_s\Big) 
		\\&= \inf \Big( t \geq 0 \colon \int_0^{t \wedge T_c \wedge T_{r_n}} \big[ \beta (\X_{s \wedge T_c \wedge T_{r_n}}) \big]^2 \, d \langle \X_{\cdot \wedge T_c \wedge T_{r_n}}, \X_{\cdot \wedge T_c \wedge T_r} \rangle_s\Big)= \sigma^n_m.
	\end{align*} 
It follows from Lemma~\ref{lem: Meyer theo}
applied to the diffusion \(\Q_{x_0} \circ \X_{\cdot \wedge T_c \wedge T_{r_n}}^{-1}\)
that there exists an \((\mathcal{F}_t)_{t \geq 0}\)-predictable time \(\bar{\sigma}^n_m\) such that \(\Q_{x_0} \circ \X_{\cdot \wedge T_c \wedge T_{r_n}}^{-1}\)-a.s. \(\sigma^n_m = \bar{\sigma}^n_m\). 
		As \(\Q_{x_0}\)-a.s. \(\sigma^n_m > 0\), we may take \(\bar{\sigma}^n_m > 0\) identically.  Hence, by \cite[III.2.36]{JS}, the time \(\bar{\sigma}^n_m\) is an \((\mathcal{F}^o_t)_{t \geq 0}\)-stopping time, where \(\mathcal{F}^o_t \triangleq \sigma (\X_s, s \leq t)\). 
		We also observe that \(\tilde{\sigma}^n_m \triangleq \bar{\sigma}^n_m (\X_{\cdot \wedge T_c \wedge T_{r_n}})\) is an \((\mathcal{F}^o_{t \wedge T_c \wedge T_{r_n}})_{t \geq 0}\)-stopping time (see \cite[Proposition~10.35]{Jacod}). In particular, it is an \((\mathcal{F}_t^o)_{t \geq 0}\)-stopping time.
	Below we use this observation to apply Lemma~\ref{lem: loc uni}.
Since \(\Q_{x_0} \circ \X_{\cdot \wedge T_c \wedge T_{r_n}}^{-1}\)-a.s. \(\sigma^n_m = \bar{\sigma}^n_m\),
we have \(\Q_{x_0}\)-a.s.
\(
\sigma^n_m
= \sigma^n_m (\X_{\cdot \wedge T_c \wedge T_{r_n}})
= \bar\sigma^n_m (\X_{\cdot \wedge T_c \wedge T_{r_n}})
= \tilde{\sigma}^n_m.
\)
By Lemma~\ref{lem: finiteness hitting times}, $\Q_{x_0}$-a.s.\ $T_c\wedge T_{r_n}<\infty$,
hence $T_c\wedge T_{r_n}<T_r$ and therefore, by~\eqref{eq:280323a1},
$\Q_{x_0}$-a.s.
\begin{equation}\label{eq:020423a1}
	\int_0^{T_c\wedge T_{r_n}}[\beta(\X_s)]^2\,d\langle\X,\X\rangle_s<\infty.
\end{equation}
Thus, by Novikov's condition, \(Z_{\cdot \wedge T_{r_n} \wedge \tilde{\sigma}^n_m}\) is a uniformly integrable
\(\Q_{x_0}\)-\((\mathcal{F}^o_t)_{t \geq 0}\)-martingale
that starts in~$1$.
Notice that $\Q_{x_0}$-a.s.\ $Z_{T_{r_n}\wedge\tilde\sigma^n_m}>0$.
We define a probability measure \(\mathds{K}^{n, m}\) by the formula
\begin{align*}
\mathds{K}^{n, m} ( G ) \triangleq \E^{\Q_{x_0}} \big[ Z_{T_{r_n} \wedge \tilde{\sigma}_m^n} \1_{G}\big], \quad G \in \mathcal{F}.
\end{align*}
Below we will prove that 
	\begin{align} \label{eq: K eq}
	\mathds{K}^{n, m} = \P_{x_0} \text{ on } \mathcal{F}^o_{T_{c} \wedge T_{r_n} \wedge \tilde{\sigma}_m^n}, \quad n, m \in \mathbb{N}.
	\end{align}
Suppose for a moment that this identity is established.
Given a $\sigma$-field $\mathcal G$ on $\Omega$ such that $\mathcal G\subset\mathcal F$,
let \((\P_{x_0}|\mathcal G)^{ac}\) denote the absolutely continuous part of the restriction \(\P_{x_0}|\mathcal G\) with respect to the restriction \(\Q_{x_0}|\mathcal G\).
By Jessen's theorem (see Corollary~\ref{cor:030323a1}), $\Q_{x_0}$-a.s.
\begin{equation}\label{eq:020423a2}
Z_{T_{r_n}\wedge\tilde\sigma^n_m}
\equiv
\frac{d(\P_{x_0}|\mathcal F^o_{T_c\wedge T_{r_n}\wedge\tilde\sigma^n_m})}
{d(\Q_{x_0}|\mathcal F^o_{T_c\wedge T_{r_n}\wedge\tilde\sigma^n_m})}
\to
\frac{d(\P_{x_0}|\bigvee_{k \in \mathbb{N}}\mathcal F^o_{T_c\wedge T_{r_n}\wedge\tilde\sigma^n_k})^{ac}}
{d(\Q_{x_0}|\bigvee_{k \in \mathbb{N}}\mathcal F^o_{T_c\wedge T_{r_n}\wedge\tilde\sigma^n_k})^{\phantom{ac}}},
\quad m\to\infty.
\end{equation}
Since $\P_{x_0}\sim\Q_{x_0}$ on $\mathcal F_{T_c\wedge T_{r_n}}$ by Lemma~\ref{lem: loc equivalence},  
\eqref{eq:020423a1} holds $\P_{x_0},\Q_{x_0}$-a.s.
This yields that $\P_{x_0},\Q_{x_0}$-a.s.\ $\lim_{m\to\infty}\sigma^n_m=\infty$.
Moreover, recalling that
$\sigma^n_m = \sigma^n_m (\X_{\cdot \wedge T_c \wedge T_{r_n}})$
and that
\(\tilde{\sigma}^n_m\) is an \((\mathcal{F}^o_{t \wedge T_c \wedge T_{r_n}})_{t \geq 0}\)-stopping time
(hence, both $\sigma^n_m$ and $\tilde\sigma^n_m$ are $\mathcal F^o_{T_c\wedge T_{r_n}}$-measurable),
$\P_{x_0}\sim\Q_{x_0}$ on $\mathcal F_{T_c\wedge T_{r_n}}$ yields
that $\sigma^n_m=\tilde\sigma^n_m$ holds not only $\Q_{x_0}$-a.s. but also $\P_{x_0}$-a.s.
Consequently,
$\P_{x_0},\Q_{x_0}$-a.s.\ $\lim_{m\to\infty}\tilde\sigma^n_m=\infty$,
hence, $\P_{x_0},\Q_{x_0}$-a.s.
\begin{equation}\label{eq:020423a3}
\bigvee_{k \in \mathbb{N}} \mathcal F^o_{T_c\wedge T_{r_n}\wedge\tilde\sigma^n_k}
=
\mathcal F^o_{T_c\wedge T_{r_n}},
\end{equation}
therefore, $\Q_{x_0}$-a.s.
\begin{equation}\label{eq:020423a4}
\frac{d(\P_{x_0}|\bigvee_{k \in \mathbb{N}}\mathcal F^o_{T_c\wedge T_{r_n}\wedge\tilde\sigma^n_k})^{ac}}
{d(\Q_{x_0}|\bigvee_{k \in \mathbb{N}}\mathcal F^o_{T_c\wedge T_{r_n}\wedge\tilde\sigma^n_k})^{\phantom{ac}}}
=
\frac{d(\P_{x_0}|\mathcal F^o_{T_c\wedge T_{r_n}})^{ac}}
{d(\Q_{x_0}|\mathcal F^o_{T_c\wedge T_{r_n}})^{\phantom{ac}}}.
\end{equation}
(Notice that we need \eqref{eq:020423a3} to hold $\Q_{x_0}$-a.s. {\em and} \(\P_{x_0}\)-a.s. to conclude that \eqref{eq:020423a4} holds \(\Q_{x_0}\)-a.s.
Indeed, if the $\sigma$-fields $\bigvee_{k \in \mathbb{N}} \mathcal F^o_{T_c\wedge T_{r_n}\wedge\tilde\sigma^n_k}$ and $\mathcal F^o_{T_c\wedge T_{r_n}}$ were essentially different under $\P_{x_0}$,
then the restrictions of the measures to these $\sigma$-fields could have essentially different absolutely continuous parts.)
Now, \eqref{eq:020423a2} and~\eqref{eq:020423a4} yield that $\Q_{x_0}$-a.s.
\[
Z_{T_{r_n}}
=
\frac{d(\P_{x_0}|\mathcal F^o_{T_c\wedge T_{r_n}})^{ac}}
{d(\Q_{x_0}|\mathcal F^o_{T_c\wedge T_{r_n}})^{\phantom{ac}}}.
\]
Using Jessen's theorem together with the fact that $\bigvee_{n \in \mathbb{N}} \mathcal F^o_{T_c\wedge T_{r_n}}=\mathcal F^o_{T_c\wedge T_r}$, we get that $\Q_{x_0}$-a.s.
$$
Z_{T_{r_n}}
\equiv
\frac{d(\P_{x_0}|\mathcal F^o_{T_c\wedge T_{r_n}})^{ac}}
{d(\Q_{x_0}|\mathcal F^o_{T_c\wedge T_{r_n}})^{\phantom{ac}}}
\to
\frac{d(\P_{x_0}|\mathcal F^o_{T_c\wedge T_r})^{ac}}
{d(\Q_{x_0}|\mathcal F^o_{T_c\wedge T_r})^{\phantom{ac}}},
\quad n\to\infty.
$$
This yields that \(\Q_{x_0}\)-a.s.
\[
Z_{T_{r}}
=
\frac{d(\P_{x_0}|\mathcal F^o_{T_c\wedge T_r})^{ac}}
{d(\Q_{x_0}|\mathcal F^o_{T_c\wedge T_r})^{\phantom{ac}}}.
\]
Using that \(\Q_{x_0}\)-a.s. \(Z_{T_r} = 0\) on \(\{T_c \geq T_r\}\), we conclude that \(\P_{x_0} \perp \Q_{x_0}\) on \(\mathcal{F}^o_{T_r} \cap \{T_c \geq T_r\}\).
Further, as  $\mathcal F^o_{T_r}\subset\mathcal F_{T_r}$, we obtain that
\(\P_{x_0}, \Q_{x_0}\)-a.s. \(S \leq T_r\) on \(\{T_c \geq T_r\}\).
It follows from Lemma~\ref{lem:200223a2} that $\P_{x_0},\Q_{x_0}$-a.s.\ $\{T_c\ge T_r\}\nearrow\{T_\alpha\ge T_r\}$ as $c\searrow\alpha$.
Hence \(\P_{x_0}, \Q_{x_0}\)-a.s. \(S \leq T_r\) on \(\{T_\alpha \geq T_r\}\).

It remains to prove \eqref{eq: K eq}.
Let \(f \in C_b([\s (c), \s( r_n )]; \mathbb{R})\) be such that \(f |_{(\s (c), \s (r_n))}\) is the difference of two convex functions on \((\s (c), \s(r_n))\) and \(d f'_+ = 2 g d \m \circ \s^{-1}\) on \((\s (c),\s( r_n))\) for some \(g \in C_b([\s (c), \s(r_n)]; \mathbb{R})\) with \(g (\s(c)) = g (\s(r_n)) = 0\).
The generalized It\^o formula
(see Lemma~\ref{lem: occ smg}~(ii)) yields that,
\(\Q_{x_0}\)-a.s.\ for all \(t < T_c \wedge T_{r_n}\),
\begin{equation}\label{eq:020423b1}
df(\s(\X_t))=f'_-(\s(\X_t))\,d\s(\X_t)+\tfrac12\, d\int L_t^y(\s(\X))\,df'_+(y).
\end{equation}
As all points in $[c,r_n]$ are non-separating, $\s$ is a $C^1$-function with absolutely continuous derivative on $[c,r_n]$ (recall Definition~\ref{def: non-sep int}).
Hence,
\(\Q_{x_0}\)-a.s.\ for \(t < T_c \wedge T_{r_n}\),
\begin{equation}\label{eq:020423b2}
d\s(\X_t)=\s'(\X_t)\,d\X_t+\tfrac12\,\s''(\X_t)\,d\langle\X,\X\rangle_t.
\end{equation}
Furthermore, applying part~(iii) of Lemma~\ref{lem: occ smg} together with the fact that $d\M=\s'\,d\m$ on $[c,r_n]$ (see Definition~\ref{def: non-sep int}), we get,
\(\Q_{x_0}\)-a.s.\ for \(t < T_c \wedge T_{r_n}\),
\begin{equation} \label{eq:020423b3}
	\begin{split}
\frac12 \int L_t^y(\s(\X))\,df'_+(y)
&=
\int L_t^y(\s(\X))g(y)\,\m\circ\s^{-1}(dy)
=
\int L_t^{\s(x)}(\s(\X))g(\s(x))\,\m(dx)
\\
&=
\int L_t^x(\X)g(\s(x))\,\M(dx)
=
\int_0^t g(\s(\X_s))\,ds,
\end{split}
\end{equation}
where the last equality is the occupation time formula for diffusions
(more precisely, see~\eqref{eq:241124a2}).
Substituting \eqref{eq:020423b2} and \eqref{eq:020423b3} into~\eqref{eq:020423b1} yields that,
\(\Q_{x_0}\)-a.s.\ for \(t < T_c \wedge T_{r_n}\),
$$
df(\s(\X_t))
=
f'_-(\s(\X_t))\s'(\X_t)\,d\X_t
+
\tfrac12\, f'_-(\s(\X_t))\s''(\X_t)\,d\langle\X,\X\rangle_t
+
g(\s(\X_t))\,dt.
$$
Recall that,
\(\Q_{x_0}\)-a.s.\ \(dZ_t=-\frac12 Z_t \beta(\X_t)\,d\X_t\) for \(t < T_c \wedge T_{r_n}\).
Hence,
\(\Q_{x_0}\)-a.s.\ for \(t < T_c \wedge T_{r_n}\),
\begin{align*}
d(f(\s(\X_t))Z_t)
&=
Z_t d f (\s (\X_t)) + f (\s (\X_t)) d Z_t + d \langle f (\s (\X)), Z \rangle_t
\\[1mm]
&=
dM_t
+
Z_t g(\s(\X_t))\,dt +
\tfrac12 Z_t f'_-(\s(\X_t))\s''(\X_t)\,d\langle\X,\X\rangle_t
\\&\hspace{3.5cm}-
\tfrac12 Z_t \beta(\X_t) f'_-(\s(\X_t))\s'(\X_t)\,d\langle\X,\X\rangle_t
\\[0.5mm]
&=
dM_t
+
Z_t g(\s(\X_t))\,dt
\end{align*}
with some local $\Q_{x_0}$-martingale $M$,
where the integrals with respect to $\langle\X,\X\rangle$ are cancelled in the last equality due to $\s''=\beta\s'$ on $[c,r_n]$ (see Definition~\ref{def: non-sep int}).
Hence, $\Q_{x_0}$-a.s.\ for all \(t < T_c \wedge T_{r_n}\),
$$
d\Big[\Big( f (\s (\X_t)) - \int_0^{t} g (\s (\X_s))\,ds \Big) Z_t\Big]
=
dM_t
-
\Big(\int_0^{t} g (\s (\X_s))\,ds\Big)\,dZ_t,
$$
which proves that the process 
\[
\Big( f (\s (\X_{\cdot \wedge T_c \wedge T_{r_n}})) - \int_0^{\cdot \wedge T_c \wedge T_{r_n}} g (\s (\X_s)) ds \Big) Z_{\cdot \wedge T_c \wedge T_{r_n}}
\]
is a local \(\Q_{x_0}\)-martingale. Consequently, by \cite[Proposition III.3.8]{JS}, the stopped process 
\[
f (\s (\X_{\cdot \wedge T_c \wedge T_{r_n} \wedge \tilde{\sigma}^n_m})) - \int_0^{\cdot \wedge T_c \wedge T_{r_n} \wedge \tilde{\sigma}^n_m} g (\s (\X_s)) ds
\]
is a local \(\mathds{K}^{n, m}\)-martingale. By the Lemmata \ref{lem: generator} and \ref{lem: loc uni}, this proves~\eqref{eq: K eq}.

\smallskip
\noindent
\emph{Case 6 (\(\alpha = \gamma = \Delta\)):} In this case we have \(U \wedge V \wedge R = R\). Thus, if \(R = \delta\), i.e., the diffusions have a non-reflecting boundary point, the inequality \(S \leq \delta = U \wedge V \wedge R = \delta\) is trivial. We now discuss the case where \(R = \infty\), i.e., we assume that the boundaries of \((x \mapsto \P_x)\) and \((x \mapsto \Q_x)\) are reflecting. Notice that in this case
\(J=\tilde J\) is compact and, by~\eqref{eq:101022a3}, that
\(\m\) and \(\M\) are finite measures.
Further, thanks to Lemma~\ref{lem:200223a1}, both diffusions are recurrent.
Let \(\mathcal{C}\) be a countable set of bounded continuous functions \(f \colon J \to \mathbb{R}\) that is probability measure determining. 
It follows from the ratio ergodic theorem (see Lemma~\ref{lem: ratio ergodic thm})
that \(\P_{x_0}\)-a.s.
	\[
	\lim_{t \to \infty} \frac{1}{t} \int_0^t f (\X_s) ds = \frac{1}{\m (J)} \int f (x) \m (dx), \ \ \text{for all } f \in \mathcal{C}, 
	\]
	and \(\Q_{x_0}\)-a.s.
	\[
	\lim_{t \to \infty} \frac{1}{t} \int_0^t f (\X_s) ds = \frac{1}{\M (J)} \int f (x) \M (dx), \ \ \text{for all } f \in \mathcal{C}.
	\]
	For contradiction, assume that \(\P_{x_0} (S = \delta) > 0\). Then, by the definition of the separating time,
	\[
	\Q_{x_0} \Big( 
	\lim_{t \to \infty} \frac{1}{t} \int_0^t f (\X_s) ds = \frac{1}{\m (J)} \int f (x) \m (dx), \ \text{for all } f \in \mathcal{C}
	, \ S = \delta \Big) > 0,
	\]
	and consequently, 
	\[
	\Q_{x_0} \Big( 
	\frac{1}{\M (J)} \int f (x) \M(dx) = \lim_{t \to \infty} \frac{1}{t} \int_0^t f (\X_s) ds = \frac{1}{\m (J)} \int f (x) \m (dx), \ \text{for all } f \in \mathcal{C} \Big) > 0.
	\]
	By the uniqueness theorem for probability measures, and the assumption that \(\mathcal{C}\) is probability measure determining, we conclude that
$\m/\m (J) = \M/\M(J)$, or, equivalently,
\(
\m=c\M,
\)
where $c$ is a constant (necessarily, $c=\m(J)/\M(J)$).
As all points in \(J\) are non-separating and $\S=\on{Id}$, we observe that \(\s\) is continuously differentiable and
\(
\s'= 1 / c
\) on \(J\).
Recall
that a speed measure $\m$ is determined uniquely given the scale function $\s$ in the sense that
	if we replace $\s$ with $k\s+l$, $k>0$, $l\in\mathbb R$, then $\m$ is replaced with $\m/k$. Thanks to this observation, and the fact that speed and scale determine a diffusion uniquely, we conclude that \(\P_{x_0} = \Q_{x_0}\). This, however, is a contradiction to our general assumption that \(\P_{x_0} \neq \Q_{x_0}\). Hence, \(\P_{x_0}\)-a.s. \(S \leq \infty\) and, by Proposition~\ref{prop: AC Sing}, \(\P_{x_0}, \Q_{x_0}\)-a.s. \(S \leq \infty = U \wedge V \wedge R\).
\\

\noindent
The cases (\(\alpha = \Delta, x_0 < \gamma = r\)) and (\(l = \alpha < x_0 < \gamma = r\)) can be treated with the techniques developed in Cases 5 and~6. 
The only point where additional arguments are needed is the situation where \(\alpha = \Delta\), \(l\) is a reflecting boundary for (necessarily both) diffusions and \(r\) is either inaccessible for both diffusions or absorbing for both diffusions. 
The reason for this is that \(\X\) is no local $\Q_{x_0}$-martingale anymore.
On the contrary, in the proof of Case~5, the local martingale property of $\X$ was used twice,
namely in the definition of the candidate density \(Z\) and in the proof of~\eqref{eq: K eq}.
We now discuss the necessary changes. 

Suppose that \(\alpha = \Delta\),
\(x_0 < \gamma = r\),
that \(l\) is reflecting for both diffusions and that \(r\) is inaccessible for both diffusions.\footnote{The remaining case where $r$ is absorbing for both diffusions is handled via a sequence $x_0<r_1<r_2<\cdots$ such that $r_n\to r$ in exactly the same way as in Case~5.}
We then infer from Lemma~\ref{lem: finiteness hitting times} that $\P_{x_0},\Q_{x_0}$-a.s.\ $U\wedge V\wedge R=T_r=\infty$, that is, we need only to prove that $\P_{x_0}\perp\Q_{x_0}$.
To simplify our notation, we also assume that \(l = 0\).

Recall from Lemma~\ref{lem: occ formula diff}~(i) that \(\X\) is a \(\Q_{x_0}\)-semimartingale and denote its continuous local \(\Q_{x_0}\)-martingale part by \(\X^c\).
Quite similar as in Case 5 above, we define 
\[
Z \triangleq \exp \Big( - \frac{1}{2} \int_0^{\cdot} \beta (\X_s) d \X^c_s - \frac{1}{8} \int_0^{\cdot} [ \beta ( \X_s) ]^2 d \langle \X, \X \rangle_s \Big).
\]
Using that \(\beta \in L^2_\textup{loc} ([0, r))\), it follows as in Case~5 that \(Z\) is well-defined as a continuous local \(\Q_{x_0}\)-martingale.
We, further, observe that
\begin{equation}\label{eq:040423a1}
\Q_{x_0}\text{-a.s.}\quad Z_{\infty-} (= Z_{T_r-}) = 0.
\end{equation}
Indeed, if $r<\infty$, then \eqref{eq:040423a1} follows as in Case~5 from Lemma~\ref{lem: pepetual integral diffusion} due to the fact that \(r\) necessarily fails to be half-good.
In the opposite case $r=\infty$, the diffusion $(x\mapsto\Q_x)$ is recurrent by Lemma~\ref{lem:200223a1}, hence \eqref{eq:040423a1} follows from Lemma~\ref{lem:040423a1}, once we establish that $\beta$ is non-vanishing.
Now, assuming that $\beta$ vanishes (a.e. with respect to the Lebesgue measure) and noting that all points in $[0,r)\;(=J=\tilde J)$ are non-separating
yields that $\s'$ equals some positive constant $c$ on $[0,r)$ and, hence, $\m=\frac1c\M$
(recall Definitions \ref{def: non-sep int} and~\ref{def: non-sep bound}).
As discussed in Case~6, this would imply $\P_{x_0}=\Q_{x_0}$, which contradicts to our general assumption $\P_{x_0}\ne\Q_{x_0}$ and concludes the proof of~\eqref{eq:040423a1}.
Next, let \(\sigma\) be an \((\mathcal{F}^o_t)_{t \geq 0}\)-stopping time such that \(\E^{\Q_{x_0}}[Z_{\sigma}] = 1\). Then, we can define a probability measure \(\mathds{K}\) by the formula
\[
\mathds{K} (G) \triangleq \E^{\Q_{x_0}} \big[ Z_{\sigma} \1_G \big], \quad G \in \mathcal{F}. 
\]
Below, we prove that \(\mathds{K} = \P_{x_0}\) on \(\mathcal{F}^o_{\sigma}\).
Once this identity is established, we can reuse arguments from Case~5 and deduce \(\Q_{x_0} \perp \P_{x_0}\) from the fact that \(\Q_{x_0}\)-a.s. \(Z_{\infty-} = 0\) and Jessen's theorem. We omit this part of the proof and concentrate on the proof for \(\mathds{K} = \P_{x_0}\) on \(\mathcal{F}^o_{\sigma}\).
Again, as in Case~5, we use a martingale problem argument.
Take \(f \in C_b([\s (0), \s(r)) ; \mathbb{R})\) such that \(f'_+\) exists on \([\s(0), \s (r))\) as a right-continuous function of locally finite variation, \(df'_+ = 2g d \m \circ \s^{-1}\) on \((\s(0), \s(r))\) and \(f'_+(\s(0)) = 2g (\s (0))\m(\{0\})\) for some \(g \in C_b([\s(0), \s(r)); \mathbb{R})\). 
We deduce from Lemma~\ref{lem: compensation reflecting}, that 
\(\X^c = \X - \frac{1}{2} {L}^{0} (\X),\)
where \(L(\X)=\{ {L}^y_t (\X) \colon (t, y) \in \mathbb{R}_+ \times J\}\) denotes the semimartingale local time of \(\X\) under \(\Q_{x_0}\).
For what follows, we notice that $L(\X)$ is jointly continuous on
$\mathbb R_+\times J$ ($= \mathbb R_+\times[0,r)$).
This follows from~\eqref{eq:241124a1} and the joint continuity of the diffusion local time $\widehat L(\X)$ in Lemma~\ref{lem: occ formula diff}.
As all points in $[0, r)$ are non-separating, $\s$ is a $C^1$-function with absolutely continuous derivative on $[0, r)$ (recall the Definitions~\ref{def: non-sep int} and \ref{def: non-sep bound}~(iii)).
Thus, we can apply Lemma~\ref{lem: extended gen Ito formula} with \(\s\) and obtain that \(\Q_{x_0}\)-a.s. 
\begin{equation} \label{eq: s ito ref} \begin{split}
	d \s (\X_t) &= \s' (\X_t) d \X_t + \tfrac{1}{2} \s'' (\X_t) d \langle \X, \X \rangle_t
	\\&= \s' (\X_t) d \X^c_t + \tfrac{1}{2} \s' ( 0 ) d{L}^0_t (\X) + \tfrac{1}{2} \s'' (\X_t) d \langle \X, \X \rangle_t.
\end{split}
\end{equation} 
Using that \(d f'_+ = 2 g d \m \circ \s^{-1}\) on \(\s (J^\circ)\), \(d\M = \s' d\m\) on \(J^\circ\) (see Definition~\ref{def: non-sep int}) and Lemma~\ref{lem: occ smg}~(iii), we obtain \(\Q_{x_0}\)-a.s.
\begin{equation} \label{eq: local inte comp} \begin{split}
	\tfrac{1}{2} \int_{(\s (0), \s(r))} {L}^{x}_t (\s (\X)) f'_+ (dx)  &= \int_{(0, r)} {L}^{\s (x)}_t (\s (\X)) g (\s (x)) \m (dx) = \int_{(0, r)} {L}^{x}_t (\X) g (\s (x)) \M (dx).
\end{split}
\end{equation} 
Using Lemma~\ref{lem: extended gen Ito formula} with \(f\), \eqref{eq: s ito ref}, \eqref{eq: local inte comp}, \(\frac12 f'_+ (\s (0)) \s' (0) = g (\s (0)) \m (\{0\}) \s' (0) = g (\s (0)) \M(\{0\})\), which uses the fact that \(0\) is non-separating (see Definition~\ref{def: non-sep bound}~(iii)), and the occupation time formula for diffusions (see Lemma~\ref{lem: occ formula diff}, in particular, formulas \eqref{eq:241124a1} and~\eqref{eq:020423a5}),
we get \(\Q_{x_0}\)-a.s.
\begin{equation} \label{eq: main comp rel ito}
	\begin{split}
	d  f (\s (\X_t)) &= f'_+ (\s (\X_t)) d \s (\X_t) + \tfrac{1}{2} \, d\int_{(\s(0),\s(r))} {L}^{x}_t (\s (\X)) f'_+ (dx) 
	\\&= d M_t + \tfrac{1}{2} f'_+ (\s (0)) \s' (0) d {L}^0_t (\X) + \tfrac{1}{2} f'_+ (\s (\X_t)) \s'' (\X_t) d \langle \X, \X\rangle_t + d\int_{(0, r)} {L}^{x}_t (\X) g (\s (x)) \M (dx)
	\\&=d M_t  + g (\s (0)) \M (\{0\}) d {L}^{0}_t (\X) + \tfrac{1}{2} f'_+ (\s (\X_t)) \s'' (\X_t) d \langle \X, \X\rangle_t + d\int_{(0, r)} {L}^{x}_t (\X) g (\s (x)) \M (dx)
	\\&=d M_t  + \tfrac{1}{2} f'_+ (\s (\X_t)) \s'' (\X_t) d \langle \X, \X\rangle_t + d\int_{[0, r)} {L}^{x}_t (\X) g (\s (x)) \M (dx)
	\\&= d M_t + \tfrac{1}{2} f'_+ (\s (\X_t)) \s'' (\X_t) d \langle \X, \X\rangle_t + g (\s (\X_t)) dt, \phantom \int
\end{split}
\end{equation} 
where \(d M_t = f'_+ (\s (\X_t)) \s' (\X_t) d \X^c_t\).
Then \(\Q_{x_0}\)-a.s. it holds
\begin{align} \label{eq: QV ref pf}
	d \langle Z, f (\s (\X)) \rangle_t = -\frac12 Z_t \beta (\X_t) f'_+ (\X_t) \s' (\X_t) d \langle \X, \X \rangle_t
	= -\frac12 Z_t f'_+ (\s (\X_t)) \s'' (\X_t) d\langle \X, \X \rangle_t,
\end{align}
where the second equality follows from the fact that \(\mu_L\)-a.e. \(\s'' = \beta \s'\)
(see Definition~\ref{def: non-sep int})
and the semimartingale occupation time formula (Lemma~\ref{lem: occ smg}).
By integration by parts, \eqref{eq: main comp rel ito} and \eqref{eq: QV ref pf},
we obtain that \(\Q_{x_0}\)-a.s.
\begin{align*}
	d f (\s (\X_t)) Z_t &= f (\s (\X_t)) dZ_t + Z_t d f (\X_t) + d \langle Z, f (\X_t) \rangle_t 
	= f (\s (\X_t)) dZ_t + Z_t d M_t + Z_t g (\s (\X_t)) dt.
\end{align*}
Hence, as in Case 5, using \cite[Proposition III.3.8]{JS}, we get that \(f (\s (\X_{\cdot \wedge \sigma})) - \int_0^{\cdot \wedge \sigma} g (\s (\X_s)) ds\) 
is a local \(\mathds{K}\)-martingale and, by Lemmata \ref{lem: generator} and \ref{lem: loc uni}, \(\mathds{K} = \P_{x_0}\) on \(\mathcal{F}^o_{\sigma}\). This finishes our discussion of this case.

\smallskip 
\noindent
Up to symmetry we considered all possible cases.\qed

\appendix

\section{Examples for Separating Times} \label{sec: examples}
In the first example we relate our definition of non-separating (good) points to those from \cite{cherUru,MU12} for the It\^o diffusion setting. Not surprisingly, we will see that the definitions coincide in this case.

\begin{example}[It\^o diffusion setting] \label{ex: ito diffusion}
	Let \(J^\circ = \tilde{J}^\circ = (l, r)\) and take four Borel measurable functions \(b, \tilde{b}, \sigma, \tilde{\sigma} \colon J^\circ \to \mathbb{R}\) such that the Engelbert--Schmidt conditions hold, i.e.,
	\[
	\sigma^2, \tilde{\sigma}^2 > 0\;\;\text{everywhere on }J^\circ, \qquad \frac{1 + |b|}{\sigma^2}, \frac{1 + |\tilde{b}|}{\tilde{\sigma}^2} \in L^1_\textup{loc}(J^\circ).
	\]
	Suppose that \(x_0 \in J^\circ\) and define
	\[
	\s (x) \triangleq \int^x \exp \Big( - 2 \int^z \frac{b (y) dy}{\sigma^2(y)}\Big) dz, \qquad \S (x) \triangleq \int^x \exp \Big( - 2 \int^z \frac{\tilde{b} (y) dy}{\tilde{\sigma}^2(y)}\Big) dz,
	\]
	and 
	\begin{equation}\label{eq:130222b3}
		\m (dx) \triangleq \frac{dx}{\s' (x) \sigma^2(x)}, \qquad \M (dx) \triangleq \frac{dx}{\S'(x) \tilde{\sigma}^2(x)}.
	\end{equation}
	Moreover, we suppose that \(\m (\{l\}) \equiv \m (\{r\}) \equiv \M(\{l\}) \equiv \M(\{r\}) \equiv \infty\) in case the points are accessible. In other words, we stipulate that the diffusions are absorbed in the boundaries of their state spaces in case they can reach them in finite time. 
	Providing some intuition, in this case \(\P_{x_0}\) is the law of an It\^o diffusion $X$ with dynamics 
	\[
	d X_t = b(X_t) dt + \sigma (X_t) dW_t, \quad X_0 = x_0, 
	\]
	up to the hitting time of the boundaries,
	and \(\Q_{x_0}\) is the law of an It\^o diffusion $Y$ with dynamics 
	\[
	d Y_t = \tilde{b}(Y_t) dt + \tilde{\sigma} (Y_t) dB_t, \quad Y_0 = x_0, 
	\]
	up to the hitting time of the boundaries, where $W$ and $B$ are standard Brownian motions.
	We refer to \cite[Section~5.5]{KaraShre} for precise definitions.
	
	Let us now understand Definitions \ref{def: non-sep int}, \ref{def:130222a1} and~\ref{def: non-sep bound} in this specific setting.
	First, the differential quotient $d^+\s/d\S$ clearly exists everywhere on $J^\circ$ and it equals
	\[
	\frac{d^+ \s}{d \S} = \frac{\s'}{\S'} = \exp \Big(2 \int^\cdot \frac{\tilde{b}(y) dy}{\tilde{\sigma}^2(y)} - 2 \int^\cdot \frac{b(y)dy}{\sigma^2(y)}\Big).
	\]
	In particular, $d^+\s/d\S$ is an absolutely continuous function.
	This yields that \eqref{eq:050122a1} is satisfied on $J^\circ$ with
	\[
	\beta = \frac{d^2 \s}{d \S d \s} = \Big( \frac{\s'}{\S'}\Big)' \frac{1}{\s'} = \frac{2 }{\S'} \Big(\frac{\tilde{b}}{\tilde{\sigma}^2} - \frac{b}{\sigma^2}\Big).
	\]
	Let $\mu_L$ denote the Lebesgue measure.
	We deduce from~\eqref{eq:130222b3} that
	\[
	\frac{d \m}{d \M} = \frac{\S' \tilde{\sigma}^2}{\s' \sigma^2}\;\;\mu_L\text{-a.e. on }J^\circ
	\]
	(more precisely, this holds for any Lebesgue point, see \cite[Chapter~7]{rudin}).
	As in Definition~\ref{def: non-sep int}, we now consider a point $x\in J^\circ$ and an open neighborhood $U(x)\subset J^\circ$ of $x$. We see that
	\[
	\frac{d \m}{d \M}\frac{d^+ \s}{d \S} =1\;\;\text{on }U(x)\quad \Longrightarrow \quad\tilde{\sigma}^2 = \sigma^2\;\;\mu_L\text{-a.e. on }U(x).
	\]
	Conversely, if \(\tilde{\sigma}^2 = \sigma^2\) $\mu_L$-a.e. on $U(x)$, then we get everywhere on $U(x)$
	\[
	\frac{d \m}{d \M} = \frac{\S'}{\s'}
	\]
	by the continuity of \(\S'\) and \(\s'\) and the mean value theorem for Riemann--Stieltjes integrals (\cite[p. 197]{walter}). In summary, we have 
	\[
	\frac{d \m}{d \M}\frac{d^+ \s}{d \S}=1\;\;\text{on }U(x) \quad \Longleftrightarrow \quad \tilde{\sigma}^2 = \sigma^2\;\;\mu_L\text{-a.e. on }U(x).
	\]
	Now, suppose that $\mu_L$-a.e. on $U(x)$ we have \(\tilde{\sigma}^2 = \sigma^2\).
	Hence, $\mu_L$-a.e. on $U(x)$
	\[
	\beta = \frac{2}{\S'} \Big(\frac{\tilde{b} - b}{\sigma^2}\Big)
	\]
	and 
	\begin{align}\label{eq: cond beta comp Ito setting}
		\big( \beta (z)\big)^2 \, \S (dz) =  \frac{4}{\S' (z)} \frac{(\tilde{b}(z) - b(z))^2}{\sigma^4(z)} \,dz.
	\end{align}
	As \(\S'\) is positive and continuous on \(J^\circ\), provided that \(\on{cl}(U(x)) \subset J^\circ\), we have
	\[
	\int_{U(x)} \big( \beta (z) \big)^2 \, \S(dz) < \infty \quad \Longleftrightarrow \quad \int_{U(x)} \frac{(\tilde{b}(z) - b(z))^2}{\sigma^4(z)} \,dz< \infty. 
	\]
	We conclude that a point \(x \in J^\circ\) is non-separating in the sense of Definition \ref{def: non-sep int} if and only if there exists an open neighborhood \(U(x) \subset J^\circ\) of \(x\) such that \(\tilde{\sigma}^2 = \sigma^2\) $\mu_L$-a.e. on \(U(x)\) and \((\tilde{b} - b)^2 / \sigma^4 \in L^1 (U(x))\).
	This is precisely the definition of a non-separating (good) interior point from \cite{cherUru, MU12}.
	Recalling \eqref{eq:130222a1} and~\eqref{eq: cond beta comp Ito setting}, we also see that, for the boundary points \(l\) and \(r\), Definition~\ref{def:130222a1} of half-goodness coincides with the definition of a non-separating (good) boundary point from \cite{cherUru, MU12}.
	It remains to notice that in this case, where accessible boundaries are forced to be absorbing, a boundary point is non-separating (good) in the sense of Definition~\ref{def: non-sep bound} if and only if it is half-good in the sense of Definition~\ref{def:130222a1}.\footnote{Definition~\ref{def: non-sep bound} starts to be essential as long as we include instantaneously or slowly reflecting boundaries into consideration.}
	In summary, Theorem \ref{theo: main1} includes \cite[Theorem~5.1]{cherUru} and \cite[Theorem~5.5]{MU12}.
\end{example}

Example \ref{ex: ito diffusion} gives the impression that \(d \m / d \M \cdot d^+ \s / d \S  = 1\) means that the diffusion coefficients coincide, which is well-known to be necessary for (local) absolute continuity, recall Girsanov's theorem (Lemma~\ref{lem: Girs}). However, the condition encodes much more as the following example illustrates.

\begin{example}[Sticky Brownian motions] \label{ex: sticky}
	Suppose that \(J = \tilde{J} = \mathbb{R}\) and that \((x \mapsto \P_x)\) and \((x \mapsto \Q_x)\) are Brownian motions sticky at the origin. More precisely, we assume that both \((x \mapsto \P_x)\) and \((x \mapsto \Q_x)\) are on natural scale and that 
	\[
	\m (dx) = dx + \gamma \delta_0 (dx), \qquad \M(dx) = dx + \tilde{\gamma} \delta_0 (dx), 
	\]
	where \(\gamma, \tilde{\gamma} \in (0, \infty)\). It is well-known (\cite{bass,EngPes}) that \(\P_{x_0}\) is the law of a solution process to
	\begin{equation}\label{eq:130222b1}
		d X_t = \1_{\{X_t \not = 0\}} dW_t, \quad \1_{\{X_t = 0\}} dt = \gamma d L^0_t (X),\quad X_0 = x_0, 
	\end{equation}
	and \(\Q_{x_0}\) is the law of a solution process to
	\begin{equation}\label{eq:130222b2}
		d Y_t = \1_{\{Y_t \not = 0\}} dB_t, \quad \1_{\{Y_t = 0\}} dt = \tilde{\gamma} d L^0_t (Y), \quad Y_0 = x_0, 
	\end{equation}
	where \(L^0 (X)\) and \(L^0 (Y)\) denote the semimartingale local times of \(X\) and \(Y\) in zero,
	and $W$ and $B$ are standard Brownian motions.
	In this setting we clearly have 
	\(
	d^+ \s/d \S = 1
	\)
	and 
	\[
	\Big(\frac{d \m}{d \M} \Big) (z) = \begin{cases} 1,& z \not = 0,\\
		\gamma/\tilde{\gamma},& z = 0.\end{cases}
	\]
	Consequently, the origin is separating if and only if \(\gamma \not = \tilde{\gamma}\). All other points in \(\mathbb{R}\) are non-separating.
	Finally, since \(\s(\pm \infty) = \S(\pm \infty) = \pm \infty\), the boundary points \(\pm \infty\) are separating.
	In summary, 
	\[
	\A = \begin{cases} \{- \infty, \infty\},&\gamma = \tilde{\gamma},\\
		\{- \infty, 0, \infty\},&\gamma \not = \tilde{\gamma}. \end{cases}
	\]
	With this observation at hand, we can deduce the following result from our main theorem.
	\begin{corollary} \label{coro: sep sticky}
		Let \(S\) be the separating time for \(\P_{x_0}\) and \(\Q_{x_0}\).
		\begin{enumerate}
			\item[\textup{(i)}]
			If \(\gamma = \tilde{\gamma}\), then \(\P_{x_0}, \Q_{x_0}\)-a.s. \(S = \delta\).
			\item[\textup{(ii)}]
			If \(\gamma \not = \tilde{\gamma}\), then \(\P_{x_0}, \Q_{x_0}\)-a.s. \(S = T_0\) and, in particular, \(\P_0 \perp \Q_0\) on \(\mathcal{F}_0\).
		\end{enumerate}
	\end{corollary}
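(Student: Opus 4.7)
The plan is to apply Theorem~\ref{theo: main1} directly, using the set $A$ of separating points already identified in the discussion preceding the corollary statement.

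Case (i) is immediate: when $\gamma=\tilde\gamma$ the extended characteristics coincide, i.e., $(\m,\s)=(\M,\S)$, so by uniqueness of a diffusion given its characteristics we have $\P_{x_0}=\Q_{x_0}$, and part~(i) of Theorem~\ref{theo: main1} yields $S=\delta$.

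For case~(ii), with $A=\{-\infty,0,\infty\}$, I would read off the theorem's $\alpha$ and the closest separating point from the right, which I denote $\gamma^\star$ (to avoid clashing with the stickiness constant), according to the position of $x_0$: if $x_0>0$ then $\alpha=0$ and $\gamma^\star=\infty$; if $x_0<0$ then $\alpha=-\infty$ and $\gamma^\star=0$; if $x_0=0$ both coincide with $x_0$. The deterministic time $R$ is always $\delta$, since the boundaries $\pm\infty$ are natural for both diffusions: on natural scale with $\m$ restricted to $\mathbb R\setminus\{0\}$ being Lebesgue measure, one computes $\u(\pm\infty)=\v(\pm\infty)=\infty$, so neither boundary is reflecting.

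The remaining task is to identify $U$ and $V$ in terms of $T_0$. The key input is recurrence of both diffusions on $\mathbb R$: natural scale with $\s(\pm\infty)=\pm\infty$ gives $T_y<\infty$ a.s.\ under $\P_{x_0}$ and $\Q_{x_0}$ for every $y\in\mathbb R$, and also $\limsup_{t\to\infty}\X_t=+\infty$ and $\liminf_{t\to\infty}\X_t=-\infty$ a.s.\ (this is the only step beyond bookkeeping, and it is standard from Feller boundary classification). Plugging into the definition~\eqref{eq:160223a2} yields $U=T_0$ when $x_0>0$ and, symmetrically, $V=T_0$ when $x_0<0$; the other of the two, which points toward $\pm\infty$, evaluates to $\infty$ since $T_{\pm\infty}=\infty$ and the relevant $\limsup$/$\liminf$ matches the boundary value. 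When $x_0=0$ the value $U=0$ is immediate, giving $S=0=T_0$, and Proposition~\ref{prop: AC Sing}(viii) then delivers the additional singularity statement $\P_0\perp\Q_0$ on $\mathcal F_0$. I do not expect any real obstacle, as everything follows mechanically from Theorem~\ref{theo: main1} once $A$ is in hand and recurrence is verified.
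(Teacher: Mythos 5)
Your proposal is correct and follows essentially the same route as the paper: part~(i) via $\P_{x_0}=\Q_{x_0}$, and part~(ii) by reading off $U$, $V$, $R$ from Theorem~\ref{theo: main1} with $A=\{-\infty,0,\infty\}$, using recurrence (the content of Lemma~\ref{lem:200223a2} for $\s(\pm\infty)=\pm\infty$) to get $U\wedge V\wedge R=T_0$. The only cosmetic difference is that the paper justifies $R=\delta$ simply by $\alpha\ne\Delta$, whereas you verify the boundaries are not reflecting; both are valid.
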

	\begin{proof}
		Of course, in case \(\gamma = \tilde{\gamma}\) we have \(\P_{x_0} = \Q_{x_0}\) and hence, (i) is trivial. We now discuss part (ii) and therefore assume that \(\gamma \not = \tilde{\gamma}\).
		Clearly, we have $R=\delta$.
		If \(x_0 = 0\), then \(U = V = T_0 = 0\) and the claim follows.
		Suppose now that \(x_0 > 0\) (resp. \(x_0 < 0\)). By virtue of Lemma~\ref{lem:200223a2}, we get \(\P_{x_0}, \Q_{x_0}\)-a.s. \(U = T_0\) (resp. \(V = T_0\)) and \(\P_{x_0}, \Q_{x_0}\)-a.s. \(V = T_\infty = \infty\) (resp. \(U = T_{- \infty} = \infty\)).
		We conclude from Theorem~\ref{theo: main1} that \(\P_{x_0}, \Q_{x_0}\)-a.s. \(S = T_0\).
	\end{proof}
	
	We emphasize that in the second case the probabilities \(\P_{x_0}\) and \(\Q_{x_0}\) are never locally absolutely continuous.
	This observation is not surprising
	due to \eqref{eq:130222b1} and~\eqref{eq:130222b2},
	as the local time is preserved by a locally absolutely continuous change of measure (\cite[Exercise~29.17]{kallenberg}).
	On the other hand, (ii) tells us for instance that \(\P_1 \sim \Q_1\) on \(\mathcal{F}_{T_a}\) for all \(a \in (0, 1)\).
\end{example}

The existence of the differential \(d^+ \s/ d \S\) as a continuous function excludes certain skewness properties of the scale function. 

\begin{example}[Skew Brownian motions] \label{ex: skew}
	Suppose that \(J = \tilde{J} = \mathbb{R}\) and that \((x \mapsto \P_x)\) and \((x \mapsto \Q_x)\) are skew Brownian motions. More precisely, we assume that 
	\[
	\s (x) = \begin{cases}(1 - \alpha)x,& x \geq 0,\\ \alpha x,& x \leq 0,\end{cases}\qquad \S(x) = \begin{cases}(1 - \tilde{\alpha})x,& x \geq 0,\\ \tilde{\alpha}x,& x \leq 0,\end{cases}
	\]
	and
	\begin{align*}
		\m (dx) &= \big( (1 - \alpha)^{-1} \1_{\{x \geq 0\}} + \alpha^{-1} \1_{\{x < 0\}}\big) dx, \\ 
		\M(dx) &= \big( (1 - \tilde{\alpha})^{-1} \1_{\{x \geq 0\}} + \tilde{\alpha}^{-1} \1_{\{x < 0\}}\big) dx, 
	\end{align*}
	where \(\alpha, \tilde{\alpha} \in (0, 1)\). 
	We easily see that
	\[
	\Big(\frac{d^+ \s}{d \S}\Big) (x) = \begin{cases} (1 - \alpha) / (1 - \tilde{\alpha}),& x \geq 0,\\
		\alpha /\tilde{\alpha},& x < 0.
	\end{cases}
	\]
	Hence, in case \(\alpha \not = \tilde{\alpha}\) the origin is separating. Moreover, as
	\[
	\Big(\frac{d \m}{d \M}\Big) (x) = \begin{cases} (1 - \tilde{\alpha}) /(1 - \alpha),& x > 0,\\
		[(1 - \tilde{\alpha}) \tilde{\alpha}] / [(1 - \alpha) \alpha],& x = 0,\\
		\tilde{\alpha}/\alpha,& x < 0, \end{cases}
	\]
	we note that all interior points except the origin are non-separating regardless of \(\alpha\) and \(\tilde{\alpha}\). 
	Let us stress that (ii) as well as (iii) from Definition \ref{def: non-sep int} fail for the origin. Finally, as \(\s(\pm \infty) = \S(\pm \infty) = \pm \infty\), the boundary points \(\pm \infty\) are separating.
	Now, as in the proof of Corollary \ref{coro: sep sticky}, we get the following:
	\begin{corollary}
		Let \(S\) be the separating time for \(\P_{x_0}\) and \(\Q_{x_0}\).
		\begin{enumerate}
			\item[\textup{(i)}]
			If \(\alpha = \tilde{\alpha}\), then \(\P_{x_0}, \Q_{x_0}\)-a.s. \(S = \delta\).
			\item[\textup{(ii)}]
			If \(\alpha \not = \tilde{\alpha}\), then \(\P_{x_0}, \Q_{x_0}\)-a.s. \(S = T_0\) and, in particular, \(\P_0 \perp \Q_0\) on \(\mathcal{F}_0\).
		\end{enumerate}
	\end{corollary}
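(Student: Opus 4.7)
The plan is to mirror the proof of the analogous corollary for sticky Brownian motions almost verbatim, since the structural setup is identical: after computing the set $A$ of separating points in $[-\infty,\infty]$, apply Theorem~\ref{theo: main1} together with Lemma~\ref{lem:200223a2}.

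For part~(i), observe that $\alpha=\tilde\alpha$ forces $\s=\S$ and $\m=\M$ on $\mathbb R$, hence the characteristics of the two diffusions coincide. Since scale and speed determine the diffusion uniquely, $\P_{x_0}=\Q_{x_0}$, and Theorem~\ref{theo: main1}(i) delivers $S=\delta$ $\P_{x_0},\Q_{x_0}$-a.s.

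For part~(ii), assume $\alpha\neq\tilde\alpha$. The preceding paragraph in the excerpt has already identified the set $A$ of separating points: every $x\in\mathbb R\setminus\{0\}$ is non-separating (both $d^+\s/d\S$ and $d\m/d\M$ are locally constant and positive, so Definition~\ref{def: non-sep int} holds trivially with $\beta\equiv0$), the origin is separating because the jumps of $d^+\s/d\S$ and $d\m/d\M$ at~$0$ prevent condition~\ref{it:130222a5} (and~\ref{it:130222a6}) of Definition~\ref{def: non-sep int} from holding in any neighborhood of~$0$, and $\pm\infty$ are separating by Discussion~\ref{disc:190223a1}(iii) since $|\s(\pm\infty)|=|\S(\pm\infty)|=\infty$. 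Thus $A=\{-\infty,0,+\infty\}$. Because $A\neq\emptyset$, the ``otherwise'' clause in the definition of $R$ applies, giving $R=\delta$.

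I then case-split on the sign of $x_0$. If $x_0=0$, then $\alpha=\gamma=0=x_0$, so $U=V=0$ by definition, and Theorem~\ref{theo: main1}(ii) yields $S=U\wedge V\wedge R=0=T_0$; Proposition~\ref{prop: AC Sing}(viii) then produces the claim $\P_0\perp\Q_0$ on $\mathcal F_0$. If $x_0>0$, then $\alpha=0$ and $\gamma=+\infty$, so~$U$ is a ``kind of hitting time'' of the origin and~$V$ a ``kind of hitting time'' of $+\infty$. By Lemma~\ref{lem:200223a2}, combined with the recurrence of skew Brownian motion under both laws (a consequence of $|\s(\pm\infty)|=|\S(\pm\infty)|=\infty$), we obtain $\P_{x_0},\Q_{x_0}$-a.s. $U=T_0$ and $V=T_{+\infty}=\infty$. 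Theorem~\ref{theo: main1}(ii) then gives $S=T_0\wedge\infty\wedge\delta=T_0$. The case $x_0<0$ is completely symmetric: now $\alpha=-\infty$, $\gamma=0$, $U=T_{-\infty}=\infty$, $V=T_0$, and $S=T_0$.

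No step presents a real obstacle: the only mildly delicate point is the identification of $U$ and $V$ with $T_0$ on the event $\{T_0=\infty\}$, which is handled by Lemma~\ref{lem:200223a2} together with recurrence; all remaining computations of differential quotients have already been laid out in the excerpt preceding the corollary.
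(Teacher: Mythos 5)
Your proof is correct and follows essentially the same route as the paper, which simply refers back to the proof of Corollary~\ref{coro: sep sticky}: identify \(A=\{-\infty,0,\infty\}\) (already done in the example), note \(R=\delta\), and evaluate \(U\) and \(V\) via Lemma~\ref{lem:200223a2} in each of the three cases \(x_0=0\), \(x_0>0\), \(x_0<0\). One small inaccuracy in a parenthetical remark: at the origin it is not condition~\ref{it:130222a5} of Definition~\ref{def: non-sep int} that fails (the right differential quotient \(d^+\s/d\S\) exists and is positive and finite everywhere, including at \(0\)), but rather conditions (ii) and (iii), as the example itself points out; this does not affect your argument, since you take the identification of \(A\) from the example anyway.
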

\end{example}

In \cite[Theorem 4.1]{cherUru} it was shown that the first hitting time of the origin is the separating time for two Bessel processes with different dimensions. In the next example we deduce this result from Theorem~\ref{theo: main1}.

\begin{example}[Generalized Bessel processes] \label{ex: bessel}
	For \(\gamma > 0\) and \(x_0 \in \mathbb{R}_+\), a solution \(Y\) to
	\begin{align}\label{eq: SDE Bessel}
		d Y_t = \gamma dt + 2 \sqrt{Y_t} d W_t, \quad Y_0 = x_0, \quad W = \text{Brownian motion},
	\end{align}
	is called a \emph{square Bessel process of dimension \(\gamma\)} started at \(x_0\). The number \(\nu \triangleq \gamma /2  - 1\) is called its \emph{index}.
	The square root $Z\triangleq\sqrt Y$
	is called a \emph{Bessel process of dimension \(\gamma\)}. A detailed discussion of (square) Bessel processes can be found in \cite[Chapter~XI]{RY}. 
	
	In the following we discuss the separating time for
	\emph{generalized} Bessel processes
	(in the sense that we allow for arbitrary boundary behavior in the origin whenever the origin is accessible).
	More precisely, we take \(J^\circ = (0, \infty)\) and \((x \mapsto \P^\gamma_x)\) to be a regular diffusion with scale function, for \(x > 0\),
	\[
	\s^\gamma (x) \triangleq \begin{cases} - \on{sgn}(\nu) x^{- 2 \nu}, & \nu \not = 0,\\ 2 \log (x),& \nu = 0,\end{cases}
	\]
	and speed measure, on \(\mathcal{B}((0, \infty))\),
	\[
	\m^\gamma (dx) \triangleq \begin{cases} x^{2 \nu + 1} dx/ 2|\nu|,& \nu \not = 0,\\ \frac{1}{2} x\, dx,& \nu = 0.\end{cases}
	\]
	The above scale and speed coincide with those of the Bessel process $Z$.
	For all $\gamma>0$ we have that $\infty$ is inaccessible.
	In case \(0<\gamma < 2\) the origin is regular
	(in particular, $J=[0,\infty)$)
	and in case \(\gamma \geq 2\) the origin is inaccessible
	(so, $J=(0,\infty)$).
	For the square Bessel process as defined in \eqref{eq: SDE Bessel} with \(0<\gamma < 2\) the origin is instantaneously reflecting, which corresponds to \(\m^\gamma (\{0\}) = 0\) for the Bessel process.
	For our \emph{generalized} Bessel process,
	we allow for all values \(\m^\gamma (\{0\}) \in [0, \infty]\) in case~\(0<\gamma < 2\).
	Notice that stopping at the origin is included as particular case $\m^\gamma (\{0\})=\infty$.
	
	From now on, fix \(x_0,\gamma, \tilde{\gamma} > 0\) such that \(\gamma \not = \tilde{\gamma}\).
	\begin{lemma} \label{lem: sep}
		Recalling that \(\A \subset [0, \infty]\) is the set of separating points for \(\P_{x_0}^\gamma\) and \(\P_{x_0}^{\tilde{\gamma}}\), 
		\(\A = \{0, \infty\}\).
	\end{lemma}
\begin{proof}
	We write \(\tilde{\nu} \triangleq \tilde{\gamma}/2 - 1\). 
	In the following we investigate which points in \([0, \infty]\) are separating.
	If \(\gamma, \tilde{\gamma} \not = 2\), we get, for \(x > 0\),
	\begin{align*}
		\Big(\frac{d^+ \s^{\gamma}}{d \s^{\tilde{\gamma}}} \Big) (x) = \frac{(\s^{\gamma})' (x)}{(\s^{\tilde{\gamma}})' (x)} = \Big| \frac{\nu}{\tilde{\nu}}\Big| x^{2 (\tilde{\nu}-\nu)}, \qquad
		\Big(\frac{d \m^{\gamma}}{d \m^{\tilde{\gamma}}} \Big) (x) = \Big| \frac{\tilde{\nu}}{\nu}\Big| x^{2 (\nu - \tilde{\nu})}, 
	\end{align*}
	and
	\begin{align*}
		\beta (x) \equiv \frac{(d^+ \s^{\gamma}/d \s^{\tilde{\gamma}})' (x)}{(\s^{\gamma})' (x)} = \frac{(\tilde{\nu} - \nu)}{|\tilde{\nu}|} x^{2 \tilde{\nu}}.	
	\end{align*}
	Further, if \(\gamma = 2\) and \(\tilde{\gamma} \not = 2\), we obtain, for \(x > 0\),
	\begin{align*}
		\Big(\frac{d^+ \s^{\gamma}}{d \s^{\tilde{\gamma}}} \Big) (x) = \frac{x^{2 \tilde{\nu}}}{|\tilde{\nu}|},
		\qquad
		\Big(\frac{d \m^{\gamma}}{d \m^{\tilde{\gamma}}} \Big) (x) = \frac{|\tilde{\nu}|}{x^{2 \tilde{\nu}}},
		\qquad
		\beta (x) = 2 \on{sgn}(\tilde{\nu}) x^{2 \tilde{\nu}}.
	\end{align*}
	The case where \(\gamma \not = 2\) and \(\tilde{\gamma} = 2\) looks similar.
	Thus, all points in \((0, \infty)\) are non-separating. 
	
	Next, we discuss the boundary points.
	Notice the following:
	\begin{center}
		\begin{tabular}{ c | c c c} 
			\(\tilde{\gamma}\) & \(< 2\) & \(= 2\) & \(> 2\) \\
			\(\s^{\tilde{\gamma}}(\infty)\) & \(\infty\) & \(\infty\) & 0
			\\
			\(\s^{\tilde{\gamma}}(0)\) & 0 &\(-\infty\) & \(-\infty\)	
		\end{tabular}		
	\end{center}
	Thus, \(\infty\) is separating if \(\tilde{\gamma} \leq 2\) and \(0\) is separating if \(\tilde{\gamma} \geq 2\).
	For \(\tilde{\gamma} < 2\) the origin is separating, because
	\begin{align*}
		\int_{0 + } | \s^{\tilde{\gamma}} (x) - \s^{\tilde{\gamma}}(0)| \big( \beta (x) \big)^2 \, \s^{\tilde{\gamma}} (dx) &\overset{c}= \int_{0 +}  x^{- 2 \tilde{\nu}}x^{4\tilde{\nu}} x^{- 2 \tilde{\nu} - 1} dx = \int_{0 +} \frac{dx}{x} = \infty, 
	\end{align*}
	where \(\overset{c}=\) denotes an equality up to a positive multiplicative constant, and for \(\tilde{\gamma} > 2\) the boundary \(\infty\) is separating, as 
	\begin{align*}
		\int^{\infty-} | \s^{\tilde{\gamma}} (x) - \s^{\tilde{\gamma}}(\infty)| \big( \beta (x) \big)^2 \, \s^{\tilde{\gamma}} (dx) &\overset{c}= \int^{\infty-} \frac{dx}{x} = \infty.
	\end{align*}
	We conclude that \(\A = \{0, \infty\}\).
\end{proof}
	
	Thanks to this lemma, we obtain the following representation of the separating time for  \(\P^\gamma_{x_0}\) and \(\P^{\tilde{\gamma}}_{x_0}\). 
	\begin{corollary} \label{coro: bessel}
		Let \(S\) be the separating time for \(\P_{x_0}^{\gamma}\) and \(\P_{x_0}^{\tilde{\gamma}}\). Then, \(\P_{x_0}^\gamma, \P_{x_0}^{\tilde{\gamma}}\)-a.s. \(S =T_0\).
	\end{corollary}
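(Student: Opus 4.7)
The plan is to derive Corollary~\ref{coro: bessel} directly from Theorem~\ref{theo: main1}(ii), mirroring the argument used in the proof of Corollary~\ref{coro: sep sticky}. First, since $\gamma\neq\tilde\gamma$, the characteristics $(\m^\gamma,\s^\gamma)$ and $(\m^{\tilde\gamma},\s^{\tilde\gamma})$ differ, and since scale and speed determine the law of a regular diffusion uniquely, we have $\P_{x_0}^\gamma\neq\P_{x_0}^{\tilde\gamma}$, so part~(ii) of Theorem~\ref{theo: main1} applies.

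By Lemma~\ref{lem: sep}, the set of separating points is $A=\{0,\infty\}$. Write $\alpha$ and $\gamma^*$ for the separating points closest to $x_0$ from the left and right, respectively. If $x_0=0$ (which can happen only when $0<\gamma,\tilde\gamma<2$ and both diffusions have $0$ as a reflecting boundary), then $x_0\in A$, so $U=V=0$, and since $T_0=0$ in this case the identity $S=T_0$ is trivial. We therefore concentrate on the case $x_0>0$. Since $0\in\on{cl}(J)$ and $\infty\in\on{cl}(J)$, we obtain $\alpha=0$ and $\gamma^*=\infty$. Because $\alpha\neq\Delta$, the definition of $R$ yields $R=\delta$.

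Next, we identify $U$ and $V$. Since $\infty$ is a natural boundary for both Bessel diffusions, the process does not approach $\infty$ in the sense required in the definition of $V$, and Lemma~\ref{lem:200223a2} delivers $\P_{x_0}^\gamma,\P_{x_0}^{\tilde\gamma}$-a.s.\ $V=T_\infty=\infty$. For $U$, the same lemma gives $\P_{x_0}^\gamma,\P_{x_0}^{\tilde\gamma}$-a.s.\ $U=T_0$: in the reflecting case $0\in J\cap\tilde J$ the hitting time $T_0$ is finite a.s.\ and the continuity of paths ensures $\liminf_{t\nearrow T_0}\X_t=0$; in the inaccessible case $\gamma\geq 2$ or $\tilde\gamma\geq 2$, the origin is approached only asymptotically on the set where it is approached at all, and Lemma~\ref{lem:200223a2} again accounts for both possibilities.

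Combining these computations via Theorem~\ref{theo: main1}(ii), we conclude
\[
S \;=\; U\wedge V\wedge R \;=\; T_0\wedge\infty\wedge\delta \;=\; T_0
\]
$\P_{x_0}^\gamma,\P_{x_0}^{\tilde\gamma}$-a.s., as claimed. The only non-mechanical step is the reliance on Lemma~\ref{lem:200223a2} to justify $U=T_0$ and $V=\infty$ in all boundary regimes (regular vs.\ natural at $0$, and natural at $\infty$), but this is the very purpose of that lemma and is applied in exactly the same way as in the proof of Corollary~\ref{coro: sep sticky}.
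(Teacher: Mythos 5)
Your overall strategy is the paper's: invoke Lemma~\ref{lem: sep} to get $A=\{0,\infty\}$, hence $\alpha=0$, $\gamma^*=\infty$, $R=\delta$, and then compute $U\wedge V$ via Lemma~\ref{lem:200223a2}. But your identification of $U$ and $V$ is wrong in several regimes, and the justifications you give would not survive scrutiny. For $V$: your sentence is self-contradictory (if the process ``does not approach $\infty$ in the sense required'', the definition gives $V=\delta$, not $V=T_\infty$), and the claim ``$V=\infty$ always'' is false. Whether $\limsup_{t\nearrow T_\infty}\X_t=\infty$ depends on $\gamma$ \emph{and} on $\m^\gamma(\{0\})$: for $\gamma<2$ with an absorbing origin the path is absorbed at $0$ in finite time and $\limsup_{t\to\infty}\X_t=0<\infty$, so $V=\delta$; for $\gamma<2$ with a reflecting origin one needs Lemma~\ref{lem: finiteness hitting times} to see that every level is reached, giving $V=\infty$. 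For $U$: the claim ``$U=T_0$ always'' fails for $\gamma>2$, where $\s^\gamma(0)=-\infty$ and $\s^\gamma(\infty)=0$, so by Lemma~\ref{lem:200223a2}(iii) the path stays bounded away from $0$ and $U=\delta$. (For $\gamma=2$ one does get $U=T_0=\infty$ since $\liminf_{t\to\infty}\X_t=0$.)

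The corollary's conclusion survives these errors only by coincidence of the case structure: in every regime at least one of $U,V$ equals $T_0$ (with $T_0=\infty$ when the origin is inaccessible), so $U\wedge V\wedge R=T_0$ throughout — but exhibiting this requires exactly the case analysis on $\s^\gamma(0)$, $\s^\gamma(\infty)$ and $\m^\gamma(\{0\})$ that your argument skips. That case analysis is the entire content of the paper's proof, so as written your proposal has a genuine gap. A minor further point: the example fixes $x_0>0$, so the $x_0=0$ digression is unnecessary; and its parenthetical is off anyway ($0\in J\cap\tilde J$ requires accessibility, not reflection, and if the origin were absorbing for both diffusions the two laws would coincide and part~(i) of Theorem~\ref{theo: main1}, not part~(ii), would apply).
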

\begin{proof}
	Clearly, $R=\delta$.
	Notice the following:
	\begin{center}
		\begin{tabular}{ c | c c c} 
			\(\gamma\) & \(< 2\) & \(= 2\) & \(> 2\) \\
			\(\s^{\gamma}(\infty)\) & \(\infty\) & \(\infty\) & 0
			\\
			\(\s^{\gamma}(0)\) & 0 &\(-\infty\) & \(- \infty\)	
		\end{tabular}		
	\end{center}
	Thus, by virtue of Lemmata \ref{lem: sep} and~\ref{lem:200223a2}, we obtain \(\P^\gamma_{x_0}\)-a.s.
	\begin{align*}
		U &= \begin{cases}
			T_0, & \gamma \le 2,\\ \delta,& \gamma > 2,
		\end{cases}
		\qquad
		V =  \begin{cases}
			T_\infty (= \infty), & \gamma < 2,\;\m^\gamma (\{0\}) < \infty,\\
			\delta,& \gamma < 2,\;\m^\gamma (\{0\}) = \infty,\\
			T_\infty (= \infty = T_0),& \gamma \geq 2
		\end{cases}
	\end{align*}
	(notice that, in the case $\gamma < 2$ and $\m^\gamma (\{0\}) < \infty$,
	Lemma~\ref{lem: finiteness hitting times} yields $\P^\gamma_{x_0}$-a.s.\ $\limsup_{t\nearrow T_\infty}X_t=\infty$),
	therefore, \(\P^\gamma_{x_0}\)-a.s. \(U \wedge V\wedge R = T_0\). As this computation is independent of \(\gamma\), the corollary follows from Theorem~\ref{theo: main1}.
\end{proof}
	\begin{remark}
		It is interesting to observe that separability of the origin is independent of the boundary behavior, i.e., in particular of its attainability or the values \(\m^{\gamma}(\{0\})\) and \(\m^{\tilde{\gamma}}(\{0\})\) in the attainable case. This fact shows that equivalence of \(\P^{\gamma}_{x_0}\) and \(\P_{x_0}^{\tilde{\gamma}}\) is already lost \emph{at the time the origin is hit} and that the separating time is not affected by stopping at the origin.
		This is different for sticky and skew Brownian motions (see Examples \ref{ex: sticky} and~\ref{ex: skew}). Indeed, if these processes are stopped in the origin, they coincide, which means they are trivially equivalent. In the following, we present a non-trivial example of two processes whose equivalence is lost \emph{right after} the time a boundary point is hit (but not \emph{at} this time).
	\end{remark}
\end{example}

\begin{example} \label{ex: loss after hit}
	We take \(J = \tilde{J} = \mathbb{R}_+,\) and, for \(x \in \mathbb{R}_+\),
	\begin{align*}
		\s (x) &= \int^x \exp \Big( - \int^y \frac{dz}{\sqrt{z}}\Big) dy
		= \text{const } \int^x e^{-2 \sqrt{y}} dy \quad
		\Big( = \text{const } - \text{ const } e^{-2 \sqrt{x}} \Big( \sqrt{x} + \frac12 \Big) \Big),
		\\
		\S (x) &= x,
	\end{align*}
	and, on \(\mathcal{B}((0, \infty))\),
	\[
	\m (dx) = \frac{dx}{\s' (x)},\qquad \M(dx) = dx.
	\]
	Notice that 
	\[
	\int^\infty \big( \s (\infty) - \s (x) \big) \m (dx)
	=
	\int^\infty \left( \sqrt{x} + \frac12 \right)\, dx = \infty.
	\]
	It follows from~\eqref{eq:160223a1} that \(\infty\) is not accessible for the diffusion with characteristics \((\s, \m)\). Clearly, the same is true for the diffusion with characteristics \((\S, \M)\).
	Furthermore, \eqref{eq:101022a3} yields that the origin is regular for both diffusions.
	Therefore, the values \(\m(\{0\}), \M(\{0\})\) can be arbitrarily chosen from $[0,\infty]$.
	We thus take an arbitrary $\m(\{0\})\in[0,\infty]$ and an arbitrary $\M(\{0\})\in[0,\infty]$.
	
	Fix \(x_0 > 0\). Providing an intuition, under \(\P_{x_0}\), we have
	\[
	d \X_t = \frac{dt}{2 \sqrt{\X_t}} + d W_t, \quad t < T_0, 
	\]
	and, under \(\Q_{x_0}\), we have
	\(
	d \X_t = d B_t\) for \(t < T_0,\) where \(W\) and \(B\) are standard Brownian motions.
	The behavior after $T_0$ under $\P_{x_0}$ (resp., $\Q_{x_0}$) depends on the choice of $\m(\{0\})$ (resp., $\M(\{0\})$).
	
	\begin{lemma} \label{lem: sep modified Bessel}
		Recalling that \(\A\subset [0, \infty]\) is the set of separating points for \(\P_{x_0}\) and \(\Q_{x_0}\),
		\[
		\A = \begin{cases}
			\{\infty\},& \m (\{0\}) = \M(\{0\}) = \infty,\\
			\{0, \infty\},& \m (\{0\}) \wedge \M(\{0\}) < \infty. 
		\end{cases}
		\]
	\end{lemma}
\begin{proof}
	We compute
	$$
	\Big(\frac{d^+ \s}{d \S}\Big) (x)
	=
	\s'(x)
	=
	e^{-2\sqrt x},
	\quad
	\beta(x)
	\equiv
	\frac{d^+}{d\s}  \Big(\frac{d^+ \s}{d \S}\Big) (x)
	=
	\frac{(d^+\s/d\S)'(x)}{\s'(x)}
	=
	-\frac{1}{\sqrt{x}}.
	$$
	It is easy to see that all points in \((0, \infty)\) are non-separating. As \(\S(\infty) = \infty\), \(\infty\) is separating.
	Notice that 
	\[
	\int_{0 +} \big| \S(x) - \S(0)\big| \big(\beta(x)\big)^2 \, \S(dx) = \int_{0+} dx < \infty,
	\]
	that is, the origin is in any case half-good, but
	\[
	\int_{0 +} \big(\beta(x)\big)^2 \, \S(dx) = \int_{0 +} \frac{dx}{x} = \infty.
	\]
	Hence, the origin is non-separating if and only if \(\m (\{0\}) = \M(\{0\}) = \infty\). 
\end{proof}
	
	With this representation of \(\A\) at hand, one can deduce from Theorem~\ref{theo: main1} the following result on the separating time. 
	
	\begin{corollary}\label{coro: example equ lost after hit}
		Let \(S\) be the separating time for \(\P_{x_0}\) and \(\Q_{x_0}\).
		\begin{enumerate}
			\item[\textup{(i)}]
			Let \(\m(\{0\}) = \M(\{0\}) = \infty\). Then, the following hold:
			\begin{itemize}
				\item
				\(\Q_{x_0}\)-a.s. \(S = \delta\), while \(\P_{x_0}\)-a.s. \(S \geq \infty\).
				
				\item
				$\P_{x_0}(S=\infty)>0$ and $\P_{x_0}(S=\delta)>0$.
				
				\item
				We have the following mutual arrangement between $\P_{x_0}$ and $\Q_{x_0}$ from the viewpoint of their (local) absolute continuity and singularity:
				$$
				\P _{x_0}\sim_\textup{loc}\Q_{x_0},\quad
				\Q_{x_0}\ll\P_{x_0},\quad
				\P_{x_0}\not\ll\Q_{x_0},\quad
				\P_{x_0}\not\perp\Q_{x_0}.
				$$
			\end{itemize}
			
			\item[\textup{(ii)}]
			Let \(\m (\{0\}) \wedge \M(\{0\}) < \infty\). Then, the following hold:
			\begin{itemize}
				\item
				\(\P_{x_0}, \Q_{x_0}\)-a.s. \(S = T_0\).
				
				\item
				$\Q_{x_0}(T_0<\infty)=1$,
				$\P_{x_0}(T_0<\infty)>0$,
				$\P_{x_0}(T_0=\infty)>0$.
				
				\item
				We have the following mutual arrangement between $\P_{x_0}$ and $\Q_{x_0}$ from the viewpoint of their (local) absolute continuity and singularity:
				$$
				\P _{x_0}\not\ll_\textup{loc}\Q_{x_0},\quad
				\Q _{x_0}\not\ll_\textup{loc}\P_{x_0},\quad
				\P_{x_0}\perp\Q_{x_0}.
				$$
			\end{itemize}
		\end{enumerate}
	\end{corollary}
\begin{proof}
	Let us first notice the following:
	\begin{center}
		\begin{tabular}{ c c c c } 
			\(\s(0)\)&\(\s(\infty)\)&\(\S(0)\)&\(\S(\infty)\)\\
			\(> - \infty\)&\(< \infty\)&\(= 0\)&\(= \infty\)
		\end{tabular}		
	\end{center}
	Moreover, as discussed above, $\infty$ is inaccessible and $0$ is regular for both diffusions.
	The claims, therefore, follow from Theorem~\ref{theo: main1}, Proposition~\ref{prop: AC Sing}
	and Lemmata \ref{lem: sep modified Bessel} and~\ref{lem:200223a2}.
\end{proof}
	
	For \(\m (\{0\}) \not = \M (\{0\})\) it is intuitive that equivalence might get lost right after the origin is hit, because the diffusions have different boundary behavior. However, Corollary~\ref{coro: example equ lost after hit} shows that even in the case \(\m (\{0\}) = \M(\{0\}) < \infty\) the equivalence of \(\P_{x_0}\) and \(\Q_{x_0}\) is lost right after the origin is hit.
	On the other hand, stopping the processes in the origin transfers us to the case \(\m (\{0\}) = \M(\{0\}) = \infty\), where hitting the origin does not break the equivalence any longer.
\end{example}

In our final example we discuss the importance of taking the (deterministic) time \(R\) into consideration.
We thank Paul Jenkins for bringing the issue and the example to our attention.

\begin{example} \label{ex: BM with reflec bd}
	Suppose that \(([0, 1] \ni x \mapsto \Q_x)\) is a Brownian motion
	and that \(([0, 1] \ni x \mapsto \P_x)\) is a Brownian motion with a constant drift \(\mu \ne 0\), both with instantaneous reflection in their boundaries \(0\) and~\(1\).
	To be more precise, the corresponding scale functions on $[0,1]$ are given by
	\[
	\S (x) = x, \qquad \s(x) = -\frac{e^{-2\mu x}}{2\mu}
	\]
	and the speed measures on $\mathcal B([0,1])$ are given by
	\[
	\M (dx) = dx,
	\qquad
	\m (dx) = e^{2 \mu x} dx.
	\]
	We take an arbitrary $x_0\in[0,1]$ and discuss the separating time $S$ for $\P_{x_0}$ and $\Q_{x_0}$.
	It is not hard to see that all points in \([0, 1]\) are non-separating, which means that $U=V=\delta$.
	However, the separating time for \(\P_{x_0}\) and \(\Q_{x_0}\) cannot be~\(\delta\). The reason for this stems from the fact that a diffusion whose boundaries are both accessible and reflecting is necessarily recurrent (see Lemma~\ref{lem:200223a1}).
	More precisely, using this fact and the ratio ergodic theorem restated as Lemma~\ref{lem: ratio ergodic thm}, we obtain that, for all bounded Borel functions \(f \colon [0, 1] \to \mathbb{R}_+\), it holds \(\P_{x_0}\)-a.s. 
	\[
	\frac{1}{t} \int_0^t f (\X_s) ds \to \frac{1}{\m ([0, 1])} \int_0^1 f (x) \m (dx), \quad t \to \infty,
	\]
	and \(\Q_{x_0}\)-a.s.
	\[
	\frac{1}{t} \int_0^t f (\X_s) ds \to  \int_0^1 f (x) \M (dx), \quad t \to \infty.
	\]
	Hence, in the case \(\P_{x_0} (S = \delta) \vee \Q_{x_0} (S = \delta) > 0\), we get
	\[
	\frac{1}{\m ([0, 1])} \int_0^1 f (x) \m (dx) =  \int_0^1 f(x) \M(dx)
	\]	
	for any countable collection of test functions \(f\). By the uniqueness theorem for probability measures, this implies that \(\m/ \m ([0, 1]) = \M \).
	The latter is, however, false, as \(\mu \ne 0\). Therefore, we must have \(\P_{x_0}, \Q_{x_0}\)-a.s. \(S \leq \infty\).
	And, indeed, as $R=\infty$, Theorem~\ref{theo: main1} yields \(\P_{x_0}, \Q_{x_0}\)-a.s.
	\(S = U\wedge V\wedge R = \infty\).
\end{example}

\begin{discussion}\label{disc:180223a1}
	More generally,
	in the case where both diffusions $(x\mapsto\P_x)$ and $(x\mapsto\Q_x)$ are recurrent,
	an argument based on the ratio ergodic theorem similar to those from Example \ref{ex: BM with reflec bd} shows that
	$\P_{x_0},\Q_{x_0}$-a.s.\ $S\le\infty$.
	The question arises how this fact is encoded in Theorem~\ref{theo: main1} for the cases when $R=\delta$.
	If there is a separating point in $J^\circ$, then, by recurrence, $\P_{x_0},\Q_{x_0}$-a.s.\ $U\wedge V\le\infty$.
	Now, assume that all points in $J^\circ$ are non-separating, that both diffusions are recurrent,
	and that at least one boundary point (say,~$l$) is not reflecting for at least one of the diffusions (say, for $(x\mapsto\P_x)$).
	By Lemma~\ref{lem:200223a1}, in this case we have $\s(l)=-\infty$.
	As for the other boundary point, using that  $(x\mapsto\P_x)$ and $(x\mapsto\Q_x)$ are recurrent, we deduce from Lemma~\ref{lem:200223a1} that
	\begin{enumerate}
		\item[-]
		either $\s(r)=\infty$ or $r$ is reflecting for $(x\mapsto\P_x)$ and
		
		\item[-]
		either $\S(r)=\infty$ or $r$ is reflecting for $(x\mapsto\Q_x)$.
	\end{enumerate}
	Then, it follows that
	\begin{enumerate}
		\item[-]
		$l$ is a separating boundary  point (as $\s(l)=-\infty$).
		
		\item[-]
		$\P_{x_0}$-a.s.\ $U\le\infty$ (by Lemma~\ref{lem:200223a2} in the case $\s(r)=\infty$; by Lemma~\ref{lem: finiteness hitting times}~(iii) in the case $r$ is reflecting for $(x\mapsto\P_x)$).
		
		\item[-]
		$\Q_{x_0}$-a.s.\ $U\le\infty$ (by the same argument as for the other diffusion).
	\end{enumerate}
	In summary, we see that $\P_{x_0},\Q_{x_0}$-a.s.\ $U\wedge V\le\infty$
	in all cases when both $(x\mapsto\P_x)$ and $(x\mapsto\Q_x)$ are recurrent
	except the only case, when all points in $\on{cl}(J)$ are non-separating and both boundaries are reflecting for one, equivalently for both, of the diffusions.
	To account for this case, we need the deterministic time $R$.
\end{discussion}

\section{Martingale Problem for Diffusions}\label{app: A}
The martingale problem method is one of the key techniques to analyze Markov processes. The first martingale problem was introduced by Stroock and Varadhan for multidimensional diffusions which can be described via SDEs (see \cite[Section~5.4]{KaraShre} for an overview). It seems to us that the literature contains no martingale problem for general one-dimensional regular diffusions, although all required tools can be found in the monographs \cite{freedman,liggett}.
In this paper, we need such a martingale problem for the proof of our main Theorem \ref{theo: main1} as well as Theorem~\ref{theo: NFLVR finite tiome horizon}. In general, we also think that such a martingale problem is of independent interest. 
\\

Let \((J \ni x \mapsto \P_x)\) be a regular diffusion and set
\[
(G_\alpha f ) (x) \triangleq \int_0^\infty e^{- \alpha t} \E_x \big[ f (\X_t) \big] dt, \qquad f \in C_b (J; \mathbb{R}), \alpha > 0,
\]
where $C_b(J;\mathbb R)$ denotes the space of bounded continuous functions $J\to\mathbb R$.
Notice that $G_\alpha$ is a bounded linear operator from $C_b(J; \mathbb{R})$ into $C_b(J; \mathbb{R})$ (see \cite[Lemma~30, p.~116]{freedman}).
Let us collect some useful facts (see \cite[Corollaries 43,~44, p.~119]{freedman}):

\begin{lemma} \quad
	\begin{enumerate}
		\item[\textup{(i)}]
		\(G_\alpha f \equiv 0\) if and only if \(f \equiv 0.\)
		\item[\textup{(ii)}]
		The range of \(G_\alpha\) does not depend on \(\alpha\).
	\end{enumerate}
\end{lemma}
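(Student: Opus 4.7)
The proof splits naturally into the two parts. For part~(i), the ``if'' direction is immediate from the definition of $G_\alpha$. For the ``only if'' direction, the plan is to exploit the uniqueness of the Laplace transform. First I would observe that for any $f\in C_b(J;\mathbb R)$ and any $x\in J$, the map $t\mapsto\E_x[f(\X_t)]$ is continuous: this follows from the continuity of the paths of $\X$ under $\P_x$ (a standing feature of our diffusion setting) together with bounded convergence. Hence, if $G_\alpha f\equiv 0$, the Laplace transform of the bounded continuous function $t\mapsto\E_x[f(\X_t)]$ vanishes identically, which forces $\E_x[f(\X_t)]=0$ for every $t\ge 0$. Evaluating at $t=0$, where $\X_0=x$ under $\P_x$, gives $f(x)=0$, and since $x$ was arbitrary we conclude $f\equiv0$.

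For part~(ii), the approach is the classical resolvent-identity argument. The main step is to establish the resolvent equation
\begin{equation*}
G_\alpha - G_\beta = (\beta-\alpha)\, G_\alpha G_\beta,\qquad \alpha,\beta>0,
\end{equation*}
on $C_b(J;\mathbb R)$. I would derive this by writing $G_\alpha G_\beta f(x)$ as a double integral, using the Markov property of $(x\mapsto\P_x)$ (actually just the semigroup property of $P_t f(x)\triangleq\E_x[f(\X_t)]$), and performing the change of variables $(s,t)\mapsto(s,s+t)$ together with Fubini to rewrite the inner Laplace transform. Once this identity is in hand, for any $g$ in the range of $G_\alpha$, say $g=G_\alpha f$ with $f\in C_b(J;\mathbb R)$, the equation gives
\begin{equation*}
g = G_\alpha f = G_\beta f + (\beta-\alpha)\, G_\beta g = G_\beta\bigl(f+(\beta-\alpha)\,g\bigr),
\end{equation*}
and since $f+(\beta-\alpha)g\in C_b(J;\mathbb R)$ (as $G_\alpha$ maps $C_b(J;\mathbb R)$ into itself, $g$ is bounded and continuous), $g$ lies in the range of $G_\beta$. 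Symmetry of the roles of $\alpha$ and $\beta$ then gives the equality of ranges.

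\textbf{Main obstacle.} Nothing genuinely hard occurs: part~(i) is essentially Laplace uniqueness, and part~(ii) is the standard resolvent identity. The only minor technical point worth being careful about is justifying the continuity of $t\mapsto\E_x[f(\X_t)]$ in the present framework with a state space allowing accessible boundary points (so that $f$ might need to be interpreted on $J$ including such boundaries) -- this is routine given $f\in C_b(J;\mathbb R)$ and the path continuity of $\X$, but should be noted explicitly to ensure the Laplace-uniqueness step is on firm ground. Both parts are also explicitly referenced to \cite[Corollaries 43,~44, p.~119]{freedman}, so the write-up can be brief.
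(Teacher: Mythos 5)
The paper itself gives no proof of this lemma; it simply cites \cite[Corollaries 43, 44, p.~119]{freedman}, so you are supplying an argument from scratch. Your part~(ii) is correct and is the standard route: the resolvent identity $G_\alpha-G_\beta=(\beta-\alpha)G_\alpha G_\beta$ follows from the semigroup property of $P_tf(x)\triangleq\E_x[f(\X_t)]$ via Fubini and the change of variables you indicate, and the rearrangement $G_\alpha f=G_\beta\big(f+(\beta-\alpha)G_\alpha f\big)$, together with the fact (stated in the paper just before the lemma) that $G_\alpha$ maps $C_b(J;\mathbb R)$ into itself, gives $\on{rng}(G_\alpha)\subset\on{rng}(G_\beta)$ and hence equality by symmetry.

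Part~(i), however, has a genuine gap as written. The hypothesis $G_\alpha f\equiv 0$ is for a \emph{single fixed} $\alpha$ --- this is the reading the paper needs, since the definition $\Gamma=\on{Id}-G_1^{-1}$ requires $G_1$ alone to be injective. From $\int_0^\infty e^{-\alpha t}\,\E_x[f(\X_t)]\,dt=0$ at one value of $\alpha$ you cannot conclude that the Laplace transform of $t\mapsto\E_x[f(\X_t)]$ ``vanishes identically'': uniqueness of the Laplace transform requires knowing it on a half-line (or a set of parameters with an accumulation point), not at a single point. For a concrete failure of the step, the bounded continuous function $g(t)=\cos t-\tfrac12$ satisfies $\int_0^\infty e^{-t}g(t)\,dt=0$ without vanishing. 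The repair is immediate from the machinery you already build in part~(ii): prove the resolvent equation first; then $G_\alpha f\equiv0$ forces $G_\beta f=G_\alpha f+(\alpha-\beta)G_\beta G_\alpha f\equiv0$ for \emph{every} $\beta>0$, so the Laplace transform of the bounded continuous map $t\mapsto\E_x[f(\X_t)]$ does vanish on all of $(0,\infty)$, and your continuity argument plus evaluation at $t=0$ then yields $f\equiv0$. So reorder the proof (resolvent identity first, then injectivity) or replace the Laplace-uniqueness step by a direct argument; as presented, the key step of part~(i) does not follow from the stated hypothesis.
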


We define 
\[
\Delta \triangleq \on{rng} (G_1) \big( = \on{rng} (G_\alpha) \big), \qquad \Gamma \triangleq \on{Id}\ -\ G_1^{-1} \text{ on } \Delta.
\]
Notice that $\Delta\subset C_b(J; \mathbb{R})$.
The operator \((\Gamma, \Delta)\) is called the \emph{(extended) generator of the diffusion \((x \mapsto \P_x)\)}.
Here, we use the term ``extended'' to emphasize that we work on the space $C_b(J; \mathbb{R})$, which is less conventional than the space $C_0(J; \mathbb{R})$ of continuous functions vanishing at infinity.\footnote{The conventional approach is to work with \emph{Feller processes}, which correspond to \emph{Feller semigroups}, i.e., strongly continuous contraction semigroups on $C_0(J; \mathbb{R})$.
This, however, excludes diffusions with entrance boundaries, as $C_0(J; \mathbb{R})$ is not invariant under the respective semigroup.
To include all diffusions, in particular, those with entrance boundaries, we work with $C_b(J; \mathbb{R})$.
In this way, we gain generality but face the problem that, on the space $C_b(J; \mathbb{R})$, many diffusion semigroups (e.g., the Brownian semigroup on $C_b(\mathbb{R}; \mathbb{R})$) are \emph{not} strongly continuous.
Therefore, standard results for Feller processes should be applied with care.
In particular, this is the reason for presenting a full proof of Theorem~\ref{theo: martingale problem}.}
\\

The following result is given in \cite[Lemma~46, p.~119]{freedman}.

\begin{lemma} \label{lem: app2}
	\(G^{-1}_\alpha = \alpha \on{Id}\ -\ \Gamma\) on \(\Delta\).
\end{lemma}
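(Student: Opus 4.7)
The plan is to derive the classical resolvent equation
\[
G_\alpha - G_\beta = (\beta-\alpha)\, G_\alpha G_\beta, \qquad \alpha,\beta>0,
\]
and then read off the lemma by pure algebra.

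To obtain the resolvent equation, I would start from the definition of $G_\alpha$, invoke the Markov property in the form $\E_{\X_t}[f(\X_s)] = \E_x[f(\X_{t+s})\mid \mathcal F_t]$ (so that $\E_x\bigl[\E_{\X_t}[f(\X_s)]\bigr] = \E_x[f(\X_{t+s})]$), and apply Fubini (legitimate since $f\in C_b(J;\mathbb R)$ is bounded) to write
\[
G_\alpha G_\beta f(x) = \int_0^\infty\!\!\int_0^\infty e^{-\alpha t-\beta s}\,\E_x[f(\X_{t+s})]\,ds\,dt.
\]
The substitution $u=t+s$ turns the inner integral into $\int_0^u e^{-\alpha t-\beta(u-t)}\,dt = (e^{-\beta u}-e^{-\alpha u})/(\alpha-\beta)$ for $\alpha\ne\beta$, yielding $G_\alpha G_\beta f = (G_\beta f - G_\alpha f)/(\alpha-\beta)$, which is the resolvent equation.

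Specializing to $\beta=1$ gives $G_\alpha = G_1 + (1-\alpha)\,G_\alpha G_1$. Now fix $f\in\Delta$ and, using injectivity of $G_\alpha$ and $G_1$ (part~(i) of the preceding lemma, together with the fact that $\on{rng}(G_\alpha)=\on{rng}(G_1)=\Delta$), write $f=G_1 h=G_\alpha g$ for unique $h,g\in C_b(J;\mathbb R)$. Applying the displayed identity to $h$ gives
\[
G_\alpha h \;=\; G_1 h + (1-\alpha)G_\alpha G_1 h \;=\; f+(1-\alpha)G_\alpha f \;=\; G_\alpha\bigl(g+(1-\alpha)f\bigr),
\]
and injectivity of $G_\alpha$ yields $h=g+(1-\alpha)f$, i.e.\ $g=h+(\alpha-1)f$.

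Finally, from $\Gamma=\on{Id}-G_1^{-1}$ one has $\Gamma f = f-h$, hence
\[
(\alpha\on{Id}-\Gamma)f \;=\; \alpha f-(f-h) \;=\; (\alpha-1)f+h \;=\; g \;=\; G_\alpha^{-1}f,
\]
which is the claim. The only nontrivial step in this program is establishing the resolvent equation; once that is in place the rest is bookkeeping. The expected obstacle is therefore the careful justification of the Markov-property plus Fubini computation, which is however routine thanks to the boundedness of $f$.
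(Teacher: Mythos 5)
Your proof is correct. The paper itself gives no argument here (it simply cites Freedman, Lemma~46, p.~119), and your route — establishing the resolvent identity $G_\alpha-G_\beta=(\beta-\alpha)G_\alpha G_\beta$ via the Markov property and Fubini, then using injectivity of $G_\alpha$ and $G_1$ together with $\on{rng}(G_\alpha)=\on{rng}(G_1)=\Delta$ to read off $G_\alpha^{-1}f=(\alpha-1)f+G_1^{-1}f=(\alpha\on{Id}-\Gamma)f$ — is exactly the standard argument underlying the cited result.
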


The next result can be proved (with minor changes) as \cite[Theorem~3.33]{liggett}. For reader's convenience we present a proof, which is a slight modification of the one from \cite{liggett}. The theorem provides the martingale problem for \((x \mapsto \P_x)\).

\begin{theorem} \label{theo: martingale problem}
	If \(\P\) is a probability measure on \((\Omega, \mathcal{F})\) such that
	$\P(C(\mathbb R_+;J))=1$,
	\(\P (\X_0 = x) = 1\) and 
	\[
	M^f \triangleq f (\X) - f(x) - \int_0^\cdot (\Gamma f) (\X_s) ds
	\]
	is a local\,\footnote{Hence, necessarily, a \(\P\)-martingale, as the process $M^f$ is bounded on finite time intervals due to $\Delta\subset C_b(J; \mathbb{R})$.} \(\P\)-martingale for all \(f \in \Delta\), then \(\P = \P_x\).
\end{theorem}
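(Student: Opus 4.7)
The plan is a standard resolvent-based uniqueness argument. First, I would upgrade the local martingale property to an exponentially discounted version. For $\alpha > 0$ and $f \in \Delta$, integration by parts (viewing $e^{-\alpha t}$ as a deterministic finite variation process) yields
\[
e^{-\alpha t}f(\X_t) - f(x) - \int_0^t e^{-\alpha s}(\Gamma f - \alpha f)(\X_s)\,ds = \int_0^t e^{-\alpha s}\,dM^f_s.
\]
Since $f \in \Delta \subset C_b(J;\mathbb R)$ and $\Gamma f = f - G_1^{-1}f$ is also bounded (because $G_1^{-1}f$ is the unique continuous bounded function whose image under $G_1$ is $f$), the left-hand side is uniformly bounded in $t$. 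Hence the right-hand side is a bounded local $\P$-martingale, and therefore a true $\P$-martingale, uniformly integrable on $\mathbb R_+$.

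Second, taking conditional expectations at time $t$ and letting $T\to\infty$ in $N^f_t=\E^{\P}[N^f_T\mid\mathcal F_t]$, dominated convergence gives, for every $t \geq 0$,
\[
f(\X_t) = \E^\P\Big[\int_0^\infty e^{-\alpha u}\,(\alpha f - \Gamma f)(\X_{t+u})\,du \,\Big|\, \mathcal F_t\Big].
\]
Writing $f = G_\alpha h$ with a unique $h \in C_b(J;\mathbb R)$ (by $\Delta = \on{rng}(G_\alpha)$ and injectivity of $G_\alpha$) and applying Lemma~\ref{lem: app2} to identify $h = \alpha f - \Gamma f$, this becomes the conditional resolvent identity
\begin{equation}\label{eq:resolvent-P}
(G_\alpha h)(\X_t) = \E^\P\Big[\int_0^\infty e^{-\alpha u}h(\X_{t+u})\,du\,\Big|\,\mathcal F_t\Big],\qquad h \in C_b(J;\mathbb R),\ \alpha>0,\ t\geq0.
\end{equation}
The same identity holds under $\P_x$ by the very definition of $G_\alpha$ together with the strong Markov property of $(x \mapsto \P_x)$.

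Third, I would show inductively that all finite-dimensional distributions of $\X$ under $\P$ agree with those under $\P_x$. For $n=1$, evaluating \eqref{eq:resolvent-P} at $t=0$ gives equality of the Laplace transforms $\alpha\mapsto\int_0^\infty e^{-\alpha s}\E^\P[h(\X_s)]\,ds$ and $\alpha\mapsto\int_0^\infty e^{-\alpha s}\E_x[h(\X_s)]\,ds$ for every $h \in C_b(J;\mathbb R)$; path continuity of $\X$ combined with dominated convergence makes both integrands continuous in $s$, so uniqueness of Laplace transforms forces $\E^\P[h(\X_s)]=\E_x[h(\X_s)]$ for all $s\geq 0$. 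For the inductive step $n\to n+1$, given times $t_1<\cdots<t_{n+1}$ and $h_1,\ldots,h_{n+1}\in C_b$, apply \eqref{eq:resolvent-P} at $t=t_n$ with test function $h_{n+1}$ and multiply by $\prod_{i<n+1}h_i(\X_{t_i})$; taking $\P$-expectation recasts the Laplace transform in $u=t_{n+1}-t_n$ of $\E^\P[\prod_{i=1}^{n+1}h_i(\X_{t_i})]$ in terms of $n$-dimensional expectations of the continuous bounded functions $h_1,\ldots,h_{n-1}, h_n\cdot G_\alpha h_{n+1}$, which by induction are already determined. Laplace inversion (again justified by path-continuity) yields the $(n+1)$-dimensional marginal, and hence $\P=\P_x$ on $\mathcal F = \sigma(\X_s,\,s\geq 0)$.

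The main obstacle is the third step, specifically justifying that the Laplace transform in the final time variable determines the corresponding expectation pointwise: this is where the path continuity of the coordinate process and the boundedness of the test functions enter in a crucial way. The rest of the argument is bookkeeping provided one is careful that all intermediate functions (in particular $h_n\cdot G_\alpha h_{n+1}$) remain in $C_b(J;\mathbb R)$, which follows because $G_\alpha$ maps $C_b(J;\mathbb R)$ into itself.
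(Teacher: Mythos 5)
Your proposal is correct and follows essentially the same route as the paper's proof: integration by parts to produce the exponentially discounted martingale, passage to the limit to obtain the conditional resolvent identity $f(\X_t)=\E^\P[\int_0^\infty e^{-\alpha u}(\alpha f-\Gamma f)(\X_{t+u})\,du\mid\mathcal F_t]$, and then uniqueness of Laplace transforms combined with induction on finite-dimensional distributions. The only cosmetic differences are that the paper tests against indicator events $A\in\mathcal F_{t_n}$ rather than products of bounded continuous functions, and that it justifies the identity under $\P_x$ via Dynkin's formula (so that $\P_x$ itself solves the martingale problem) instead of invoking the Markov property directly; both variants are sound.
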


To understand the condition $\P(C(\mathbb R_+;J))=1$, recall that $\Omega=C(\mathbb R_+;[-\infty,\infty])$ (see Section~\ref{sec: diff basics}), while $f\in\Delta\subset C_b(J;\mathbb R)$ is defined only on $J$.
That is, the assumption $\P(C(\mathbb R_+;J))=1$ guarantees that $M^f$ is well-defined under $\P$. 

\begin{proof}
	Take \(g \in C_b(J; \mathbb{R})\) and \(\alpha > 0\). Set \(f \triangleq G_\alpha g\) and note that \(\alpha f - \Gamma f = G^{-1}_\alpha f = g\) by Lemma~\ref{lem: app2}.
	Further, we emphasize that
	 \(f (\X)\) is a \(\P\)-semimartingale by hypothesis (because \(f \in \Delta\)).
	Using integration by parts yields that (under \(\P\))
\begin{align*}
d e^{- \alpha t} f (\X_t) &= f (\X_t) d e^{- \alpha t} + e^{- \alpha t} d f (\X_t)
\\&= - \alpha e^{- \alpha t} f (\X_t) dt + e^{- \alpha t} d M^f_t + e^{- \alpha t} (\Gamma f)(\X_t) dt
\\&= -e^{- \alpha t} ( \alpha f (\X_t) - (\Gamma f)(\X_t) ) dt + e^{- \alpha t} d M^f_t
\\&= - e^{- \alpha t} g(\X_t) dt + e^{- \alpha t} d M^f_t.
\end{align*}
Consequently, the process 
\[
e^{- \alpha t} f(\X_t) + \int_0^t e^{- \alpha s} g(\X_s) ds, \quad t \in \mathbb{R}_+, 
\]
is a \(\P\)-martingale. Fix \(s \in \mathbb{R}_+\). Then, for every \(t > s\), we have \(\P\)-a.s.
\[
\E^\P \Big[ e^{- \alpha t} f(\X_t) + \int_s^t e^{- \alpha r} g(\X_r) dr \big| \mathcal{F}_s \Big] = e^{- \alpha s} f (\X_s).
\]
Letting \(t \to \infty\) and using the dominated convergence theorem yields \(\P\)-a.s.
\[
\E^\P \Big[ \int_s^\infty e^{- \alpha r} g (\X_r) dr \big| \mathcal{F}_s \Big] = e^{- \alpha s} f (\X_s).
\]
Thus, by the definition of the conditional expectation, we have for all \(A \in \mathcal{F}_s\)
\[
\E^\P \Big[ \int_0^\infty e^{- \alpha (r + s)} g (\X_{r + s}) dr \ \1_A \Big] = \E^\P\big[e^{- \alpha s} f (\X_s) \1_A\big], 
\]
which means 
\begin{align} \label{eq: used by liggett}
\E^\P \Big[ \int_0^\infty e^{- \alpha r} g (\X_{r + s}) dr \ \1_A \Big] = \E^\P\big[f (\X_s) \1_A\big].
\end{align}
By virtue of Dynkin's formula (\cite[Lemma~48, p.~119]{freedman}), also the measure \(\P_x\) satisfies the hypothesis of the theorem. Thus, the above equation also holds for \(\P\) replaced by \(\P_x\), i.e.,
\begin{align} \label{eq: used by liggett2}
\E^{\P_x} \Big[ \int_0^\infty e^{- \alpha r} g (\X_{r + s}) dr \ \1_A \Big] = \E^{\P_x}\big[f (\X_s) \1_A\big].
\end{align}
 Taking \(s = 0\) and \(A = \Omega\) in \eqref{eq: used by liggett} and \eqref{eq: used by liggett2} yields that 
\[
\int_0^\infty e^{- \alpha t} \E^\P \big[ g (\X_t) \big] dt = f(x) =  \int_0^\infty e^{- \alpha t} \E^{\P_x} \big[ g (\X_t) \big] dt.
\]
Hence, the uniqueness theorem for Laplace transforms (the maps \[t \mapsto \E^\P\big[ g(\X_t) \big], \E^{\P_x} \big[g (\X_t)\big]\] are continuous by the continuity of \(t \mapsto \X_t\) and the dominated convergence theorem) yields that \(\E^\P [g (\X_t)] = \E^{\P_x}[ g (\X_t)]\) for all \(t \in \mathbb{R}_+\). As \(g \in C_b(J; \mathbb{R})\) was arbitrary, this implies that \(\P\) and \(\P_x\) have the same one-dimensional distributions. This result can be extended to the finite-dimensional distributions by induction. Assume that the \(n\)-dimensional distributions coincide and take \(0 \leq t_1 \leq \dots \leq t_n < \infty\). Further, let \(A = \{\X_{t_1} \in F_1, \dots, \X_{t_n} \in F_n\}\) for \(F_1, \dots, F_n \in \mathcal{B}(J)\). Applying \eqref{eq: used by liggett} and \eqref{eq: used by liggett2} with \(s = t_n\) and using the induction hypothesis (i.e., that \(\E^\P[f(\X_{t_n}) \1_A] = \E^{\P_x}[f(\X_{t_n})\1_A]\)) yields 
\[
\E^\P \Big[ \int_0^\infty e^{- \alpha r} g (\X_{r + t_n}) dr \ \1_A \Big] = \E^{\P_x} \Big[ \int_0^\infty e^{- \alpha r} g (\X_{r + t_n}) dr \ \1_A \Big].
\]
Using again the uniqueness theorem for Laplace transforms, we get that the distribution of \((\X_{t_1}, \dots,\) \(\X_{t_n}, \X_{t_n + t})\) coincide under \(\P\) and \(\P_x\) for all \(t \in \mathbb{R}_+\).
In summary, \(\P\) and \(\P_x\) have the same finite-dimensional distributions and the usual monotone class argument yields that \(\P = \P_x\). 
The proof is complete.
\end{proof}

The generator \((\Gamma, \Delta)\) is known explicitly (see \cite{freedman} or \cite{itokean74}). For reader's convenience and for later reference we collect the most important cases for diffusions on natural scale.
To this end, we first prepare a couple of notations.
Consider an open interval $I\subset\mathbb R$ and recall (\cite[Proposition 5.1]{CinJPrSha}) that the following are equivalent:
\begin{enumerate}[label=\textup{(\alph*)}]
\item[\textup{(a)}]
$f\colon I\to\mathbb R$ is a difference of two convex functions;

\item[\textup{(b)}]
$f\colon I\to\mathbb R$ is a continuous function such that its right-hand derivative $f'_+$ ($=d^+f/dx$) exists everywhere on $I$ and $f'_+$ is a right-continuous function with locally finite variation on $I$.
\end{enumerate}
(There is, of course, a symmetric equivalent condition that involves the left-hand derivative.)
We stress that, if $I$ is \emph{not} open, then the equivalence between (a) and~(b) breaks: for instance, on $[0,\infty)$ the root $\sqrt{\cdot}$ satisfies~(a) but not~(b).
In fact, for the interval $[0,\infty)$, we easily conclude from above that the following are equivalent:
\begin{enumerate}
\item[\textup{(a$'$)}]
$f\colon[0,\infty)\to\mathbb R$ has a representation $f=h_1-h_2$ such that $h_i\colon[0,\infty)\to\mathbb R$ are convex functions with $|(h_i)'_+(0)|<\infty$, $i=1,2$;

\item[\textup{(a$''$)}]
$f\colon[0,\infty)\to\mathbb R$ has a representation $f=h_1-h_2$ such that $h_i\colon[0,\infty)\to\mathbb R$ are convex functions that can be extended beyond zero to convex functions $\mathbb{R} \to\mathbb R$;

\item[\textup{(b$'$)}]
$f\colon[0,\infty)\to\mathbb R$ is a continuous function such that $f'_+$ exists everywhere on $[0,\infty)$ and is a right-continuous function with locally finite variation on $[0,\infty)$.
\end{enumerate}
Further, given an open interval $I\subset\mathbb R$,
a function $f\colon I\to\mathbb R$ satisfying~(a) (equivalently,~(b)) and a locally finite measure $\mu$ on $(I,\mathcal B(I))$ the notation
``$df'_+=d\mu$
on $I$''
is a shorthand for
\begin{equation}\label{eq:180322a1}
f'_+(b)-f'_+(a)=\mu((a,b])\quad\text{for all }a<b\text{ in }I.
\end{equation}

\begin{lemma} \label{lem: generator}
Assume that \((J \ni x \mapsto \P_x)\) has characteristics \((\on{Id}, \m)\).
\begin{enumerate}[label=\textup{(\roman*)}]
\item[\textup{(i)}]
Suppose that \(J = \mathbb{R}\). Let \(\Delta^1\) be the set of all \(f \in C_b(\mathbb{R}; \mathbb{R})\) such that $f$ is a difference of two convex functions and \(df'_+ = 2g d \m\) on $\mathbb R$
for some \(g \in C_b(\mathbb{R}; \mathbb{R})\). Further, set \(\Gamma^1 f = g\). Then, \((\Gamma^1, \Delta^1) = (\Gamma, \Delta)\).

\item[\textup{(ii)}]
Suppose that \(J = (0, \infty)\). Let \(\Delta^2\) be the set of all \(f \in C_b((0, \infty); \mathbb{R})\) such that $f$ is a difference of two convex functions $(0,\infty)\to\mathbb R$ and \(df'_+ = 2g d \m\)
on $(0,\infty)$
for some \(g \in C_b((0, \infty); \mathbb{R})\). Further, set \(\Gamma^2 f = g\). Then, \((\Gamma^2, \Delta^2) = (\Gamma, \Delta)\).

\item[\textup{(iii)}]
Suppose that \(J = [0, \infty)\) and that \(\m(\{0\}) = \infty\). Let \(\Delta^3\) be the set of all \(f \in C_b([0, \infty); \mathbb{R})\) such that, restricted to \((0, \infty)\), $f$ is a difference of two convex functions $(0,\infty)\to\mathbb R$ and \(df'_+ = 2g d \m\)
on $(0,\infty)$
for some \(g \in C_b([0, \infty); \mathbb{R})\) with \(g(0) = 0\). Further, set \(\Gamma^3 f = g\). Then, \((\Gamma^3, \Delta^3) = (\Gamma, \Delta)\).

\item[\textup{(iv)}]
Suppose that \(J = [0, \infty)\) and that \(\m(\{0\}) < \infty\). Let \(\Delta^4\) be the set of all \(f \in C_b([0, \infty); \mathbb{R})\) such that
$f$ satisfies the equivalent conditions \textup{(a$'$), (a$''$), (b$'$)} above,
\(df'_+ = 2g d \m\) on \((0, \infty)\) and \(f'_+(0) = 2g (0)\m(\{0\})\) for some \(g \in C_b([0, \infty); \mathbb{R})\). Further, set \(\Gamma^4 f = g\). Then, \((\Gamma^4, \Delta^4) = (\Gamma, \Delta)\).
\end{enumerate}
\end{lemma}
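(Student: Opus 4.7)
I would verify the equality $(\Gamma^i,\Delta^i)=(\Gamma,\Delta)$ by a two-sided inclusion in each of the four cases, treating the bulk calculation uniformly and isolating only the boundary issues.

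For the inclusion $(\Delta^i,\Gamma^i)\subset(\Delta,\Gamma)$, let $f\in\Delta^i$ with $\Gamma^i f=g$. Since $\S=\on{Id}$, the coordinate process $\X$ is a continuous local $\P_x$-martingale up to hitting the boundary (Lemma~\ref{lem: scale fct}). Applying the generalized It\^o formula (Lemma~\ref{lem: occ smg}) to $f(\X)$ gives
\[
f(\X_t)-f(x)=\int_0^t f'_-(\X_s)\,d\X_s+\tfrac12\int L^y_t(\X)\,df'_+(y).
\]
Substituting $df'_+=2g\,d\m$ on $J^\circ$ and invoking the occupation time formula for diffusions (Lemma~\ref{lem: occ formula diff}) converts the local-time integral into $\int_0^t g(\X_s)\,ds$ for the interior part. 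In case~(iii) the boundary point $0$ is absorbing, so once hit $\X$ stays there and the condition $g(0)=0$ guarantees no further contribution. In case~(iv) the boundary point $0$ is reflecting with $\m(\{0\})<\infty$, and the symmetrization device (cf.\ Lemma~\ref{lem: refl} and the extended It\^o formula Lemma~\ref{lem: extended gen Ito formula}) produces an extra boundary term $\tfrac12 f'_+(0)\,L^0_t(\X)$ which must be matched against the $\m(\{0\})$-contribution to the occupation formula; the precise matching is exactly the prescribed boundary condition $f'_+(0)=2g(0)\m(\{0\})$. Thus $M^f=f(\X)-f(x)-\int_0^\cdot g(\X_s)\,ds$ is a local $\P_x$-martingale, and since $f,g\in C_b$, it is a true martingale. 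Taking expectations and Laplace-transforming in $t$ yields $\alpha G_\alpha f=f+G_\alpha g$ for every $\alpha>0$, so $f=G_\alpha(\alpha f-g)\in\on{rng}(G_\alpha)=\Delta$, and $\Gamma f=g$ by Lemma~\ref{lem: app2}.

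For the reverse inclusion $(\Delta,\Gamma)\subset(\Delta^i,\Gamma^i)$, fix $f\in\Delta$ and let $h\triangleq f-\Gamma f\in C_b$, so $f=G_1 h$. The classical analysis of the resolvent of a one-dimensional diffusion on natural scale (see \cite{itokean74,freedman}) gives an explicit representation
\[
G_1 h(x)=w^{-1}\phi(x)\int_{(l,x]}\psi(y)h(y)\,\m(dy)+w^{-1}\psi(x)\int_{(x,r)}\phi(y)h(y)\,\m(dy),
\]
where $\phi$ (decreasing) and $\psi$ (increasing) are the fundamental positive solutions of $u'_+(b)-u'_+(a)=2\int_{(a,b]}u(z)\,\m(dz)$ on $J^\circ$ with Wronskian $w$. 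Both $\phi$ and $\psi$ are differences of convex functions, and the correct boundary behavior at each endpoint is encoded in the construction (absorption enforces $u(0)=0$ in case~(iii), reflection enforces $u'_+(0)=2u(0)\m(\{0\})$ in case~(iv), etc.). Direct differentiation of the above representation, together with $\phi'_+,\psi'_+$ solving the integral equation, yields that $f$ is a difference of two convex functions with $df'_+=2(f-h)\,d\m$ on $J^\circ$, and that $f$ inherits the correct boundary condition from $\phi,\psi$. Setting $g\triangleq f-h=\Gamma f\in C_b$ (by Lemma~\ref{lem: app2}) completes the identification $f\in\Delta^i$ with $\Gamma^if=g$.

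The main obstacle is the boundary bookkeeping in case~(iv): verifying that the reflecting boundary condition $f'_+(0)=2g(0)\m(\{0\})$ appears with the correct constants both in the forward direction (via the additional local-time term at~$0$ and its matching with the occupation formula incorporating the atom $\m(\{0\})$) and in the backward direction (via the prescribed behavior of the eigenfunction $\psi$ at the reflecting endpoint). Once these constants are reconciled, cases (i)--(iii) follow as variants in which the boundary contribution is either absent (interior points in~(i),~(ii)) or forced to vanish (absorption in~(iii), handled by $g(0)=0$).
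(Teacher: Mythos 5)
Your proposal is correct in outline but follows a genuinely different route from the paper. The paper's proof is essentially a citation: parts (i)--(iii) are read off directly from Freedman's theorems on diffusion generators, and the only argument actually carried out is for part (iv), where Freedman's description gives the weaker regularity (b$''$) ($f'_+$ of locally finite variation only on the \emph{open} interval $(0,\infty)$) and the paper upgrades it to (b$'$) by computing the variation of $f'_+$ over $(0,1]$ as $\int_{(0,1]}2|g|\,d\m$, which is finite because the regular boundary forces $\m((0,1])<\infty$ via \eqref{eq:101022a3}. You instead re-derive the characterization from scratch: the inclusion $(\Delta^i,\Gamma^i)\subset(\Delta,\Gamma)$ via the generalized It\^o formula, the occupation time formula and a Laplace-transform/Dynkin argument giving $f=G_\alpha(\alpha f-g)$, and the reverse inclusion via the explicit Green's-function representation of $G_1$ through the fundamental solutions $\phi,\psi$. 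Both identities you use are correct (and the forward direction is in effect validated by the paper itself, which performs exactly this It\^o computation in the proof of Theorem~\ref{theo: NFLVR finite tiome horizon} and in the reflecting-boundary addendum to Case~6 of the main proof). What your route buys is self-containedness and a transparent link between the boundary condition $f'_+(0)=2g(0)\m(\{0\})$ and the local-time term at the reflecting boundary; what it costs is a substantial amount of suppressed work: existence, positivity, monotonicity and correct boundary behavior of $\phi,\psi$ at all four boundary types, finiteness of the Wronskian, and justification of differentiating the resolvent representation. Most importantly, in case (iv) your backward direction must still address the very point the paper's proof is devoted to: after differentiating the Green's function you only get $df'_+=2(f-h)\,d\m$ on the open interval, and you must separately verify that $f'_+$ has finite variation up to and including the regular boundary $0$ (which again follows from $\m((0,1])<\infty$, but is not automatic and should not be skipped).
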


Other possible cases (e.g., $J=[0,1]$, $\m(\{0\})<\infty$, $\m(\{1\})<\infty$) are similar to the ones above.
Alternatively, with somewhat more effort, descriptions of such type for general diffusion generators can be read off from \cite[Section~4.7, p.~135]{itokean74}.

\begin{proof}
For~(i) see \cite[Theorem~75, p.~131]{freedman}, for~(ii) see \cite[Theorem~79, p.~133]{freedman}, for~(iii) see \cite[Theorem~81, p.~135]{freedman}.
We now prove~(iv).
\cite[Theorem~89, p.~137]{freedman} yields a description similar to that in~(iv)
with the only difference that the requirement that $f$ satisfies (b$'$) is replaced by a seemingly weaker requirement that
\begin{enumerate}
\item[\textup{(b$''$)}]
$f\colon[0,\infty)\to\mathbb R$ is a continuous function such that
$f'_+$ exists, is finite and right-continuous on $[0,\infty)$,
and $f'_+$ has locally finite variation on the \emph{open} interval $(0,\infty)$.
\end{enumerate}
It remains to establish that, if $f$ satisfies (b$''$) together with other things listed in~(iv), then $f$ satisfies~(b$'$). For $a\in(0,1)$ we have
$$
\int_{(a,1]} |df'_+|
=
\int_{(a,1]} 2|g|\,d\m.
$$
As the boundary $0$ is regular in~(iv), by~\eqref{eq:101022a3}, $\m((0,1])<\infty$.
Letting $a\searrow0$, we obtain
$$
\big(\,\text{variation of }f'_+\text{ on }[0,1]\,\big)
=
\int_{(0,1]} |df'_+|
=
\int_{(0,1]} 2|g|\,d\m
<\infty.
$$
Together with~(b$''$), this yields~(b$'$) and concludes the proof.
\end{proof}

Finally, we also need a local uniqueness property of martingale problems. We define \(\mathcal{F}^o_t \triangleq \sigma (\X_s, s \leq t)\) for \(t \in \mathbb{R}_+\).
The following result is, in fact, a generalization of Theorem~\ref{theo: martingale problem}, and it can be proved similarly to \cite[Lemma~9.1]{criens20}.
We emphasize that Theorem~\ref{theo: martingale problem} is needed for its proof.

\begin{lemma}\label{lem: loc uni}
	Let \(\tau\) be a stopping time for the filtration \((\mathcal{F}^o_t)_{t \geq 0}\) and let \(\P\) be a probability measure on \((\Omega, \mathcal{F})\) such that
	$\P(C(\mathbb R_+;J))=1$,
	\(\P (\X_0 = x) = 1\) and 
	\[
	M^f \triangleq f (\X_{\cdot \wedge \tau}) - f(x) - \int_0^{\cdot \wedge \tau} (\Gamma f) (\X_s) ds
	\]
	is a local\,\footnote{Hence, necessarily, a \(\P\)-martingale, as $M^f$ is bounded on finite time intervals due to $\Delta\subset C_b(J; \mathbb{R})$.} \(\P\)-martingale for all \(f \in \Delta\). Then, \(\P = \P_x\) on \(\mathcal{F}^o_\tau\).
\end{lemma}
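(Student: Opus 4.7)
The plan is to reduce Lemma~\ref{lem: loc uni} to Theorem~\ref{theo: martingale problem} via a concatenation (``pasting'') argument. Roughly speaking, I will construct a probability measure $\widetilde{\P}$ on $(\Omega,\mathcal F)$ which agrees with $\P$ on the $\sigma$-field $\mathcal F^o_\tau$ and which, conditionally on $\mathcal F^o_\tau$, behaves on $\{\tau<\infty\}$ as the diffusion $\P_{\X_\tau}$ restarted at time $\tau$. The goal is then to verify that $\widetilde{\P}$ solves the (full) martingale problem associated with $(\Gamma,\Delta)$ and starting at $x$, so that Theorem~\ref{theo: martingale problem} forces $\widetilde{\P}=\P_x$; restricting back to $\mathcal F^o_\tau$ yields $\P=\P_x$ on $\mathcal F^o_\tau$.

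More concretely, since $\tau$ is $(\mathcal F^o_t)_{t\ge0}$-measurable and $(\Omega,\mathcal F)$ is a standard Borel space, I can take a regular conditional probability $\omega\mapsto Q(\omega,\cdot)$ of $\P$ given $\mathcal F^o_\tau$ and define $\widetilde{\P}$ by pasting: for $\P$-a.a.\ $\omega$ on $\{\tau<\infty\}$, I replace the post-$\tau$ trajectory by an independent sample of $\P_{\X_\tau(\omega)}$ shifted to start at time $\tau(\omega)$ (and on $\{\tau=\infty\}$ I keep the $\P$-trajectory). This is the standard construction used, e.g., in \cite[Lemma~9.1]{criens20}; the measurability requirements are taken care of by the fact that $x\mapsto\P_x$ is a diffusion in the sense of Section~\ref{sec: diff basics}.

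The bulk of the work is then to check that, for every $f\in\Delta$, the process
\[
N^f_t \triangleq f(\X_t)-f(x)-\int_0^t(\Gamma f)(\X_s)\,ds,\quad t\in\mathbb R_+,
\]
is a local $\widetilde{\P}$-martingale. For $t\le\tau$ this is true because $\widetilde{\P}$ coincides with $\P$ on $\mathcal F^o_\tau$ and, by hypothesis, $N^f_{\cdot\wedge\tau}=M^f$ is a $\P$-martingale (it is bounded on finite intervals, recall $\Delta\subset C_b(J;\mathbb R)$). For the post-$\tau$ increments I use Dynkin's formula applied to $\P_{\X_\tau}$: for $f\in\Delta$ the process $f(\X)-f(\X_0)-\int_0^\cdot(\Gamma f)(\X_s)\,ds$ is a martingale under $\P_y$ uniformly in the starting point $y$. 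Combining both pieces via the tower property and the optional stopping theorem yields the martingale property of $N^f$ under $\widetilde{\P}$. Once this is in place, Theorem~\ref{theo: martingale problem} gives $\widetilde{\P}=\P_x$, and since $\widetilde{\P}=\P$ on $\mathcal F^o_\tau$ by construction, we obtain $\P=\P_x$ on $\mathcal F^o_\tau$.

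The main technical obstacle is the verification that $N^f$ is a $\widetilde{\P}$-martingale \emph{across} the pasting time $\tau$; this is where the $(\mathcal F^o_t)_{t\ge0}$-measurability of $\tau$ matters (it ensures the pasting is compatible with the canonical filtration) and where the boundedness of $f$ and of $\Gamma f=g\in C_b(J;\mathbb R)$ is convenient in order to pass from the local martingale property to the (true) martingale property on finite intervals. A minor subtlety is that the martingale problem is posed with respect to $(\mathcal F_t)_{t\ge0}$, i.e., the right-continuous filtration generated by $\X$, whereas $\tau$ is only $(\mathcal F^o_t)_{t\ge0}$-adapted; this causes no trouble, however, since any $(\mathcal F^o_t)_{t\ge0}$-stopping time is automatically an $(\mathcal F_t)_{t\ge0}$-stopping time and $\mathcal F^o_\tau\subset\mathcal F_\tau$.
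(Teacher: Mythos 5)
Your proposal is correct and follows essentially the same route as the paper, which proves this lemma by the standard pasting/concatenation argument of \cite[Lemma~9.1]{criens20} combined with the global uniqueness from Theorem~\ref{theo: martingale problem}. Your construction of $\widetilde{\P}$, the verification of the martingale property of $N^f$ across $\tau$ via Dynkin's formula and optional stopping, and the final reduction to $\mathcal F^o_\tau$ are exactly the intended steps.
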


\section{Technical Facts on Diffusions and Semimartingales}\label{app: B}
To make our paper as self-contained as possible we collect technical facts about diffusions and semimartingales which are used in our proofs.

\subsection{Facts on Diffusions}
In this section, 
\((J \ni x \mapsto \P_x)\) is a regular diffusion with scale function \(\s\) and speed measure \(\m\).
Recall from~\eqref{eq:020323a1} that \(T_y =  \inf (s \geq 0 \colon \X_s = y)\).
We denote \(l \triangleq \inf J\) and \(r \triangleq \sup J\).

\begin{lemma}\label{lem:121224a1}
In addition to the regular diffusion
$(J \ni x \mapsto \P_x)$
consider a state space $\tilde J$ with $J^\circ=\tilde J^\circ$
and a regular diffusion
$(\tilde J \ni x \mapsto \tilde\P_x)$.
Let $x_0\in J\cap\tilde J$ be such that
either $x_0\in J^\circ$
or $x_0\in\partial J$ is not an absorbing boundary for at least one of the diffusions.
Assume that $\P_{x_0}=\tilde\P_{x_0}$.
Then $J=\tilde J$ and $\P_y=\tilde\P_y$ for all $y\in J$
(that is, the diffusions coincide).
\end{lemma}

It is worth noting that the assumption on $x_0$ cannot be dropped:
if $x_0$ is an absorbing boundary point for both diffusions, then $\P_{x_0}=\tilde\P_{x_0}$, but, clearly, the diffusions can be different.

\begin{proof}
Notice first that if $x_0$ is a boundary point, then it is clearly reflecting for both diffusions.
Now, fix some $y\in J$ and observe that
\begin{equation}\label{eq:121224a1}
\P_{x_0}(T_y<\infty)>0.
\end{equation}
Indeed, if $x_0\in J^\circ$, then \eqref{eq:121224a1} follows from the regularity of the diffusion $(x\mapsto\P_x)$
(recall~\eqref{eq:121224a0});
if $x_0\in\partial J$, then \eqref{eq:121224a1} follows from Lemma~\ref{lem: finiteness hitting times} below.
Take any $A\in\mathcal F$.
By the strong Markov property,
$$
\P_{x_0}(T_y<\infty,\X_{\cdot+T_y}\in A)
=
\E_{x_0}\big[\1_{\{T_y<\infty\}}\P_{x_0}(\X_{\cdot+T_y}\in A|\mathcal F_{T_y})\big]
=
\P_y(A)\,\P_{x_0}(T_y<\infty).
$$
Thus,
\begin{equation}\label{eq:121224a2}
\P_y(A)
=
\frac{\P_{x_0}(T_y<\infty,\X_{\cdot+T_y}\in A)}{\P_{x_0}(T_y<\infty)}.
\end{equation}
As $y\in J$ and $A\in\mathcal F$ are arbitrary,
it follows that the measure $\P_{x_0}$ uniquely determines the whole diffusion $(J\ni x\mapsto\P_x)$.
Similarly, the measure $\tilde\P_{x_0}$ uniquely determines the whole diffusion $(\tilde J\ni x\mapsto\tilde\P_x)$.
Moreover, the determining formula \eqref{eq:121224a2}, its analogue for the other diffusion and the fact that $\P_{x_0}=\tilde\P_{x_0}$ yield that
$\P_y=\tilde\P_y$ for all $y\in J\cap\tilde J$
(in particular, for all $y\in J^\circ$).

It remains only to prove that $J=\tilde J$.
To this end, take an interior point $c\in J^\circ(=\tilde J^\circ)$ and a boundary point $b\in\partial J(=\partial\tilde J)$.
Recall that $b\in J$ (resp., $b\in\tilde J$)
if and only if
$\P_c(T_b<\infty)>0$ (resp., $\tilde\P_c(T_b<\infty)>0$).
As we already established that $\P_c=\tilde\P_c$,
we obtain that $b\in J$ is equivalent to $b\in\tilde J$.
It follows that $J=\tilde J$.
This concludes the proof.
\end{proof}

The following lemma is Blumenthal's zero-one law (\cite[Lemma~4, p.~106]{freedman}).

\begin{lemma} \label{lem: Blumenthal}
For any \(x \in J\) the \(\sigma\)-field \(\mathcal{F}_0\) is \(\P_x\)-trivial, i.e., \(\P_x(A) \in \{0, 1\}\) for all \(A\in \mathcal{F}_0\).
\end{lemma}

The following lemma is a restatement of \cite[Corollary V.46.15]{RW2}. 

\begin{lemma} \label{lem: scale fct}
	The process \(\s (\X_{\cdot \wedge T_l \wedge T_r})\) is a continuous local \(\P_{x}\)-martingale for all \(x \in J\).
\end{lemma}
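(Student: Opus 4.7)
The plan is to exhibit an explicit localizing sequence made of exit times from compact subintervals of $J^\circ$, verify the true martingale property on each stopped process by reducing to the defining formula \eqref{eq:260223a1} for the scale function via the strong Markov property, and then pass to the limit.

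First, I would dispose of the trivial case where $x\in\partial J$: then $x\in\{l,r\}$ is accessible for $(y\mapsto\P_y)$, so $T_l\wedge T_r=0$ $\P_x$-a.s., hence $\s(\X_{\cdot\wedge T_l\wedge T_r})\equiv\s(x)$ is constant and the claim is immediate. From now on, assume $x\in J^\circ$.

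Next, I would choose two monotone sequences $(a_n)_{n\in\mathbb N}, (b_n)_{n\in\mathbb N}\subset J^\circ$ with $a_n\searrow l$, $b_n\nearrow r$ and $a_n<x<b_n$ for every $n$, and set
\[
\tau_n\triangleq T_{a_n}\wedge T_{b_n}.
\]
The central step is to show that $N^n\triangleq\s(\X_{\cdot\wedge\tau_n})$ is a bounded $\P_x$-martingale. Boundedness is clear because $\s$ is continuous on the compact interval $[a_n,b_n]$ and $\X_{\cdot\wedge\tau_n}$ takes values there. For the martingale property, I would fix $0\le s\le t$ and use the strong Markov property at time $s\wedge\tau_n$: on $\{s\ge\tau_n\}$ the identity $\E_x[N^n_t\mid\mathcal F_{s}]=N^n_s$ is immediate, while on $\{s<\tau_n\}$ the shifted process has law $\P_{\X_s}$, the shifted stopping time is $T_{a_n}\wedge T_{b_n}$ for the shift, and
\[
\E_x[\s(\X_{\tau_n})\mid\mathcal F_s]
=\E_{\X_s}\!\big[\s(\X_{T_{a_n}\wedge T_{b_n}})\big]
=\s(\X_s),
\]
where the last equality follows from \eqref{eq:260223a1} together with $\E_y[T_{a_n}\wedge T_{b_n}]<\infty$ for all $y\in[a_n,b_n]$ (guaranteed by \eqref{eq:090322a1}), which forces $\X_{T_{a_n}\wedge T_{b_n}}\in\{a_n,b_n\}$ $\P_y$-a.s.

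Finally, I would argue $\tau_n\nearrow T_l\wedge T_r$ as $n\to\infty$. Monotonicity gives $\tau_\infty\triangleq\lim_n\tau_n\le T_l\wedge T_r$. On $\{\tau_\infty<\infty\}$, by path continuity $\X_{\tau_\infty}=\lim_n\X_{\tau_n}\in\{l,r\}$ (since each $\X_{\tau_n}\in\{a_n,b_n\}$ and $a_n\to l$, $b_n\to r$), whence $\tau_\infty\ge T_l\wedge T_r$. Combined with the true martingale property of each $N^n$ and the fact that $N^n=\s(\X_{\cdot\wedge T_l\wedge T_r})$ stopped at $\tau_n$, this exhibits $(\tau_n)_{n\in\mathbb N}$ as a localizing sequence for $\s(\X_{\cdot\wedge T_l\wedge T_r})$, proving the lemma. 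Continuity of the resulting process is inherited from the continuity of $\s$ on $J^\circ$ and the continuity of $\X$.

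The main obstacle, such as it is, is verifying that $\tau_n\to T_l\wedge T_r$ rigorously even when the diffusion can be absorbed or reflected at a boundary; path continuity of $\X$ together with the identification $\X_{\tau_n}\in\{a_n,b_n\}$ handles all cases simultaneously. The martingale property on each $[0,\tau_n]$ is a direct consequence of the defining relation for the scale function and the strong Markov property, so no additional machinery is needed.
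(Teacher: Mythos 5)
The paper offers no proof of this lemma at all: it is quoted verbatim as a restatement of \cite[Corollary V.46.15]{RW2}. Your argument is the standard self-contained proof lying behind that citation, and its core steps are correct: boundedness of $\s$ on $[a_n,b_n]$, finiteness of $\E_y[T_{a_n}\wedge T_{b_n}]$ via \eqref{eq:090322a1} (which forces $\X_{T_{a_n}\wedge T_{b_n}}\in\{a_n,b_n\}$), the identity $\E_y[\s(\X_{T_{a_n}\wedge T_{b_n}})]=\s(y)$ from \eqref{eq:260223a1}, and the identification $\tau_n\nearrow T_l\wedge T_r$ by path continuity. Two points should be tightened. First, to get the martingale property of $N^n$ it is cleanest to establish $N^n_u=\E_x[\s(\X_{\tau_n})\mid\mathcal F_u]$ for every $u\ge0$ --- your displayed computation does exactly this on $\{u<\tau_n\}$, and the complementary event is trivial --- and then conclude by the tower property; as written you condition the terminal value $\s(\X_{\tau_n})$ while claiming an identity for $\E_x[N^n_t\mid\mathcal F_s]$, and these differ until this identification is made.

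Second, and more substantively: $(\tau_n)_{n\in\mathbb N}$ is \emph{not} a localizing sequence in the usual sense, because $\tau_n\nearrow T_l\wedge T_r$, which is finite with positive probability whenever a boundary is accessible. What you have actually shown is that $Y\triangleq\s(\X_{\cdot\wedge T_l\wedge T_r})$ satisfies: $Y^{\tau_n}$ is a true martingale for each $n$, and $Y$ is constant after $\lim_n\tau_n=T_l\wedge T_r$. To conclude, set $\rho_k\triangleq\inf(t\ge0\colon|Y_t|\ge k)$. Since $\P_x$-a.s.\ the path lies in $C(\mathbb R_+;J)$ and $\s$ is real-valued and continuous on all of $J$ (not only on $J^\circ$, which also fixes the continuity assertion at an accessible boundary), $Y$ is locally bounded in time and $\rho_k\nearrow\infty$ $\P_x$-a.s. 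Each $Y^{\tau_n\wedge\rho_k}$ is a bounded martingale, and $Y^{\tau_n\wedge\rho_k}_t\to Y^{\rho_k}_t$ boundedly as $n\to\infty$ because $Y$ is constant after $T_l\wedge T_r$; dominated convergence then shows that $Y^{\rho_k}$ is a martingale, so $(\rho_k)_{k\in\mathbb N}$ is a genuine localizing sequence. With this patch your proof is complete and is, in substance, the argument the cited reference carries out.
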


The next lemma explains how the characteristics are changed via an homeomorphic change of space. It is a restatement of \cite[Exercise VII.3.18]{RY}.

\begin{lemma} \label{lem: diff homo} 
Let \(\phi\) be a	homeomorphism from \(J\) onto an interval \(I\). Then, \((I \ni x \mapsto \P_{\phi^{-1} (x)} \circ \phi (\X)^{-1})\) is also a regular diffusion
with speed measure \(\m \circ \phi^{-1}\)
and scale function \(\pm\,\s \circ \phi^{-1}\) (the sign is ``$+$'' if $\phi$ is increasing and ``$-$'' of \(\phi\) is decreasing).
\end{lemma}

The next lemma is a restatement of \cite[Theorem 1.1]{bruggeman}. It says that diffusions hit points arbitrarily fast with positive probability, which can be viewed as an irreducibility property.

\begin{lemma} \label{lem: diff hit points fast}
	Take \(x, z \in J\) such that \(x \not = z\) and \(\P_x(T_z < \infty) > 0\). Then, \[\P_x (T_z < \varepsilon) > 0\] for all \(\varepsilon > 0\).
\end{lemma}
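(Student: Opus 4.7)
Without loss of generality I assume $x<z$; the opposite case is symmetric. The overall strategy is a chain argument based on the strong Markov property. Choose a finite increasing sequence $x=x_0<x_1<\cdots<x_n=z$ in~$J$. By the continuity of paths, on the event $\{T_z<\infty\}$ the process hits these points in their natural order, so that $0=T_{x_0}<T_{x_1}<\cdots<T_{x_n}=T_z$, and the strong Markov property identifies the post-$T_{x_{i-1}}$ increment $T_{x_i}-T_{x_{i-1}}$ in law with $T_{x_i}$ under $\P_{x_{i-1}}$. Iterating yields
\[
\P_x(T_z<\varepsilon)\;\ge\;\P_x\!\Big(T_{x_i}-T_{x_{i-1}}<\varepsilon/n\text{ for every }i\Big)\;=\;\prod_{i=1}^n\P_{x_{i-1}}\!\big(T_{x_i}<\varepsilon/n\big),
\]
so it is enough to arrange the chain so that every factor is strictly positive.

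The first step is the local statement: for every $y\in J^\circ$ and $\delta>0$ there is $h>0$ such that $\P_y(T_{y'}<\delta)>0$ for every $y'\in(y,y+h)$ with $y'\in J^\circ$. To prove it, fix $y'>y$ close to $y$ and pick $a\in J^\circ$ symmetric with respect to~$\s$, namely $\s(a)\triangleq 2\s(y)-\s(y')$; then~\eqref{eq:260223a1} gives $\P_y(T_{y'}<T_a)=1/2$. Using the bound $G_I(y,z)\le\s(y')-\s(y)$ on $I=(a,y')$ in~\eqref{eq:090322a1} yields
\[
\E_y\!\big[T_a\wedge T_{y'}\big]\;\le\;\big(\s(y')-\s(y)\big)\,\m([a,y']),
\]
and the right-hand side tends to $0$ as $y'\searrow y$, because $\s$ is continuous and $\m$ is locally finite on~$J^\circ$. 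Markov's inequality therefore forces $\P_y(T_a\wedge T_{y'}\ge\delta)\to 0$, whence
\[
\P_y(T_{y'}<\delta)\;\ge\;\P_y(T_{y'}<T_a)-\P_y(T_a\wedge T_{y'}\ge\delta)\;>\;0
\]
for every $y'$ sufficiently close to~$y$. A symmetric argument covers $y'<y$.

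The second step globalises this. The quantitative bound above is uniform on any compact $K\subset J^\circ$, because $\s$ is uniformly continuous and $\m(K)<\infty$; so for each $\delta>0$ there is $h^*=h^*(K,\delta)>0$ that works for all $y\in K$ and all $y'\in(y,y+h^*)$. When $x,z\in J^\circ$ I take $K=[x,z]$, choose $n$ with $(z-x)/n<h^*(K,\varepsilon/n)$ (such an $n$ exists because the upper bound above forces $h^*$ to decay only polynomially in $\delta$), and equipartition $[x,z]$ to obtain the chain. Accessible boundary endpoints $x=l$ or $z=r$ are handled by approximating from the interior and executing one extra strong-Markov step: the hypothesis $\P_x(T_z<\infty)>0$ together with the accessibility classification from Section~\ref{sec: diff basics} guarantees enough accessibility for the last link.

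The main obstacle will be the uniform-in-$y$ local estimate together with the consistent choice of $n$; these are what make the chain both finite and compatible with the time budget $\varepsilon$. Once they are in place, strong Markov combined with path continuity completes the argument mechanically.
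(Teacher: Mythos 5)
Your chain-plus-strong-Markov skeleton and the single-step Chebyshev estimate are both fine as far as they go, but the step where you ``choose $n$ with $(z-x)/n<h^*(K,\varepsilon/n)$'' contains a genuine gap, and the parenthetical justification is false. The bound you derive is $\P_y(T_a\wedge T_{y'}\ge\delta)\le C\,(\s(y')-\s(y))\,\m([a,y'])/\delta$, so the admissible scale-increment per step is of order $\delta/\m([a,y'])$. If $\m$ has an atom at some $y\in[x,z]$ with mass $m_0>0$ (a sticky point — a case the paper explicitly insists on covering, see Example~\ref{ex: sticky}), then for the step whose interval contains $y$ one has the matching \emph{lower} bound $\E_y[T_a\wedge T_{y'}]\ge G_I(y,y)\,\m(\{y\})=(\s(y')-\s(y))\,m_0$, so $h^*(K,\delta)$ decays (at least) linearly in $\delta$. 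Consequently both sides of $(z-x)/n<h^*(K,\varepsilon/n)$ scale like $1/n$, the $n$'s cancel, and the requirement collapses to a condition of the form $\s(z)-\s(x)\lesssim\varepsilon/m_0$, which fails for small $\varepsilon$. No choice of $n$ rescues this: a first-moment Markov inequality can never detect the (small but positive) probability of traversing a sticky point quickly, because the \emph{expected} time to do so is bounded below linearly in the spatial step. The same obstruction appears at a slowly reflecting boundary, and your treatment of accessible boundary endpoints (``one extra strong-Markov step'') is in any case only asserted, not proved. Your argument does work when $\m$ is atomless on a neighbourhood of $[x,z]$, since then $\m([a,y'])\to0$ uniformly and $h^*(K,\delta)/\delta\to\infty$; the gap is precisely at atoms of $\m$.

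For comparison: the paper does not prove this lemma itself but cites Bruggeman and Ruf, whose argument is structurally different. They write the diffusion as a time-changed Brownian motion $\X=W_\gamma$ (Lemma~\ref{lem: diff time changed BM}), so that $T_z(\X)=\int L^y_{T_{\s(z)}(W)}(W)\,\m\circ\s^{-1}(dy)$, and then show that with positive probability the \emph{entire local-time profile} $y\mapsto L^y_{T_{\s(z)}(W)}(W)$ is uniformly small on the (compact) range of $W$, which makes the additive functional — and hence $T_z(\X)$ — smaller than $\varepsilon$. Controlling the distribution of the local time, rather than the mean of the hitting time, is exactly the ingredient your proof is missing; if you want to keep the chain structure, the step crossing an atom would need such a distributional estimate (e.g., that the local time of $W$ at the sticky level before hitting the next chain point is exponentially distributed and hence arbitrarily small with positive probability).
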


The following lemma explains that diffusions exit non-absorbing points immediately. It follows, for instance, from \cite[Corollary 5, Fact 6 on p. 107, Lemma 12 on p. 109]{freedman}.

	\begin{lemma} \label{lem: exit immediately}
		For every \(x \in J\) with \(\m (\{x\}) < \infty\)\footnote{In other words, we exclude only the case where $x$ is an absorbing boundary point.},
		\(
		\P_x ( S_x = 0 ) = 1, 
		\)
		where \(S_x \triangleq \inf (t \geq 0 \colon \X_t \not = x)\).
	\end{lemma}

Part~(i) of the following lemma is a restatement of \cite[Corollary~19, p.~112]{freedman}, parts (ii) and~(iii) are restatements of \cite[Lemmata 20,~21, p.~112]{freedman}.

\begin{lemma} \label{lem: finiteness hitting times}
\begin{enumerate}
	\item[\textup{(i)}]
	If $a,x,c\in J$ are such that $a\le x\le c$, then \(\E_{x}[T_a \wedge T_c] < \infty\).
	
	\item[\textup{(ii)}]
	If \(l\) is reflecting and $x,c\in J$ are such that \(x \leq c \), then \(\E_{x}[T_c] < \infty\).
	
	\item[\textup{(iii)}]
	If \(r\) is reflecting and $a,x\in J$ are such that \(a \leq x\), then \(\E_{x}[T_a] < \infty\).
\end{enumerate}
\end{lemma}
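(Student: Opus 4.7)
My strategy for all three parts is to reduce the claim to finiteness of a Green-type integral against $\m$. For part~(i), the cases $x\in\{a,c\}$ or $a=c$ are immediate since $T_a\wedge T_c=0$. Otherwise $a<x<c$, and since $[a,c]\subset J$ we may apply~\eqref{eq:090322a1} with $I=(a,c)$ to get
\[
\E_x[T_a\wedge T_c]\;=\;\int_{(a,c)}G_I(x,y)\,\m(dy).
\]
From the explicit formula~\eqref{eq: def G}, one has $G_I(x,y)\le C_1(\s(y)-\s(a))$ on $(a,x]$ and $G_I(x,y)\le C_2(\s(c)-\s(y))$ on $[x,c)$ for suitable positive constants $C_1,C_2$. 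When the relevant endpoint lies in $J^\circ$, the corresponding piece of the integral is finite by local finiteness of $\m$ on the compact set $[a,x]$ or $[x,c]$. When it is a boundary point of $J$ (and hence accessible, since it lies in $J$), the required integrability is exactly what the accessibility criterion~\eqref{eq:160223a1} provides. This settles~(i).

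For~(ii), reflection means $l\in J$ is a regular boundary with $\m(\{l\})<\infty$. Assuming $l\le x<c$ (the case $x=c$ being trivial), the strong Markov property at $T_l$ gives
\[
\E_x[T_c]\;\le\;2\,\E_x[T_l\wedge T_c]\;+\;\E_l[T_c],
\]
so by part~(i) it suffices to control $\E_l[T_c]$. To this end I would invoke the extended speed-measure identity~\eqref{eq: extended speed measure} with $f\equiv1$ on $I=[l,c)$, reducing to the stated setting $J=[0,\infty)$ by an affine transformation via Lemma~\ref{lem: diff homo} that maps $l$ to $0$ and leaves the point mass at the boundary intact. This yields $\E_l[T_c]=\int_{[l,c)}G^\circ_I(l,y)\,\m(dy)$. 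The kernel $G^\circ_I(l,\cdot)$ is bounded on $[l,c]$ thanks to $|\s(l)|<\infty$ and continuity of $\s$, and the total mass $\m([l,c])=\m(\{l\})+\m((l,c])$ is finite by reflection and by~\eqref{eq:101022a3}. Hence the integral is finite.

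Part~(iii) follows symmetrically to~(ii), for instance by applying~(ii) to the diffusion obtained through the involution $x\mapsto -x$ in Lemma~\ref{lem: diff homo}. The main technical point is the extended-Green-kernel step in~(ii): one needs to verify carefully that the affine reduction transports both the scale function and the boundary point mass correctly, so that the extended speed-measure formula~\eqref{eq: extended speed measure} can be applied in its stated form and the finite contribution of the reflecting boundary is properly accounted for.
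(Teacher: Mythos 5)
Your proposal is correct. Note, however, that the paper does not prove this lemma at all: it is stated as a restatement of \cite[Corollary~19 and Lemmata~20, 21, p.~112]{freedman}, so there is no in-paper argument to compare against; what you have produced is a self-contained derivation from the paper's own formulas. Your route is sound: for (i), the identity~\eqref{eq:090322a1} together with the bound \(G_I(x,y)\le 2(\s(y)-\s(a))\) on \((a,x]\) and \(G_I(x,y)\le 2(\s(c)-\s(y))\) on \([x,c)\) reduces everything to local finiteness of \(\m\) (interior endpoints) and to the accessibility criterion~\eqref{eq:160223a1} (boundary endpoints in \(J\), which are necessarily accessible and have finite scale by~\eqref{eq:101022a1}); for (ii), the first-passage decomposition at \(T_l\) is valid (in fact \(\E_x[T_c]\le\E_x[T_l\wedge T_c]+\E_l[T_c]\), so your factor \(2\) is superfluous but harmless), and \(\E_l[T_c]<\infty\) follows from~\eqref{eq: extended speed measure} with \(f\equiv1\), boundedness of \(G^\circ_I(l,\cdot)\) by \(2(\s(c)-\s(l))\), and \(\m([l,c))<\infty\) from \(\m(\{l\})<\infty\) and~\eqref{eq:101022a3}. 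Two small touch-ups: since the paper allows \(J\subset[-\infty,\infty]\), the reduction to the normalized setting of~\eqref{eq: extended speed measure} should be phrased as a homeomorphic (not merely affine) change of variable via Lemma~\ref{lem: diff homo}, which indeed transports the extended speed measure including the atom at the boundary; and an alternative way to handle \(\E_l[T_c]\) that avoids the extended Green kernel altogether is to symmetrize via (the analogue of) Lemma~\ref{lem: refl}, under which \(T_c(|\X|)=T_{-c}\wedge T_c(\X)\) and part~(i) applies to the unfolded diffusion, whose speed measure is locally finite on a neighborhood of \([-c,c]\).
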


Let us also recall some  general  path properties of diffusions.
The following lemma is a consequence of~\eqref{eq:260223a1}
(see, e.g., \cite[Proposition 5.5.22]{KaraShre} for a detailed proof).

\begin{lemma}\label{lem:200223a2}
	Take \(x \in J^\circ\).
	\begin{enumerate}
		\item[\textup{(i)}]
		If \(\s (l) = - \infty, \s (r) = \infty\), then 
		\[
		\P_{x} \big(T_l \wedge T_r = \infty\big) = \P_{x} \Big( \limsup_{t\to\infty} \X_t = r \Big) = \P_{x} \Big(\liminf_{t\to\infty} \X_t = l\Big) = 1.
		\]
		
		\item[\textup{(ii)}]
		If \(\s (l) > - \infty, \s (r) = \infty\), then 
		\[
		\P_{x} \Big( \sup_{t < T_l \wedge T_r} \X_t < r \Big) = \P_{x} \Big(\lim_{t \nearrow T_l \wedge T_r} \X_t = l\Big) = 1.
		\]
		
		\item[\textup{(iii)}]
				If \(\s (l) = - \infty, \s (r) < \infty\), then 
		\[
		\P_{x} \Big( \lim_{t \nearrow T_l \wedge T_r} \X_t = r \Big) = \P_{x} \Big(\inf_{t < T_l \wedge T_r} \X_t > l\Big) = 1.
		\]
		
		\item[\textup{(iv)}]
		If \(\s (l) > - \infty, \s (r) < \infty\), then 
		\[
		\P_{x} \Big(\lim_{t \nearrow T_l \wedge T_r} \X_t = l\Big) = 1 - \P_{x} \Big(\lim_{t \nearrow T_l \wedge T_r} \X_t= r\Big) = \frac{\s(r) - \s(x)}{\s(r) - \s(l)}.
		\]
	\end{enumerate}
\end{lemma}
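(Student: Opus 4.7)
The plan is to combine Lemma~\ref{lem: scale fct}, which says that $M \triangleq \s(\X_{\cdot\wedge T_l\wedge T_r})$ is a continuous local $\P_x$-martingale, with the exit probability formula~\eqref{eq:260223a1} and a monotone limiting argument. Pick monotone sequences $a_n\searrow l$ and $b_n\nearrow r$ in $J^\circ$ with $a_n<x<b_n$, set $\tau_n\triangleq T_{a_n}\wedge T_{b_n}$, and note that by path continuity $\tau_n\nearrow T_l\wedge T_r$ as $n\to\infty$. On $[a_n,b_n]$ the function $\s$ is bounded, so optional stopping (or equivalently~\eqref{eq:260223a1}) gives the classical exit identity $\P_x(T_{b_n}<T_{a_n})=(\s(x)-\s(a_n))/(\s(b_n)-\s(a_n))$.

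For (iv), send $a_n\to l$ and $b_n\to r$: since $\s(l),\s(r)\in\mathbb R$, the ratio converges to $(\s(x)-\s(l))/(\s(r)-\s(l))$, yielding the stated formula once we know $\P_x(T_l\wedge T_r<\infty)=1$. The latter follows from the boundedness of $M$ on $[0,T_l\wedge T_r)$: $M$ is then a bounded continuous martingale, hence converges a.s.\ as $t\nearrow T_l\wedge T_r$, and the limit must be $\s(l)$ or $\s(r)$ (otherwise $\X$ would be stuck in a proper sub-interval of $J^\circ$ where by~\eqref{eq:260223a1} it exits in finite mean time, a contradiction with Lemma~\ref{lem: finiteness hitting times}); since $\s$ is strictly monotone and continuous, this means $\X$ converges to one of the boundary points in finite time.

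For (i), fix $a_n$ and let $b_n\nearrow r$: the denominator $\s(b_n)-\s(a_n)\to\infty$, so $\P_x(T_{b_n}<T_{a_n})\to 0$ and hence $\P_x(T_{a_n}<T_r)=1$ for every $n$; by symmetry $\P_x(T_{b_n}<T_l)=1$. Taking countable intersections and letting $n\to\infty$ yields $\liminf_{t\to\infty}\X_t\le l$ and $\limsup_{t\to\infty}\X_t\ge r$ $\P_x$-a.s., so by definition of $l,r$ the liminf equals $l$ and the limsup equals $r$ a.s.; the oscillation between levels arbitrarily close to $l$ and to $r$ is incompatible with finite $T_l\wedge T_r$, giving $T_l\wedge T_r=\infty$ a.s.

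For (ii), keep $a_n$ fixed and let $b_n\nearrow r$: now $\s(b_n)\to\infty$ while $\s(a_n)$ is bounded, so $\P_x(T_{a_n}<T_r)=1$, meaning $\X$ a.s.\ downcrosses every level $a_n\searrow l$. On the other hand $M$ is bounded below by $\s(l)>-\infty$, hence a non-negative (after shift) continuous supermartingale; by Doob's supermartingale convergence theorem $M_{t\wedge T_l\wedge T_r}$ has an a.s.\ limit in $[\s(l),\infty]$. The downcrossing property forces $\liminf M=\s(l)$, so the limit equals $\s(l)$, i.e., $\lim_{t\nearrow T_l\wedge T_r}\X_t=l$ $\P_x$-a.s.; this also bounds $\sup M_t$, giving $\sup_{t<T_l\wedge T_r}\X_t<r$ a.s. Case~(iii) is completely symmetric. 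The only point that needs some care is the a.s.\ convergence of the coordinate process to the attained endpoint in case~(iv), but this is handled by the uniformly integrable martingale argument above; this is the technical crux but is standard.
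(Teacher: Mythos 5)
Your argument is the standard one: the paper does not prove this lemma itself but refers to \cite[Proposition~5.5.22]{KaraShre} and notes that the statement is a consequence of~\eqref{eq:260223a1}, and your combination of the exit identity with the (super)martingale convergence of $\s(\X_{\cdot\wedge T_l\wedge T_r})$ is exactly that proof. Parts (i)--(iii) are fine modulo routine care (e.g.\ in (i) you let one index vary while the other is fixed, and the passage from $\P_x(T_{a_n}<T_{b_m})\to1$ to $\P_x(T_{a_n}<T_r)=1$ uses $T_{b_m}\nearrow T_r$; likewise, identifying $\P_x(\lim\X=r)$ with $\lim_n\P_x(T_{b_n}<T_{a_n})$ in (iv) deserves a sentence since these events are not monotone in $n$ --- but all of this is standard).

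There is, however, one genuinely false step in your treatment of (iv): the claim that $\P_x(T_l\wedge T_r<\infty)=1$, i.e.\ that ``$\X$ converges to one of the boundary points \emph{in finite time}.'' Finiteness of the scale limits $\s(l),\s(r)$ does \emph{not} make the boundaries accessible --- the paper explicitly warns that the converse of~\eqref{eq:101022a1} fails, and by~\eqref{eq:160223a1} accessibility additionally requires $\int_{\ofr b,c\gsr}|\s(z)-\s(b)|\,\m(dz)<\infty$. For instance, take $J^\circ=(0,1)$ on natural scale with $\m(dx)=x^{-2}(1-x)^{-2}dx$: both boundaries are natural, $T_0=T_1=\infty$ a.s., yet we are squarely in case (iv). Your deduction ``the limit of $M$ is $\s(l)$ or $\s(r)$, hence $\X$ reaches a boundary in finite time'' conflates convergence \emph{to} a boundary with \emph{hitting} it; that dichotomy is precisely the content of Lemma~\ref{lem:020323a2} and goes the other way only for accessible boundaries. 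Fortunately the error is harmless for the statement being proved: (iv) only asserts something about $\lim_{t\nearrow T_l\wedge T_r}\X_t$, which on $\{T_l\wedge T_r=\infty\}$ is a limit as $t\to\infty$, and your bounded-martingale-convergence argument (limit exists, cannot be interior by Lemma~\ref{lem: finiteness hitting times}) already delivers exactly what is needed without any finiteness of the exit time. Simply delete the finite-time claim and state the formula as $\P_x(\lim_{t\nearrow T_l\wedge T_r}\X_t=r)=\lim_n\P_x(T_{b_n}<T_{a_n})$, justified by the (one-sided) event inclusions on $\{\lim\X=r\}$ and $\{\lim\X=l\}$.
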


The next result implies that there is a dichotomy:
in the case when with positive probability the trajectories tend to a boundary point $b\in\{l,r\}$, as $t\nearrow T_b$,\footnote{From Lemma~\ref{lem:200223a2} we know that this is exactly the case $|\s(b)|<\infty$.}
either they do this in infinite time only
(when $r$ is inaccessible)
or they do this in finite time \emph{only}
(when $r$ is accessible).
It is the latter ``only'' that needs to be formally stated.
We do this in the next lemma, which follows from \cite[Theorem 33.15]{kallenberg}
(in fact, \cite[Theorem 33.15]{kallenberg} contains much more information).

\begin{lemma}\label{lem:020323a2}
Let $b\in\{l,r\}$ be an accessible boundary and $x\in J^\circ$. Then
$$
\P_x\Big(T_b=\infty,\lim_{t\nearrow T_b}\X_t=b\Big)=0.
$$
\end{lemma}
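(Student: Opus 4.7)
The plan is to proceed by a case analysis on the boundary classification at the \emph{other} endpoint. By the obvious symmetry, it suffices to treat $b=r$; note that since $r$ is accessible, $|\s(r)|<\infty$. The main tools will be the exit probability formula~\eqref{eq:260223a1}, the expected-hitting-time estimates in Lemma~\ref{lem: finiteness hitting times}, and the long-run dichotomy from Lemma~\ref{lem:200223a2}.

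First I would handle the case where $l$ is accessible. If $l$ is reflecting (for the diffusion $(x\mapsto\P_x)$), Lemma~\ref{lem: finiteness hitting times}(ii) gives $\E_x[T_r]<\infty$, hence $T_r<\infty$ $\P_x$-a.s., and the event in the statement is $\P_x$-null. If $l$ is absorbing, then $\X_t\equiv l$ for $t\ge T_l$, so on $\{T_l<T_r\}$ one has $T_r=\infty$ together with $\lim_{t\nearrow T_r}\X_t=l\ne r$; on the complementary event, Lemma~\ref{lem: finiteness hitting times}(i) forces $T_l\wedge T_r<\infty$ and hence $T_r\le T_l<\infty$.

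The substance of the proof lies in the case where $l$ is inaccessible. Then $\P_x(T_l<\infty)=0$, and a pathwise compactness argument on each finite interval $[0,T]$ shows that for a sequence $a_n\searrow l$ (with $a_n\in J^\circ$) one has $T_{a_n}\nearrow\infty$ $\P_x$-a.s. Consequently $\{T_r<T_{a_n}\}\nearrow\{T_r<\infty\}$, and \eqref{eq:260223a1} yields
\[
\P_x(T_r<\infty) \;=\; \lim_{n\to\infty}\frac{\s(x)-\s(a_n)}{\s(r)-\s(a_n)}.
\]
If $\s(l)=-\infty$ this limit equals $1$, so $T_r<\infty$ a.s.\ and the claim follows immediately. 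If $\s(l)>-\infty$, the limit equals $(\s(x)-\s(l))/(\s(r)-\s(l))$, which by Lemma~\ref{lem:200223a2}(iv) (applicable since $\s(r)<\infty$) coincides with $\P_x(\lim_{t\nearrow T_l\wedge T_r}\X_t=r)=\P_x(\lim_{t\nearrow T_r}\X_t=r)$, the last equality using $T_l=\infty$ $\P_x$-a.s. Since path continuity gives $\{T_r<\infty\}\subset\{\lim_{t\nearrow T_r}\X_t=r\}$, the two events coincide up to a $\P_x$-null set, so their set-theoretic difference---which is precisely the event in the statement---has $\P_x$-measure zero.

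The main obstacle is essentially the bookkeeping imposed by the four sub-cases (reflecting vs.\ absorbing $l$ in the accessible case; $\s(l)$ finite vs.\ infinite in the inaccessible case); once the case split is in place, each branch is a short application of \eqref{eq:260223a1} together with Lemma~\ref{lem: finiteness hitting times} or Lemma~\ref{lem:200223a2}, and in particular no appeal to the full strength of \cite[Theorem 33.15]{kallenberg} is needed.
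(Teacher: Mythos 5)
Your proof is correct, and it is genuinely different from what the paper does: the paper gives no argument at all for this lemma, deriving it instead as a consequence of \cite[Theorem~33.15]{kallenberg} (a general structural result on boundary behavior of diffusions, of which the lemma is a small corollary). Your route is a self-contained case analysis using only the paper's own toolkit: Lemma~\ref{lem: finiteness hitting times} disposes of the case where $l$ is accessible (reflecting $l$ forces $\E_x[T_r]<\infty$; absorbing $l$ traps the path at $l\ne r$ on $\{T_l<T_r\}$ and forces $T_r<\infty$ on the complement via finiteness of $\E_x[T_l\wedge T_r]$), while for inaccessible $l$ the monotone limit $\{T_r<T_{a_n}\}\nearrow\{T_r<\infty\}$ combined with \eqref{eq:260223a1} identifies $\P_x(T_r<\infty)$ with either $1$ (when $\s(l)=-\infty$) or with $\P_x(\lim_{t\nearrow T_r}\X_t=r)$ via Lemma~\ref{lem:200223a2}(iv) (when $\s(l)>-\infty$), and the inclusion $\{T_r<\infty\}\subset\{\lim_{t\nearrow T_r}\X_t=r\}$ then forces the difference event to be null. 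All steps check out (the pathwise argument that $T_{a_n}\nearrow\infty$ when $l\notin J$ is sound, since a continuous path on a compact time interval attains its infimum in $J$). What the citation buys is brevity and extra information (Kallenberg's theorem describes the full boundary behavior); what your argument buys is that the appendix becomes self-contained, at the cost of the four-way bookkeeping you describe. The only point worth making explicit is the standard fact that an absorbing accessible boundary is a trap ($\X_t\equiv l$ for $t\ge T_l$), which the paper uses throughout but never formally isolates.
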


We recall that a diffusion $(J\ni x\mapsto\P_x)$ is called \emph{recurrent} if $\P_x(T_y<\infty)=1$ for all $x,y\in J$.
The following characterization of recurrence is a consequence of
Lemmata \ref{lem: finiteness hitting times} and~\ref{lem:200223a2}.

\begin{lemma}\label{lem:200223a1}
Diffusion $(J\ni x\mapsto\P_x)$ is recurrent if and only if each boundary point $b\in\{l,r\}$ satisfies one of the following:
\begin{enumerate}
\item[\textup{(i)}]
$|\s(b)|=\infty$,

\item[\textup{(ii)}]
$b$ is reflecting.
\end{enumerate}
\end{lemma}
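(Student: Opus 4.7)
The plan is to prove both directions by case analysis, leveraging Lemma~\ref{lem:200223a2} to control the limiting behavior of paths in $J^\circ$ and Lemma~\ref{lem: finiteness hitting times} to control hitting times when a boundary is reflecting. Throughout, I will use that accessibility of a boundary $b$ forces $|\s(b)|<\infty$ (by \eqref{eq:101022a1}), so conditions (i) and (ii) effectively say: each boundary is either inaccessible with infinite scale, or accessible and reflecting. The case (accessible and absorbing) and the case (inaccessible with finite scale) are the two ``bad'' configurations to rule out.

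For the sufficiency direction, I will fix $x,y\in J$ (WLOG $x<y$) and argue $\P_x(T_y<\infty)=1$ by splitting on boundary types. If the right boundary $r$ is reflecting, Lemma~\ref{lem: finiteness hitting times}(ii)/(iii) gives $\E_x[T_r]<\infty$, which covers hitting any $y\in J$ with $y\ge x$ by path continuity; the mirror argument with Lemma~\ref{lem: finiteness hitting times}(iii) handles hitting points below~$x$ when $l$ is reflecting. If instead $r$ is inaccessible, then (i) forces $\s(r)=\infty$, and Lemma~\ref{lem:200223a2} applies: when $\s(l)=-\infty$ too, part~(i) gives $\limsup\X_t=r$ so every level in $J^\circ$ is hit; when $\s(l)>-\infty$, then $l$ must be accessible and (by~(ii)) reflecting, and part~(iii) gives $\lim_{t\nearrow T_l\wedge T_r}\X_t=r$ which, combined with Lemma~\ref{lem:020323a2}, forces $T_r<\infty$ almost surely on that convergence event. (The case where $x$ is itself a reflecting boundary reduces to the interior case via Lemma~\ref{lem: exit immediately} and the Markov property.)

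For the necessity direction, I will argue the contrapositive: if some boundary $b\in\{l,r\}$ violates both (i) and~(ii), recurrence fails. WLOG $b=l$. In the accessible-and-not-reflecting case, $l$ is absorbing, so $\P_l$-a.s.\ the process is constant at $l$, giving $\P_l(T_y<\infty)=0$ for any $y\in J^\circ$ and contradicting recurrence. In the inaccessible case with $|\s(l)|<\infty$, I will pick $x\in J^\circ$ and $c\in J^\circ$ with $x<c$, apply \eqref{eq:260223a1} on $(a,c)$ for $a\searrow l$ to get $\P_x(T_c<T_a)=\frac{\s(x)-\s(a)}{\s(c)-\s(a)}$, and pass to the limit: the key step is to show $T_a\uparrow\infty$ as $a\searrow l$ when $l\notin J$, via the continuity argument that if $\tau\triangleq\lim_a T_a$ were finite then $\X_\tau=\lim_a\X_{T_a}=l\notin J$. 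This yields $\P_x(T_c<\infty)=\frac{\s(x)-\s(l)}{\s(c)-\s(l)}<1$, contradicting recurrence.

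The main obstacle will be neither deep nor technical but organisational: keeping the case split clean across the four pairs $(\s(l),\s(r))\in\{-\infty\}\times\{l\text{ inacc.}\}$ etc., and correctly invoking Lemma~\ref{lem:020323a2} to upgrade the almost-sure convergence $\X_t\to b$ provided by Lemma~\ref{lem:200223a2} to the almost-sure finiteness of $T_b$ when $b$ is accessible. The slightly subtle calculational point is the passage $T_a\uparrow\infty$ for inaccessible $l$, but this is immediate from path continuity and $l\notin J$.
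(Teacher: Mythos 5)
Your overall strategy is sound and is essentially the one the paper intends (the paper gives no detailed proof, merely citing Lemmata \ref{lem: finiteness hitting times} and \ref{lem:200223a2}); in particular, your necessity argument via \eqref{eq:260223a1}, the limit $T_a\uparrow\infty$ as $a\searrow l$ for an inaccessible $l$, and the resulting bound $\P_x(T_c<\infty)=(\s(x)-\s(l))/(\s(c)-\s(l))<1$ are all correct, as is the absorbing case.

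There is, however, a concrete error in the sufficiency direction: you have the hypotheses of Lemma \ref{lem: finiteness hitting times} backwards. Part (ii) gives finite expected \emph{upward} hitting times under the assumption that the \emph{left} boundary is reflecting, and part (iii) gives finite expected \emph{downward} hitting times under the assumption that the \emph{right} boundary is reflecting. Neither yields your claim that ``$r$ reflecting $\Rightarrow \E_x[T_r]<\infty$''; indeed that claim is false (reflected Brownian motion on $(-\infty,0]$ has $\E_x[T_0]=\infty$ for $x<0$, even though $0$ is instantaneously reflecting). The step is repairable with tools you already deploy in the inaccessible branch: to hit $y>x$ when $r$ is reflecting, split on the \emph{other} boundary. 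If $l$ is reflecting, Lemma \ref{lem: finiteness hitting times}(ii) gives $\E_x[T_y]<\infty$. If $l$ is not reflecting, the standing hypothesis forces $\s(l)=-\infty$, while accessibility of $r$ forces $\s(r)<\infty$ by \eqref{eq:101022a1}; then Lemma \ref{lem:200223a2}(iii) gives $\lim_{t\nearrow T_l\wedge T_r}\X_t=r$ a.s.\ and Lemma \ref{lem:020323a2} upgrades this to $\P_x(T_r<\infty)=1$, whence $T_y<\infty$ a.s.\ for all $y\in(x,r]$ by path continuity. The mirror statement for hitting points below $x$ needs $r$ reflecting (for Lemma \ref{lem: finiteness hitting times}(iii)) or $\s(r)=\infty$ (for Lemma \ref{lem:200223a2}(i)--(ii)), not ``$l$ reflecting'' as you wrote. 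With this correction the case analysis closes.
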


The following version of the {\em ratio ergodic theorem} follows from \cite[Theorem 33.14]{kallenberg} and Lemma~\ref{lem: diff homo}.

\begin{lemma} \label{lem: ratio ergodic thm}
Let \((J\ni x \mapsto \P_x)\) be recurrent.
Then, for any Borel functions \(f, g \colon J \to \mathbb{R}_+\) with \(\int f\, d \m < \infty\) and \(\int g \,d \m > 0\), we have 
	\[
	\lim_{t \to \infty} \frac{\int_0^t f (\X_s) \,ds}{\int_0^t g (\X_s) \,ds} = \frac{\int f\, d \m}{\int g \,d \m} \ \ \P_x\text{-a.s. for all } x \in J.
	\]
\end{lemma}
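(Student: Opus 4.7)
The plan is to reduce to the natural-scale case via Lemma \ref{lem: diff homo} and then invoke the cited result from Kallenberg directly. Let $\phi \triangleq \s$, which is a homeomorphism from $J$ onto an interval $I \subset \mathbb{R}$. By Lemma \ref{lem: diff homo}, the pushforward $(I \ni y \mapsto \P_{\s^{-1}(y)} \circ \s(\X)^{-1})$ is a regular diffusion on natural scale with speed measure $\widetilde{\m} \triangleq \m \circ \s^{-1}$. Recurrence is preserved under this change of space: indeed, the hitting times $T_y$ of $\s(\X)$ for points $y \in I$ correspond to the hitting times $T_{\s^{-1}(y)}$ of $\X$, and recurrence of the original diffusion yields $\P_x(T_{\s^{-1}(y)} < \infty) = 1$ for all $x \in J$, $y \in I$.

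Next I would apply the ratio ergodic theorem for recurrent one-dimensional diffusions on natural scale (which is exactly the content of \cite[Theorem 33.14]{kallenberg}) to the transformed diffusion. For Borel functions $\tilde f, \tilde g \colon I \to \mathbb{R}_+$ with $\int \tilde f\, d\widetilde \m < \infty$ and $\int \tilde g\, d\widetilde\m > 0$, this theorem yields, $\P_x$-a.s.\ for each $x \in J$,
\[
\lim_{t \to \infty} \frac{\int_0^t \tilde f(\s(\X_s))\, ds}{\int_0^t \tilde g(\s(\X_s))\, ds} = \frac{\int \tilde f\, d\widetilde\m}{\int \tilde g\, d\widetilde\m}.
\]
Applying this with $\tilde f \triangleq f \circ \s^{-1}$ and $\tilde g \triangleq g \circ \s^{-1}$, the integrability assumptions $\int f\, d\m < \infty$ and $\int g\, d\m > 0$ translate, by the change of variables formula, into $\int \tilde f\, d\widetilde\m = \int f\, d\m < \infty$ and $\int \tilde g\, d\widetilde\m = \int g\, d\m > 0$.

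Finally, since $\tilde f \circ \s = f$ and $\tilde g \circ \s = g$ pointwise on $J$, the numerator and denominator of the displayed limit coincide with $\int_0^t f(\X_s)\, ds$ and $\int_0^t g(\X_s)\, ds$ respectively, so the claim follows. The only genuinely delicate point in this scheme is checking that Kallenberg's formulation of the ratio ergodic theorem covers the case where the denominator function need not be $\m$-integrable (only $\int g\, d\m > 0$ is required); this is standard for Hopf-type ratio ergodic theorems and is precisely what is stated in \cite[Theorem 33.14]{kallenberg}, so no further argument is needed.
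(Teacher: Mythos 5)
Your proof is correct and follows exactly the route the paper takes: the paper's entire justification of Lemma~\ref{lem: ratio ergodic thm} is that it follows from \cite[Theorem 33.14]{kallenberg} together with Lemma~\ref{lem: diff homo}, i.e., reduction to natural scale by the scale function plus the cited ratio ergodic theorem. Your write-up merely makes explicit the routine transfer of recurrence and of the integrability hypotheses under the space transformation, which is all that is needed.
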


The law of a reflecting Brownian motion coincides with the law of the absolute value of a standard Brownian motion. More generally, for any diffusion with
an (instantaneously or slowly) reflecting boundary point
one can find a diffusion on an extended state space with inaccessible or absorbing boundaries and a Lipschitz function \(\f\) such that in law the original one is obtained via space transformation by \(\f\) from the extended one.
In the following we explain this fact for the cases \(J = [0, \infty)\) (Lemma~\ref{lem: refl})
and $J=[0,1]$ where both $0$ and $1$ are reflecting boundaries (Lemma~\ref{lem:200223a3}).
Lemmata \ref{lem: refl} and~\ref{lem:200223a3} follow from the proof of
\cite[Proposition VII.3.10]{RY}.
We also refer to \cite[Section~6]{ankirchner} for a similar discussion for diffusions on natural scale.

\begin{lemma} \label{lem: refl}
	Suppose that \(J = \mathbb{R}_+\), \(\s (0) = 0\) and \(\m (\{0\}) < \infty\). Define scale function \(\s^\leftrightarrow\colon \mathbb{R} \to \mathbb{R}\) by 
	\[
	\s^\leftrightarrow (x) \triangleq \begin{cases}\s(x),&x \geq 0,\\- \s (- x),& x < 0,\end{cases}
	\]
	and speed measure \(\m^\leftrightarrow\) on \((\mathbb{R}, \mathcal{B}(\mathbb{R}))\) by
\begin{equation}\label{eq:101022b1}
	\m^\leftrightarrow(A) = \begin{cases} \m (A),& A \in \mathcal{B}((0,\infty)),\\
	\m(- A), & A \in \mathcal{B}((- \infty, 0)) \quad (- A \triangleq \{x \in (0, \infty) \colon - x \in A\}),\\
	2 \m (\{0\}),& A = \{0\}.
	\end{cases}
\end{equation}
	Let \((\mathbb{R} \ni x \mapsto \P^\leftrightarrow_x)\) be regular diffusion with characteristics \((\s^\leftrightarrow, \m^\leftrightarrow)\). Then, 
	\[
	\P_x = \P^\leftrightarrow_x \circ | \X |^{-1}, \quad x \in \mathbb{R}_+ = J.
	\]
\end{lemma}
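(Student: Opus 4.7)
The strategy is to verify the martingale problem of Theorem~\ref{theo: martingale problem} for the candidate measure $\mathds{K}_x \triangleq \P^\leftrightarrow_x \circ |\X|^{-1}$, $x\in\mathbb R_+$. Since the claim is invariant under homeomorphic space transformations (Lemma~\ref{lem: diff homo}), and since $|\s^\leftrightarrow(y)|=\s(|y|)$ for every $y\in\mathbb R$, I would first apply the homeomorphism $\s$ (resp.\ $\s^\leftrightarrow$) to reduce to the case where both $(x\mapsto\P_x)$ and $(x\mapsto\P^\leftrightarrow_x)$ are on natural scale, i.e., $\s=\on{Id}$ on $[0,\infty)$ and $\s^\leftrightarrow=\on{Id}$ on $\mathbb R$. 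Under this reduction, the characterisations of the generators from Lemma~\ref{lem: generator}\,(i) and~(iv) become directly available.

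With this reduction in place, fix $x\in\mathbb R_+$ and note that $\mathds{K}_x(\X_0=x)=1$ and $\mathds{K}_x(C(\mathbb R_+;\mathbb R_+))=1$ are immediate. So by Theorem~\ref{theo: martingale problem} it suffices to show that, for every $f\in\Delta$ described by Lemma~\ref{lem: generator}\,(iv) via some $g=\Gamma f\in C_b([0,\infty);\mathbb R)$, the process $f(\X)-f(x)-\int_0^{\cdot}g(\X_s)\,ds$ is a local $\mathds{K}_x$-martingale. Pushing this back through the map $|\cdot|$, this amounts to showing that the even extensions $\tilde f(y)\triangleq f(|y|)$, $\tilde g(y)\triangleq g(|y|)$, $y\in\mathbb R$, satisfy $\tilde f\in\Delta^1$ with $\Gamma^1\tilde f=\tilde g$, where $(\Gamma^1,\Delta^1)$ is the generator of $(x\mapsto\P^\leftrightarrow_x)$ from Lemma~\ref{lem: generator}\,(i). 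Indeed, if this is established, then $\tilde f(\X)-\tilde f(x)-\int_0^\cdot\tilde g(\X_s)\,ds$ is a local $\P^\leftrightarrow_x$-martingale, and transporting this via $|\X|$ gives exactly the desired local martingale property under $\mathds{K}_x$.

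The key verification is therefore the identity $d\tilde f'_+=2\tilde g\,d\m^\leftrightarrow$ on $\mathbb R$, together with $\tilde f$ being a difference of two convex functions (equivalently, $\tilde f'_+$ being right-continuous with locally finite variation). On $(0,\infty)$ the identity restricts to $df'_+=2g\,d\m$, which is given; on $(-\infty,0)$ it follows by direct substitution $\tilde f'_+(y)=-f'_+(-y)$ and the symmetry of $\m^\leftrightarrow$ encoded in~\eqref{eq:101022b1}; the interesting case is the point $\{0\}$, where
\[
\tilde f'_+(0+)-\tilde f'_+(0-)
=f'_+(0)-\bigl(-f'_+(0)\bigr)
=2f'_+(0)
=4g(0)\m(\{0\}),
\]
using the boundary condition $f'_+(0)=2g(0)\m(\{0\})$ from Lemma~\ref{lem: generator}\,(iv). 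On the other hand, $2\tilde g(0)\m^\leftrightarrow(\{0\})=2g(0)\cdot 2\m(\{0\})=4g(0)\m(\{0\})$, and the two sides coincide. Continuity of $\tilde g$ and boundedness of $\tilde f$ are immediate from those of $g$ and $f$.

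I expect the matching of the atom at the origin above to be the main obstacle, and it is precisely the reason why $\m^\leftrightarrow(\{0\})$ is defined to be $2\m(\{0\})$: the factor~$2$ on the extended speed measure exactly absorbs the ``doubling'' of the jump of $\tilde f'_+$ that arises from the even extension. A minor additional subtlety is to check that, after the natural-scale reduction, the case $\m(\{0\})=0$ (instantaneous reflection) is handled correctly, i.e., $\tilde f'_+$ is continuous at $0$; but this is automatic since then $f'_+(0)=0$. Once the identification $\tilde f\in\Delta^1$ with $\Gamma^1\tilde f=\tilde g$ is in hand, Theorem~\ref{theo: martingale problem} yields $\mathds K_x=\P_x$, completing the proof.
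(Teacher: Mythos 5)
Your argument is correct, but it is a genuinely different route from the paper's: the paper gives no self-contained proof of Lemma~\ref{lem: refl} at all, merely deferring to the proof of Proposition~VII.3.10 in Revuz--Yor and to Section~6 of the Ankirchner--Kruse--Urusov paper, whereas you verify the martingale problem of Theorem~\ref{theo: martingale problem} directly using the generator descriptions of Lemma~\ref{lem: generator}. Your identification of the atom at the origin as the crux is exactly right, and the computation $\tilde f'_+(0)-\tilde f'_+(0-)=2f'_+(0)=4g(0)\m(\{0\})=2\tilde g(0)\m^\leftrightarrow(\{0\})$ is the precise reason for the factor $2$ in $\m^\leftrightarrow(\{0\})$; this makes the lemma considerably more transparent than the bare citation. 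Three small points deserve attention if you write this out in full. First, after the natural-scale reduction the state spaces become $[0,b)$ and $(-b,b)$ with $b=\s(\infty)$ possibly finite, so you need the variants of Lemma~\ref{lem: generator}\,(i) and~(iv) for these intervals rather than the literal statements for $\mathbb R$ and $[0,\infty)$; the paper asserts these are analogous, but you should say so. Second, for $y<0$ the right-hand derivative of the even extension is $\tilde f'_+(y)=-f'_-(-y)$, not $-f'_+(-y)$; the two agree off a countable set, so the measure identity $d\tilde f'_+=2\tilde g\,d\m^\leftrightarrow$ is unaffected, but the distinction matters when verifying that $\tilde f'_+$ is right-continuous with locally finite variation, i.e., that $\tilde f$ is genuinely a difference of convex functions on all of $\mathbb R$. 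Third, the transfer of the martingale property from $\P^\leftrightarrow_x$ to $\mathds K_x$ requires noting that $\tilde f(\X)-\tilde f(x)-\int_0^\cdot\tilde g(\X_s)\,ds$ is adapted to the filtration generated by $|\X|$ and invoking the tower property before pushing forward (compare the argument in Lemma~\ref{lem: smg bigger filtration}); this is routine but should not be left entirely implicit.
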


We emphasize that in the situation of Lemma~\ref{lem: refl}
formula~\eqref{eq:101022b1} always defines a valid speed measure on $(\mathbb R,\mathcal B(\mathbb R))$ (recall Remark~\ref{rem:170322a1}).
Indeed, only the fact that $\m^\leftrightarrow$ is locally finite on $\mathbb R$ requires an explanation,
and this is a direct consequence
of \eqref{eq:101022a2} or~\eqref{eq:101022a3}
(notice that boundary point $0$ is regular due to the assumption $\m(\{0\})<\infty$;
to this end, recall the convention in the second paragraph before Remark~\ref{rem:170322a1}
that $\m(\{b\})=\infty$ whenever $b$ is an exit boundary).

\begin{lemma}\label{lem:200223a3}
Suppose that \(J = [0,1]\), \(\s (0) = 0\), \(\m (\{0\}) < \infty\) and $\m(\{1\})<\infty$.
Define scale function $\overline\s\colon\mathbb R\to\mathbb R$ by
\begin{itemize}
\item
$\overline\s(x)\triangleq\s(x)$, $x\in[0,1]$,

\item
$\overline\s(x)\triangleq-\s(-x)$, $x\in[-1,0]$,

\item
$\overline\s(x+2k)\triangleq\overline\s(x)+2k\overline\s(1)$, $k\in\mathbb Z\setminus\{0\}$, $x\in[-1,1]$,
\end{itemize}
and speed measure $\overline\m$ on $(\mathbb R,\mathcal B(\mathbb R))$ by
\begin{itemize}
\item
$\overline\m(A)=\overline\m(-A)\triangleq\m(A)$, $A\in\mathcal B((0,1))$,

\item
$\overline\m(A+2k)\triangleq\overline\m(A)$, $k\in\mathbb Z\setminus\{0\}$, $A\in\mathcal B((-1,0)\cup(0,1))$,

\item
$\overline\m(\{2k\})=2\m(\{0\})$, $k\in\mathbb Z$,

\item
$\overline\m(\{2k+1\})=2\m(\{1\})$, $k\in\mathbb Z$.
\end{itemize}
Let \((\mathbb{R} \ni x \mapsto \QQ_x)\) be regular diffusion with characteristics \((\overline\s,\overline\m)\). Then,
\[
\P_x = \QQ_x \circ\f(\X)^{-1}, \quad x \in [0,1] = J,
\]
where $\f\colon\mathbb R\to[0,1]$ is the periodic function with period $2$ satisfying $\f(x)=|x|$, $x\in[-1,1]$.
\end{lemma}

It is necessary to remark that in the situation of Lemma~\ref{lem:200223a3} both $\infty$ and $-\infty$ are inaccessible boundaries for $(x\mapsto\QQ_x)$
(that is, the state space of that diffusion is indeed $\mathbb R$), as $|\overline\s(\pm\infty)|=\infty$.
Furthermore, as both $0$ and $1$ are regular boundaries for $(x\mapsto\P_x)$,
the measure $\overline\m$ is locally finite on $\mathbb R$ due to \eqref{eq:101022a2} or~\eqref{eq:101022a3}, which is necessary for $\overline\m$ to be a valid speed measure
(cf.\ with the discussion after Lemma~\ref{lem: refl}).

\smallskip
The next lemma explains the general structure of diffusions on natural scale.
It is a restatement of \cite[Theorem 33.9]{kallenberg}.

\begin{lemma} \label{lem: diff time changed BM}
	Suppose that \(\s = \on{Id}\) and take \(x \in J\).
	\begin{enumerate}
		\item[\textup{(i)}]
		Possibly on an extension of the underlying filtered probability space, there exists a Brownian motion \(W\) starting in \(x\) such that \(\P_{x}\)-a.s.
		\(
		\X = W_{\gamma}, 
		\)
		where
		\[
		\gamma_t \triangleq \inf (s \geq 0 \colon A_s > t), \quad A_t \triangleq \int_J L^y_t (W) \m (dy), \quad t \in \mathbb{R}_+, 
		\]
		and \(\{L^y_t (W) \colon (t,y) \in \mathbb{R}_+ \times \mathbb{R}\}\) is the local time process of \(W\).
		\item[\textup{(ii)}]
		The law of any time-changed Brownian motion \(W_{\gamma}\) as in \textup{(i)} above coincides with \(\P_{x}\).
	\end{enumerate}
\end{lemma}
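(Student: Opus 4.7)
The plan splits naturally along the two assertions. For (ii), I would verify that $Y = W_\gamma$ satisfies the martingale problem from Theorem~\ref{theo: martingale problem} with the generator $(\Gamma,\Delta)$ explicitly described in Lemma~\ref{lem: generator}, and invoke uniqueness to conclude $\on{Law}(Y) = \P_x$. Fix $f \in \Delta$ and let $g = \Gamma f$, so that $df'_+ = 2 g\, d\m$ on $J^\circ$ (with the appropriate boundary condition from Lemma~\ref{lem: generator} in the reflecting case). The generalized It\^o formula (Lemma~\ref{lem: occ smg}) applied to $f(W)$ gives
\[
f(W_t) = f(x) + \int_0^t f'_-(W_s)\, dW_s + \tfrac{1}{2}\int L_t^y(W)\, df'_+(y) = f(x) + \int_0^t f'_-(W_s)\, dW_s + \int g(y)\, L_t^y(W)\, \m(dy).
\]
Substituting $t \mapsto \gamma_t$ and using Lemma~\ref{lem: change of time} to see that the time-changed stochastic integral remains a local martingale in the time-changed filtration, only the drift term requires identification. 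A change of variables $s = A_u$ combined with the support property of $d_u L^y_u(W)$ on $\{W_u = y\}$ (again Lemma~\ref{lem: occ smg}) yields
\[
\int g(y)\, L^y_{\gamma_t}(W)\, \m(dy) = \int_0^{\gamma_t} g(W_u)\, dA_u = \int_0^t g(W_{\gamma_s})\, ds = \int_0^t g(Y_s)\, ds,
\]
so $f(Y) - f(x) - \int_0^\cdot g(Y_s)\, ds$ is a local martingale, and Theorem~\ref{theo: martingale problem} closes~(ii).

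For (i), I would work on the canonical space and exploit Lemma~\ref{lem: scale fct}: since $\s = \on{Id}$, the stopped coordinate process $\X_{\cdot \wedge \tau}$, with $\tau \triangleq T_l \wedge T_r$, is a continuous local $\P_x$-martingale. By the Dambis--Dubins--Schwarz theorem, possibly after enlarging the probability space, there exists a Brownian motion $W$ starting at $x$ such that $\X_{t \wedge \tau} = W_{\langle \X, \X\rangle_{t \wedge \tau}}$. It remains to identify $\langle \X, \X\rangle$ with the right-inverse of $A$. Combining the occupation time formula for the diffusion $\X$ (Lemma~\ref{lem: occ formula diff}), which in natural scale amounts to $t\wedge\tau = \int L^y_{t\wedge\tau}(\X)\,\m(dy)$, with the transformation rule for local times under a continuous time change, $L^y_{t\wedge\tau}(\X) = L^y_{\langle\X,\X\rangle_{t\wedge\tau}}(W)$, one gets
\[
t\wedge\tau = \int L^y_{\langle \X,\X\rangle_{t\wedge\tau}}(W)\, \m(dy) = A_{\langle\X,\X\rangle_{t\wedge\tau}},
\]
hence $\langle\X,\X\rangle_{t\wedge\tau} = \gamma_{t\wedge\tau}$, which gives the announced representation $\X = W_\gamma$ on $[0,\tau]$.

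The main obstacle is honest handling of accessible boundaries, since the DDS representation is cleanly available only up to $\tau$. When $l$ and/or $r$ are absorbing or exit, $\X$ freezes after $\tau$ and no further argument is needed, but one should also check from \eqref{eq:090322a2} that $A$ is strictly increasing on $J^\circ$, so that $\gamma$ is truly the right-inverse of $A$ and the representation extends unambiguously to all of $\mathbb{R}_+$. When $l$ and/or $r$ are (instantaneously or slowly) reflecting, I would reduce to the previous situation via the symmetrization/unfolding constructions from Lemmata \ref{lem: refl} and~\ref{lem:200223a3}: the reflecting diffusion is realized as $\f(Y^\leftrightarrow)$ for a diffusion $Y^\leftrightarrow$ on an extended state space without reflecting boundaries, and under this symmetrization the local time of the driving Brownian motion transforms consistently with the symmetrized speed measure~\eqref{eq:101022b1}, so the identity $\X = W_\gamma$ is preserved. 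Finally, (ii) applied to the resulting $W$ gives the converse statement and, in particular, uniqueness in law of the representation obtained in~(i).
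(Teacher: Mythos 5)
The paper does not actually prove this lemma --- it is stated as a restatement of \cite[Theorem~33.9]{kallenberg} --- so your from-scratch argument is necessarily a different route. For an open state space, or for accessible boundaries that are absorbing (where $\m(\{b\})=\infty$ forces $A$ to jump to $+\infty$ at the hitting time), your plan is sound: DDS plus the occupation time formula identifies $\langle\X,\X\rangle$ with $\gamma$ in part~(i), and the martingale-problem computation closes part~(ii), exactly in the spirit of the arguments the paper itself runs in the proof of Theorem~\ref{theo: NFLVR finite tiome horizon} and in Case~5 of Section~\ref{sec: pf}.

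The gap is the reflecting case, in both halves. For~(ii): when, say, $J=[0,\infty)$ with $\m(\{0\})<\infty$, the clock $A_t=\int_J L^y_t(W)\,\m(dy)$ does not advance during the negative excursions of $W$, so $\gamma$ \emph{jumps over} them (as the paper notes after the lemma). The stochastic integral $\int_0^\cdot f'_-(W_s)\,dW_s$ is \emph{not} constant on the jump intervals $[\gamma_{t-},\gamma_t]$, so the hypothesis of Lemma~\ref{lem: change of time}(ii) fails and the time-changed integral is not, by that lemma, a local martingale. Moreover, to apply the generalized It\^o formula at all you must extend $f$ beyond $J$, and then the drift term is $\tfrac12\int_{\mathbb R}L^y_{\gamma_t}(W)\,df'_+(y)$, which picks up the local time of $W$ on $(-\infty,0)$; this is \emph{not} equal to $\int_J g(y)L^y_{\gamma_t}(W)\,\m(dy)=\int_0^t g(Y_s)\,ds$ --- your displayed chain of equalities silently restricts the integral to $J$. (The two errors in fact cancel --- the increments of the stochastic integral over a negative excursion equal minus the extension's drift accumulated there --- but exhibiting this cancellation, or replacing the time-change lemma by optional sampling for the u.i.\ martingale $M_{\gamma_t}$, is precisely the missing work.) For~(i): the symmetrization of Lemma~\ref{lem: refl} delivers $\X=|W|_{\gamma'}$ with a clock built from the local time of the \emph{reflected} process and the symmetrized measure $\m^\leftrightarrow$, i.e., the representation of Lemma~\ref{lem: time change refl BM}, not the claimed $\X=W_\gamma$ with $A_t=\int_J L^y_t(W)\,\m(dy)$ computed from the local time of $W$ itself over $J$ only; since $L^y(|W|)=L^y(W)+L^{-y}(W)$, the two clocks genuinely differ and your assertion that ``the identity is preserved'' is exactly what must be proved. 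A clean repair is to first establish~(ii) in all cases and then obtain~(i) by the transfer theorem from the distributional identity $W_\gamma\overset{d}{=}\X$; alternatively, follow the paper and simply invoke \cite[Theorem~33.9]{kallenberg}.
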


Notice that,
as a.s.\ $L_\infty^y(W)=\infty$ for all $y\in\mathbb R$ and, hence, $A_\infty=\infty$,
it follows that the time-change $\gamma$ in Lemma~\ref{lem: diff time changed BM} is always finite.
Further notice that $A$ can have intervals of constancy, i.e., the time-change $\gamma$ might have jumps.
For instance, think about $J=[0,\infty)$ with $\m(\{0\})<\infty$ (in particular, $0$ is a regular boundary, i.e., also $\m((0,1))<\infty$, see~\eqref{eq:101022a3}).
Then, $A$ is constant (and finite) during the negative excursions of $W$, i.e., $\gamma$ overjumps the negative excursions of~$W$.

\smallskip
By virtue of Lemma~\ref{lem: diff homo} any diffusion can be brought to natural scale (i.e., \(\s = \on{Id})\) by a homeomorphic space transformation via its scale function. Thus,
Lemma~\ref{lem: diff time changed BM} shows that any regular diffusion is a space and time-changed Brownian motion. In fact, any diffusion on natural scale is a semimartingale.\footnote{We stress that the semimartingale property might be lost via the space transformation by the scale function. For instance, the square root of reflecting Brownian motion (which is a semimartingale and a diffusion on natural scale) is not a semimartingale. This example is known under the name \emph{Yor's example}.}
The next lemma formalizes this fact, providing the \emph{occupation time formula for diffusions}. In particular, it emphasizes the difference between the \emph{semimartingale local time} and the \emph{diffusion local time}.\footnote{In this connection, we thank Zhesheng Liu and Mihail Zervos for bringing formula~\eqref{eq:241124a1} to our attention.}

\begin{lemma}\label{lem: occ formula diff}
Suppose that \(\s = \on{Id}\) and take \(x \in J\).

\textup{(i)}
The process \(\X\) is a continuous \(\P_x\)-semimartingale.
In particular, it has continuous in time and c\`adl\`ag in space
\emph{semimartingale local time} process
$L(\X)=\{L_t^y(\X):(t,y)\in\mathbb R_+\times\mathbb R\}$
(see Lemma~\ref{lem: occ smg} below for a recap).

\textup{(ii)}
For the \emph{diffusion local time process}
$\widehat L(\X)=\{\widehat L_t^y(\X):(t,y)\in\mathbb R_+\times J\}$
defined as
\begin{equation}\label{eq:241124a1}
\widehat L_t^y(\X)\triangleq
\begin{cases}
L_t^y(\X),&(t,y)\in\mathbb R_+\times(J\setminus\{r\}),
\\
L_t^{y-}(\X),&(t,y)\in\mathbb R_+\times\{r\}\text{ (in case }r\in J)
\end{cases}
\end{equation}
(recall that $r=\sup J$),
$\P_x$-a.s. for all $(t,y)\in\mathbb R_+\times J$,
\begin{equation}\label{eq:260223a4}
\widehat L^y_t(\X)=L^y_{\gamma_t} (W),
\end{equation}
where we use the notation from Lemma~\ref{lem: diff time changed BM}.
Moreover, \(\P_x\)-a.s.\
the diffusion local time $\widehat L(\X)$ is
jointly continuous in time and space
on \(\mathbb{R}_+ \times J\),
and \(\P_x\)-a.s.\ we have
	\begin{equation}\label{eq:020423a5}
	\int_0^t f (\X_s) ds = \int_J f (y) \widehat L^y_t (\X) \m (dy)
	\end{equation}
simultaneously for all \(t \in \mathbb{R}_+\) and all Borel functions
\(f \colon J \to [0,\infty]\)
with \(f (b) = 0\) for all boundary points \(b \in J \setminus J^\circ\) with \(\m (\{b\}) = \infty\).
\end{lemma}

\begin{remark}\label{rem:241124a1}
(a)
We emphasize that the semimartingale local time is defined for semimartingales and for $(t,y)\in\mathbb R_+\times\mathbb R$.
On the other hand, the diffusion local time is defined for diffusions on natural scale and for $(t,y)\in\mathbb R_+\times J$.
Comparing them for levels $y\in J$, we see that, by~\eqref{eq:241124a1}, $L(\X)$ and $\widehat L(\X)$ can only differ at the level $y=r$.
To see the difference, the right boundary point \(r\) needs to be in \(J\) and therefore, as $\X$ is on natural scale, \(r\) is necessarily finite.
In that case, as $\X$ is $J$-valued and c\`adl\`ag in the space variable, we always have
$L_\cdot^r(\X)=0$, but $L_\cdot^{r-}(\X) > 0$ is also possible.
Thus, the only correction at the level $y=r$ performed in~\eqref{eq:241124a1} is needed to obtain the joint continuity of $\widehat L(\X)$ in time and space
(but of course on $\mathbb R_+\times J$ only; not on $\mathbb R_+\times\mathbb R$!).
To ease several references, we also note the following consequence of the joint continuity of $\widehat L(\X)$ on $\mathbb R_+\times J$:
$\P_x$-a.s. the semimartingale local time $L(\X)$ is jointly continuous in time and space on $\mathbb R_+\times J^\circ$.

(b)
Finally, we stress that, in the above generality, it is essential to have the diffusion local time $\widehat L(\X)$ in~\eqref{eq:020423a5} (with $L(\X)$ in place of $\widehat L(\X)$ the claim would be incorrect).
On the other hand,
\eqref{eq:020423a5} obviously implies that \(\P_x\)-a.s.\ we have
	\begin{equation}\label{eq:241124a2}
	\int_0^t f (\X_s) ds = \int_J f (y) L^y_t (\X) \m (dy)
	\end{equation}
simultaneously for all \(t \in \mathbb{R}_+\) and all Borel functions
\(f \colon J \to [0,\infty]\)
with \(f (b) = 0\) for all boundary points \(b \in J \setminus J^\circ\)
(cf. \cite[Theorem V.49.1]{RW2}).
\end{remark}

\begin{proof}[Proof of Lemma~\ref{lem: occ formula diff}]
	The semimartingale property follows from Lemma \ref{lem: diff time changed BM} and Lemma~\ref{lem: change of time} below.\footnote{We stress that,
	contrary to the semimartingale property,
	the (local) martingale property is in general not preserved by the change of time.
	Indeed, a reflected Brownian motion is a time-changed Brownian motion by Lemma \ref{lem: diff time changed BM} but it is not a (local) martingale.}
The representation~\eqref{eq:260223a4} of the diffusion local time and the occupation time formula~\eqref{eq:020423a5}
are restatements of \cite[Theorem~V.49.1]{RW2} modulo an obvious monotone class argument to extend the formula to non-negative unbounded functions and the extension to functions which vanish only at absorbing boundary points. The latter generalization follows from an inspection of the proof in \cite[Theorem V.49.1]{RW2}.
It remains to prove the joint continuity of $\widehat L(\X)$ in time and space on $\mathbb R_+\times J$.
To this end, we use~\eqref{eq:260223a4}.
First, we notice that jumps of the time-change $\gamma$ correspond to excursions of $W$ outside $J$, while the local time $L^z(W)$ at all levels $z\in J$ stays constant in time during these excursions.
Therefore, \(\widehat L(\X)\equiv L_{\gamma}(W)\) is \(\P_x\)-a.s. continuous in time. 
Furthermore, by virtue of \cite[Corollary VI.1.8]{RY}, \(L (W)\) is \(\P_x\)-a.s.\ H\"older continuous of any order \(\alpha \in (0, 1/2)\) in space uniformly on compact time intervals.
Hence, the same holds for $\widehat L(\X)$.
Together with the continuity in time, this implies that \(\widehat L(\X)\) is \(\P_x\)-a.s.\ jointly continuous in time and space on $\mathbb R_+\times J$.
	To be more precise, let \((t_n, x_n)_{n = 0}^\infty \subset \mathbb{R}_+ \times J\) be a sequence such that \((t_n, x_n) \to (t_0, x_0)\in \mathbb{R}_+ \times J\), $n\to\infty$, take some \(\alpha \in (0, 1/2)\) and let \(C > 0\) be such that 
	\[
	|\widehat L^{x}_{t_n} (\X) - \widehat L^y_{t_n} (\X)| \leq C |x - y|^\alpha
	\]
for all \(n \in \mathbb{Z}_+\) and all \(x,y \in J\)
(here the constant $C$ depends on $\omega$).
Then,
	\begin{align*}
	|\widehat L^{x_n}_{t_n} (\X) - \widehat L^{x_0}_{t_0} (\X)| &\leq |\widehat L^{x_n}_{t_n} (\X) - \widehat L^{x_0}_{t_n} (\X)| + |\widehat L^{x_0}_{t_n} (\X) - \widehat L^{x_0}_{t_0} (\X)| 
	\\&\leq C |x_n - x_0|^\alpha + |\widehat L^{x_0}_{t_n} (\X) - \widehat L^{x_0}_{t_0} (\X)|  \to 0
	\end{align*}
as \(n \to \infty\).
This completes the proof.
\end{proof}

	The next lemma
	is a restatement of \cite[V.47.23(ii)]{RW2}.
	\begin{lemma} \label{lem: compensation reflecting}
		Take \(J = [l, r)\), \(\s = \on{Id}\) and assume that \(l\) is a reflecting boundary point. Fix \(x \in J\) and let \(\{ \widehat{L}^y_t (\X) \colon (t, y) \in \mathbb{R}_+ \times J\}\) be the diffusion local time of \(\X\) under \(\P_{x}\) (see Lemma~\ref{lem: occ formula diff}).
		Then, \(\X - \frac{1}{2} \, \widehat{L}^{l} (\X)\) is a continuous local \(\P_{x}\)-martingale.
	\end{lemma}

\begin{lemma} \label{lem: pos LT}
Suppose that \(\s = \on{Id}\) and take \(x \in J\) with \(\m(\{x\}) < \infty\).
Let \(\{\widehat L^y_t(\X) \colon (t, y) \in \mathbb{R}_+ \times J\}\) be the diffusion local time of \(\X\) under \(\P_x\)
(see Lemma~\ref{lem: occ formula diff}).
Then, \(\P_x\)-a.s. \(\widehat L^x_t (\X) > 0\)
for all \(t > 0\).
\end{lemma}

\begin{proof}
	By Lemma~\ref{lem: occ formula diff}, using also its notation, \(\P_x\)-a.s. \(\widehat L(\X) = L_\gamma (W)\), where we recall that \(W\) is a Brownian motion starting in \(x\).
	Furthermore, as \(x\) is non-absorbing (which is the meaning of the assumption \(\m (\{x\}) < \infty\)), \(\P_x\)-a.s. \(\gamma_t > 0\) for all \(t > 0\), see Lemma~\ref{lem: diff time changed BM}~(i) or \cite[Section~2.9]{freedman}.
Finally, since a.s. \(L^x_t (W) > 0\) for all \(t > 0\) by \cite[Lemma~106, p.~146]{freedman}, the claim follows.
\end{proof}

\begin{lemma} \label{lem:090922a1}
Take \(x \in J \) with \(\m (\{x\}) < \infty\).
	Furthermore, take a Borel function \(f \colon J \to [0,\infty]\) and set
	\[
	A \triangleq \int_0^\cdot f (\X_s) ds.
	\]
	The following are equivalent:
	\begin{enumerate}
		\item[\textup{(i)}]
		There exists an open in $J$ neighborhood $I$ of $x$ such that
		\(\int_{I} f (y) \m (dy) < \infty\).
		
		\item[\textup{(ii)}]
		There exists a random time $\rho$ such that $\P_x$-a.s. $\rho>0$ and \(\P_x(A_\rho < \infty) > 0\).
		
		\item[\textup{(iii)}]
		There exists a stopping time $\tau$ such that $\P_x$-a.s. $\tau>0$ and~\(A_\tau < \infty\).
	\end{enumerate}
\end{lemma}

In relation with Lemma~\ref{lem:090922a1} we emphasize that, in~(i),
the neighborhood $I$ of $x$ should be \emph{open in $J$}.
In particular, if $x$ is left (resp., right) boundary point of $J$, then we search for $I$ of the form $[x,c)$ (resp., $(c,x]$) for some $c\in J^\circ$.

\begin{remark}
In an equivalent and condensed form, the claim in Lemma~\ref{lem:090922a1} can be restated as follows:
\begin{enumerate}
\item[-]
If (i) holds, then (iii) holds.

\item[-]
If (i) does not hold, then $\P_x$-a.s. we have $A_t=\infty$ for all $t>0$.
\end{enumerate}
\end{remark}

\begin{proof}[Proof of Lemma~\ref{lem:090922a1}]
By Lemmata \ref{lem: diff homo} and~\ref{lem: occ formula diff}, $\P_x$-a.s.\ it holds for all $t\in\mathbb R_+$
\begin{equation} \label{eq:090922a1}
\begin{split}
A_t=\int_0^t f(\X_u)\,du
&=
\int_0^t f\circ\s^{-1}(\s(\X_u))\,du
\\[1mm]
&=
\int_{\s(J)} f\circ\s^{-1}(z)
\widehat L_t^z(\s(\X))
\,\m\circ\s^{-1}(dz)
\\[1mm]
&=
\int_{J} f(y)
\widehat L_t^{\s(y)}(\s(\X))
\,\m(dy).
\end{split}
\end{equation}
Assume that (i) holds and let $I$ be an open in $J$ neighborhood of $x$ such that
\(\int_{I} f (y) \m (dy) < \infty\). Via shrinking, we can w.l.o.g.\ assume that \(I\) is relatively compact in \(J\).
Define the stopping time
$$
\tau \triangleq \inf( t\ge0 \colon \X_t\notin I )
$$
and notice that for \(\P_x\)-a.a. \(\omega \in \Omega\),
\(J\ni y\mapsto\widehat L_\tau^{\s(y)}(\s(\X)) (\omega)\)
is continuous with compact support, because it vanishes outside the relatively compact set \(I\).
Now, \eqref{eq:090922a1} implies that (iii) holds with the stopping time~$\tau$.
Statement~(iii), in turn, implies~(ii).

We finally show that (ii) implies~(i) or, equivalently, that the negation of~(i) implies the negation of~(ii).
So we assume that for any open in $J$ neighborhood $I$ of $x$ we have
\(\int_{I} f (y) \m (dy) = \infty\) and take any random time $\rho$ such that $\P_x$-a.s. $\rho>0$.
Thanks to Lemma~\ref{lem: pos LT}, we have \(\P_x\)-a.s.
$\widehat L_\rho^{\s(x)}(\s(\X))>0$.
Further, by continuity (Lemma~\ref{lem: occ formula diff}), for
$\P_x$-a.a. \(\omega \in \Omega\), the function
$J\ni y\mapsto\widehat L_\rho^{\s(y)}(\s(\X)) (\omega)$
is bounded away from zero in a small neighborhood of $x$ in $J$.
Then, \eqref{eq:090922a1} implies that $\P_x$-a.s.\ $A_\rho=\infty$.
This concludes the proof.
\end{proof}

The next lemma can be viewed as an extension of \cite[Theorem 2.11]{MU12c} beyond the class of It\^o diffusions. For its statement, recall that diffusions on natural scale are semimartingales
(Lemma~\ref{lem: occ formula diff}~(i)).
Of course, Lemma~\ref{lem: pepetual integral diffusion} also has an analogue for the left boundary point.

\begin{lemma} \label{lem: pepetual integral diffusion}
Assume that \(\s = \on{Id}\) and $r<\infty$ (recall the notations $r=\sup J$ and $l=\inf J$).
Let \(f \colon J \to \mathbb{R}_+\) be a Borel function such that \(f \in L^1_\textup{loc} (J^\circ)\) and set
\[
\zeta \triangleq \inf (t \geq 0 \colon \X_t \not \in J^\circ)
\quad\text{and}\quad
D \triangleq \Big\{ \lim_{t \nearrow \zeta} \X_t = r\Big\}.
\]
With $\langle\X,\X\rangle$ denoting the quadratic variation process of $\X$, we have:
\begin{align}
\int^{r-} (r - y) f (y) dy < \infty
\;\;&\Longrightarrow\;\;
\forall x \in J^\circ:\;\;
\int_0^{T_r} f (\X_s) d \langle \X, \X\rangle_s < \infty\;\;\P_{x}\text{-a.s.\ on } D,
\label{eq:260223a2}\\
\int^{r-} (r - y) f (y) dy = \infty
\;\;&\Longrightarrow\;\;
\forall x \in J^\circ:\;\;
\int_0^{T_r} f (\X_s) d \langle \X, \X\rangle_s = \infty\;\;\P_{x}\text{-a.s.\ on } D.
\label{eq:260223a3}
\end{align}
Furthermore, if $l$ is reflecting (necessarily, $l>-\infty$) and \(f \in L^1_\textup{loc} ([l,r))\), then \eqref{eq:260223a2} and~\eqref{eq:260223a3} also hold for all $x\in[l,r)$ and with \(D\) replaced by $\Omega$.
\end{lemma}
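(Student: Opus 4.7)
The plan is to reduce the statement to a classical result about Brownian local time at the hitting time of a point, via the time-change representation of a diffusion in natural scale. By Lemma~\ref{lem: diff time changed BM} (respectively, Lemma~\ref{lem: time change refl BM} in the reflecting case), we write $\X = W_\gamma$, where $W$ is a standard Brownian motion (resp.\ a Brownian motion with instantaneous reflection at $l$) starting at $x$, and $\gamma$ is the right inverse of $A_t=\int_J L^y_t(W)\,\m(dy)$. As a time-changed Brownian motion, $\X$ satisfies $\langle\X,\X\rangle=\gamma$. Using the semimartingale occupation time formula (Lemma~\ref{lem: occ smg}) together with the identity $L^y_t(\X) = L^y_{\gamma_t}(W)$ from Lemma~\ref{lem: occ formula diff}, we obtain
$$
\int_0^{T_r}f(\X_s)\,d\langle\X,\X\rangle_s
= \int_J f(y)\,L^y_{T_r}(\X)\,dy
= \int_J f(y)\,L^y_{\gamma_{T_r}}(W)\,dy.
$$
On the event $D$ the time change $\gamma$ is continuous at $T_r$ and $\gamma_{T_r}$ coincides with the first hitting time $T^W_r$ of $r$ by $W$: indeed $\X_{T_r}=r$ on $D$, and an earlier visit of $W$ to $r$ would correspond to an earlier visit of $\X$ to $r$, contradicting the definition of $T_r$.

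Both the hypothesis and the conclusion in \eqref{eq:260223a2}--\eqref{eq:260223a3} involve only the behavior of $f$ near $r$, so it suffices to work with $f$ supported in $(y_0,r)$ for some $y_0 \in (x,r)$. Joint continuity of the local time field (Lemma~\ref{lem: occ formula diff}) ensures that $y\mapsto L^y_{T^W_r}(W)$ is bounded on compacta of $(l,r)$, so that $f\in L^1_\text{loc}(J^\circ)$ yields $\P_x$-a.s.\ finiteness of $\int_l^{y_0} f(y)L^y_{T^W_r}(W)\,dy$. For the contribution on $(y_0,r)$, the First Ray--Knight theorem applied to $W$ (on $D$ it hits $r$ in finite time) tells us that $a\mapsto L^{r-a}_{T^W_r}(W)$ on $[0,r-x]$ is a squared Bessel process $R^2$ of dimension~$2$ started at $0$, whence on $D$
$$
\int_{y_0}^r f(y)\,L^y_{T^W_r}(W)\,dy = \int_0^{r-y_0} f(r-a)\,R^2_a\,da.
$$
In the reflecting case, one first stops at the last passage to $y_0$ and applies the strong Markov property, so that the Ray--Knight description above the chosen level is unaffected (the excursions of the reflected process below $y_0$ contribute only finitely).

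The final step is the integrability dichotomy for Bessel squared integrals: for any Borel $h\ge 0$, $\int_0^{a_0} h(a)R^2_a\,da <\infty$ $\P$-a.s.\ if and only if $\int_0^{a_0} h(a)\,a\,da<\infty$. The ``if'' direction is Tonelli together with $\E[R^2_a]=2a$. The ``only if'' direction rests on Blumenthal's zero-one law for $R^2$ at the origin (the event of finiteness depends only on the germ at $0$, since $h$ is locally bounded away from $0$ after truncation and $R^2$ is bounded on compacta), together with a Paley--Zygmund-type lower bound $\P(R^2_a\ge a)\ge c>0$ that converts divergence of $\int a h(a)\,da$ into positive probability of divergence for $\int h(a) R^2_a\,da$; Blumenthal's law then upgrades this to a.s.\ divergence. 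Applying the dichotomy with $h(a)=f(r-a)$ yields \eqref{eq:260223a2} and~\eqref{eq:260223a3}.

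The main obstacle I anticipate is the rigorous verification of the ``only if'' half of the Bessel integrability dichotomy. Converting an infinite first moment into an almost sure divergence requires more than direct Markov-type inequalities and is cleanest via the zero-one law combined with a second-moment (Paley--Zygmund) or scaling argument for $R^2$; a secondary technical point is handling the reflecting case cleanly, since Lemma~\ref{lem: time change refl BM} produces a reflected Brownian motion that does not directly satisfy the classical First Ray--Knight statement, so the strong Markov reduction to the sub-interval $(y_0,r)$ needs to be spelled out.
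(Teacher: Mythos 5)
Your reduction is the same as the paper's: represent $\X=W_\gamma$ via the time change of Lemma~\ref{lem: diff time changed BM}, convert $\int_0^{T_r}f(\X_s)\,d\langle\X,\X\rangle_s$ into an integral of $f$ against the Brownian local time field at time $\gamma_{T_r}$ via the occupation formula and $L^y_t(\X)=L^y_{\gamma_t}(W)$, and identify $\gamma_{T_r}=T_r(W)$ on $D$. The genuine divergence is in the last step: the paper simply invokes \cite[Lemma~4.1]{MU12c}, which is exactly the zero--one dichotomy for the Brownian perpetual integral $\int_0^{T_r(W)}g(W_s)\,ds$, whereas you re-derive that dichotomy from scratch via the first Ray--Knight theorem (the field $a\mapsto L^{r-a}_{T_r(W)}(W)$ is a $\mathrm{BESQ}(2)$ from $0$) plus an integrability dichotomy for $\int_0^{a_0}h(a)R^2_a\,da$. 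Your route is self-contained but longer; the paper's buys brevity at the cost of an external citation. Two soft spots in your version deserve attention. First, the ``only if'' half of the Bessel dichotomy: a marginal bound $\P(R^2_a\ge \epsilon a)\ge c$ together with Blumenthal's law is not by itself enough; the standard completion is Jeulin's lemma, i.e., one assumes $\P(\int hR^2\le K)\ge\delta$ and integrates $\E[R^2_a\1_G]\ge \epsilon a(\delta-\P(R^2_a<\epsilon a))$ over $a$ to force $\int a\,h(a)\,da<\infty$. You correctly flag this as the delicate point, and the argument does go through. Second, your treatment of the reflecting case via ``stopping at the last passage to $y_0$'' is not rigorous as written, since last passage times are not stopping times and the strong Markov property does not apply there; the clean fix (and what the paper effectively does) is to keep the \emph{ordinary} Brownian representation of Lemma~\ref{lem: diff time changed BM} even when $l$ is reflecting and extend $f$ by zero below $l$, so that the excursions of $W$ outside $J$ contribute nothing and the first Ray--Knight description above the starting point applies unchanged. (Your passing claim $\langle\X,\X\rangle=\gamma$ is also false when $\gamma$ jumps over excursions of $W$ outside $J$, but you never use it, since the displayed identity only relies on the occupation formula and the local time identity.)
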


\begin{proof}
In case $l$ is reflecting and \(f \in L^1_\textup{loc} ([l,r))\) take some $x\in[l,r)$.
Otherwise take some \(x \in J^\circ\).
By Lemma~\ref{lem: diff time changed BM}, we have \(\P_{x}\)-a.s.\ \(\X = W_\gamma\), where, on an extension of the underlying space, \(W\) is a Brownian motion starting in \(x\) and the time-change \(\gamma\) is defined as in Lemma~\ref{lem: diff time changed BM}.
We extend the function $f\colon J\to\mathbb R_+$ to a function $\mathbb R\to\mathbb R_+$ by setting $f(x)\triangleq 0$ for $x\in\mathbb R\setminus J$.
By the semimartingale occupation time formula (see Lemma~\ref{lem: occ smg} below)
together with \eqref{eq:241124a1} and~\eqref{eq:260223a4}
in Lemma~\ref{lem: occ formula diff}, we have a.s.\ for all \(t \in \mathbb{R}_+\)
\begin{equation}\label{eq:260223a5}
\int_0^{t} f (\X_s) d \langle \X, \X\rangle_s = \int_J f(y) L^y_t (\X) dy = \int_{- \infty}^\infty f (y) L_{\gamma_t}^y (W) dy = \int_0^{\gamma_t} f (W_s) ds.
\end{equation}
Here the following comments are in order:
\begin{enumerate}
\item[-]
The third and the fourth expressions in~\eqref{eq:260223a5} require the extended function~$f$.

\item[-]
The seemingly indirect way of proving~\eqref{eq:260223a5} via the occupation time formula is due to the fact that the time-change $\gamma$ can have jumps.
\end{enumerate}
Notice that a.s.\ \(\gamma_{T_r} = T_r (W)\triangleq \inf (t \geq 0 \colon W_t = r)\) on \(\{ \lim_{t \nearrow T_r} \X_t = r \}\),
while this event equals~a.s.
\begin{enumerate}
\item[-]
$\Omega$ if $l$ is reflecting,

\item[-]
$D$ otherwise.
\end{enumerate}
By \cite[Lemma 4.1]{MU12c}, 
for any nonnegative Borel function \(g \in L^1_\textup{loc} ((- \infty, r))\), 
\begin{align*}
\int^{r-} (r - y) g(y) dy < \infty \ \ &\Longrightarrow \ \ \text{ a.s. } \int_0^{T_r (W)} g (W_s) ds < \infty,
\\
\int^{r-} (r - y) g(y) dy = \infty \ \ &\Longrightarrow \ \ \text{ a.s. } \int_0^{T_r (W)} g (W_s) ds = \infty.
\end{align*}
Therefore, the claims follow.
\end{proof}

We also need an analogue of the previous lemma for the recurrent case.

\begin{lemma}\label{lem:040423a1}
Assume that \(\s = \on{Id}\) and that $(x \mapsto \P_x)$ is recurrent.
Let \(f \colon J \to \mathbb{R}_+\) be a non-vanishing Borel function in the sense that
$\int_J f(y)\,dy>0$.
Then
$$
\forall x \in J:\;\;
\int_0^\infty f(\X_s)\,d\langle\X,\X\rangle_s=\infty
\;\;\P_x\text{-a.s.}
$$
\end{lemma}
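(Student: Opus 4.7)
The plan is to apply the semimartingale occupation time formula to rewrite the integral as an integral of local time over levels, and then to show, via the time-change representation of Lemma~\ref{lem: diff time changed BM} combined with the recurrence hypothesis, that $L^y_\infty(\X) = \infty$ almost surely at each $y \in J^\circ$; Fubini then closes the argument.

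First, extending $f$ by zero outside $J$, I would use Lemma~\ref{lem: occ formula diff}, which asserts that $\X$ is a continuous $\P_x$-semimartingale whose semimartingale local time coincides with the jointly continuous diffusion local time $\{L^y_t(\X)\}$. The semimartingale occupation time formula (part~(i) of Lemma~\ref{lem: occ smg}) together with monotone convergence as $t \to \infty$ then yields
$$
\int_0^\infty f(\X_s)\,d\langle\X,\X\rangle_s
=
\int_J f(y)\, L^y_\infty(\X)\,dy,
\qquad L^y_\infty(\X) \triangleq \lim_{t\to\infty} L^y_t(\X).
$$

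Second, by Lemma~\ref{lem: diff time changed BM}, on a possible extension of the probability space $\P_x$-a.s.\ $\X = W_\gamma$ with $W$ a standard Brownian motion starting at $x$ and $\gamma$ the right-inverse of $A_s = \int_J L^y_s(W)\,\m(dy)$; moreover, Lemma~\ref{lem: occ formula diff} gives $L^y_t(\X) = L^y_{\gamma_t}(W)$ for all $(t,y) \in \mathbb R_+\times J$. Since standard Brownian motion on $\mathbb R$ is recurrent, $L^y_\infty(W) = \infty$ a.s.\ for every $y \in \mathbb R$. Because $\m$ is locally finite on $J^\circ$ and satisfies $\m([a,b])>0$ for any $a<b$ in $J^\circ$ (recall~\eqref{eq:090322a2}), I can fix a compact $K \subset J^\circ$ with $\m(K)>0$; Fubini's theorem applied to $(y,\omega)\mapsto\mathbf 1_{\{L^y_\infty(W)<\infty\}}$ on $K\times\Omega$ then gives $A_\infty = \int_J L^y_\infty(W)\,\m(dy) = \infty$ a.s., so $\gamma_t \nearrow \infty$ a.s. Hence, for every $y \in J^\circ$,
$$
L^y_\infty(\X) = \lim_{t \to \infty} L^y_{\gamma_t}(W) = L^y_\infty(W) = \infty
\quad \P_x\text{-a.s.}
$$

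Third, a further application of Fubini gives $\E_x\bigl[\int_J f(y)\mathbf{1}_{\{L^y_\infty(\X) < \infty\}}\,dy\bigr] = 0$, so $\P_x$-a.s.\ the set $N(\omega) \triangleq \{y\in J\colon L^y_\infty(\X)(\omega)<\infty\}$ has zero Lebesgue measure; since $\int_J f\,dy>0$, the set $\{y\colon f(y)>0\}\setminus N(\omega)$ still has positive Lebesgue measure and the integrand $f(y)L^y_\infty(\X)$ equals $+\infty$ there, forcing $\int_J f(y)L^y_\infty(\X)\,dy=\infty$ $\P_x$-a.s., which combined with Step~1 proves the claim. The main obstacle is establishing $L^y_\infty(\X) = \infty$ a.s.\ (which fails without recurrence); the time-change representation supplies a clean route by transferring it to the classical recurrence of Brownian motion, but one must carefully check $\gamma_\infty = \infty$, and this is where both recurrence and the non-degeneracy of $\m$ on $J^\circ$ enter.
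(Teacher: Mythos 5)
Your proposal follows the paper's own proof almost verbatim: rewrite the integral via the occupation time formula as $\int f(y)L^y_{\gamma_t}(W)\,dy$ using the time-change representation of Lemma~\ref{lem: diff time changed BM}, then let $t\to\infty$ using $\gamma_\infty=\infty$ and the fact that a.s.\ $L^y_\infty(W)=\infty$ for all $y$.

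The one step whose justification does not hold up is the deduction ``$A_\infty=\infty$ a.s., so $\gamma_t\nearrow\infty$ a.s.''. Since $\gamma_t=\inf(s\ge 0\colon A_s>t)$, the condition $A_\infty=\infty$ only guarantees that each $\gamma_t$ is finite; it does not rule out that $A$ reaches $+\infty$ already at some finite time $s_0$, in which case $\gamma_t\le s_0$ for all $t$ and hence $\gamma_\infty\le s_0<\infty$. (This is exactly what happens when $W$ hits an absorbing boundary point carrying infinite speed-measure mass -- then $\gamma$ freezes and $\X$ is absorbed.) What you actually need is that a.s.\ $A_s<\infty$ for every finite $s$, for then $\gamma_t\ge s$ as soon as $t>A_s$. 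This is precisely where the recurrence hypothesis enters (the paper's parenthetical ``due to the recurrence of $\X$'' points at this): by Lemma~\ref{lem:200223a1} and $\s=\on{Id}$, every finite boundary point of $J$ must be reflecting, hence regular, so $\m$ is finite in a neighbourhood of it by \eqref{eq:101022a3}, while any infinite boundary point is not reached by $W$ in finite time. Consequently $y\mapsto L^y_s(W)$ is bounded and its support meets $J$ in a compact set on which $\m$ is finite, giving $A_s<\infty$. With this repair the remainder of your argument -- the Fubini step showing the exceptional Lebesgue-null set cannot swallow $\{f>0\}$ -- is correct and yields the claim.
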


\begin{proof}
Take some $x\in J$.
Using the notation from the beginning of the proof of Lemma~\ref{lem: pepetual integral diffusion}
and extending the function $f$ to the whole $\mathbb R$ by zero,
we obtain a.s. for $t\in\mathbb R_+$
$$
\int_0^{t} f (\X_s) d \langle \X, \X\rangle_s = \int_{- \infty}^\infty f (y) L_{\gamma_t}^y (W) dy.
$$
Letting $t\to\infty$ and noting that
\begin{enumerate}
\item[-]
a.s. $\gamma_\infty=\infty$ (due to the recurrence of $\X$) and

\item[-]
a.s., for all $y\in\mathbb R$, it holds $L_\infty^y(W)=\infty$,
\end{enumerate}
we deduce the claim.
\end{proof}

We end this subsection with a version of Meyer's theorem on predictability, see \cite[Proposition~4]{chungwalsh} and \cite[Lemma~I.2.17]{JS}.

\begin{lemma} \label{lem: Meyer theo}
For every \(x \in J\), any stopping time coincides \(\P_x\)-a.s.\ with a predictable time.
\end{lemma}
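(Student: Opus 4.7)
The plan is to deduce this lemma from the classical Meyer theorem on predictability for quasi-left-continuous filtrations. I would proceed in two steps.

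First, I would show that the right-continuous filtration $(\mathcal{F}_t)_{t \geq 0}$, once completed with respect to $\P_x$, is quasi-left-continuous, i.e., for every increasing sequence of stopping times $\tau_n \nearrow \tau$ one has $\bigvee_n \mathcal{F}^{\P_x}_{\tau_n} = \mathcal{F}^{\P_x}_\tau$ on $\{\tau<\infty\}$. This is a classical property of filtrations generated by continuous strong Markov processes: combining the path continuity of $\X$ (which forces $\X_{\tau_n}\to\X_\tau$ $\P_x$-a.s.) with the strong Markov property of $(x\mapsto\P_x)$ ensures that no additional information is ``suddenly revealed'' at the limit time $\tau$. A precise version of this fact for continuous strong Markov processes can be extracted from \cite{chungwalsh} or the Blumenthal--Getoor framework and simply cited.

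Second, I would invoke the classical Meyer theorem (see, e.g., \cite[Proposition~4]{chungwalsh} or \cite[Lemma~I.2.17]{JS}): in any filtration satisfying the usual conditions and being quasi-left-continuous, every stopping time is predictable. Applied to $(\mathcal{F}^{\P_x}_t)_{t \geq 0}$, this shows that every stopping time of this filtration is predictable there. Since every stopping time $\tau$ of the original (uncompleted) right-continuous filtration $(\mathcal{F}_t)_{t \geq 0}$ is a fortiori a stopping time of the $\P_x$-completion, it admits a predictable announcing sequence in the completion; a standard modification on a $\P_x$-null set then yields a predictable time of the original filtration that coincides $\P_x$-a.s.\ with $\tau$, which is exactly the statement of the lemma.

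The only delicate point is step~1, namely the transfer of the strong Markov property together with path continuity into quasi-left-continuity of the associated filtration. This is the conceptual heart of the result but is a well-documented classical fact, so in the write-up it would be invoked rather than reproved in detail.
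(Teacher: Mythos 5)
The paper offers no argument for this lemma at all---it is stated as a direct citation of \cite[Proposition~4]{chungwalsh} and \cite[Lemma~I.2.17]{JS}---so the only question is whether your two-step reduction is sound. It is not: step~2 invokes a false theorem. Quasi-left-continuity of a filtration does \emph{not} imply that every stopping time is predictable. The augmented natural filtration of a Poisson process is quasi-left-continuous (it is the filtration of a Feller, hence Hunt, process), yet the jump times are totally inaccessible and therefore as far from predictable as possible. Quasi-left-continuity of the filtration only guarantees that accessible times are predictable and that no information is revealed at the limit of an announcing sequence; it says nothing about the totally inaccessible component in the decomposition of a general stopping time, and it is exactly that component which must be shown to vanish here. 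What Meyer's theorem in the form of \cite{chungwalsh} actually asserts is a statement about the \emph{process}, not the filtration: for a Hunt process $\X$, a stopping time $T$ is predictable if and only if $\X_{T-}=\X_T$ holds $\P_x$-a.s.\ on $\{0<T<\infty\}$. For a diffusion the paths are continuous, so this condition is automatic for every $T$ and the conclusion follows at once; no detour through quasi-left-continuity of the filtration is needed, and that detour cannot be completed without a separate argument excluding totally inaccessible times (e.g., continuity of all martingales of the filtration), which is precisely where path continuity would have to re-enter.

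Your step~1 (quasi-left-continuity of the completed filtration of a continuous strong Markov process) and your step~3 (a predictable time of the $\P_x$-completion coincides $\P_x$-a.s.\ with a predictable time of the raw filtration) are both correct and standard, but together they do not yield the lemma: the bridge between them, as you have written it, rests on a statement that the Poisson example refutes. To repair the proof, replace step~2 by the Chung--Walsh characterization quoted above and drop step~1 entirely, since quasi-left-continuity of the filtration plays no role in that argument.
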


\subsection{Chain Rule for Diffusions} \label{sec: chain rule}
In this subsection we put the chain rule for diffusions from \cite[Section 5.5]{itokean74} to a general scale.

Let \((J \ni x \mapsto \P_x)\) and \((I \ni x \mapsto \Q_x)\) be regular diffusions with characteristics \((\s, \m)\) and \((\S, \M)\), respectively. Suppose the following:
\begin{enumerate}
	\item[(a)] \(I \subset J\).
	\item[(b)] \(\s = \S\) on \(I\).
	\item[(c)] If \(l^* \triangleq \inf I\) is reflecting for \((x \mapsto \Q_x)\), then either \(l \triangleq \inf J\) is reflecting for \((x \mapsto \P_x)\) or [\(\s(l) = - \infty\) and \(l < l^*\)].
	\item[(d)] If \(r^* \triangleq \sup I\) is reflecting for \((x \mapsto \Q_x)\), then either \(r \triangleq \sup J\) is reflecting for \((x \mapsto \P_x)\) or [\(\s(r) = \infty\) and \(r^* < r\)].
\end{enumerate}
We emphasize that the terminology ``reflecting'' in all instances in (c) and~(d) above is understood as ``instantaneously or slowly reflecting''.
Let \(G \subset \{l^*, r^*\}\) be the set of absorbing boundaries of \((x \mapsto \Q_x)\) and set \[\e \triangleq \inf (t \geq 0 \colon \X_t \in G)\] and
\[
\g (t) \triangleq \begin{cases} 
\int_{I} \l (t, x)\,\M(dx),&t < \e,\\ \infty,& t \geq \e,
\end{cases}
\]
where
\[
\l (t, x) \triangleq
\begin{cases}
\limsup_{h \searrow 0}
\frac{\int_0^t \1 \{x - h < \X_s < x + h\} ds}{\m ((x - h, x + h))}
&\text{if }x\in I^\circ,
\\[2mm]
\limsup_{h \searrow 0}
\frac{\int_0^t \1 \{x \le \X_s < x + h\} ds}{\m ([x, x + h))}
&\text{if }x=l^*\in I,
\\[2mm]
\limsup_{h \searrow 0}
\frac{\int_0^t \1 \{x-h < \X_s \le x\} ds}{\m ((x-h, x])}
&\text{if }x=r^*\in I,
\end{cases}
\]
which is a measurable function with values in \([0, \infty]\).
Let \(\g^{-1}\) be the right-inverse of \(\g\), i.e.,
\[
\g^{-1} (t) \triangleq \inf (s \geq 0 \colon \g(s+) > t), \quad t \in \mathbb{R}_+.
\]
The following theorem provides the chain rule for diffusions:

\begin{theorem} \label{theo: chain rule}
	\(\P _x\circ \X_{\g^{-1}(\cdot)}^{-1} = \Q_x\) for all \(x \in I\).
\end{theorem}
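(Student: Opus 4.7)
The plan is to reduce to natural scale via Lemma~\ref{lem: diff homo} and then verify the martingale problem for the diffusion~$\Q_x$ using the generalized It\^o formula together with the chain rule for semimartingale time changes. First I would apply the homeomorphism $\s$ to the coordinate process. Since $\s=\S$ on $I$ and since $\l$, $\e$ and $\g$ transform equivariantly under a scale transformation (the occupation densities in the definition of $\l$ are intrinsic), this reduces the problem to the case $\s=\S=\on{Id}$ on $I$; all structural hypotheses (a)--(d) are preserved because accessibility and reflection are preserved by homeomorphic changes of space.

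Second, under the reduction $\s=\S=\on{Id}$, Lemma~\ref{lem: occ formula diff} says that under $\P_x$ the coordinate process is a continuous semimartingale admitting a jointly continuous local time $\{L_t^y(\X)\}$ and that $\int_0^t \1\{x-h<\X_s<x+h\}\,ds=\int_{(x-h,x+h)} L_t^y(\X)\,\m(dy)$. Dividing by $\m((x-h,x+h))$ and letting $h\searrow0$, joint continuity yields $\l(t,x)=L_t^x(\X)$ on $J^\circ$; the one-sided variants of the same argument apply at any reflecting boundary of $I$ that lies in $I$. Hence $\P_x$-a.s.
\[
\g(t)=\int_I L_t^y(\X)\,\M(dy)\quad\text{for all }t<\e,
\]
and $\g$ is continuous and non-decreasing. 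Using that $(x\mapsto\P_x)$ does not exit $I$ before $\e$ (this is where hypotheses (c)--(d) enter: when a boundary of $I$ is reflecting for $\Q$ but is not accessible for $\P$, the corresponding boundary of $J$ has infinite scale and cannot be reached either), one checks that $\Y_t\triangleq\X_{\g^{-1}(t)}$ is continuous with values in $\on{cl}(I)$ and is absorbed in $G$.

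Third, I would verify the martingale problem. Fix $f\in\Delta$ (the generator domain of $\Q$, described by Lemma~\ref{lem: generator}) with $\Gamma f=g$. The structure of $f$ gives $df'_+=2g\,d\M$ on $I^\circ$ plus a Neumann-type boundary condition at any reflecting boundary of~$I$. The generalized It\^o formula (Lemma~\ref{lem: occ smg}) applied to $f(\X_{\cdot\wedge\e})$ under $\P_x$ produces a continuous local martingale plus $\frac12\int L^y_\cdot(\X)\,df'_+(y)$, and the support property of the local-time measures rewrites this drift, via the identity
\[
\tfrac12\int L_t^y(\X)\,df'_+(y)=\int_I g(y)L_t^y(\X)\,\M(dy)=\int_0^t g(\X_s)\,d\g(s),
\]
as an integral against~$\g$. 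Composing with $\g^{-1}$ turns this into a standard-time integral, so Lemma~\ref{lem: change of time} yields that $f(\Y_t)-f(x)-\int_0^t g(\Y_s)\,ds$ is a continuous local $\P_x$-martingale (with the appropriate filtration). Lemma~\ref{lem: loc uni} then identifies the law of $\Y$ up to absorption as $\Q_x$.

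The hard part will be the boundary bookkeeping in step three when $I$ has a reflecting boundary for $\Q$. The generator domain then carries an extra condition $f'_+=2g\,\M(\{b\})$ at such $b$ (Lemma~\ref{lem: generator}(iv)), and I must check that the It\^o expansion of $f(\X)$ actually produces this boundary contribution in the time-change. When $b$ is also reflecting for $\P$, this comes from the atom $\m(\{b\})$ (the boundary atom in $df'_+$ matches the atom in the speed measure), and the $\M(\{b\})$-weighted local time at $b$ enters $\g$ through Lemma~\ref{lem: refl}-style symmetrization. When $b$ is only inaccessible for $\P$, hypothesis (c) or~(d) forces $|\s(b^*)|=\infty$, the process never sees $b$, and the boundary condition is vacuous; one must still verify that $\g^{-1}$ carries $\Y$ continuously to~$b$, which is where the integrability of $L^y(\X)$ against $\M(dy)$ near~$b$ becomes essential.
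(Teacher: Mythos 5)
Your first two steps coincide with the paper's: reduce to natural scale via Lemma~\ref{lem: diff homo} and identify $\g$ as the additive functional $\int_I L^{\S(y)}_t(\s(\X))\,\M(dy)$ of the local time of the natural-scale process. From there the paper simply invokes the It\^o--McKean subordination theorem (\cite[p.~177]{itokean74}): time-changing a natural-scale diffusion by the inverse of $\int L^y_t\,\mu(dy)$ yields the natural-scale diffusion with speed measure $\mu$, boundary behaviour included. You instead propose to verify the martingale problem for $\Q_x$ and conclude via Lemma~\ref{lem: loc uni}. That route is viable in principle --- the paper runs exactly this kind of computation elsewhere, e.g.\ in the proof of Theorem~\ref{theo: NFLVR finite tiome horizon} and in the final case of Section~\ref{sec: pf} --- but your sketch breaks down precisely where the theorem has content.

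The concrete error is the claim that ``$(x\mapsto\P_x)$ does not exit $I$ before $\e$.'' Under the second alternative of hypothesis (c) we have $l<l^*$ with $l^*$ reflecting for $(x\mapsto\Q_x)$, so $l^*$ is an \emph{interior} point of $J$; the $\P_x$-process crosses it freely and makes excursions below $l^*$ (the same happens in the first alternative whenever $l<l^*$). The time change does not avoid these excursions --- it excises them, since they are intervals of constancy of $\g$ --- and this excision is exactly what manufactures reflection of $\X_{\g^{-1}(\cdot)}$ at $l^*$. Consequently your later assertion that the boundary condition is ``vacuous'' when the relevant boundary of $J$ has infinite scale is wrong: the Neumann condition $f'_+ (l^*)=2g(l^*)\M(\{l^*\})$ from Lemma~\ref{lem: generator}(iv) is essential and must be produced by the local time of $\X$ at $l^*$ weighted by the atom $\M(\{l^*\})$ inside $\g$. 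This also undermines your appeal to Lemma~\ref{lem: change of time}(ii): the process $f(\X_\cdot)-f(x)-\int_0^\cdot g(\X_s)\,d\g(s)$ is not even defined during excursions of $\X$ outside $I$ (where $f$ has no values), so before checking constancy on the jump intervals of $\g^{-1}$ you must extend $f$ across $l^*$; for slow reflection any such extension has a kink at $l^*$ whose second-derivative atom has to be matched against $\M(\{l^*\})$, which is the actual proof in the reflecting case (compare the local-time computations around \eqref{eq: 15.10-1} and \eqref{eq:270423a1}). Deferring this as ``boundary bookkeeping'' leaves the theorem unproved exactly where the citation to It\^o--McKean does the work.
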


\begin{proof}
Under $(J\ni x\mapsto\P_x)$, the process $\s(\X)$ is a diffusion on natural scale (Lemma~\ref{lem: diff homo}).
Lemma~\ref{lem: occ formula diff} yields for the
diffusion
local time process
$\widehat L(\s(\X))$
that, for all $x\in I$, $\P_x$-a.s.\ we have, for all $t\in \mathbb{R}_+$ and 
all $K \in \mathcal{B}(I)$,
\begin{align*}
\int_0^t\1\{\X_s\in K\}\,ds
&=
\int_0^t\1\{\s(\X_s)\in\s(K)\}\,ds
=
\int_{\s(K)}\widehat L_t^{y}(\s(\X))\,\m\circ\s^{-1}(dy)
\\
&=
\int_K \widehat L_t^{\s(y)}(\s(\X))\,\m(dy)
=
\int_K \widehat L_t^{\S(y)}(\s(\X))\,\m(dy),
\end{align*}
where in the last equality we use that \(\s = \S\) on \(I\).
The continuity of $\widehat L(\s(\X))$ in the space variable implies that the above defined $\{\l(t,x)\colon (t, x) \in \mathbb{R}_+ \times I\}$ provides a version of the processes $\{\widehat L_t^{\S(x)}(\s(\X))\colon t \in \mathbb{R}_+\}$ simultaneously for all $x\in I$.
This means that, for all $x\in I$, $\P_x$-a.s.\ we have, for all $t\in \mathbb{R}_+$,
\begin{equation}
\g (t) = \begin{cases}\label{eq:200922a1}
\int_{\S(I)} \widehat L_t^{x}(\s(\X))\,\M\circ\S^{-1}(dx),&t < \e,\\ \infty,& t \geq \e,
\end{cases}
\end{equation}
Finally, the diffusion on natural scale
$(\S(I)\ni x\mapsto\Q_{\S^{-1}(x)}\circ\S(\X)^{-1})$
with speed measure $\M\circ\S^{-1}$
is obtained from the diffusion\footnote{Viewed under $(J\ni x\mapsto\P_x)$.} $\s(\X)$ considered above in this proof
via the subordination inverse to~\eqref{eq:200922a1} (\cite[p.~177]{itokean74}).
This concludes the proof.
\end{proof}

\subsection{Separating Times for It\^o Diffusions}
The separating time for It\^o diffusions was studied in \cite{cherUru,MU12}. Thanks to some time-change and symmetrization arguments we can reduce certain steps in the proof of Theorem \ref{theo: main1} to the It\^o diffusion setting. The purpose of this short subsection is to recall the result from \cite{cherUru,MU12} for later reference. We pose ourselves in the setup from Example~\ref{ex: ito diffusion}, i.e., we assume that \(J^\circ = (l, r)\) and 
\[
\s (x) \triangleq \int^x \exp \Big( -  \int^z \frac{2 b (y) }{\sigma^2(y)} dy\Big) dz, \qquad \S (x) \triangleq \int^x \exp \Big( -  \int^z \frac{2 \tilde{b} (y)}{\tilde{\sigma}^2(y)} dy\Big) dz,
\]
and 
\[
\m (dx) \triangleq \frac{dx}{\s' (x) \sigma^2(x)}, \qquad \M (dx) \triangleq \frac{dx}{\S'(x) \tilde{\sigma}^2(x)},
\]
where \(b, \tilde{b}, \sigma, \tilde{\sigma} \colon J^\circ \to \mathbb{R}\) are Borel functions such that
\[
\sigma^2, \tilde{\sigma}^2 > 0\;\;\text{everywhere on }J^\circ, \qquad \frac{1 + |b|}{\sigma^2}, \frac{1 + |\tilde{b}|}{\tilde{\sigma}^2} \in L^1_\textup{loc}(J^\circ).
\]
Furthermore, we suppose that \(\m (\{l\}) \equiv \m (\{r\}) \equiv \M(\{l\}) \equiv \M(\{r\}) \equiv \infty\) in case the points are accessible.
We now recall \cite[Theorem~5.5]{MU12} (see also \cite[Theorem~5.1]{cherUru}):

\begin{lemma}\label{lem: sep ito diff}
Theorem \ref{theo: main1} holds for the above setting.
\end{lemma}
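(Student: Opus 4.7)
The plan is to reduce the statement to the known results \cite[Theorem~5.1]{cherUru} and \cite[Theorem~5.5]{MU12}, which already establish the precise representation of the separating time for two one-dimensional It\^o diffusions under the Engelbert--Schmidt conditions with possibly absorbing boundaries. No new analytic content is needed; the work is purely a translation of notation and definitions. First I would observe that, since in the present set-up any accessible boundary is forced to be absorbing by the convention $\m(\{l\})\equiv\m(\{r\})\equiv\M(\{l\})\equiv\M(\{r\})\equiv\infty$, reflection never occurs. Consequently, the deterministic time $R$ appearing in Theorem~\ref{theo: main1} automatically equals $\delta$ in this situation, so $U\wedge V\wedge R=U\wedge V$, matching the hitting-time structure of the candidate separating time in \cite{cherUru, MU12}.

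Next I would invoke the detailed computation already carried out in Example~\ref{ex: ito diffusion}: in the present It\^o diffusion framework, a point $x\in J^\circ$ is non-separating in the sense of Definition~\ref{def: non-sep int} if and only if there exists an open neighborhood $U(x)\subset J^\circ$ of $x$ such that $\tilde\sigma^2=\sigma^2$ $\mu_L$-a.e.\ on $U(x)$ and $(\tilde b-b)^2/\sigma^4\in L^1(U(x))$. This is verbatim the notion of a good interior point used in \cite{cherUru, MU12}. An identical identification was shown to hold for the boundary case: Definition~\ref{def:130222a1} of half-goodness at $l$ or $r$ reduces, via the relation
\[
\big(\beta(z)\big)^2\,\S(dz)=\frac{4}{\S'(z)}\,\frac{(\tilde b(z)-b(z))^2}{\sigma^4(z)}\,dz,
\]
to the integrability condition on $|\s(z)-\s(b)|(\tilde b-b)^2/\sigma^4$ figuring in the definition of a good boundary point in \cite{cherUru, MU12}. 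Since accessible boundaries are absorbing here, Definition~\ref{def: non-sep bound} adds no further constraint beyond half-goodness, so the two notions of non-separating boundary point coincide.

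With these identifications, the set $A\subset\on{cl}(J)$ of separating points in the sense of the present paper coincides with the corresponding set in \cite{cherUru, MU12}, and hence so do the extended stopping times $U$ and $V$. Plugging this into the cited theorems and using $R=\delta$ yields exactly the conclusion of Theorem~\ref{theo: main1} in this setting. The only mildly delicate point is the bookkeeping at accessible boundaries, for which I would rely on the convention on the point masses of $\m$ and $\M$ to ensure that the boundary behavior prescribed in both frameworks agrees; once this is recorded, the lemma is a direct citation.
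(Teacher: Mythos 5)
Your proposal is correct and matches the paper's treatment: the paper presents Lemma~\ref{lem: sep ito diff} as a restatement of \cite[Theorem~5.5]{MU12} (see also \cite[Theorem~5.1]{cherUru}), with the translation of the notions of non-separating interior and boundary points carried out exactly as you describe in Example~\ref{ex: ito diffusion}, and with the observation that forced absorption at accessible boundaries makes Definition~\ref{def: non-sep bound} collapse to half-goodness and renders $R=\delta$. Nothing further is needed.
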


\subsection{Facts on Semimartingales}
We start by recalling the semimartingale occupation time formula, the generalized It\^o formula and some useful facts about the semimartingale local time process.\footnote{Regarding the normalizations in the literature, we stress that the local time in \cite{KaraShre} is half of the local time in \cite{kallenberg,LeGall,RY,RW2} which we use in this paper.}
The following lemma contains \cite[Theorem 29.5]{kallenberg}, \cite[Exercise VI.1.23]{RY} and \cite[Exercise 9.19]{LeGall}.

\begin{lemma} \label{lem: occ smg}
Let \(Y\) be a continuous semimartingale with the continuous in time and c\`adl\`ag in space local time process \(\{L^x_t (Y) \colon (t, x) \in \mathbb{R}_+ \times \mathbb{R}\}\). 
\begin{enumerate}
\item[\textup{(i)}]
Almost surely, simultaneously for all Borel functions \(\f \colon \mathbb{R}\to \mathbb{R}_+\) it holds that 
\[
\int_0^t \f (Y_s) d \langle Y, Y\rangle_s = \int \f (z) L^z_t (Y) dz, \quad t \in \mathbb{R}_+,
\]
where $\langle Y,Y\rangle$ denotes the quadratic variation process of~$Y$.

\item[\textup{(ii)}]
Almost surely, simultaneously for all functions \(\f \colon \mathbb{R} \to \mathbb{R}\) that are differences of two convex functions we have 
\begin{equation}\label{eq:020423c1}
\f (Y_t) = \f (Y_0) + \int_0^t \Big(\frac{d^- \f}{dx}\Big) (Y_s) d Y_s + \frac{1}{2} \int L^y_t (Y) \f'' (dy), \quad t\in\mathbb R_+,
\end{equation}
where \(\f''(dx)\) denotes the second derivative measure of \(\f\) defined by 
\begin{equation}\label{eq:020423c2}
\f '' ((x, y]) = \Big(\frac{d^+ \f}{dx}\Big) (y) - \Big(\frac{d^+ \f}{dx}\Big) (x), \quad x \leq y.
\end{equation}
In particular, $\f(Y)$ is a continuous semimartingale.

\item[\textup{(iii)}]
Let \(\f \colon \mathbb{R} \to \mathbb{R}\) be a strictly increasing function which is a difference of two convex functions.  Almost surely, simultaneously for all $(t,z)\in\mathbb R_+\times\mathbb R$ it holds
\[
L_t^{\f (z)} (\f (Y)) = \Big(\frac{d^+ \f}{dx}\Big) (z) L^z_t (Y).
\]

\item[\textup{(iv)}]
Almost surely, simultaneously for all \(a \in \mathbb{R}\) it holds \(\on{supp}(d_t L^a_t (Y)) \subset \{t \in \mathbb{R}_+ \colon Y_t = a\}\).
\end{enumerate}
\end{lemma}

\begin{remark}\label{rem:021022a1}
In part~(ii) in the formula for $\f(Y_t)$ one can replace the left-hand derivative $d^-\f/dx$ by the right-hand derivative $d^+\f/dx$ when one takes the left-continuous (in the space variable) local time process.
Similarly, the analogue of part~(iii) for the left-continuous local time contains the left-hand derivative~$d^-\f/dx$.
\end{remark}

There is a delicate point related to part~(ii) of Lemma~\ref{lem: occ smg}.
It is tempting to write~\eqref{eq:020423c1} also in case when, say, $Y$ is a $[0,\infty)$-valued semimartingale and $\f\colon[0,\infty)\to\mathbb R$ is a difference of two convex continuous functions $[0,\infty)\to\mathbb R$.
But this would no longer be true in general.
A counterexample is given by $Y=|W|$ with a Brownian motion $W$ and $\f=\sqrt{\, \cdot\,}$ (indeed, $\sqrt{|W|}$ is not a semimartingale).
We, therefore, present also the generalized It\^o formula for semimartingales taking values in half-open intervals.

\begin{lemma} \label{lem: extended gen Ito formula}
	Let \(I = [a, b)\) be a half-open (possibly unbounded) interval and let \(Y\) be an \(I\)-valued continuous semimartingale. Furthermore, let \(\f \colon I \to \mathbb{R}\) be a continuous function such that \(d^+ \f / dx\) exists everywhere on \([a, b)\) as a right-continuous function with locally finite variation
(recall the equivalent conditions \textup{(a$'$)}, \textup{(a$''$)} and \textup{(b$'$)} preceding Lemma~\ref{lem: generator}).
Then, a.s.\ for all \(t \in \mathbb{R}_+\),
	\begin{equation}\label{eq:020423c3}
	\f (Y_t) = \f (Y_0) + \int_0^t \Big( \frac{d^+ \f}{dx} \Big) (Y_s) d Y_s + \frac{1}{2} \int_{(a, b)} L^{x-}_t (Y)\f'' (dx),
	\end{equation}
	where \(\{L_t^x (Y) \colon (t, x) \in \mathbb{R}_+ \times \mathbb{R}\}\) denotes the continuous in time and c\`adl\`ag in space local time process of the process \(Y\), and $\f''(dx)$ is the measure defined on the \emph{open} interval $(a,b)$ by \eqref{eq:020423c2} with $a\le x\le y<b$.
In particular, if \(\f \in C^1 ([a, b); \mathbb{R})\) with absolutely continuous derivative, then a.s.\ for all $t\in\mathbb R_+$
	\begin{equation}\label{eq:020423c4}
	\f (Y_t) = \f (Y_0) + \int_0^t \f' (Y_s) d Y_s + \frac{1}{2} \int_0^t \f'' (Y_s) d \langle Y, Y\rangle_s,
	\end{equation}
where the function $\f''$ is the second derivative of~$\f$
(which is well-defined almost everywhere with respect to the Lebesgue measure).
\end{lemma}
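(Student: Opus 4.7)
The plan is to reduce to the classical generalized It\^o formula in Lemma~\ref{lem: occ smg}(ii) by extending $\phi$ from $[a,b)$ to a difference of two convex functions on all of $\mathbb R$. Two technical points to address are: (i) $\phi'_+$ has only \emph{locally} finite total variation on $[a,b)$, so a direct extension to $\mathbb R$ may fail because the variation can blow up near $b$; (ii) the extension across $a$ must be chosen so that the second-derivative measure $\tilde\phi''$ places no atom at $a$, which would otherwise give a spurious contribution.

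First, I would localize. Setting $\tau_n \triangleq \inf\{t \geq 0 \colon Y_t \geq b - 1/n\} \wedge n$, continuity of $Y$ with values in $[a,b)$ forces $\tau_n \nearrow \infty$ a.s., and the stopped process $Y^{\tau_n}$ takes values in a compact subinterval $[a, c_n] \subset [a,b)$. For each $n$, I would then extend $\phi|_{[a, c_n]}$ to $\tilde\phi \colon \mathbb R \to \mathbb R$ by linear continuation with slope $\phi'_+(a)$ on $(-\infty, a)$ and slope $\phi'_+(c_n)$ on $(c_n, \infty)$. Then $\tilde\phi'_+$ is right-continuous on $\mathbb R$ with finite total variation and matching one-sided limits at $a$ and at $c_n$; in particular $\tilde\phi$ is a difference of two convex functions on $\mathbb R$, and the measure $\tilde\phi''$ is supported in $(a, c_n]$ and agrees there with $\phi''$.

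Applying Lemma~\ref{lem: occ smg}(ii) in the right-continuous version of Remark~\ref{rem:021022a1} to $\tilde\phi$ and $Y^{\tau_n}$ yields, almost surely for all $t \geq 0$,
\[
\tilde\phi(Y^{\tau_n}_t) = \tilde\phi(Y_0) + \int_0^t (d^+\tilde\phi/dx)(Y^{\tau_n}_s)\, dY^{\tau_n}_s + \tfrac{1}{2}\int_{\mathbb R} L^{y-}_t(Y^{\tau_n})\, \tilde\phi''(dy).
\]
Since $Y^{\tau_n}$ takes values in $[a, c_n]$, the first two terms coincide with their counterparts for $\phi$; in the last term, the support properties of $\tilde\phi''$ together with $L^{y-}_t(Y^{\tau_n}) = 0$ for $y > c_n$ (because $Y^{\tau_n} \leq c_n$) let me rewrite the integral as $\int_{(a,b)} L^{y-}_t(Y^{\tau_n})\, \phi''(dy)$. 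Passing to the limit $n \to \infty$ removes the localization (using continuity of the stochastic integral and monotone convergence for the spatial integral) and yields \eqref{eq:020423c3}.

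The main obstacle will be verifying the fine properties of the extension across $a$, namely that the matching of one-sided derivatives $\tilde\phi'_-(a) = \tilde\phi'_+(a) = \phi'_+(a)$ eliminates any atom of $\tilde\phi''$ at $a$, so that the restriction of $\int_{\mathbb R} L^{y-}_t(Y^{\tau_n})\,\tilde\phi''(dy)$ agrees with the desired $\int_{(a,b)} L^{y-}_t(Y^{\tau_n})\,\phi''(dy)$. For the $C^1$ case \eqref{eq:020423c4}, once $\phi'$ is absolutely continuous we have $\phi''(dy) = \phi''(y)\,dy$ (Radon--Nikodym derivative, equal to the a.e.\ second derivative), and the occupation time formula from Lemma~\ref{lem: occ smg}(i), together with $L^{y-}_t(Y) = L^y_t(Y)$ for Lebesgue-a.a.\ $y$, converts the spatial integral into $\int_0^t \phi''(Y_s)\, d\langle Y, Y\rangle_s$.
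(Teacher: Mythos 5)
Your proof is correct and follows essentially the same route as the paper's: extend $\f$ to a difference of two convex functions on all of $\mathbb{R}$ and apply Lemma~\ref{lem: occ smg}(ii) in the right-derivative/left-continuous-local-time version of Remark~\ref{rem:021022a1}, noting that nothing at or below $a$ contributes. Your explicit localization via $\tau_n$ is a worthwhile addition: the paper reduces to $I=[0,\infty)$ ``for notational simplicity,'' where the possible blow-up of the variation of $\f'_+$ near $b$ does not arise, whereas for finite $b$ one genuinely needs your stopping argument. One small observation: your care in matching the one-sided slopes at $a$ is not actually needed, since with the left-continuous local time one has $L^{a-}_t(Y)=0$ (as $Y\geq a$), so an atom of the extended second-derivative measure at $a$ would be annihilated anyway --- this is precisely the mechanism the paper relies on, its extension via condition (a$''$) being allowed to have such an atom.
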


\begin{proof}
For notational simplicity, we prove the claim for $I=[0,\infty)$.
By hypothesis, there exists a function \(g \colon \mathbb{R} \to \mathbb{R}\) that is the difference of two convex functions such that \(g = \f\) on \([0, \infty)\).
Further, as a.s. \(L^y (Y) = 0\) for all \(y \in (- \infty, 0)\), we have a.s. \(L^{y-} (Y) = 0\) for all \(y \in (- \infty, 0]\). Now, Lemma~\ref{lem: occ smg} and Remark~\ref{rem:021022a1} yield that, a.s.\ for all $t\in\mathbb R_+$,
\begin{align*}
\f (Y_t) = g (Y_t)
&=
g (Y_0) + \int_0^t \Big( \frac{d^+ g}{dx} \Big) (Y_s)\,d Y_s + \frac{1}{2} \int_{- \infty}^\infty L^{x-}_t (Y)\,g'' (dx) 
\\
&=
\f (Y_0) + \int_0^t \Big( \frac{d^+ \f}{dx} \Big) (Y_s)\,d Y_s + \frac{1}{2} \int_{(0, \infty)} L^{x-}_t (Y)\,\f'' (dx),
\end{align*}
which is~\eqref{eq:020423c3}.
It is worth noting that, if we applied \eqref{eq:020423c1} directly (i.e., without Remark~\ref{rem:021022a1}),
we would get a dependence on $d^-g(0)/dx$ in both integral terms, which looks puzzling at first glance, as $d^-g(0)/dx$ is not uniquely defined.
However, using \cite[Theorem~VI.1.7]{RY} one can see that $d^-g(0)/dx$ cancels.

For the second claim, suppose that \(\f \in C^1 ([a, b); \mathbb{R})\) with absolutely continuous derivative. Then, by the occupation time formula (part~(i) of Lemma~\ref{lem: occ smg}), a.s.\ for all $t\in\mathbb R_+$,
\[
\int_{(a, b)} L^{x-}_t (Y) \f'' (dx) = \int_a^b L^{x-}_t (Y) \f'' (x) dx = \int_a^b L^x_t (Y) \f'' (x) dx = \int_0^t \f'' (Y_s) d \langle Y, Y\rangle_s.
\]
This observation establishes~\eqref{eq:020423c4}.
\end{proof}

The following lemma contains classical facts on stability of the local martingale and semimartingale property under time-changes. It is implied by \cite[Corollary~10.12, Theorem 10.16]{Jacod}.

\begin{lemma}\label{lem: change of time}
Let \((\mathcal{G}_t)_{t \geq 0}\) be a right-continuous (complete) filtration and let \((L (t))_{t \geq 0}\) be a finite time-change, i.e., a family of (a.s.) finite \((\mathcal{G}_t)_{t \geq 0}\)-stopping times such that \(t \mapsto L (t)\) is (a.s.) increasing and right-continuous.
\begin{enumerate}
\item[\textup{(i)}]
If \(Y\) is a \((\mathcal{G}_t)_{t \geq 0}\)-semimartingale, then \(Y_L\) is a \((\mathcal{G}_{L (t)})_{t \geq 0}\)-semimartingale.

\item[\textup{(ii)}]
If \(Y\) is a (continuous) local \((\mathcal{G}_t)_{t \geq 0}\)-martingale such that a.s. \(Y\) is constant on every interval \([L(t-), L(t)]\), then \(Y_L\) is a (continuous) local \((\mathcal{G}_{L (t)})_{t \geq 0}\)-martingale.
\end{enumerate}
\end{lemma}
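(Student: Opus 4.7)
The plan is to deduce both parts of the statement from the general theory of time-changes for semimartingales developed in Jacod's monograph, since the lemma is precisely the specialization of results in that reference to our setting. The main task is to verify that our hypotheses match those used there and, for part~(ii), to understand the role of the constancy condition on jump intervals of $L$.

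For part~(i), I would argue as follows. A finite time-change $(L(t))_{t \geq 0}$ in our sense is a family of a.s.\ finite $(\mathcal G_t)_{t \geq 0}$-stopping times that is right-continuous and increasing in $t$, which is exactly the notion of time-change used in \cite{Jacod}. Under these assumptions, the general theorem on time-changes of semimartingales (Theorem~10.16 in \cite{Jacod}) asserts that if $Y$ is a $(\mathcal G_t)_{t \geq 0}$-semimartingale, then $Y_L$ is a semimartingale with respect to $(\mathcal G_{L(t)})_{t \geq 0}$. Right-continuity of the time-changed filtration is automatic from right-continuity of $(\mathcal G_t)_{t \geq 0}$, so part~(i) follows immediately.

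For part~(ii), I would use a localization argument. Let $(\tau_n)$ be a localizing sequence reducing $Y$ to uniformly integrable martingales, and set $\sigma_n \triangleq \inf\{t \geq 0 \colon L(t) \geq \tau_n\}$, which is a $(\mathcal G_{L(t)})$-stopping time by the analogue of Galmarino's test. The constancy of $Y$ on each interval $[L(t-), L(t)]$ is used crucially here: it ensures that $Y_{L(\cdot) \wedge \sigma_n} = (Y^{\tau_n})_L$, since any overshoot of $L$ past $\tau_n$ at a jump of $L$ leaves $Y$ unchanged and so does not affect the time-changed process. Once this identity is in hand, optional stopping applied to the uniformly integrable martingale $Y^{\tau_n}$ at the family $\{L(t) \colon t \geq 0\}$ shows that $(Y^{\tau_n})_L$ is a uniformly integrable $(\mathcal G_{L(t)})$-martingale (this is the content of Corollary~10.12 in \cite{Jacod}). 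Letting $n \to \infty$ yields that $Y_L$ is a local $(\mathcal G_{L(t)})$-martingale. When $Y$ is continuous, continuity of $Y_L$ is immediate from the constancy on $[L(t-), L(t)]$: any jump of $L$ produces the same value of $Y$ at both endpoints, while outside the jumps of $L$ the process $Y_L$ inherits continuity directly from $Y$.

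The main obstacle is the subtlety in part~(ii): without the constancy assumption on jump intervals of $L$, the time-changed process of a local martingale need not be a local martingale, because the optional stopping identity at jumps of $L$ fails. Recognizing that this hypothesis is exactly what is needed to preserve the martingale property through time-change, and matching it with the hypotheses in Jacod's framework, is the key conceptual step; the remaining work is purely a translation between the two filtrations and a standard localization argument.
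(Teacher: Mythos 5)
Your proposal is correct and follows essentially the same route as the paper, which gives no argument beyond citing \cite[Corollary~10.12, Theorem~10.16]{Jacod} — exactly the two results you invoke. Your additional details for part~(ii) (the identity $Y_{L(\cdot\wedge\sigma_n)}=(Y^{\tau_n})_L$ via constancy on $[L(\sigma_n-),L(\sigma_n)]\ni\tau_n$, followed by optional sampling and localization) are the standard and correct way to fill in what the citation leaves implicit.
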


Let us also recall a useful formulation of Girsanov's theorem, which is a version of \cite[Theorem~III.3.24]{JS} for one-dimensional (continuous) semimartingales.

\begin{lemma} \label{lem: Girs}
	We consider a filtered measurable space \((\Sigma, \mathcal{G}, (\mathcal{G}_t)_{t \geq 0})\) (with a right-continuous filtration) and two probability meassures \(\P\) and \(\Q\) defined on it. Suppose that \(\Q\ll_\textup{loc} \P\). If \(Y\) is a (continuous) \(\P\)-semimartingale, then it is also a (continuous) \(\Q\)-semimartingale. Moreover, if \(Y\) is a continuous local \(\P\)-martingale with \(\langle Y, Y\rangle = \int_0^\cdot c_s ds\) (where \(c\) is a non-negative predictable process and the integral is well-defined), then there exists a predictable process \(\beta\) such that \(\Q\)-a.s.
	\[
	\int_0^t \beta^2_s c_s ds < \infty, \quad t \in \mathbb{R}_+, 
	\]
	and 
	\[
	Y - \int_0^\cdot \beta_s c_s ds 
	\]
	is a continuous local \(\Q\)-martingale with quadratic variation \(\int_0^\cdot c_s ds\).
\end{lemma}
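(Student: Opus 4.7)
The plan is to derive this lemma by specializing the general multidimensional Girsanov-type theorem \cite[Theorem III.3.24]{JS} to the one-dimensional continuous setting. First I would observe that the preservation of the semimartingale property under $\Q\ll_\textup{loc}\P$ is the content of that cited theorem and does not need any separate argument: the canonical decomposition of $Y$ as a $\P$-semimartingale gives rise, via Girsanov, to a canonical decomposition of $Y$ as a $\Q$-semimartingale. Continuity is then automatic, since the paths of $Y$ are the same under $\P$ and under $\Q$ (as $\Q$-a.s. the process takes values in a set of full $\P$-measure).

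For the second claim, I would apply Girsanov's theorem to the continuous local $\P$-martingale $Y$ to obtain a decomposition
\[
Y = \tilde Y + A,
\]
where $\tilde Y$ is a continuous local $\Q$-martingale and $A$ is a continuous predictable process of locally finite variation. The key structural step is to show that $A$ is absolutely continuous with respect to $\langle Y,Y\rangle^\P = \int_0^\cdot c_s\,ds$. Here I would use a localizing sequence $(\tau_n)$ of $\P$-stopping times reducing $Z$, the density process of $\Q$ with respect to $\P$, to a uniformly integrable martingale on each piece. On each stopped interval, Girsanov's formula identifies $A^{\tau_n} = \int_0^{\cdot\wedge\tau_n} d\langle Y, Z\rangle^\P_s/Z_{s-}$, and the Kunita--Watanabe inequality then yields absolute continuity of $\langle Y,Z\rangle^\P$ with respect to $\langle Y,Y\rangle^\P$. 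Hence there is a predictable process $\beta$ with $A = \int_0^\cdot\beta_s c_s\,ds$, as claimed.

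Finally, I would handle the quadratic variation and the integrability condition together. Since the quadratic variation of a continuous semimartingale is a pathwise limit of discrete quadratic sums, it is invariant under locally absolutely continuous changes of measure, so $\langle \tilde Y,\tilde Y\rangle^\Q = \langle Y,Y\rangle^\P = \int_0^\cdot c_s\,ds$ $\Q$-a.s. Moreover, expanding $\langle \tilde Y,\tilde Y\rangle^\Q = \langle Y - A, Y-A\rangle^\Q$ and using that $A$ has finite variation, we obtain the same identity and hence the $\Q$-a.s.\ finiteness of $\int_0^t \beta_s^2 c_s\,ds$ follows from $\tilde Y$ being a bona fide continuous local $\Q$-martingale whose predictable quadratic variation equals $\int_0^\cdot c_s\,ds$.

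The main obstacle is the passage from the global form of the classical Girsanov theorem, which requires $\Q\ll\P$, to the present local form $\Q\ll_\textup{loc}\P$. This is handled by the standard localization device outlined above: one works on each $\mathcal G_{\tau_n}$, where $\Q\ll\P$ holds genuinely, verifies the decomposition there, and then glues the resulting predictable processes into a single $\beta$ defined on $\mathbb R_+$. Once this localization is in place, the rest of the argument is routine one-dimensional stochastic calculus.
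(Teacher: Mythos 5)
The paper does not prove this lemma at all: it is stated as a direct restatement of \cite[Theorem III.3.24]{JS} for one-dimensional continuous semimartingales, so you are in effect re-deriving that textbook theorem. Your route — Girsanov decomposition $Y=\tilde Y+A$ with $A=\int_0^{\cdot}Z_{s-}^{-1}\,d\langle Y,Z\rangle_s$ on localized pieces, Kunita--Watanabe to get $d\langle Y,Z\rangle\ll d\langle Y,Y\rangle$ and hence $A=\int_0^\cdot\beta_s c_s\,ds$, and invariance of the quadratic variation under locally absolutely continuous changes of measure — is exactly the standard proof, and those parts are fine.

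There is, however, one genuine gap: your justification of the integrability condition $\int_0^t\beta_s^2c_s\,ds<\infty$ $\Q$-a.s. You claim it ``follows from $\tilde Y$ being a bona fide continuous local $\Q$-martingale whose predictable quadratic variation equals $\int_0^\cdot c_s\,ds$.'' This is a non sequitur: $\langle\tilde Y,\tilde Y\rangle=\langle Y,Y\rangle$ holds for \emph{any} continuous finite-variation drift $A$ and carries no information about $\beta$. What your construction actually delivers for free is only that $A$ has locally finite variation, i.e.\ $\int_0^t|\beta_s|c_s\,ds<\infty$, and $L^1$ does not imply $L^2$ (take $c\equiv1$ and $\beta_s=s^{-1/2}$). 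The correct argument is a second application of Kunita--Watanabe, this time in its density form: writing $N$ for the stochastic logarithm of the density process $Z$ (a continuous-martingale-part computation on the $\Q$-full set $\{Z_->0\}$), one has $\beta=Z_-^{-1}\,d\langle Y,Z\rangle/d\langle Y,Y\rangle=d\langle Y,N\rangle/d\langle Y,Y\rangle$ and the pointwise bound
\[
\beta_s^2\,\frac{d\langle Y,Y\rangle_s}{ds}\le\frac{d\langle N^c,N^c\rangle_s}{ds}
\quad d\langle Y,Y\rangle\text{-a.e.},
\]
whence $\int_0^t\beta_s^2c_s\,ds\le\langle N^c,N^c\rangle_t<\infty$ $\Q$-a.s. (finiteness of $\langle N^c,N^c\rangle_t$ holding under $\Q$ because $Z$ and $Z_-$ stay strictly positive $\Q$-a.s.). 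With this step repaired, the proof matches the cited source.
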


\subsection{Semimartingale Functions in a Non-Markovian Setting}
It is well-known (\cite[Theorem~5.5]{CinJPrSha}) that if \(W\) is a Brownian motion and \(\f \colon \mathbb{R} \to \mathbb{R}\) is a Borel function, the process \(\f (W)\) is a semimartingale\footnote{Semimartingales are always assumed to have c\`adl\`ag paths.} if and only if \(\f\) is the difference of two convex functions. The following
theorem
is a variation of this result in the sense that we consider Brownian motion up to a strictly positive stopping time and deduce properties of the function in a neighborhood of the origin.
As a stopped Brownian motion is, in general, no longer Markovian, we cannot use the argument from \cite{CinJPrSha}, which essentially hinges on the Markov property.

\begin{theorem} \label{lem: diff convex}
	Let \((\Sigma, \mathcal{G}, (\mathcal{G}_t)_{t \geq 0}, \P)\) be a filtered probability space which supports a standard \((\mathcal{G}_t)_{t \geq 0}\)-Brownian motion \(W\) (starting in the origin). Furthermore, let \(\tau\) be a \((\mathcal{G}_t)_{t \geq 0}\)-stopping time such that \(\P (\tau > 0) > 0\) and let \(\f \colon \mathbb{R} \to \mathbb{R}\) be a Borel function. Suppose that \(\f(W_{\cdot \wedge \tau})\) is a \((\mathcal{G}_t)_{t \geq 0}\)-semimartingale. 
Then, there exists a \(\delta > 0\) such that
the restriction
\(\f |_{(- \delta, \delta)}\) is the difference of two convex functions
from $(-\delta,\delta)$ into $\mathbb R$
(in particular, $\f$ is continuous on $(-\delta,\delta)$).
\end{theorem}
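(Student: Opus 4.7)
First, I would reduce to the case $\tau>0$ $\P$-a.s. Observing that $\{\tau=0\}=\bigcap_n\{\tau\le 1/n\}\in\bigcap_n\mathcal G_{1/n}=\mathcal G_{0+}=\mathcal G_0$ by right-continuity, the event $A\triangleq\{\tau>0\}$ lies in $\mathcal G_0$. I would pass to the conditional probability $\P_1\triangleq\P(\cdot\mid A)$, whose Radon--Nikodym density $\1_A/\P(A)$ with respect to $\P$ is $\mathcal G_0$-measurable. Girsanov's theorem then yields no drift correction, so $W$ remains a $\P_1$-Brownian motion and $\f(W_{\cdot\wedge\tau})$ remains a $\P_1$-semimartingale (Lemma~\ref{lem: Girs}). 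Working under $\P_1$, I may therefore assume $\tau>0$ a.s.\ throughout.

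Next, since $\f(W_{\cdot\wedge\tau})$ is a.s.\ c\`adl\`ag while $t\mapsto W_{t\wedge\tau}$ is continuous, $\f$ must be continuous on the (random) range of $W$ on $[0,\tau]$; otherwise a one-sided discontinuity of $\f$ at a point in the range would be visited infinitely often by the oscillations of $W$ and spoil the c\`adl\`ag property. As this range is a.s.\ an interval $[m,M]$ with $m<0<M$ (because $\tau>0$ a.s.), a $\sigma$-additivity argument produces a deterministic $\delta_0>0$ for which $\{m\le-\delta_0,\ M\ge\delta_0\}$ has positive probability, forcing continuity of $\f$ on $(-\delta_0,\delta_0)$. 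For $\delta\in(0,\delta_0)$, I set $H_\delta\triangleq\inf\{t:|W_t|=\delta\}$ and $\sigma_\delta\triangleq\tau\wedge H_\delta$, and let $E_\delta\triangleq\{\tau>H_\delta\}\in\mathcal G_{H_\delta}$. Because $H_\delta\searrow 0$ a.s., one has $\P(E_\delta)>0$ for $\delta$ small enough. On $E_\delta$ we have $\sigma_\delta=H_\delta$, and so passing to $\P_2\triangleq\P(\cdot\mid E_\delta)\ll\P$ makes $Y\triangleq\f(W_{\cdot\wedge\sigma_\delta})$ agree $\P_2$-a.s.\ with $\f(W_{\cdot\wedge H_\delta})$. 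Under $\P_2$, both $\f(W_{\cdot\wedge H_\delta})$ and $W_{\cdot\wedge H_\delta}$ are continuous semimartingales, the latter with bracket $t\wedge H_\delta$ (brackets are invariant under absolutely continuous changes of measure).

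The core step is then to derive, from the $\P_2$-semimartingale structure of $Y=\f(W_{\cdot\wedge H_\delta})$, that $\f|_{(-\delta,\delta)}$ is a difference of two convex functions. My plan is to apply Tanaka's formula to $Y$ at each level $a\in(-\delta,\delta)$, producing local times $L^a(Y)$, and to match these --- through $\f$ --- against the local times $L^x(W_{\cdot\wedge H_\delta})$ via the semimartingale occupation time formula (Lemma~\ref{lem: occ smg}). Strict positivity of $L^x(W_{\cdot\wedge H_\delta})$ at every interior $x\in(-\delta,\delta)$ on a set of positive $\P_2$-probability is inherited from the classical Brownian local-time theory under $\P$ via absolute continuity. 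The resulting identities should force $\f'_-$ to exist on $(-\delta,\delta)$ as a right-continuous function of locally finite variation, which is equivalent to $\f$ being the difference of two convex functions on $(-\delta,\delta)$.

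The main obstacle lies in this last step: after conditioning on $E_\delta$, the process $W$ acquires a Girsanov drift through the $\mathcal G_{H_\delta}$-measurable density $\1_{E_\delta}/\P(E_\delta)$ and is no longer Markov under $\P_2$, so the argument of \cite{CinJPrSha} --- which crucially relies on the strong Markov property --- cannot be invoked directly. My strategy is to carry out the Tanaka-based matching using only brackets and local times, both preserved by the absolutely continuous change of measure, thereby bypassing the Markov property entirely. The technical heart is to show that the matched identities indeed force local finiteness of the total variation of $\f'_-$ on $(-\delta,\delta)$, which is where I expect the delicate bookkeeping to concentrate.
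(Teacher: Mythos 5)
Your preliminary reductions are sound and essentially coincide with the paper's: conditioning on $\{\tau>0\}\in\mathcal G_0$ to assume $\tau>0$ a.s., and using the oscillation of Brownian motion immediately after hitting a point to rule out discontinuities of $\f$ near the origin. The problem is that the core of the theorem --- actually proving the difference-of-convex-functions (DC) property --- is left as a ``plan'', and the plan as described does not have a viable path to completion. The identity $L^{\f(x)}_t(\f(W))=\f'_+(x)\,L^x_t(W)$ that you intend to exploit (part~(iii) of Lemma~\ref{lem: occ smg}) is a \emph{consequence} of $\f$ being DC; it is not available as a tool before that structure is established. Likewise, to connect the Tanaka local times $L^a(Y)$ of $Y=\f(W_{\cdot\wedge H_\delta})$ back to $\f$ and to $L^x(W)$ you would need the occupation time formula for $Y$, which requires knowing $\langle Y,Y\rangle$ --- and showing that $d\langle Y,Y\rangle_s=h(W_s)\,ds$ for a function $h$ tied to a derivative of $\f$ is essentially equivalent to what you are trying to prove. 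So the ``matching'' step is circular, and the ``delicate bookkeeping'' you defer is in fact the entire content of the theorem. (Note also that a non-injective $\f$ sends several $W$-levels to one $Y$-level, which breaks any naive matching.)

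The paper avoids this entirely by a different mechanism, adapted from Prokaj: it embeds a simple random walk $S_n=W_{\tau_n}$ of step size $\beta$ into $W$, writes $\f(W_{\cdot\wedge\tau})=\f(0)+M+A$, and uses the conditional expectation $\E[\f(S_{n+1})-\f(S_n)\mid\mathcal G_{\tau_n}]$ to express the discrete second difference $\tfrac12|\f(x+\beta)+\f(x-\beta)-2\f(x)|$ in terms of increments of $A$ and boundary terms. Summing over levels, bounding the expected number of visits $\E[l(k\beta,\rho_K)]$ from below by $\beta^{-1}\E[L^{k\beta}_{\rho_K}(W)]$, and invoking the deterministic criterion of Lemma~\ref{lem: convex difference} (a function is DC on $I$ iff its discrete second differences are summable at rate $\beta^{-1}$) yields the claim without any Markov property and without presupposing regularity of $\f$. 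If you want to complete your proof, you should replace the Tanaka-matching step by an argument of this discrete type (or supply a genuinely new way to extract local finiteness of the variation of $\f'_-$ from the semimartingale decomposition alone); as written, the proposal does not prove the theorem.
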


Before we prove this result, let us stress that we recover the classical global result from~\cite{CinJPrSha}.

\begin{corollary}
	Let \(\f \colon \mathbb{R} \to \mathbb{R}\) be a Borel function and let \(W\) be a standard Brownian motion. If \(\f (W)\) is a semimartingale, then \(\f\) is the difference of two convex functions
from \(\mathbb{R}\) into \(\mathbb{R}\).
\end{corollary}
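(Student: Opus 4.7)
The plan is to promote the local statement of Theorem~\ref{lem: diff convex} to the claimed global one by exploiting the strong Markov property of Brownian motion, together with the fact (as recalled in the proof of Theorem~\ref{theo: NFLVR finite tiome horizon}) that being the difference of two convex functions on an open convex subset of $\mathbb R$ is a local property (\cite[(I), p.~707]{hart}).

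First, I would apply Theorem~\ref{lem: diff convex} directly to $W$ with, say, $\tau\equiv 1$ to obtain some $\delta_0>0$ such that $\f|_{(-\delta_0,\delta_0)}$ is the difference of two convex functions from $(-\delta_0,\delta_0)$ into $\mathbb R$. To handle an arbitrary point $x_0\in\mathbb R$, set $T_{x_0}\triangleq\inf(t\ge 0\colon W_t=x_0)$; by recurrence of Brownian motion this hitting time is almost surely finite. The strong Markov property then yields that $W'_t\triangleq W_{T_{x_0}+t}-x_0$, $t\ge 0$, is a standard Brownian motion starting at $0$ in the time-shifted (right-continuous) filtration $\mathcal G'_t\triangleq\mathcal G_{T_{x_0}+t}$.

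Since $\f(W)$ is a $(\mathcal G_t)$-semimartingale and the semimartingale property is preserved under forward time-shifting by an almost surely finite stopping time, the process $t\mapsto\f(W_{T_{x_0}+t})=g(W'_t)$ is a $(\mathcal G'_t)$-semimartingale, where $g(y)\triangleq\f(x_0+y)$. Applying Theorem~\ref{lem: diff convex} on the shifted filtered space with the Borel function $g$, the Brownian motion $W'$, and stopping time $\tau\equiv 1$ then yields some $\delta_{x_0}>0$ such that $g|_{(-\delta_{x_0},\delta_{x_0})}$ is a difference of two convex functions; equivalently, $\f|_{(x_0-\delta_{x_0},x_0+\delta_{x_0})}$ is a difference of two convex functions. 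As $x_0\in\mathbb R$ was arbitrary, the local-to-global principle recalled above yields that $\f$ is globally a difference of two convex functions on $\mathbb R$.

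The main technical point I expect to need to verify carefully is the preservation of the semimartingale property under forward shifting by $T_{x_0}$. This is a standard fact but worth a short argument: one can, for instance, take the canonical decomposition of $\f(W)$ as a special semimartingale and check that both the local-martingale and finite-variation parts remain so after the shift in $(\mathcal G'_t)$, or invoke an off-the-shelf shifting result. Once this is in place, the rest of the argument is a routine localization as described above.
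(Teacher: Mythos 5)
Your proposal is correct and follows essentially the same route as the paper: localize at an arbitrary point $x_0$ via the hitting time $T_{x_0}$ and the strong Markov property, apply Theorem~\ref{lem: diff convex} to the shifted Brownian motion, and conclude by the local-to-global property of differences of convex functions. The shifting step you flag as the main technical point is exactly what the paper handles with the off-the-shelf time-change result (Lemma~\ref{lem: change of time}(i), applied to $L(t)=T_{x_0}(W)+t$), so no further argument is needed there.
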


\begin{proof}
Take \(x \in \mathbb{R}\). By Lemma \ref{lem: change of time}, the process \(\f (W_{\cdot + T_x (W)})\) is a semimartingale.
As \(W_{\cdot + T_x (W)} - x\) is a standard Brownian motion,
Theorem~\ref{lem: diff convex} yields the existence of a \(\delta > 0\) such that \( (- \delta, \delta) \ni y \mapsto \f (y + x)\) is the difference of two convex functions,
i.e., \(\f\) is the difference of two convex functions on \((x - \delta, x+ \delta)\).
Thus, as being the difference of two convex functions is a local property
(use the equivalence between (a) and~(b) before Lemma~\ref{lem: generator}, or, alternatively, apply \cite[(I) on p.~707]{hart}),
it follows that \(\f \colon \mathbb{R} \to \mathbb{R}\) is the difference of two convex functions.
\end{proof}

	Theorem~\ref{lem: diff convex} follows by an adjustment of the argument in \cite[Theorem~2.1]{prokaj}, which relies on \cite[Proposition~6.3]{prokaj}, which provides a useful criterion for a continuous function to be the difference of two convex ones. For convenience, let us restate \cite[Proposition~6.3]{prokaj}:
	
	\begin{lemma} \label{lem: convex difference}
		Let \(\f \colon \mathbb{R} \to \mathbb{R}\) be a continuous function and let \(I\) be an open interval. Then,  \(\f \colon I \to \mathbb{R}\) is the difference of two convex functions if and only if for any compact interval \(K \subset I\)
		\[
		\limsup_{\beta \searrow 0} \frac{1}{\beta} \sum_{x \in K \cap  \beta \mathbb{Z}} \big| \f (x + \beta) + \f(x - \beta) - 2 \f (x)\big| < \infty.
		\]
	\end{lemma}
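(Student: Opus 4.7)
My plan is to reduce the theorem to the deterministic criterion of Lemma \ref{lem: convex difference}, following Prokaj's general strategy but adapted to the stopped-Brownian-motion setting. The main issue is that the semimartingale property of $Y := \f(W_{\cdot \wedge \tau})$ is available only up to the stochastic time $\tau$, so I first localize to a positive-probability event on which $\tau$ is bounded below by a deterministic time and on which $W$ stays in a bounded region.

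For the setup, choose $t_0 > 0$ with $\P(\tau \geq t_0) > 0$ (possible since $\P(\tau > 0) > 0$), and then pick $\delta > 0$ small enough that the event
$$
A := \{\tau \geq t_0\} \cap \Big\{\sup_{s \leq t_0} |W_s| \leq \delta\Big\}
$$
still has positive probability, using continuity of Brownian paths. Moreover, $\delta$ is taken small enough that for every $x \in (-\delta, \delta)$ the event $A \cap \{\rho_x \leq t_0\}$ has positive probability, where $\rho_x := \inf\{t \geq 0 : W_t = x\}$. I then argue that $\f$ is continuous on $(-\delta, \delta)$: for any such $x$, on $A \cap \{\rho_x < t_0\}$ the process $Y$ is c\`adl\`ag in a neighborhood of $\rho_x$; by the strong Markov property $W$ oscillates on both sides of $x$ in every right-neighborhood of $\rho_x$, so existence of the right limit $Y_{\rho_x+} = \f(x)$ forces $\f(x-) = \f(x+) = \f(x)$. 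Local boundedness of $\f$ near such $x$ is likewise forced by non-explosion of $Y$.

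To verify the summability condition of Lemma \ref{lem: convex difference} on a fixed compact $K \subset (-\delta, \delta)$, for small $\beta > 0$ I introduce the stopping times
\begin{align*}
\sigma_0 &:= 0, \\
\sigma_{k+1} &:= \inf\{t \geq \sigma_k : |W_t - W_{\sigma_k}| = \beta\} \wedge t_0 \wedge \tau.
\end{align*}
On $A$, between consecutive stopping times $W$ moves by exactly $\pm \beta$. Whenever three consecutive stopping times form an up-down excursion returning $W$ to its starting level $x := W_{\sigma_k}$, the second-order increment $(Y_{\sigma_{k+2}} - Y_{\sigma_{k+1}}) - (Y_{\sigma_{k+1}} - Y_{\sigma_k})$ equals $\pm [\f(x+\beta) + \f(x-\beta) - 2\f(x)]$. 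Summing over all such triples with base level in $K$ and comparing with the total variation $V(\omega)$ of the finite-variation part of $Y$ on $[0, t_0]$ together with the quadratic variation of its local martingale part, one obtains on $A$
$$
\sum_{k} \big|\f(W_{\sigma_k}+\beta) + \f(W_{\sigma_k}-\beta) - 2\f(W_{\sigma_k})\big| \; \leq \; C(\omega) < \infty,
$$
where the sum runs over the indices $k$ for which $(\sigma_k, \sigma_{k+1}, \sigma_{k+2})$ forms an up-down triple with $W_{\sigma_k} \in K$. By standard hitting-time computations for Brownian motion, on a further positive-probability subevent of $A$ the number of such triples based at each level $x \in K \cap \beta\mathbb{Z}$ within $[0, t_0]$ is bounded below by $c \beta^{-1}$ uniformly in $x$, for some deterministic constant $c > 0$.

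Combining these estimates, on a positive-probability event there is a pathwise inequality
$$
\frac{1}{\beta} \sum_{x \in K \cap \beta \mathbb{Z}} \big|\f(x+\beta) + \f(x-\beta) - 2\f(x)\big| \leq C'(\omega),
$$
whose left-hand side is deterministic and whose right-hand side is a.s.\ finite; consequently the left-hand side admits a deterministic finite $\limsup$ as $\beta \searrow 0$, and Lemma \ref{lem: convex difference} concludes that $\f$ is a difference of two convex functions on $(-\delta, \delta)$. The principal obstacle is the variational estimate: producing the uniform lower bound on the up-down excursion count across \emph{all} levels $x \in K \cap \beta\mathbb{Z}$ simultaneously on a single positive-probability event, which requires combining joint continuity of Brownian local time in the space variable with a concentration argument, and then performing the de-randomization that transfers pathwise bounds into the deterministic summability required by Lemma \ref{lem: convex difference}.
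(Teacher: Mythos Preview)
Your proposal does not address the stated lemma at all. Lemma~\ref{lem: convex difference} is a purely deterministic, real-analytic characterization: a continuous function on an open interval is a difference of two convex functions if and only if a certain discrete second-difference sum stays bounded as the mesh $\beta\searrow 0$. There is no Brownian motion, no stopping time, no semimartingale in the statement, and the proof should not involve any. What you have written is an attempt to prove Theorem~\ref{lem: diff convex} (the result that if $\f(W_{\cdot\wedge\tau})$ is a semimartingale then $\f$ is locally a difference of convex functions), \emph{using} Lemma~\ref{lem: convex difference} as a black box. That is the wrong target.

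For the record, the paper does not prove Lemma~\ref{lem: convex difference} either: it is quoted verbatim as \cite[Proposition~6.3]{prokaj} and used without proof. A direct argument goes roughly as follows. For the forward direction, if $\f=h_1-h_2$ with $h_i$ convex, then the second difference $\f(x+\beta)+\f(x-\beta)-2\f(x)$ is controlled by the increments of the monotone one-sided derivatives $(h_i)'_+$, and the sum over $K\cap\beta\mathbb Z$ telescopes into a bound by the total variation of $\f'_+$ on a fixed neighborhood of $K$. For the converse, the finiteness of the $\limsup$ gives a uniform mass bound for the signed measures $\mu_\beta=\beta^{-1}\sum_{x\in\beta\mathbb Z}[\f(x+\beta)+\f(x-\beta)-2\f(x)]\,\delta_x$ on compacts; any weak-$*$ limit point is a signed Radon measure $\mu$ on $I$ whose action on smooth test functions recovers the distributional second derivative of $\f$, and a function with second derivative a signed Radon measure is a difference of two convex functions. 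If you want to supply a proof here, that is the kind of argument required; your probabilistic machinery belongs to the proof of Theorem~\ref{lem: diff convex}, which the paper carries out separately.
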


\begin{proof}[Proof of Theorem~\ref{lem: diff convex}]
We first show that $\f$ is continuous in a sufficiently small neighborhood of zero.
For contradiction, assume that there exists a sequence $(x_n)_{n \in \mathbb{N}}$ of points such that $x_n\to0$ and such that $\f$ is not continuous at each $x_n$.
Since a.s.\ ${T_{x_n}(W)\to0}$, we can take a sufficiently large $N\in\mathbb N$ such that $\P(T_{x_N}(W)<\tau)>0$.
By the strong Markov property, $(W_{T_{x_N}(W)+s}-x_N)_{s\ge0}$ is a Brownian motion, hence has oscillating behavior as $s\searrow0$.
Therefore, a.s.\ on $\{T_{x_N}(W)<\tau\}$ it holds
$$
\limsup_{t\searrow T_{x_N}(W)}\f(W_{t\wedge\tau})
=\limsup_{x\to x_N}\f(x)
>\liminf_{x\to x_N}\f(x)
=\liminf_{t\searrow T_{x_N}(W)}\f(W_{t\wedge\tau}).
$$
This contradicts the right-continuity and hence, the semimartingale property of $\f(W_{\cdot\wedge\tau})$.
Therefore, we can find an $\varepsilon>0$ such that $\f$ is continuous on $(-2\varepsilon,2\varepsilon)$ and
$\P (T_{- \varepsilon} (W) \wedge T_{\varepsilon} (W) < \tau) > 0$.
Now, replacing $\f$ with the continuous function
$\f(-\varepsilon)\1_{(-\infty,-\varepsilon)}+\f\1_{[-\varepsilon,\varepsilon]}+\f(\varepsilon)\1_{(\varepsilon,\infty)}$
and $\tau$ with $\tau\wedge T_{- \varepsilon} (W) \wedge T_{\varepsilon} (W)$,
we see that we can w.l.o.g.\ assume that \(\f \colon \mathbb{R} \to \mathbb{R}\) is continuous.

Next, also notice that we can w.l.o.g.\ assume that \(\P (\tau > 0) = 1\). Indeed, in case \(\P (\tau > 0) < 1\), we can simply replace the probability measure \(\P\) by 
\[
\Q (d \omega) \triangleq \P (d \omega | \tau> 0) = \frac{\P (d \omega \cap \{\tau> 0\})}{\P (\tau > 0)}.
\]
Clearly, as \(\{\tau> 0\}\in \mathcal{G}_0\), \(W\) remains a \((\mathcal{G}_t)_{t \geq 0}\)-Brownian motion under \(\Q\) and, since \(\Q \ll \P\), Girsanov's theorem yields that \(\f (W_{\cdot \wedge \tau})\) is also a \(\Q\)-\((\mathcal{G}_t)_{t \geq 0}\)-semimartingale.
Thus, from now on, we assume that \(\f \colon \mathbb{R} \to \mathbb{R}\) is continuous and \(\P (\tau>0) = 1\).

	Fix \(\beta \in (0, 1)\) and define inductively
	\[
	\tau_0 \triangleq 0, \qquad \tau_n \triangleq \inf (t > \tau_{n - 1} \colon |W_t - W_{\tau_{n - 1}}| \geq \beta), \ \ n \in \mathbb{N}.
	\]
	It is easy to see that \((S_n \triangleq W_{\tau_n})_{n = 0}^\infty\) is a simple random walk starting in the origin with step size \(\beta\). 
	By hypothesis, 
	\[
	\f (W_{\cdot \wedge \tau}) = \f (0) + M + A, 
	\]
	where \(M\) is a continuous local martingale and \(A\) is a continuous process of (locally) finite variation both starting in zero. We denote the variation process of \(A\) by \(V\).
Take some $K>1$ and define
	\[
	\rho_K \triangleq \inf (t \geq 0 \colon|M_t| \vee |V_t| \vee |W_t| \geq K) \wedge K \wedge \tau.
	\]
	Next, take \(I = (a, c)\) with \(a < 0 < c\) small enough such that 
	\[
	\inf_{x \in I} \E \big[ L^x_{\rho_K} (W) \big] > 0,
	\]
	where \(\{L^x_t (W) \colon (t, x) \in \mathbb{R}_+ \times \mathbb{R}\}\) denotes the jointly continuous modification of the local time process of~\(W\).

Let us prove that such an interval \(I\) exists.
Thanks to the generalized It\^o formula (Lemma~\ref{lem: occ smg}), we have \[\E [L^x_{\rho_K} (W)] = \E[ |W_{\rho_K} - x|] - |x|,\]
and hence, the map \(x \mapsto \E [L^x_{\rho_K} (W)]\) is continuous.
Lemma~\ref{lem: pos LT} implies that $\E[L^0_{\rho_K}(W)]>0$.
Now, for a continuous function which is strictly positive in zero, we can find a sufficiently small neighborhood $I$ of the origin where this function is bounded away from zero.
	
	We set
	\[
	l (x, t) \triangleq \sum_{m = 0}^\infty \1 \{S_m = x, \tau_m < t\}, \quad (x,t) \in \mathbb{R} \times \mathbb{R}_+.
	\]
For \(n \in \mathbb{Z}_+\),
on the event \(\{\tau_n  < \rho_K\}\)
we get
\begin{align*}
	\big|\tfrac{1}{2} \big( \f (S_n + \beta) + \f(S_n - \beta) \big) - \f(S_{n}) \big|
	&= \big| \E\big[ \f (S_{n + 1}) - \f (S_{n}) \big| \mathcal{G}_{\tau_{n}}\big] \big|
	\\&= \big|\E\big[\f (W_{\tau_{n + 1}}) - \f(W_{\tau_{n + 1} \wedge \rho_K}) \big| \mathcal{G}_{\tau_n} \big] + \E\big[ A_{\tau_{n + 1} \wedge \rho_K} - A_{\tau_{n}} \big| \mathcal{G}_{\tau_n}\big] \big|
	\\&\leq \E\big[ |\f (W_{\tau_{n + 1}}) - \f(W_{\tau_{n + 1} \wedge \rho_K})| \big| \mathcal{G}_{\tau_n} \big] + \E\big[ |A_{\tau_{n + 1} \wedge \rho_K} - A_{\tau_{n}}| \big| \mathcal{G}_{\tau_n}\big].
\end{align*}
Consequently, we obtain
	\begin{align*}
	\sum_{k \colon k \beta \in I} & \tfrac{1}{2}  \big| \f ((k + 1) \beta) + \f((k - 1)\beta) - 2 \f(k \beta)\big| \E\big[ l (k \beta, \rho_K)\big] 
	\\&\hspace{0.45cm} = \E \Big[ \sum_{m = 0}^\infty\;\sum_{\substack{k \colon\! k \beta \in I}} \tfrac{1}{2} \big| \f (S_{m} + \beta) + \f(S_m - \beta) - 2 \f(S_m)\big| \1 \{S_m = k \beta, \tau_m < \rho_K\} \Big]
		\\&\hspace{0.45cm} \leq \E \Big[ \sum_{m = 0}^\infty\big(\E\big[ |\f (W_{\tau_{m + 1}}) - \f(W_{\tau_{m + 1} \wedge \rho_K})| \big| \mathcal{G}_{\tau_m} \big] 
		+ \E\big[ |A_{\tau_{m + 1} \wedge \rho_K} - A_{\tau_{m}}| \big| \mathcal{G}_{\tau_m}\big]\big) \1 \{\tau_m < \rho_K\} \Big]
		\\&\hspace{0.45cm} =  \sum_{m = 0}^\infty \E \Big[\big(\E\big[ |\f (W_{\tau_{m + 1}}) - \f(W_{\tau_{m + 1} \wedge \rho_K})| \big| \mathcal{G}_{\tau_m} \big] 
		+ \E\big[ |A_{\tau_{m + 1} \wedge \rho_K} - A_{\tau_{m}}| \big| \mathcal{G}_{\tau_m}\big]\big) \1 \{\tau_m < \rho_K\} \Big]
		\\&\hspace{0.45cm} =  \sum_{m = 0}^\infty \E \Big[\big( |\f (W_{\tau_{m + 1}}) - \f(W_{\tau_{m + 1} \wedge \rho_K})| +  |A_{\tau_{m + 1} \wedge \rho_K} - A_{\tau_{m}}|\big) \1 \{\tau_m < \rho_K\} \Big]
		\\&\hspace{0.45cm}\leq \E\Big[ \sum_{m = 0}^\infty |\f (W_{\tau_{m + 1}}) - \f(W_{\tau_{m + 1} \wedge \rho_K})| \1 \{ \tau_m < \rho_K < \tau_{m + 1} \} + V_{\rho_K} \Big]
		\\&\hspace{0.45cm}\leq \sup_{|x| \leq K + 1} 2|f (x)| + K.
	\end{align*}
	We now estimate the expectation \(\E[l (k \beta, \rho_K)]\). 
	Notice that, for all \(n \in \mathbb{Z}_+\), on the event \(\{\tau_n < \rho_K\}\)
	\begin{align} \label{eq: small observation}
	|W_{\tau_{n + 1} \wedge \rho_K} - k \beta| - |W_{\tau_n} - k \beta| = \begin{cases} W_{\tau_{n + 1} \wedge \rho_K} - W_{\tau_n}& \text{if } W_{\tau_n} \geq (k + 1) \beta,\\
	W_{\tau_{n}} - W_{\tau_{n +1} \wedge \rho_K}&\text{if }W_{\tau_n} \leq (k - 1) \beta,\\
	|W_{\tau_{n + 1} \wedge \rho_K} - k \beta|&\text{if } S_n = W_{\tau_n} = k \beta.\end{cases}
	\end{align}
For every \(n \in \mathbb{N}\), the generalized It\^o formula (Lemma~\ref{lem: occ smg}) yields that \(\P\)-a.s.
		\begin{align*}
		 \big(|W_{\tau_{n + 1} \wedge \rho_K} &- k \beta| - |W_{\tau_n} - k \beta| \big)\1 \{\tau_n < \rho_K\} 
		 \\&\quad = \Big(L^{k \beta}_{\tau_{n + 1} \wedge \rho_K} (W) - L^{k \beta}_{\tau_n} - \int_{\tau_n}^{\tau_{n + 1} \wedge \rho_K} \on{sgn} (W_s - k \beta) d W_s\Big) \1 \{\tau_n < \rho_K\}.
		 \end{align*}
		 Computing the conditional expectation \(\E[ \hspace{0.075cm}\cdot\hspace{0.075cm} | \mathcal{G}_{\tau_n}]\) of both sides and taking \eqref{eq: small observation} into account, we obtain from the martingale property of Brownian motion and the stochastic integral that \(\P\)-a.s.
		 \begin{align*}
		 \E\big[ |W_{\tau_{n + 1} \wedge \rho_K} &- k \beta| \1 \{S_n = k \beta, \tau_n < \rho_K\} | \mathcal{G}_{\tau_n}\big] 
		 = \E \big[ \big(L^{k \beta}_{\tau_{n + 1} \wedge \rho_K} - L^{k \beta}_{\tau_n}\big) \1 \{\tau_n < \rho_K\} | \mathcal{G}_{\tau_n} \big].
		 \end{align*}
		 Consequently, taking the expectation of both sides, we obtain that
		 \[ 
		  \E \big[ \big(L^{k \beta}_{\tau_{n + 1} \wedge \rho_K} - L^{k \beta}_{\tau_n}\big) \1 \{\tau_n < \rho_K\} \big] \leq \beta \P (S_n = k \beta, \tau_n < \rho_K).
		 \]
		 Summing over \(n\) from \(0\) to \(\infty\) and using Fubini's theorem yields that 
		 	\[
		 \E \big[ L^{k \beta}_{\rho_K} (W) \big] \leq \beta \E\big[ l (k \beta, \rho_K)\big].
		 \]	
	Putting all pieces together, we conclude that 
	\[
	\sum_{k \colon k \beta \in I} \tfrac{1}{\beta} \big| \f ((k + 1) \beta) + \f((k - 1)\beta) - 2 \f(k \beta)\big| \leq \frac{\max_{|x| \leq K + 1} 4 | \f (x) | + 2 K}{\inf_{x \in I} \E[ L^x_{\rho_K} (W)]} < \infty.
	\]
	Finally, we conclude from Lemma \ref{lem: convex difference} that the function \(\f \colon I \to \mathbb{R}\) is the difference of two convex functions.
\end{proof}

\subsection{A useful identity for natural filtrations}

For a process \(Y = (Y_t)_{t \geq 0}\) with paths in \(\Omega\), we define its natural filtration by 
\[
\mathcal{F}^Y_t \triangleq \bigcap_{\varepsilon > 0} \sigma (Y_{s}, s \leq t + \varepsilon), \quad t \in \mathbb{R}_+.
\]
The following technical observation is useful in some of our arguments.

\begin{lemma} \label{lem: sigma field stopped id}
	Let \(\tau\) be an \((\mathcal{F}_t)_{t \geq 0}\)-stopping time and let \(Y\) be a process with paths in \(\Omega\). Then, \(\rho \triangleq \tau \circ Y\) is an \((\mathcal{F}^Y_t)_{t \geq 0}\)-stopping time and 
	\begin{align} \label{eq: stopped sigma id}
		Y^{-1} (\mathcal{F}_\tau) = \mathcal{F}^Y_\rho.
	\end{align}
\end{lemma}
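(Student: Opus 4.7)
The plan is to verify the two assertions separately. The first is direct: $\{\rho\le t\}=Y^{-1}(\{\tau\le t\})$, and since $\{\tau\le t\}\in\mathcal F_t\subseteq\sigma(\X_s,s\le t+\varepsilon)$ for every $\varepsilon>0$, pulling back by $Y$ and intersecting over $\varepsilon>0$ gives $\{\rho\le t\}\in\mathcal F^Y_t$.

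For the identity~\eqref{eq: stopped sigma id}, the inclusion $Y^{-1}(\mathcal F_\tau)\subseteq\mathcal F^Y_\rho$ is immediate from the definitions: for $A\in\mathcal F_\tau$ and $t\ge 0$,
\[
Y^{-1}(A)\cap\{\rho\le t\}=Y^{-1}\bigl(A\cap\{\tau\le t\}\bigr)\in Y^{-1}(\mathcal F_t)\subseteq\mathcal F^Y_t.
\]
For the reverse inclusion, my plan is to use Galmarino's test for the right-continuous filtration on the canonical space $\Omega=C(\mathbb R_+;[-\infty,\infty])$. Setting $\Phi\colon\Omega\to\Omega$, $\Phi(\omega)(s)\triangleq\omega(s\wedge\tau(\omega))$ (which is $\mathcal F/\mathcal F$-measurable because $\X_s\circ\Phi=\X_{s\wedge\tau}$ is $\mathcal F_s$-measurable by optional stopping), Galmarino identifies
\[
\mathcal F_\tau=\Phi^{-1}(\mathcal F)=\sigma\bigl(\X_{s\wedge\tau}:s\ge 0\bigr),
\]
so pulling back by $Y$ yields $Y^{-1}(\mathcal F_\tau)=\sigma(Y_{s\wedge\rho}:s\ge 0)$ via $(\Phi\circ Y)(\omega')(s)=Y_{s\wedge\rho(\omega')}(\omega')$.

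It then remains to show $\mathcal F^Y_\rho\subseteq\sigma(Y_{s\wedge\rho}:s\ge 0)$. Here my plan is first to observe $\mathcal F^Y_t=Y^{-1}(\mathcal F_t)$ (the non-trivial direction: if $B'\in\mathcal F^Y_t$, choose $C_n\in\sigma(\X_s,s\le t+1/n)$ with $B'=Y^{-1}(C_n)$ for each $n$; then $\bigcap_n C_n\in\mathcal F_t$ and $B'=Y^{-1}(\bigcap_n C_n)$), and then, for $B\in\mathcal F^Y_\rho$, to pick for each $t\in\mathbb Q_+$ a set $E_t\in\mathcal F_t$ contained in $\{\tau\le t\}$ with $B\cap\{\rho\le t\}=Y^{-1}(E_t)$, monotonize via $E_t\mapsto\bigcup_{q\in\mathbb Q_+,\,q\le t}E_q$, and assemble $A\triangleq\bigcup_{q\in\mathbb Q_+}E_q\cup A_\infty$, where $A_\infty\subseteq\{\tau=\infty\}$ is chosen in $\mathcal F$ so as to represent $B\cap\{\rho=\infty\}$. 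The main obstacle is checking that this $A$ lies in $\mathcal F_\tau$ and satisfies $Y^{-1}(A)=B$: the monotonization yields $A\cap\{\tau\le t\}=E_t\in\mathcal F_t$ (since any $E_q\cap\{\tau\le t\}$ with $q>t$ is contained in $E_t$ on $Y(\Omega')$, and the gluing absorbs the residual ambiguity on the complement of $Y(\Omega')$), while $Y^{-1}(A)=(B\cap\{\rho<\infty\})\cup(B\cap\{\rho=\infty\})=B$ by construction.
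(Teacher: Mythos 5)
The first assertion and the inclusion \(Y^{-1}(\mathcal F_\tau)\subseteq\mathcal F^Y_\rho\) are fine (and essentially match the paper, which quotes \cite[Proposition~10.35]{Jacod} for the stopping-time part). The reverse inclusion, however, rests on a false identity. For the \emph{right-continuous} filtration \(\mathcal F_t=\bigcap_{s>t}\sigma(\X_r,r\le s)\), Galmarino's test does \emph{not} give \(\mathcal F_\tau=\sigma(\X_{s\wedge\tau}\colon s\ge0)\). Already for \(\tau\equiv0\) this would assert \(\mathcal F_0=\sigma(\X_0)\), whereas \(\mathcal F_0\) is the germ \(\sigma\)-field and contains, e.g., the event that the path is constant on some interval \([0,\varepsilon)\). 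The correct statement, which the paper takes from \cite[Problem~1, p.~88]{itokean74}, is \(\mathcal F_\tau=\bigcap_{\varepsilon>0}\sigma(\X_{t\wedge(\tau+\varepsilon)},t\ge0)\); the naive form you invoke holds only for the raw filtration \(\mathcal F^o_t=\sigma(\X_s,s\le t)\).

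Second, even granting a usable description of \(\mathcal F_\tau\), your assembly of the set \(A=\bigcup_q E_q\cup A_\infty\) does not close. The sets \(E_q\) are only determined up to modifications off the range of \(Y\) (which need not be measurable), so for \(q>t\) you cannot conclude \(E_q\cap\{\tau\le t\}=E_t\) as sets, only that the two sides have the same \(Y\)-preimage; hence \(A\cap\{\tau\le t\}\in\mathcal F_t\) is not established, and the phrase about the gluing ``absorbing the residual ambiguity'' is exactly the step that is missing. The paper's argument is designed to avoid this: for \(A\in\mathcal F^Y_\rho\) one picks, for each \(n\), a set \(B_n\in\sigma(\X_{t\wedge(\tau+1/n)},t\ge0)\) with \(Y^{-1}(B_n)=A\) and passes to \(G=\bigcap_{n}\bigcup_{m\ge n}B_m\), which is an honest element of \(\bigcap_{\varepsilon>0}\sigma(\X_{t\wedge(\tau+\varepsilon)},t\ge0)=\mathcal F_\tau\) and still satisfies \(Y^{-1}(G)=A\). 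A small additional slip of the same nature: in your argument for \(\mathcal F^Y_t=Y^{-1}(\mathcal F_t)\), the set \(\bigcap_n C_n\) need not lie in \(\mathcal F_t\), since \(C_m\) for small \(m\) is only \(\sigma(\X_s,s\le t+1/m)\)-measurable; the same limsup device is needed there too.
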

\begin{proof}
	The fact that \(\rho\) is an \((\mathcal{F}^Y_t)_{t \geq 0}\)-stopping time is part~(a) of \cite[Proposition 10.35]{Jacod}. We now prove identity~\eqref{eq: stopped sigma id}. Recall from \cite[Problem~1, p.~88]{itokean74} that 
	\begin{align*}
		\mathcal{F}_\tau &= \bigcap_{\varepsilon > 0} \sigma (\X_{t \wedge (\tau + \varepsilon)}, t \geq 0),\qquad
		\mathcal{F}^Y_\rho = \bigcap_{\varepsilon > 0} \sigma (Y_{t \wedge (\rho + \varepsilon)}, t \geq 0).
	\end{align*}
	Thus, for every \(\varepsilon > 0\), we have
	\[
	Y^{-1} (\mathcal{F}_\tau) \subset Y^{-1} (\sigma (\X_{t \wedge (\tau + \varepsilon)}, t \geq 0)) = \sigma (Y_{t \wedge (\rho + \varepsilon)}, t \geq 0), 
	\]
	which implies that \(Y^{-1} (\mathcal{F}_\tau) \subset \mathcal{F}^Y_\rho\). 
	To establish the converse inclusion, take \(A \in \mathcal{F}^Y_\rho\). Then, for every \(n \in \mathbb{N}\), \(A \in \sigma (Y_{t \wedge (\rho + 1/n)}, t \geq 0) = Y^{-1} (\sigma (\X_{t \wedge (\tau + 1/n)}, t \geq 0))\), which means there exists a set \(B_n \in \sigma (\X_{t \wedge (\tau + 1/n)}, t \geq 0)\) such that \(A = Y^{-1} (B_n)\).
	Set \(G \triangleq \bigcap_{n \in \mathbb{N}} \bigcup_{m \geq n} B_m\) and notice that \(G \in \mathcal{F}_\tau\). Finally, as \(Y^{-1} (G) = \bigcap_{n \in \mathbb{N}} \bigcup_{m \geq n} Y^{-1} (B_m) = A\), we conclude that \(A \in Y^{-1} (\mathcal{F}_\tau)\), which finishes the proof of~\eqref{eq: stopped sigma id}.
\end{proof}

\section{Generalized Density and Differentiation of Measures}\label{app: C}

\subsection{Generalized Density}\label{subsec:GenDen}
Let $(\Omega,\mathcal F)$ be a measurable space and let $\mu,\nu$ be $\sigma$-finite measures on $(\Omega,\mathcal F)$.
\emph{Generalized Radon--Nikodym derivative} (or \emph{generalized density})
$\partial\mu/\partial\nu$
is an $\mathcal F$-measurable mapping $\Omega\to[0,\infty]$ such that
\begin{equation}\label{eq:260922a1}
\mu,\nu\text{-a.e.}\quad
\frac{\partial\mu}{\partial\nu}
=
\frac{d\mu/d\gamma}{d\nu/d\gamma},
\end{equation}
where $\gamma$ is any $\sigma$-finite measure on $(\Omega,\mathcal F)$ such that $\mu\ll\gamma$ and $\nu\ll\gamma$
($d\mu/d\gamma$ and $d\nu/d\gamma$ denote the corresponding Radon--Nikodym derivatives).
To define a version of $\partial\mu/\partial\nu$, we can simply take the right-hand side of~\eqref{eq:260922a1} with any convention for how to understand $0/0$, e.g., $0/0\triangleq1$.\footnote{In fact, as $(\mu+\nu)(d\mu/d\gamma=d\nu/d\gamma=0)=0$, it does not matter which convention to choose for $0/0$.}
We emphasize that this definition does not depend on the choice of the dominating measure $\gamma$ because it holds that $\mu+\nu\ll\gamma$,
\begin{align*}
\gamma\text{-a.e., hence, }\mu,\nu\text{-a.e.}\quad
\frac{d\mu}{d\gamma}
&=
\frac{d\mu}{d(\mu+\nu)}
\frac{d(\mu+\nu)}{d\gamma},
\\
\gamma\text{-a.e., hence, }\mu,\nu\text{-a.e.}\quad
\frac{d\nu}{d\gamma}
&=
\frac{d\nu}{d(\mu+\nu)}
\frac{d(\mu+\nu)}{d\gamma},
\end{align*}
and
$(\mu+\nu)(d(\mu+\nu)/d\gamma=0)=0$, thus,
$$
\mu,\nu\text{-a.e.}\quad
\frac{d\mu/d\gamma}{d\nu/d\gamma}
=
\frac{d\mu/d(\mu+\nu)}{d\nu/d(\mu+\nu)},
$$
where the latter expression does not depend on $\gamma$.
We also notice that in the case $\mu\ll\nu$ the generalized Radon--Nikodym derivative is nothing else but the standard one (take $\gamma=\nu$ in~\eqref{eq:260922a1}).

The generalized Radon--Nikodym derivative appears without a special name
in \cite[Theorem~A.17]{FollmerSchied} and in \cite[Theorem~7.1]{Jacod},
and it appears under the name \emph{Lebesgue derivative} in formula~(29) of \cite[Section~III.9]{Shiryaev}.
This object
is a convenient tool to concisely describe the mutual arrangement between $\mu$ and $\nu$ from the viewpoint of absolute continuity and singularity. We illustrate this by the following lemma. Its proof is straightforward and therefore omitted.

\begin{lemma}\label{lem:260922a1}
There exist pairwise disjoint sets $E,S_\mu,S_\nu\in\mathcal F$ such that
$$
\Omega=E\uplus S_\mu\uplus S_\nu,
\quad
\mu\sim\nu\text{ on }\mathcal F\cap E,
\quad
\mu(S_\nu)=\nu(S_\mu)=0
$$
(in particular, $\mu\perp\nu$ on $\mathcal F\cap E^c$, where $E^c\triangleq\Omega\setminus E$).
Such sets $E,S_\mu,S_\nu$ are $\mu,\nu$-a.e.\ unique.
Moreover, $\mu,\nu$-a.e.\ it holds
$$
E=\Big\{\frac{\partial\mu}{\partial\nu}\in(0,\infty)\Big\},
\quad
S_\mu=\Big\{\frac{\partial\mu}{\partial\nu}=\infty\Big\},
\quad
S_\nu=\Big\{\frac{\partial\mu}{\partial\nu}=0\Big\}.
$$
\end{lemma}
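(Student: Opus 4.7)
The plan is to obtain the decomposition by choosing a convenient dominating measure and reading off everything from the resulting densities.

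First I would take $\gamma\triangleq\mu+\nu$, which is $\sigma$-finite and dominates both $\mu$ and $\nu$, and set $f\triangleq d\mu/d\gamma$, $g\triangleq d\nu/d\gamma$. Since $\mu+\nu=\gamma$, we have $f+g=1$ $\gamma$-a.e., so modifying $f,g$ on a $\gamma$-null set I may assume $f+g=1$ identically. I then define
\[
E\triangleq\{0<f<1\},\qquad S_\mu\triangleq\{f=1\}=\{g=0\},\qquad S_\nu\triangleq\{f=0\}=\{g=1\}.
\]
These three sets are pairwise disjoint and cover $\Omega$.

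Next I would verify the three claimed properties. For $\mu\sim\nu$ on $\mathcal F\cap E$: if $A\in\mathcal F$ with $A\subset E$, then $\mu(A)=\int_A f\,d\gamma$ and $\nu(A)=\int_A g\,d\gamma$, and since $f,g>0$ on $E$, vanishing of either integral forces $\gamma(A)=0$, hence vanishing of the other. For the singular parts, $\mu(S_\nu)=\int_{S_\nu} f\,d\gamma=0$ because $f=0$ on $S_\nu$, and analogously $\nu(S_\mu)=0$.

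For the uniqueness, given a second triple $E',S_\mu',S_\nu'$ with the same properties, I would argue that the symmetric difference $S_\nu\triangle S_\nu'$ is $\mu,\nu$-null. Indeed, $\mu(S_\nu\setminus S_\nu')=0$ follows from $\mu(S_\nu)=0$; and $\nu(S_\nu\setminus S_\nu')$ decomposes as $\nu(S_\nu\cap E')+\nu(S_\nu\cap S_\mu')$, the first vanishing because $\mu\sim\nu$ on $\mathcal F\cap E'$ (together with $\mu(S_\nu\cap E')=0$) and the second because $\nu(S_\mu')=0$. The reverse inclusion and the analogous statements for $S_\mu,E$ are handled symmetrically.

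Finally, the characterization in terms of $\partial\mu/\partial\nu$ follows directly from the construction: the right-hand side of \eqref{eq:260922a1} applied with this particular $\gamma$ is a version of $\partial\mu/\partial\nu$ that equals $f/g$ (with any convention for $0/0$ and $1/0$), which takes values in $(0,\infty)$, $\{\infty\}$, $\{0\}$ on $E$, $S_\mu$, $S_\nu$, respectively; since any two versions of the generalized density agree $\mu,\nu$-a.e., the stated identification of $E,S_\mu,S_\nu$ via an arbitrary version of $\partial\mu/\partial\nu$ holds $\mu,\nu$-a.e. There is no real obstacle here; the only minor care needed is the convention at the points where $g=0$ or $f=g=0$, which is absorbed into the $\mu,\nu$-a.e.\ qualifier because $\nu(\{g=0\})=0$ and $(\mu+\nu)(\{f=g=0\})=0$.
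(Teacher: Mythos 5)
Your proof is correct and is exactly the argument the paper intends: the paper omits the proof as straightforward, and its discussion immediately preceding the lemma already reduces everything to the dominating measure $\gamma=\mu+\nu$, just as you do. The only nitpick is the parenthetical ``with any convention for $0/0$ and $1/0$'': for the identification $S_\mu=\{\partial\mu/\partial\nu=\infty\}$ to hold up to $\mu$-null sets you do need the standard convention $c/0=\infty$ for $c>0$, because $S_\mu=\{g=0\}$ may have positive $\mu$-measure, so its value there is not absorbed by the $\mu,\nu$-a.e.\ qualifier; only the $0/0$ convention is genuinely irrelevant (indeed, with your normalization $f+g=1$ that set is empty).
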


\begin{corollary}\label{cor:260922a1}
\begin{enumerate}
\item[\textup{(i)}]
$\mu$-a.e.\ $\partial\mu/\partial\nu$ is $(0,\infty]$-valued.

\item[\textup{(ii)}]
$\mu\ll\nu$ if and only if $\mu$-a.e.\ $\partial\mu/\partial\nu<\infty$.

\item[\textup{(iii)}]
$\mu\perp\nu$ if and only if $\mu$-a.e.\ $\partial\mu/\partial\nu=\infty$.
\end{enumerate}
\end{corollary}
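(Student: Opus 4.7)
\medskip

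\noindent\textbf{Proof plan.} All three items are direct corollaries of the decomposition $\Omega=E\uplus S_\mu\uplus S_\nu$ provided by Lemma~\ref{lem:260922a1}, together with its identification of the three sets via $\partial\mu/\partial\nu$. The plan is therefore to simply translate that lemma into statements about the generalized density.

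For~(i), I would use the fact that $\mu(S_\nu)=0$ combined with the $\mu,\nu$-a.e.\ identity $S_\nu=\{\partial\mu/\partial\nu=0\}$. Hence $\mu(\{\partial\mu/\partial\nu=0\})=0$, which is exactly the claim.

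For~(ii) and~(iii) I would first observe, from Lemma~\ref{lem:260922a1}, the following structural fact: writing $\mu=\mu|_{E}+\mu|_{S_\mu}+\mu|_{S_\nu}$, the piece $\mu|_E$ is equivalent to $\nu|_E$ (so, in particular, $\mu|_E\ll\nu$), the piece $\mu|_{S_\nu}$ is the zero measure (since $\mu(S_\nu)=0$), and the piece $\mu|_{S_\mu}$ is singular with respect to $\nu$ (since $\nu(S_\mu)=0$). Thus the Lebesgue decomposition of $\mu$ with respect to $\nu$ is essentially $\mu|_{E}+\mu|_{S_\mu}$, with absolutely continuous part $\mu|_{E}$ and singular part $\mu|_{S_\mu}$. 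Consequently,
\[
\mu\ll\nu\ \Longleftrightarrow\ \mu(S_\mu)=0,\qquad
\mu\perp\nu\ \Longleftrightarrow\ \mu(E)=0.
\]
Combining the first equivalence with $S_\mu=\{\partial\mu/\partial\nu=\infty\}$ $\mu$-a.e.\ yields~(ii), and combining the second with $E=\{\partial\mu/\partial\nu\in(0,\infty)\}$ $\mu$-a.e.\ together with~(i) (which gives $\mu$-a.e.\ $\partial\mu/\partial\nu>0$) yields~(iii).

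There is no real obstacle; everything follows by bookkeeping from Lemma~\ref{lem:260922a1}. The only mild care needed is to note that in~(ii) one uses $\mu$-a.e. $\partial\mu/\partial\nu>0$ from~(i) to pass between ``$\partial\mu/\partial\nu<\infty$ $\mu$-a.e.'' and ``$\partial\mu/\partial\nu\in(0,\infty)$ $\mu$-a.e.'', and analogously in~(iii) to pass between ``$\partial\mu/\partial\nu=\infty$ $\mu$-a.e.'' and ``$\partial\mu/\partial\nu\notin(0,\infty)$ $\mu$-a.e.''.
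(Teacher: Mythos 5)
Your proof is correct and matches the paper's intent exactly: the paper presents this corollary as an immediate consequence of Lemma~\ref{lem:260922a1} (with no written proof), and your bookkeeping via the decomposition $\Omega=E\uplus S_\mu\uplus S_\nu$ and the $\mu,\nu$-a.e.\ identifications of $E$, $S_\mu$, $S_\nu$ is precisely the intended argument.
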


Another convenient feature of the generalized density is its symmetry in $\mu$ and~$\nu$ (regardless of the mutual arrangement between $\mu$ and~$\nu$):
$\mu,\nu$-a.e.\ we, clearly, have
$$
\frac{\partial\nu}{\partial\mu}=\frac1{\partial\mu/\partial\nu}.
$$
For an illustration, we now rewrite Corollary~\ref{cor:260922a1},
which views the generalized density $\partial\mu/\partial\nu$ only under~$\mu$,
into a similar result  that views $\partial\mu/\partial\nu$ only under~$\nu$.

\begin{corollary}\label{cor:260922a2}
\begin{enumerate}
\item[\textup{(i)}]
$\nu$-a.e.\ $\partial\mu/\partial\nu$ is $[0,\infty)$-valued.

\item[\textup{(ii)}]
$\nu\ll\mu$ if and only if $\nu$-a.e.\ $\partial\mu/\partial\nu>0$.

\item[\textup{(iii)}]
$\mu\perp\nu$ if and only if $\nu$-a.e.\ $\partial\mu/\partial\nu=0$.
\end{enumerate}
\end{corollary}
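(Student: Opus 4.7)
The plan is to derive Corollary~\ref{cor:260922a2} as the symmetric counterpart to Corollary~\ref{cor:260922a1} by exchanging the roles of $\mu$ and $\nu$ and then translating statements about $\partial\nu/\partial\mu$ into statements about $\partial\mu/\partial\nu$ via the pointwise reciprocity relation
\[
\frac{\partial\nu}{\partial\mu}=\frac{1}{\partial\mu/\partial\nu}\qquad\mu,\nu\text{-a.e.},
\]
which was already noted in the text immediately preceding the statement. Corollary~\ref{cor:260922a1} itself is not restricted to a particular ordering of the two measures, so it applies verbatim to the pair $(\nu,\mu)$ in place of $(\mu,\nu)$.

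Concretely, first I would apply part~(i) of Corollary~\ref{cor:260922a1} to $(\nu,\mu)$ to get that $\nu$-a.e.\ $\partial\nu/\partial\mu\in(0,\infty]$; inverting via the reciprocity relation yields $\nu$-a.e.\ $\partial\mu/\partial\nu\in[0,\infty)$, which is~(i). Next, part~(ii) of Corollary~\ref{cor:260922a1} applied to $(\nu,\mu)$ says that $\nu\ll\mu$ is equivalent to $\nu$-a.e.\ $\partial\nu/\partial\mu<\infty$; using reciprocity, the latter condition is equivalent to $\nu$-a.e.\ $\partial\mu/\partial\nu>0$, which gives~(ii). Finally, part~(iii) of Corollary~\ref{cor:260922a1} applied to $(\nu,\mu)$ says that $\nu\perp\mu$ is equivalent to $\nu$-a.e.\ $\partial\nu/\partial\mu=\infty$, and by reciprocity this is equivalent to $\nu$-a.e.\ $\partial\mu/\partial\nu=0$; together with the obvious symmetry $\mu\perp\nu\Leftrightarrow\nu\perp\mu$, this yields~(iii).

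There is essentially no obstacle here: the only care needed is bookkeeping of the null sets on which the reciprocity formula may fail, but since both the reciprocity relation and the statement we want hold only $\nu$-a.e., these null sets can be absorbed without issue.
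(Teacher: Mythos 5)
Your proposal is correct and matches the paper's intent exactly: the paper presents Corollary~\ref{cor:260922a2} as an immediate rewriting of Corollary~\ref{cor:260922a1} via the reciprocity relation $\partial\nu/\partial\mu=1/(\partial\mu/\partial\nu)$ (valid $\mu,\nu$-a.e.), which is precisely your argument. No further comment is needed.
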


We, finally, remark that the unique (Lebesgue) decomposition
$\mu=\mu^{ac}+\mu^s$, where $\mu^{ac}$ and $\mu^s$ are $\sigma$-finite measures on $(\Omega,\mathcal F)$ such that $\mu^{ac}\ll\nu$ and $\mu^s\perp\nu$, can be easily expressed with the help of the generalized Radon--Nikodym derivative,
namely
$$
\mu^{ac}(A)=\int_A \frac{\partial\mu}{\partial\nu}\,d\nu,
\quad
\mu^s(A)=\mu\Big(A\cap\Big\{\frac{\partial\mu}{\partial\nu}=\infty\Big\}\Big),
\quad
A\in\mathcal F,
$$
which easily follows from Lemma~\ref{lem:260922a1}
(alternatively, see \cite[Theorem~A.17]{FollmerSchied} or \cite[(29) in Section~III.9]{Shiryaev}).
In particular, viewed under $\nu$ (but not under $\mu$!), $\partial\mu/\partial\nu$ is a version of the Radon--Nikodym derivative $d\mu^{ac}/d\nu$ of the absolutely continuous part $\mu^{ac}$ of $\mu$ with respect to~$\nu$.

\subsection{Generalized density process and Jessen's theorem}
With the concept of generalized density we can in a natural way generalize Jessen's theorem (Corollary~\ref{cor:030323a1} below) and provide a very simple proof.

We consider a filtered space $(\Omega,\mathcal F,(\mathcal F_n)_{n\in\mathbb N})$ with a discrete-time filtration such that $\mathcal F=\bigvee_{n\in\mathbb N}\mathcal F_n$
(i.e., $\bigcup_{n\in\mathbb N}\mathcal F_n$ generates $\mathcal F$).
In this subsection, for any probability measure $\P$ on $(\Omega,\mathcal F)$, we use the notation
$$
\P_n\triangleq\P|_{\mathcal F_n}
$$
for the restriction of $\P$ to $\mathcal F_n$.

Let $\P$ and $\Q$ be two arbitrary probability measures on $(\Omega,\mathcal F)$.
The process $(\partial\P_n/\partial\Q_n)_{n\in\mathbb N}$ is called the \emph{generalized density process}, where $\partial\P_n/\partial\Q_n$ denote the corresponding generalized Radon-Nikodym derivatives.

\begin{theorem}\label{th:030323a1}
$\P,\Q$-a.s.\ it holds that
$$
\frac{\partial\P_n}{\partial\Q_n}
\to
\frac{\partial\P}{\partial\Q},\quad
n\to\infty.
$$
\end{theorem}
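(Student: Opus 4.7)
The plan is to reduce the problem to the classical Doob martingale convergence theorem by working under a common dominating measure. Set $R\triangleq(\P+\Q)/2$, which is a probability measure on $(\Omega,\mathcal F)$ dominating both $\P$ and $\Q$. Let $p\triangleq d\P/dR$ and $q\triangleq d\Q/dR$, and, for each $n\in\mathbb N$, let $p_n\triangleq d\P_n/dR_n$ and $q_n\triangleq d\Q_n/dR_n$. By definition of the generalized Radon--Nikodym derivative (formula~\eqref{eq:260922a1} with $\gamma=R$), we have $R$-a.e.
$$
\frac{\partial\P_n}{\partial\Q_n}=\frac{p_n}{q_n}
\qquad\text{and}\qquad
\frac{\partial\P}{\partial\Q}=\frac{p}{q},
$$
with any fixed convention for $0/0$, e.g.\ $0/0\triangleq1$ (recall that this is irrelevant because $R(p=q=0)=0$, since $p+q=2$ $R$-a.s.).

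Next, I will observe that $p_n=\E^R[p\,|\,\mathcal F_n]$ and $q_n=\E^R[q\,|\,\mathcal F_n]$, so $(p_n)$ and $(q_n)$ are non-negative bounded $R$-martingales (in fact, $0\le p_n,q_n\le 2$ with $p_n+q_n=2$ $R$-a.s.). Since $\mathcal F=\bigvee_{n\in\mathbb N}\mathcal F_n$, Doob's (uniformly integrable) martingale convergence theorem yields
$$
p_n\to p\quad\text{and}\quad q_n\to q\qquad R\text{-a.s.}
$$
Taking limits in $p_n+q_n=2$ gives $p+q=2$ $R$-a.s., so $R$-a.s.\ at least one of $p,q$ is strictly positive.

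Now I would split into the two $R$-a.s.\ exhaustive cases. On $\{q>0\}$, continuity gives $p_n/q_n\to p/q\in[0,\infty)$; on $\{q=0\}$ we have $p=2>0$ $R$-a.s., so $p_n\to 2$ and $q_n\to 0$, hence $p_n/q_n\to\infty=p/q$ (understood in the sense of our convention). Combining both cases, $R$-a.s.\ $\partial\P_n/\partial\Q_n\to\partial\P/\partial\Q$ in $[0,\infty]$. Since $\P\ll R$ and $\Q\ll R$, this convergence passes to $\P$-a.s.\ and $\Q$-a.s.\ convergence.

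The only subtlety — and the only place the proof deviates from the textbook Jessen argument — is the careful bookkeeping of the $\infty$-valued limit in the case $q=0$, which is precisely where the generalization beyond the standard ``$\P\ll\Q$'' setting sits; but the constraint $p_n+q_n=2$ makes this completely transparent. All other steps are immediate applications of $L^1$ martingale convergence and the definition of the generalized density.
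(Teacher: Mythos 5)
Your proof is correct and follows essentially the same route as the paper's: both pass to the dominating measure $(\P+\Q)/2$, identify the restricted densities as conditional expectations, and apply uniformly integrable martingale convergence before dividing. The only difference is cosmetic: you spell out the final case analysis for the ratio (including the $\infty$-valued limit on $\{q=0\}$), which the paper compresses into ``the result follows from~\eqref{eq:260922a1}.''
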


\begin{proof}
Set $\QQ\triangleq\frac{\P+\Q}2$, notice that $\P\ll\QQ$ and consider the process $(Z_n)_{n\in\mathbb N}$, $Z_n\triangleq d\P_n/d\QQ_n$, where $d\P_n/d\QQ_n$ denotes the (standard) Radon-Nikodym derivative.
We have $Z_n=\E^\QQ[d\P/d\QQ|\mathcal F_n]$, $n\in\mathbb N$,
and consequently, the following:
\begin{enumerate}
\item[-]
$(Z_n)_{n \in \mathbb{N}}$ is a uniformly integrable $\QQ$-martingale.

\item[-]
$\QQ$-a.s.\ and in $L^1(\QQ)$, $Z_n\to Z_\infty$, $n\to\infty$, for some random variable $Z_\infty\in L^1(\QQ)$.

\item[-]
For all $n_0\in\mathbb N$ and $A\in\mathcal F_{n_0}$,
$$
\int_A Z_\infty\,d\QQ
=
\lim_{n_0\le n\to\infty}\int_A Z_n\,d\QQ
=
\P(A)
=
\int_A \frac{d\P}{d\QQ}\,d\QQ.
$$

\item[-]
By a monotone class argument, for all $A\in\bigvee_{n\in\mathbb N}\mathcal F_n=\mathcal F$,
$$
\int_A Z_\infty\,d\QQ
=
\int_A \frac{d\P}{d\QQ}\,d\QQ.
$$
This implies that $\QQ$-a.s.\ $Z_\infty=d\P/d\QQ$.
\end{enumerate}
Concluding, $\QQ$-a.s.\ it holds
$$
\frac{d\P_n}{d\QQ_n}
\to
\frac{d\P}{d\QQ}
\quad\text{and}\quad
\frac{d\Q_n}{d\QQ_n}
\to
\frac{d\Q}{d\QQ},
\quad
n\to\infty
$$
(the second claim holds by symmetry).
Now the result follows from~\eqref{eq:260922a1}.
\end{proof}

Let $\P^{ac}$ denote the absolutely continuous part of $\P$ with respect to $\Q$.
Similarly, let $\P_n^{ac}$ be the absolutely continuous part of $\P_n$ with respect to $\Q_n$.
Notice that, in general, $\P_n^{ac}$ is \emph{not} the restriction of $\P^{ac}$ to $\mathcal F_n$.

Recalling the discussion after Corollary~\ref{cor:260922a2}, we obtain Jessen's theorem
(see \cite[Theorem~5.2.20]{stroock})
as an immediate consequence of Theorem~\ref{th:030323a1}.

\begin{corollary}[Jessen's theorem]\label{cor:030323a1}
$\Q$-a.s.\ it holds that
$$
\frac{d\P_n^{ac}}{d\Q_n}
\to
\frac{d\P^{ac}}{d\Q},\quad
n\to\infty.
$$
\end{corollary}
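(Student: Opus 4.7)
The plan is to read Corollary~\ref{cor:030323a1} off Theorem~\ref{th:030323a1} directly, using the identification of the generalized Radon--Nikodym derivative with the Radon--Nikodym derivative of the absolutely continuous part. That identification is already recorded in the discussion at the end of Section~\ref{subsec:GenDen}: for any two $\sigma$-finite measures $\mu,\nu$ on $(\Omega,\mathcal F)$, viewed under $\nu$ (but not under $\mu$), a version of the generalized density $\partial\mu/\partial\nu$ coincides with a version of $d\mu^{ac}/d\nu$.

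Applying this observation with $(\mu,\nu)=(\P_n,\Q_n)$ for each $n\in\mathbb N$, I obtain that $\Q_n$-a.s., hence $\Q$-a.s.,
\[
\frac{\partial\P_n}{\partial\Q_n}=\frac{d\P_n^{ac}}{d\Q_n}.
\]
Applying the same observation with $(\mu,\nu)=(\P,\Q)$ gives $\Q$-a.s.\ $\partial\P/\partial\Q=d\P^{ac}/d\Q$. Throwing out the countable union of the exceptional $\Q$-null sets keeps a $\Q$-null set, so both identifications hold simultaneously off a single $\Q$-null set.

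Next I invoke Theorem~\ref{th:030323a1}, which gives $\P,\Q$-a.s.\ convergence $\partial\P_n/\partial\Q_n\to\partial\P/\partial\Q$ as $n\to\infty$; in particular, this convergence holds $\Q$-a.s. Substituting the two identifications into this limit yields
\[
\frac{d\P_n^{ac}}{d\Q_n}\to\frac{d\P^{ac}}{d\Q}\quad\Q\text{-a.s.},
\]
as claimed.

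There is no genuine obstacle here beyond being careful about which measure governs which a.s.\ statement: the Lebesgue-decomposition identification $\partial\mu/\partial\nu=d\mu^{ac}/d\nu$ is an equality $\nu$-a.s.\ only (not $\mu$-a.s.), and Theorem~\ref{th:030323a1} delivers convergence $\Q$-a.s.\ (which is what is needed), so the statement and its proof are consistent. The whole deduction is essentially a rewriting of Theorem~\ref{th:030323a1}, which is the reason for introducing the generalized density in the first place.
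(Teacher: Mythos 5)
Your proof is correct and matches the paper's own argument: the paper likewise obtains the corollary as an immediate consequence of Theorem~\ref{th:030323a1} via the identification (recorded after Corollary~\ref{cor:260922a2}) of $\partial\mu/\partial\nu$, viewed under $\nu$, with $d\mu^{ac}/d\nu$. Your extra care about which measure governs each a.s.\ statement and the countable union of null sets is exactly the right bookkeeping, though the paper leaves it implicit.
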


\subsection{Differentiation of Measures}\label{subsec:DiffMeas}
Let \(I \subset \mathbb{R}\) be an open interval and let \(\mu\) and \(\nu\) be locally finite measures on \((I, \mathcal{B}(I))\).
For \(x \in \mathbb{R}\) and \(r > 0\), we write \(B(x, r)\) for the open ball with center \(x\) and radius~\(r\).
A sequence \((A_j)_{j = 1}^\infty \subset \mathcal{B}(I)\) is said to \emph{converge \(\nu\)-measure-metrizably} to \(x \in I\) if the following conditions hold:
\begin{enumerate}
	\item[\textup{(a)}]
	For every \(j \in \mathbb{N}\), there exists an \(r_j > 0\) such that \(A_j \subset B(x, r_j) \subset I\);
	\item[\textup{(b)}]
	\(\lim_{j \to \infty} r_j = 0\);
	\item[\textup{(c)}]
	there exists an \(\alpha \in (0, 1]\) such that for every \(j \in \mathbb{N}\), \(\nu (A_j) \geq \alpha \nu(B(x, r_j))\).
\end{enumerate}
We now define two closely related \emph{derivatives of \(\mu\) w.r.t.\ \(\nu\)} at a point \(x\in I\).
The first one will be denoted by $D_\nu(\mu)(x)$ and the second one by $\overline D_\nu(\mu)(x)$.
If there exists a number \(z \in [0,\infty]\) such that
\[
\lim_{j \to \infty} \frac{\mu (A_j)}{\nu(A_j)} = z,
\]
for every sequence \((A_j)_{j = 1}^\infty\) which converges \(\nu\)-measure-metrizably
(resp., \(\mu\)-measure-metrizably and \(\nu\)-measure-metrizably)
to \(x\), then we write \(D_\nu(\mu)(x)=z\) (resp., \(\overline D_\nu(\mu)(x)=z\)).
If the limit 
\[
\lim_{r \searrow 0} \frac{\mu (B(x, r))}{\nu(B(x, r))}
\]
exists in $[0,\infty]$, it is said to be the \emph{symmetric derivative of \(\mu\) w.r.t.\ \(\nu\)} at \(x\) and we denote it by \(D^{\on{sym}}_\nu(\mu)(x)\).
We notice the obvious relations between the notions:
\begin{align}
D_\nu(\mu)(x)\text{ exists}\quad
&\Longrightarrow\quad
\overline D_\nu(\mu)(x)\text{ exists and equals }D_\nu(\mu)(x);
\\
\overline D_\nu(\mu)(x)\text{ exists}\quad
&\Longrightarrow\quad
D^{\on{sym}}_\nu(\mu)(x)\text{ exists and equals }\overline D_\nu(\mu)(x).
\label{eq:260922a2}
\end{align}
The following is a partial restatement of \cite[Theorem~8.4.4, Corollary~8.4.5]{benedetto}.

\begin{lemma} \label{lem: benedetto}
\begin{enumerate}
\item[\textup{(i)}] \(D_\nu (\mu)\) exists \(\nu\)-a.e.\ and \(D_\nu (\mu) \in L^1_{\textup{loc}} (I, \mathcal{B}(I), \nu)\).

\item[\textup{(ii)}] \(\mu \ll \nu\) if and only if \(\mu (A) = \int_A D_\nu (\mu) d \nu\) for all \(A \in \mathcal{B}(I)\).
\end{enumerate}
\end{lemma}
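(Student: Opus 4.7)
My plan is to reduce the claim to the classical Besicovitch differentiation theorem for the symmetric derivative $D^{\on{sym}}_\nu(\mu)$, and then upgrade from symmetric convergence over balls to the stronger notion of $\nu$-measure-metrizable convergence, exploiting precisely condition~(c) in the definition. The one-dimensional Besicovitch theorem is standard and accessible via the Vitali covering theorem, so I would cite it rather than reprove it.

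First, I would apply the Lebesgue decomposition $\mu=\mu^{ac}+\mu^s$, where $\mu^{ac}\ll\nu$ and $\mu^s\perp\nu$, and set $\varphi\triangleq d\mu^{ac}/d\nu\in L^1_{\textup{loc}}(I,\mathcal B(I),\nu)$. By the Besicovitch differentiation theorem on $\mathbb{R}$ applied to $\mu^s$, for $\nu$-a.e.\ $x\in I$ one has $D^{\on{sym}}_\nu(\mu^s)(x)=0$, i.e.\ $\mu^s(B(x,r))/\nu(B(x,r))\to 0$ as $r\searrow 0$. By the $\nu$-Lebesgue differentiation theorem applied to $\varphi$, for $\nu$-a.e.\ $x\in I$ the point $x$ is a $\nu$-Lebesgue point of $\varphi$, i.e.
\[
\frac{1}{\nu(B(x,r))}\int_{B(x,r)}|\varphi(y)-\varphi(x)|\,d\nu(y)\longrightarrow 0,\quad r\searrow 0.
\]
Fix an $x$ at which both conclusions hold.

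Next, for any sequence $(A_j)_{j\ge 1}\subset\mathcal B(I)$ converging $\nu$-measure-metrizably to $x$, with corresponding radii $r_j\searrow 0$ and constant $\alpha\in(0,1]$, I would estimate
\[
\frac{\mu^s(A_j)}{\nu(A_j)}\le\frac{\mu^s(B(x,r_j))}{\alpha\,\nu(B(x,r_j))}\longrightarrow 0,
\]
and
\[
\left|\frac{\mu^{ac}(A_j)}{\nu(A_j)}-\varphi(x)\right|
\le\frac{1}{\nu(A_j)}\int_{A_j}|\varphi(y)-\varphi(x)|\,d\nu(y)
\le\frac{1}{\alpha\,\nu(B(x,r_j))}\int_{B(x,r_j)}|\varphi(y)-\varphi(x)|\,d\nu(y)\longrightarrow 0.
\]
Summing, $\mu(A_j)/\nu(A_j)\to\varphi(x)$ along any such sequence, which proves $D_\nu(\mu)(x)=\varphi(x)$ for $\nu$-a.e.\ $x\in I$. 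Since $\varphi\in L^1_{\textup{loc}}(I,\mathcal B(I),\nu)$, this gives~(i).

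Finally, part~(ii) is immediate from~(i): if $\mu\ll\nu$, then $\mu^s\equiv 0$ and $\varphi=d\mu/d\nu$ is a version of $D_\nu(\mu)$, so $\mu(A)=\int_A D_\nu(\mu)\,d\nu$ for all $A\in\mathcal B(I)$; conversely, if this identity holds, then $\nu(A)=0$ forces $\mu(A)=0$. The main obstacle in a fully self-contained write-up would be the one-dimensional Besicovitch theorem together with the $\nu$-Lebesgue differentiation theorem; here I simply quote them. The genuinely new content, namely passing from balls to $\nu$-measure-metrizable sequences, is handled by the uniform lower bound $\alpha$ from condition~(c), which is the only place where this condition is used.
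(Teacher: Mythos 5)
Your proposal is correct. Note, however, that the paper does not prove this lemma at all: it is stated as a ``partial restatement'' of Theorem~8.4.4 and Corollary~8.4.5 of Benedetto--Czaja, so the comparison is between your self-contained argument and a bare citation. What you do is essentially reconstruct the content of the cited result: Lebesgue decomposition $\mu=\mu^{ac}+\mu^s$, the Besicovitch differentiation theorem to kill $\mu^s(B(x,r))/\nu(B(x,r))$ at $\nu$-a.e.\ $x$, the $\nu$-Lebesgue point property of $\varphi=d\mu^{ac}/d\nu$, and then the observation that conditions (a) and (c) in the definition of $\nu$-measure-metrizable convergence let you dominate $\mu^s(A_j)/\nu(A_j)$ and the Lebesgue-point average over $A_j$ by the corresponding quantities over $B(x,r_j)$ up to the factor $1/\alpha$. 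This is exactly the right mechanism, and it is where condition (c) earns its keep. Two small points you should make explicit in a full write-up: first, you must discard the $\nu$-null set of points $x\notin\operatorname{supp}(\nu)$, since otherwise $\nu(B(x,r_j))$ (and hence $\nu(A_j)$) could vanish and the ratios would be undefined; second, since $\nu$ is an arbitrary locally finite measure (not assumed doubling), the differentiation theorems you invoke must be the Besicovitch versions, which are available here because the Besicovitch covering theorem holds in $\mathbb{R}$. Neither point is a gap, and your part~(ii) (forward direction from $D_\nu(\mu)=\varphi$ $\nu$-a.e., converse by testing on $\nu$-null sets) is immediate as you say. What your approach buys is independence from the specific reference; what the paper's citation buys is brevity and the full strength of the source (which treats more general differentiation bases).
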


This observation allows us to infer that $\overline D_\nu(\mu)$ is a version of the generalized Radon--Nikodym derivative~$\partial\mu/\partial\nu$.

\begin{theorem}\label{th:260922a1}
For all locally finite measures $\mu,\nu$ on \((I, \mathcal{B}(I))\), the derivative \(\overline D_\nu (\mu)\) exists \(\mu,\nu\)-a.e.\ on $I$ and is a version of the generalized Radon--Nikodym derivative \(\partial \mu/\partial \nu\).
In particular, the same claims hold also for the symmetric derivative $D^{\on{sym}}_\nu(\mu)$.
\end{theorem}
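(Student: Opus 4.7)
The plan is to establish the equality $\overline D_\nu(\mu) = \partial\mu/\partial\nu$ separately on a $\nu$-full-measure subset of $I$ and on a $\mu$-full-measure subset, and then read off the symmetric-derivative claim at the end. For the $\nu$-a.e.\ identity, the starting observation is that any sequence converging to $x$ both $\mu$- and $\nu$-measure-metrizably is, in particular, $\nu$-measure-metrizable; hence Lemma~\ref{lem: benedetto}(i) forces $\overline D_\nu(\mu)(x) = D_\nu(\mu)(x)$ wherever $D_\nu(\mu)(x)$ exists, which is a $\nu$-full set. It then remains to identify $D_\nu(\mu) = \partial\mu/\partial\nu$ $\nu$-a.e. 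For this I will invoke the Lebesgue decomposition $\mu = \mu^{ac} + \mu^s$ with respect to $\nu$, noting that both parts are locally finite, and exploit the additivity $D_\nu(\mu) = D_\nu(\mu^{ac}) + D_\nu(\mu^s)$, which holds $\nu$-a.e.\ by Lemma~\ref{lem: benedetto}(i) applied to each summand. Lemma~\ref{lem: benedetto}(ii) applied to $\mu^{ac}$ identifies the first summand with $d\mu^{ac}/d\nu$, which coincides with $\partial\mu/\partial\nu$ $\nu$-a.e.\ by the discussion following Corollary~\ref{cor:260922a2}. The vanishing $D_\nu(\mu^s) = 0$ $\nu$-a.e.\ will be invoked as a classical fact from the differentiation theory of Radon measures on $\mathbb{R}$.

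Next, for the $\mu$-a.e.\ claim, I will exploit symmetry. The definition of ``$\mu$- and $\nu$-measure-metrizable convergence'' is visibly symmetric in the two measures; consequently, for any $x$ such that $\overline D_\mu(\nu)(x) \in [0,\infty]$ is defined, the derivative $\overline D_\nu(\mu)(x)$ is also defined and equals its reciprocal, under the conventions $1/0 \triangleq \infty$ and $1/\infty \triangleq 0$. Applying the preceding paragraph with the roles of $\mu$ and $\nu$ swapped then yields $\overline D_\mu(\nu) = \partial\nu/\partial\mu$ $\mu$-a.e., and hence $\overline D_\nu(\mu) = 1/(\partial\nu/\partial\mu)$ $\mu$-a.e. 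This reciprocal equals $\partial\mu/\partial\nu$ $\mu$-a.e.: on the set $E$ from Lemma~\ref{lem:260922a1} this is immediate from~\eqref{eq:260922a1}, while on $S_\mu$ the same lemma gives $\partial\mu/\partial\nu = \infty$ and $\partial\nu/\partial\mu = 0$ $\mu$-a.e., consistent with the reciprocal convention; the remaining set $S_\nu$ is $\mu$-null.

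For the final clause on the symmetric derivative, it suffices to note that the ball sequence $A_j = B(x, r_j)$ with $r_j \searrow 0$ and $B(x, r_j) \subset I$ is trivially both $\mu$- and $\nu$-measure-metrizable to $x$ (with $\alpha = 1$), so existence of $\overline D_\nu(\mu)(x)$ forces existence of $D^{\on{sym}}_\nu(\mu)(x)$ with the same value, as already recorded in~\eqref{eq:260922a2}. The main obstacle in this plan will be the vanishing $D_\nu(\mu^s) = 0$ $\nu$-a.e.; although this slightly exceeds the statement of Lemma~\ref{lem: benedetto} as recorded in the paper, it is a classical Vitali/Besicovitch-type statement for singular parts of Radon measures on $\mathbb{R}$ and lies within the same body of differentiation theory from which that lemma is drawn.
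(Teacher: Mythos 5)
Your proof is correct, but it follows a genuinely different route from the paper's. The paper's proof introduces the single dominating measure $\gamma=\mu+\nu$, applies Lemma~\ref{lem: benedetto} to $\mu$ and to $\nu$ \emph{with respect to} $\mu+\nu$ (both are absolutely continuous, so only the recorded form of the lemma is needed), observes that a sequence converging $\mu$- and $\nu$-measure-metrizably to $x$ converges $(\mu+\nu)$-measure-metrizably to $x$, and reads off $\overline D_\nu(\mu)=D_{\mu+\nu}(\mu)/D_{\mu+\nu}(\nu)$, which is exactly the right-hand side of \eqref{eq:260922a1} with $\gamma=\mu+\nu$; since the exceptional set is $(\mu+\nu)$-null, the $\mu$-a.e.\ and $\nu$-a.e.\ claims come out in one stroke. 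You instead work directly with the Lebesgue decomposition $\mu=\mu^{ac}+\mu^s$ relative to $\nu$, which obliges you to know that $D_\nu(\mu^s)=0$ $\nu$-a.e.; as you concede, this is \emph{not} contained in Lemma~\ref{lem: benedetto} as restated in the paper (part~(ii) only characterizes when $\mu\ll\nu$, it does not assert the vanishing of the derivative of a singular part), so your argument imports the full Besicovitch/Lebesgue differentiation theorem from the literature where the paper deliberately arranges to need only the absolutely continuous case. You then need a second pass — the reciprocal/symmetry argument via $\overline D_\mu(\nu)$ and Lemma~\ref{lem:260922a1} — to get the $\mu$-a.e.\ statement, whereas the paper's symmetric choice of $\gamma$ makes this automatic. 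Both arguments share the same minor glossed-over edge cases (ratios with vanishing denominators along the sequence $(A_j)$), and your treatment of the symmetric derivative via \eqref{eq:260922a2} matches the paper's. In short: your route is the ``classical'' decomposition-based one and is valid, but it is less self-contained relative to the toolkit the paper sets up; the paper's $\mu+\nu$ trick is worth internalizing precisely because it sidesteps the singular-part differentiation result.
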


\begin{proof}
By Lemma~\ref{lem: benedetto}, the derivatives $D_{\mu+\nu}(\mu)$ and $D_{\mu+\nu}(\nu)$ exist $(\mu+\nu)$-a.e.\ and are versions of the Radon--Nikodym derivatives $d\mu/d(\mu+\nu)$ and $d\nu/d(\mu+\nu)$.
For a point $x\in I$ such that $D_{\mu+\nu}(\mu)(x)$ and $D_{\mu+\nu}(\nu)(x)$ exist and 
$$
\text{neither }
D_{\mu+\nu}(\mu)(x)
=
D_{\mu+\nu}(\nu)(x)
=
0
\text{ nor }
D_{\mu+\nu}(\mu)(x)
=
D_{\mu+\nu}(\nu)(x)
=
\infty,
$$
we have $\overline D_\nu(\mu)(x)=D_{\mu+\nu}(\mu)(x)/D_{\mu+\nu}(\nu)(x)$
(notice here that, if a sequence \((A_j)_{j = 1}^\infty\) converges \(\mu\)-measure-metrizably and \(\nu\)-measure-metrizably to~\(x\), then it converges \((\mu+\nu)\)-measure-metrizably to~\(x\)).
The claims for $\overline D_\nu(\mu)$ now follow from~\eqref{eq:260922a1} with $\gamma=\mu+\nu$.
The claims for $D^{\on{sym}}_\nu(\mu)$ are trivial consequences of~\eqref{eq:260922a2}.
\end{proof}

\bibliographystyle{abbrv}
\bibliography{references}

\end{document}